 \def\NN{{\mathbb N}}  
 \def\RR{{\mathbb R}}
 \def\ZZ{{\mathbb Z}}
\def\Si{\Sigma}
\def\La{\Lambda}
\def\Ga{\Gamma}
    \def\cT{{\cal T}}
\def\cC{{\cal C}}    \def\cU{{\cal U}}
   \def\cP{{\cal P}} \def\cV{{\cal V}}
\def\cF{{\cal F}}  \def\cL{{\cal L}} \def\cR{{\cal R}}
\def\interior{\operatorname{Interior}}
\newtheorem*{whitney}{Whitney's extension theorem}
\newtheorem*{maintheo}{Main Theorem}
\newtheorem{theo}{Theorem}
\newtheorem*{theo*}{Theorem}
\newtheorem{lemm}{Lemma}[section]
\newtheorem{sublemm}[lemm]{Sublemma}
\newtheorem{claim}[lemm]{Claim}
\newtheorem{coro}[lemm]{Corollary}
\newtheorem{prop}[lemm]{Proposition}
\newtheorem{defi}[lemm]{Definition}
\newtheorem{ques}[lemm]{Question}
\newtheorem{addendum}[lemm]{Addendum}
\theoremstyle{definition}
\newtheorem{rema}[lemm]{Remark}
\newtheorem{rems}[lemm]{Remarks}
\title{Center manifolds for partially hyperbolic set without strong unstable connections}
\author{\setcounter{footnote}{-1}
Christian Bonatti\footnote{Partially supported by the ANR project \emph{DynNonHyp} BLAN08-2 313375
and by the \emph{Balzan Research Project} of J.Palis.
The authors acknowledge the IFUM and the CMAT (Montevideo) where part of this text was written.} and Sylvain Crovisier\footnotemark[0]}
\date{\today}
\begin{document}
\maketitle

\begin{abstract}
We consider compact sets which are invariant and partially hyperbolic under the dynamics
of a diffeomorphism of a manifold.
We prove that such a set $K$ is contained in a locally invariant center submanifold if and only if each strong stable and strong unstable leaf intersects $K$ at exactly one point.
\end{abstract}

\section{Introduction}
From the first works on dynamical systems, one has tried to reduce 
the dimension of the system and many technics have been developed 
for that purpose: first integrals (for some conservative systems), Poincar\'e return 
maps on transverse sections (for flows), quotients by invariant foliations, etc...
One of these technics is the famous 
``center manifold theorem", see for instance~\cite{HPS,Wi}. Consider a fixed point $x$ 
of a diffeomorphism $f$ on a manifold $M$ such that the 
differential $Df(x)$ leaves invariant a splitting $T_xM= E^{ss}\oplus E^c\oplus E^{uu}$, 
corresponding to the parts of the spectrum of $Df(x)$ whose moduli are, respectively, 
strictly less than one, equal to one, and strictly greater than one. Then there exists a 
locally $f$-invariant manifold $W^c$ through $x$ tangent at $x$ to $E^c$.
Furthermore the local topological dynamics of $f$ is the product of 
the restriction $f|_{W^c}$ by a uniform contraction in the $E^{ss}$ direction 
and by a uniform dilation in the $E^{uu}$-direction. 

\subsection{Main result}
We propose here some generalization of the center manifold theorem where 
the fixed point $x$ is replaced by an invariant compact set. Recal that a $Df$-invariant splitting $TM=E^c\oplus E^{uu}$
defined over an invariant compact set $K$ is \emph{partially hyperbolic} with \emph{strong unstable direction $E^{uu}$}
if the vectors in $E^{uu}$ are uniformly expanded and the possible expansion in $E^c$ is strictly weaker than the expansion in $E^{uu}$.
More precisely there are constants $\lambda_K>1$
and $C>0$ such that for each $x\in K$, each unit vectors
$u\in E^{c}(x)$, $v\in E^{uu}(x)$ and each $n\geq 1$, one has:
$$\|Df^n(x)\cdot v\|> C\lambda_K^n\quad \text{and} \quad
\|Df^n(x)\cdot v\|> C \lambda_K^n \|Df^n(x)\cdot u\|.$$

With these hypotheses, any point in $K$ has a well defined \emph{strong unstable manifold} $W^{uu}(x)$ tangent to $E^{uu}(x)$.
It is the set of points whose backward iterates get closer to those of $x$ at almost the same rates as the contraction of the vectors
in $E^{uu}$ by the backward iterates of $Df$.

\begin{maintheo} Let $f$ be a $C^1$-diffeomorphism and $K$ a partially hyperbolic compact invariant set  such that
$TM|_K=E^c\oplus E^{uu}$. Then, the next two properties are equivalent.
\begin{enumerate}
\item There exists a compact $C^1$-submanifold $S$ with boundary which:
\begin{itemize}
\item[--] contains $K$ in its interior,
\item[--] is tangent to $E^c$ at each point of $K$ (i.e. $T_xS=E^c(x)$ for each $x\in K$),
\item[--] is locally invariant: $f(S)\cap S$ contains a neighborhood of $K$ in $S$,
\end{itemize}
\item The strong unstable manifold of any $x\in K$ intersect $K$ only at $x$ (i.e. $W^{uu}(x)\cap K=\{x\}$).
\end{enumerate}
\end{maintheo}
\medskip

\begin{rems}
\begin{enumerate}
\item The submanifold $S$ is in general not unique: if $S'$ is another submanifold
the intersection $S\cap S$ could be reduced to $K$, as for the center manifolds of a fixed point. 
\item The implication $1\Rightarrow 2$ in the Main Theorem is immediate: if $K$ is contained in $S$
and if $x,y\in K$ share the same strong unstable manifold, the points
$x'=f^{-n}(x), y'=f^{-n}(y)$ for $n\geq 0$ large belong to a same local strong unstable
manifold. The transversality between $T_xS=E^c(x)$ and $T_xW^{uu}_x=E^{uu}(x)$
implies that $x'=y'$, hence that $x=y$.
\item After we wrote a first version of this text, Genevi\`eve Raugel mentioned us that
center manifolds for partially hyperbolic invariant sets
have been also built before by Chow-Liu-Yi~\cite{CLY}
for flows generated by a vector fields under different assumptions:
they require that the set is tangent at each point to its center bundle
and that its geometry is ``bounded'' (admissibility condition).
Our result shows that their second assumption is not necessary and
that the first one can be replaced by a dynamical property on the strong unstable
lamination, which is easier to check in practice. Our assumptions are optimal since we get
an equivalence.
\end{enumerate}
\end{rems}
\bigskip

The compact $K$ is not necessarily the maximal invariant set in a neighborhood. The Corollary below extends the conclusion of the
theorem to the maximal invariant set of a neighborhood. 

\begin{coro}\label{c.principal} Under the conclusion of the Main Theorem,  there is a neighborhood $U$ of $K$ such that the maximal
invariant set $\La$ in $U$ is contained in the interior of $S$. 
\end{coro}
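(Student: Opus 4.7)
The plan is to construct a projection $\pi$ onto $S$ along local strong unstable plaques, defined on a neighborhood of $K$, and then to show that every point $y$ of the maximal invariant set $\Lambda$ coincides with a limit of backward iterates of its own forward $\pi$-images, forcing $y\in S$.

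First I will extend the setting: since $K$ is partially hyperbolic, the splitting $E^c\oplus E^{uu}$ extends to a dominated splitting on a small neighborhood $U_0$ of $K$, and a continuous family of local strong unstable plaques $W^{uu}_\varepsilon(z)$ of some fixed size $\varepsilon$ is attached to every $z\in U_0$. Because $S$ is $C^1$ with $T_xS=E^c(x)$ for every $x\in K$, transversality between $S$ and $E^{uu}$ is an open condition, so after shrinking $U_0$ and $\varepsilon$ I may assume that for each $z\in U_0$ the plaque $W^{uu}_\varepsilon(z)$ meets $S$ transversally in a unique point $\pi(z)$. This yields a continuous map $\pi:U_0\to S$ with $\pi|_K=\operatorname{id}$. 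Using the local invariance of $S$, I choose an open neighborhood $V$ of $K$ in $S$ with $f^{-1}(V)\subset S$ (obtained by rewriting $f(S)\cap S\supset W$ as $f^{-1}(W)\subset S$), and then pick $U\subset U_0$ open, with $\overline U$ disjoint from $\partial S$ and small enough that $\pi(\overline U)\subset V$.

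I then fix $y\in \Lambda=\bigcap_{n\in\ZZ}f^n(\overline U)$ and set $y_n=f^n(y)$, $\tilde y_n=\pi(y_n)\in S$ for $n\geq 0$. By invariance of the strong unstable foliation, $f^{-1}(\tilde y_n)$ lies on the strong unstable leaf through $y_{n-1}$, at unstable distance at most $\lambda_K^{-1}\varepsilon$; hence it is very close to $y_{n-1}\in\overline U$, itself close to $K$, and therefore in $V$. Backward local invariance then gives $f^{-1}(\tilde y_n)\in S$. Iterating the argument, $f^{-k}(\tilde y_n)\in S$ for all $0\leq k\leq n$, with
\[
d\bigl(y_{n-k},f^{-k}(\tilde y_n)\bigr)\leq C\lambda_K^{-k}\varepsilon.
\]
Taking $k=n$ yields a point $f^{-n}(\tilde y_n)\in S$ with $d(y,f^{-n}(\tilde y_n))\to 0$. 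Since $S$ is closed and $\overline U$ avoids $\partial S$, I conclude $y\in\interior S$, whence $\Lambda\subset\interior S$.

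The main obstacle is the induction in the previous paragraph: one must verify that every preimage $f^{-k}(\tilde y_n)$ stays in the region $V$ where backward local invariance of $S$ applies. This requires choosing $U$ small enough that each intermediate iterate $y_{n-k}$ remains very close to $K$, combined with the geometric contraction of $d(y_{n-k},f^{-k}(\tilde y_n))$ under $f^{-1}$ provided by partial hyperbolicity. The remaining ingredients — existence, continuity and robustness of the strong unstable plaques over a neighborhood of $K$, and hence of the projection $\pi$ — are standard consequences of the fact that a dominated splitting extends to a neighborhood of $K$.
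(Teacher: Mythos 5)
Your proposal is correct and follows essentially the same strategy as the paper: project $\Lambda$ onto $S$ along local strong unstable plaques, then use the exponential backward contraction of unstable distances together with the local (backward) invariance of $S$ to force the projection to equal the identity. The paper phrases the conclusion slightly more compactly—it first establishes the intertwining $f^n(\pi(x))=\pi(f^n(x))$ for all $n$ and then compares $d(x,\pi(x))$ with $d(f^n(x),\pi(f^n(x)))\le\delta$—whereas you run an explicit induction on the backward iterates $f^{-k}(\tilde y_n)$ and invoke closedness of $S$; these are the same argument in different clothing.
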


We consider now an invariant compact set $K$ having a $Df$-invariant \emph{partially hyperbolic splitting} in three bundles
$TM|_K=E^{ss}\oplus E^c\oplus E^{uu}$, with strong stable,  center, and strong unstable directions $E^{ss}$, $E^c$, and $E^{uu}$,
respectively\footnote{That is, the splittings in two bundles  $TM|_K= (E^{ss}\oplus E^c)\oplus E^{uu}$ and
$TM|_K= (E^{uu}\oplus E^c)\oplus E^{ss}$ are partially hyperbolic for $f$ and $f^{-1}$, respectively,
(with strong unstable direction $E^{uu}$ and $E^{ss}$ respectively)}.
The strong stable manifold $W^{ss}(x)$ of the points of $K$ are the strong unstable manifolds, tangent to $E^{ss}(x)$, for $f^{-1}$.

\begin{coro}\label{c.saddle} Let $K$ be a compact invariant set of a diffeomorphism $f$, admiting a partially hyperbolic splitting
$TM|_K=E^{ss}\oplus E^c\oplus E^{uu}$.
Then there is a compact $C^1$-submanifold $S$ with boundary which contains $K$ in its interior, is tangent to $E^c$ at each point of $K$
and is locally invariant, if and only if the strong stable and strong unstable manifolds of any point $x\in K$ intersect $K$ only at $x$
(i.e. $W^{uu}(x)\cap K=\{x\}=W^{ss}(x)\cap K$).
\end{coro}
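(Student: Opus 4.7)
The direction $(1)\Rightarrow(2)$ will follow exactly as in Remark~2 after the Main Theorem, applied symmetrically to $W^{uu}$ via forward iteration and to $W^{ss}$ via backward iteration: in both cases the transversality of $E^{uu}(x)$ (resp.\ $E^{ss}(x)$) to $T_xS=E^c(x)$ forces two points of $K$ sharing a local strong leaf to coincide.

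For $(2)\Rightarrow(1)$, the plan is to apply the Main Theorem twice and take a transverse intersection. First, viewing the splitting as $TM|_K=(E^{ss}\oplus E^c)\oplus E^{uu}$ (partially hyperbolic with strong unstable bundle $E^{uu}$, as recalled in the footnote above), the hypothesis $W^{uu}(x)\cap K=\{x\}$ lets me apply the Main Theorem to $f$ and obtain a compact $C^1$-submanifold $S^{cs}$ with boundary, containing $K$ in its interior, tangent to $E^{ss}\oplus E^c$ along $K$, and locally $f$-invariant. Second, using $TM|_K=(E^{uu}\oplus E^c)\oplus E^{ss}$ (partially hyperbolic for $f^{-1}$ with strong unstable bundle $E^{ss}$) together with $W^{ss}(x)\cap K=\{x\}$, the Main Theorem applied to $f^{-1}$ yields an analogous submanifold $S^{cu}$ tangent to $E^{uu}\oplus E^c$ along $K$, locally $f^{-1}$-invariant.

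At each $x\in K$ the subspaces $T_xS^{cs}=E^{ss}\oplus E^c$ and $T_xS^{cu}=E^{uu}\oplus E^c$ meet transversally in $E^c(x)$, immediately from the direct-sum decomposition $TM|_K=E^{ss}\oplus E^c\oplus E^{uu}$. Hence in a neighborhood of $K$ the two submanifolds intersect transversally in a $C^1$-submanifold of dimension $\dim E^c$; restricting to a small compact neighborhood of $K$ in $M$ produces a compact $C^1$-submanifold $S$ with boundary, containing $K$ in its interior and with $T_xS=E^c(x)$ for $x\in K$.

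It remains to verify local $f$-invariance of $S$. The Main Theorem a priori supplies only one-sided information (there is a neighborhood $V^{cs}$ of $K$ in $S^{cs}$ with $V^{cs}\subseteq f(S^{cs})\cap S^{cs}$, i.e.\ $f^{-1}(V^{cs})\subseteq S^{cs}$, and dually for $S^{cu}$). Replacing $V^{cs}$ by $V^{cs}\cap f^{-1}(V^{cs})$---still a neighborhood of $K$ in $S^{cs}$ because $f(K)=K\subseteq V^{cs}$---upgrades this to two-sided invariance $f^{\pm1}(V^{cs})\subseteq S^{cs}$, and the same upgrade is applied to $S^{cu}$. Intersecting, a neighborhood of $K$ in $S=S^{cs}\cap S^{cu}$ is sent by both $f$ and $f^{-1}$ into $S$, giving the required local invariance. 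The main technicality will be keeping track of how the neighborhoods in $S^{cs}$, $S^{cu}$, $S$, and the ambient $M$ fit together so that the final set is open in $S$; no dynamical input beyond the Main Theorem itself should be required.
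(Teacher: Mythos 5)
Your proof is correct and takes essentially the same approach as the paper: apply the Main Theorem to $f$ (splitting $(E^{ss}\oplus E^c)\oplus E^{uu}$) and to $f^{-1}$ (splitting $(E^{uu}\oplus E^c)\oplus E^{ss}$) to get $S^{cs}$ and $S^{cu}$, and take $S$ to be their transverse intersection near $K$. The paper's proof is exactly this, stated in one sentence; your version merely fills in the transversality and local-invariance bookkeeping that the paper leaves implicit.
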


We will sometimes reformulate the assumptions on $K$
with the following terminology.
We will say that a partially hyperbolic set $K$ admits a \emph{strong stable} or a \emph{strong unstable connection}
if there is $x\in K$ such that $W^{ss}(x)$ or $W^{uu}(x)$, respectively,  meets $K$ in a point different from $x$.
A compact invariant set  $K$, endowed with a partially hyperbolic structure of type $E^{ss}\oplus E^c$,
$E^{c}\oplus E^{uu}$ or $E^{ss}\oplus E^c\oplus E^{uu}$,  has \emph{no strong connection} if it has no strong stable connection, no
strong unstable connection, or no strong stable nor strong unstable connection, respectively.
\medskip

In the previous statements, the locally invariant submanifold $S$ is tangent to the center direction so that
it is \emph{normaly hyperbolic}.
In particular it persists by small perturbations:

\begin{coro}\label{c.robustness}
Under the conclusion of the Main Theorem, there exists a submanifold with boundary
$S'\subset S\cap f(S)$ which contains a neighborhood of $K$ in $S$, there exist a $C^1$-neighborhood $\cU$ of $f$
and a neighborhood  $U$ of $K$ such that, for any $g\in\cU$,
\begin{itemize}
\item[--] the maximal invariant set $\La_g$ of  $g$ in $U$ is contained in a submanifold $S_g$, $C^1$-close to $S$,
\item[--] $S_g\cap f(S_g)$ contains a submanifold $S'_g$, $C^1$-close to $S'$,
\item[--] $S_g$ and $S'_g$ depend  continuously on $g$ for the $C^1$-topology.
\end{itemize}
\end{coro}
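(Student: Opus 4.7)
\begin{demo}[Proof plan]

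The plan is to view $S$ as a normally expanding locally invariant submanifold and apply the standard persistence machinery of Hirsch--Pugh--Shub. Along $K$, the submanifold $S$ is tangent to $E^c$ with normal direction $E^{uu}$, which is uniformly expanded by $Df$ and dominates the dynamics on $E^c$. By continuity this dominated splitting extends to cone fields $\cC^c,\cC^{uu}$ on a tubular neighborhood $V$ of $S$, and these cone conditions persist for all $g$ in a small $C^1$-neighborhood $\cU_0$ of $f$.

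First I would produce $S'$. Local invariance gives that $f(S)\cap S$ contains a neighborhood of $K$ in $S$, so I pick $S'$ to be a compact $C^1$-submanifold with boundary contained in the relative interior (in $S$) of $S\cap f(S)$ and still containing $K$ in its interior. After shrinking $\cU_0$ if needed, $g(S)\cap S$ also contains $S'$ in its interior for every $g\in\cU_0$.

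The core step is a graph transform. Realize $V$ as a disk bundle $\pi:V\to S$ whose fibers are transverse to $S$ and tangent to a direction close to $E^{uu}$ near $K$. On the Banach space of $C^1$-sections of $\pi$ (over a slightly shrunk base) whose graphs are tangent to $\cC^c$, define $\Gamma_g(\sigma)$ to be the section of $\pi$ whose graph is $g^{-1}(\mathrm{graph}(\sigma))\cap V$. The cone conditions ensure this is well-defined, and the uniform contraction of $\cC^{uu}$ by $Dg^{-1}$ (together with domination over $\cC^c$) makes $\Gamma_g$ a uniform $C^1$-contraction. Its unique fixed point $\sigma_g$ produces a $C^1$-submanifold $S_g$ with $g(S_g)\cap V\subset S_g$; for $g=f$, uniqueness forces $\sigma_f\equiv 0$, so $S_f=S$. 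Continuous $C^1$-dependence of $S_g$ on $g$ is the standard continuity of Banach fixed points in a parameter. Running the argument over $S'$ produces $S'_g\subset S_g\cap g(S_g)$ that is $C^1$-close to $S'$. For the inclusion $\Lambda_g\subset S_g$, take $U$ a small neighborhood of $K$ inside $V$; any $g$-orbit staying in $U$ satisfies the cone conditions, and backward iteration under $g$ then contracts its normal distance to $S_g$ geometrically, so such an orbit must lie on $S_g$.

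The main technical obstacle is the standard boundary issue for locally invariant normally hyperbolic manifolds: some orbits leave $V$ through the lateral boundary of the base, so $\Gamma_g$ is not a priori well-defined on all sections. This is handled by working with nested pairs of bases and restricting $\Gamma_g$ to sections whose required backward iterates all remain well-defined; uniform expansion of the unstable cone guarantees that the restricted transform is still a contraction on a complete metric subspace, and its fixed point extends to give the desired locally invariant perturbed submanifold.
\end{demo}
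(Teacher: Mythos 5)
Your plan is essentially the paper's proof: both replace $f$ by a $C^1$-close $g$, run the same graph transform on graphs over the fixed base $\Si_0$, use the Banach fixed-point theorem with a parameter to get $S_g$ and its continuous dependence, pick $S'_g$ by restricting to the region where the cut-off function $\varphi_m$ equals $1$ (independent of $g$), and capture $\La_g\subset S_g$ by backward iteration contracting the normal distance — exactly the argument of Corollary~\ref{c.principal}.

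One imprecision worth flagging: you assert that the graph transform is a \emph{uniform $C^1$-contraction} on $C^1$-sections. That is not quite what happens, and in general the transform is not a contraction in $C^1$ norm. The paper (and the standard HPS machinery) instead works on the compact space $Lip_{m,\beta}$ of Lipschitz graphs with the $C^0$-distance, where the domination makes the transform a genuine contraction; the $C^1$-regularity and $C^1$-closeness of the fixed graphs are then deduced \emph{a posteriori} from the cone contraction (Proposition~\ref{p.smooth}: the tangent planes of the fixed graph are trapped in exponentially thin cones $\cC^n$, and for $g_1,g_2$ $C^1$-close those cone families are close). Your sketch would need to be phrased this way, or else invoke a fiber contraction argument; as written, the $C^1$-contraction claim is a gap, though a standard and repairable one. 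Otherwise the two proofs are the same.
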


In order to describe the local dynamics of $f$ and of its perturbations in the neighborhood of $K$
we are therefore reduced to understand the dynamics restricted to $S$. 
As $S$ is a $C^1$-submanifold, the induced local diffeomorphism (defined in a neighborhood of $K$ in $S$) cannot be a priori more regular
than $C^1$.
The standard results on normal hyperbolicity (see~\cite{HPS})
ensure anyway some better smoothness on $S$ when $f$ is more regular.
\footnote{We recall that a map is $C^{k,\alpha}$ with $k\in \NN$ and $\alpha\in(0,1]$
if it is $C^k$ and its $k^\text{th}$ derivative is $\alpha$-H\"older with locally uniform H\"older constant.
In particular a $C^{1,1}$-map has a Lipschitz derivative.
For $r\in [0,+\infty)\setminus \NN$, a map is $C^r$ if it is $C^{k,\alpha}$ with $r=k+\alpha$.}

We say that a partially hyperbolic set $K$ is \emph{$r$-normally hyperbolic}, $r\geq 1$, if
there are constants $\lambda_K>1$ and $C>0$ such that
for each $x\in K$, each $n\geq 1$
and each non-zero vectors
$v^s\in E^{ss},v^c\in E^c,v^u\in E^{uu}$ at $x$, one has:
$$C\lambda_K^{n} \; \frac{\|Df^n(x)\cdot v^s\|}{\|v^s\|}<
\left[\frac{\|Df^n(x)\cdot v^c\|}{\|v^c\|}\right]^r<
C^{-1}\lambda_K^{-n} \; \frac{\|Df^n(x)\cdot v^u\|}{\|v^u\|}.$$

\begin{coro}\label{c.smooth}
Under the conclusion of the Main Theorem:
\begin{itemize}
\item[--] if $f$ is $C^r$ and $K$ is $r$-normally hyperbolic, then $S$ can be chosen $C^r$;
\item[--] if $f$ is $C^r$, $r>1$, then $S$ can be chosen $C^{1,\alpha}$ for some $\alpha>0$.
\end{itemize}
\end{coro}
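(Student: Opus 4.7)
The plan is to start from the $C^1$ locally invariant submanifold $S$ produced by the Main Theorem and bootstrap its regularity by appealing to the classical theory of Hirsch, Pugh and Shub~\cite{HPS}.

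By the Main Theorem together with Corollary~\ref{c.robustness}, I would first arrange $S$ to be a compact $C^1$ submanifold with boundary, tangent to $E^c$ along $K$, locally invariant, and (after possibly shrinking) with $K$ contained in the interior of $S\cap f(S)$. The next step is to observe that the partial hyperbolicity over $K$ translates into normal hyperbolicity of $S$ in a neighborhood of $K$: the bundles $E^{ss}$ and $E^{uu}$ (whichever are present) extend continuously to a bundle complementary to $TS$ in a neighborhood of $K$, and the expansion/contraction inequalities on $K$ extend, by continuity and uniformly, to this neighborhood. This puts $S$ into the framework of the HPS invariant manifold theorem, and the first bullet will follow directly from its $C^r$ smoothness part: when $f$ is $C^r$ and the normal hyperbolicity is $r$-normal (exactly the hypothesis on $K$), the HPS graph transform in a tubular neighborhood of $S$ is a contraction on the space of $C^r$ graphs; its unique fixed point provides the required $C^r$ locally invariant submanifold through $K$ tangent to $E^c$.

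For the second bullet, the idea is to use that the $r$-normal hyperbolicity condition is open in $r$. Since $K$ is compact and $1$-normally hyperbolic with strict constant $\lambda_K>1$, continuity of the rates produces an $\alpha_0>0$ such that $K$ is also $(1+\alpha)$-normally hyperbolic for every $\alpha\in(0,\alpha_0)$. Choosing such an $\alpha$ also small enough that $f$ is $C^{1,\alpha}$ (possible since $f$ is $C^r$ with $r>1$) and applying the first bullet with exponent $1+\alpha$ then yields a $C^{1,\alpha}$ locally invariant submanifold.

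The main obstacle will be the initial reduction to the HPS framework: verifying that the partial hyperbolicity over $K$ lifts to uniform normal hyperbolicity on a neighborhood of $K$ in $S$, with rates matching those required in~\cite{HPS}, so that their smoothness statements apply verbatim. Once this routine continuity/compactness argument is in place, both bullets are direct consequences of the established theory.
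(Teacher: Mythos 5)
Your plan to bootstrap regularity by invoking the HPS $C^r$-section / normally hyperbolic invariant manifold theorem runs into an obstruction that the paper addresses explicitly: the submanifold $S$ produced by Theorem~\ref{t.invariant} is only \emph{locally} invariant ($f(S)\cap S$ contains a neighborhood of $K$ in $S$), not invariant or overflowing invariant, and the domain of the graph transform constructed in Section~\ref{s.invariant} is likewise non-invariant because of the cut-off function $\varphi_m$. The authors state this point directly at the start of their proof of Corollary~\ref{c.smooth}: ``The argument to prove that $S$ is $C^r$ follows the ideas of the $C^r$-section theorem in~\cite{HPS} (although we were not able to apply this theorem directly since the map $\Phi_m$ which is used for the graph transform is not defined on an invariant domain).'' So the step in your argument that reads ``the HPS graph transform in a tubular neighborhood of $S$ is a contraction on the space of $C^r$ graphs; its unique fixed point provides the required $C^r$ locally invariant submanifold'' is precisely the step that cannot be carried out verbatim: there is no $f^{-1}$-invariant tubular neighborhood of a non-invariant submanifold over which the HPS contraction operates, and the truncated map $\Psi_m$ one must use instead is not $f^{-1}$ and has derivatives controlled by $\|D\varphi_m\|\sim C_\Sigma/\varepsilon(m)$, which blows up as $m\to 0$.

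Bridging this gap requires substantial additional work, and this is where the paper diverges from your route. The paper shows that, despite the cut-off, the modified graph transform $\Psi_m$ still $r$-contracts a horizontal cone field: Addendum~\ref{a.contracte-cone} establishes this by carefully choosing the tubular neighborhood and using Proposition~\ref{p.domaine}.(\ref{i.domaine1}) (the estimate $m/\varepsilon(m)\to 0$) to beat the blow-up of $D\varphi_m$. Then, instead of citing an HPS smoothness theorem, the paper reproves the needed regularity from scratch: Addendum~\ref{a.smooth} gives $C^{1,\alpha}$ from $(1+\alpha)$-contraction, and for integer gains the dynamics is lifted to the Grassmannian bundle $G(d,M)$ via Propositions~\ref{p.fibre-contract} and~\ref{p.lift}, iterating to get $C^{k,\alpha}$. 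Your second bullet (openness of $r$-normal hyperbolicity in $r$, producing $(1+\alpha)$-normal hyperbolicity for small $\alpha$) does match the paper's argument, but it rests on the first bullet, so the gap propagates. To make your proposal rigorous you would have to either (a) show that some variant of HPS for truncated/non-invariant graph transforms applies here, verifying its hypotheses against the $\varepsilon(m)$-dependent cut-off, or (b) reconstruct the $r$-contraction and Grassmannian-lift arguments that the paper supplies.
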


The study of $C^1$-diffeomorphisms uses sometimes approximation by more regular diffeomorphisms (see for instance~\cite{PS1}).
For this reason, when $f$ and $S$ are only $C^1$, we are interested in getting a more regular submanifold, by a $C^1$-perturbation of $f$.

\begin{prop}\label{p.lissage}
Under the conclusion of the Main Theorem,
there exist a neighborhood $U$ of $K$ and a submanifold with boundary $S'\subset S\cap f(S)$ containing
$U\cap S$ with the following property.

There exist a $C^\infty$-diffeomorphism $g$ and some $C^\infty$ submanifolds with boundary $S'_g, S_g$
which are arbitrarily close to $f$, $S'$ and $S$ for the $C^1$-topology such that:
\begin{itemize}
\item[--] the maximal invariant set $\La_g$ of  $g$ in $U$ is contained in $S_g$,
\item[--] $S_g\cap f(S_g)$ contains the submanifold $S'_g$.
\end{itemize}
\end{prop}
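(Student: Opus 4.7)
The plan is to produce the $C^\infty$ data in two stages: first pick a $C^\infty$-submanifold $S_g$ close to $S$ (together with $S'_g\subset\interior(S_g)$ close to $S'$), then perturb $f$ in a $C^1$-small way to a $C^\infty$-diffeomorphism $g$ so that $S_g$ becomes genuinely locally $g$-invariant, and finally use partial hyperbolicity to conclude that $\Lambda_g\subset S_g$.

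\emph{Step 1 (smoothing the submanifolds).} Standard smoothing of $C^1$-submanifolds (smooth a $C^1$-graphing function for $S$ by convolution in local charts, patch with a partition of unity) gives $C^\infty$-submanifolds with boundary $S_g\supset S'_g$ that are $C^1$-close to $S\supset S'$; both still contain $K$ in their interior.

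\emph{Step 2 (smoothing and correcting $f$).} Approximate $f$ in $C^1$ by a $C^\infty$-diffeomorphism $\tilde f$. Choose a $C^\infty$-tubular neighborhood $N$ of $S_g$ with smooth coordinates $(x,z)\in S_g\times\RR^k$ so that $S_g=\{z=0\}$, and write $\tilde f(x,z)=(\tilde F(x,z),\tilde Z(x,z))$. Since $\tilde f$ is $C^1$-close to $f$, which preserves $S$, and $S_g$ is $C^1$-close to $S$, the normal displacement $\tilde Z(\cdot,0):S_g\to\RR^k$ is $C^1$-small near $K$. Fix a $C^\infty$-bump $\chi$ compactly supported in $N$ and equal to $1$ on a neighborhood of $K$, and define
$$g(x,z):=\bigl(\tilde F(x,z),\;\tilde Z(x,z)-\chi(x,z)\tilde Z(x,0)\bigr)\text{ in }N,\qquad g:=\tilde f\text{ elsewhere.}$$
Then $g$ is $C^\infty$, $C^1$-close to $\tilde f$ (hence to $f$), and $g(x,0)=(\tilde F(x,0),0)\in S_g$ whenever $\chi(x,0)=1$. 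Consequently $g(S_g)\cap S_g$ contains a neighborhood of $K$ in $S_g$, and by $C^1$-closeness of $f(S_g)$ to $f(S)$ and of $S'_g$ to $S'\subset\interior(S\cap f(S))$, also $S_g\cap f(S_g)\supset S'_g$.

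\emph{Step 3 ($\Lambda_g\subset S_g$).} By persistence of partial hyperbolicity under $C^1$-small perturbations, $g$ carries on a neighborhood of $K$ a dominated splitting $E^c_g\oplus E^{uu}_g$ close to $E^c\oplus E^{uu}$ with $E^{uu}_g$ uniformly expanded. In the coordinates of Step~2, the fiber direction is $C^0$-close to $E^{uu}$ and hence uniformly expanded by $Dg$; writing $g(x,z)=(X(x,z),Z(x,z))$ one has $Z(x,0)=0$ on a neighborhood of $K\cap S_g$ and $\|\partial_z Z(x,0)\|\ge\lambda>1$ there. Thus for $(x,z)\in U$ with $z\ne 0$ small, the $z$-component of $g^n(x,z)$ grows at least like $\lambda^n|z|$ as long as the iterates remain in the tubular neighborhood, forcing $g^n(x,z)$ to exit $U$. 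This contradicts $(x,z)\in\Lambda_g$, so $\Lambda_g\subset\{z=0\}=S_g$.

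The main obstacle is Step~3: one must re-run the dynamical argument underlying Corollary~\ref{c.principal} for the perturbation $g$ and the $C^\infty$-submanifold $S_g$, which was constructed by smoothing rather than dynamically. The key inputs are the persistence under $C^1$-small perturbations of both the partially hyperbolic splitting and of the uniform expansion of $E^{uu}$; everything else (the smoothing in Steps~1 and~2 and the inclusion $S'_g\subset S_g\cap f(S_g)$) is routine $C^1$-continuity.
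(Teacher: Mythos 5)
Your proposal follows the paper's strategy closely: smooth the submanifold and the diffeomorphism separately, then $C^1$-small-correct the smoothed diffeomorphism near $K$ so that $S_g$ becomes locally invariant, and finally conclude $\Lambda_g\subset S_g$. The paper implements the correction slightly differently — instead of your bump-weighted subtraction of the normal displacement in a fresh tubular neighborhood of $S_g$, it reuses the tubular neighborhood $\pi\colon T\to\Si_0$ already built in Section~\ref{ss.tubular}, and post-composes the smoothed map $g_0$ with a fiberwise translation $\tau$ that carries $g_0(S_g)$ back onto $S_g$ over $S'$. These are two realizations of the same idea, and both are fine.

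There is, however, an implicit assumption in your Step~3 that you should make explicit. You assert that ``the fiber direction is $C^0$-close to $E^{uu}$,'' but this is not automatic for an arbitrary $C^\infty$-tubular neighborhood of $S_g$: the natural normal bundle of $S_g$ is close to the orthogonal complement of $E^c$, which has no reason to coincide with $E^{uu}$. If the fiber direction is not close to $E^{uu}$, the one-step estimate $m(\partial_z Z(x,0))\ge\lambda>1$ can fail (only the $n$-step, dominated-splitting version survives), so the inequality chain you write would not go through verbatim. The fix is cheap and is exactly what the paper does: either (a) build the tubular neighborhood of $S_g$ with fibers tangent to a smooth bundle $\widetilde F$ approximating $E^{uu}$ (this is the neighborhood $\pi\colon T\to\Si_0$ of Proposition~\ref{p.tubulaire}, which is already available), or (b) drop the coordinate computation and invoke the strong-unstable-manifold argument of Corollary~\ref{c.principal} directly, which is insensitive to the choice of transversal because it uses the intrinsic backward-contraction characterization of $W^{uu}$. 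The paper opts for (b) at this step. With either fix your proof is complete and equivalent to the paper's.
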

Even if $f$ is a smooth diffeomorphisms, we do not know if it is possible to chose $g$  $C^r$-close to $f$, $r>1$,
in Proposition~\ref{p.lissage}.

\subsection{Dynamical consequences}

\paragraph{Partially hyperbolic dynamics with center dimension equal to $1$.}
For a compact partially hyperbolic set without strong stable and strong unstable connections, we may obtain a better description of
the local dynamics if the dimension of the center direction is very small. The following corollary asserts that,
when the center direction is one-dimensional, we can be perturb in order to get  a dynamics which is locally of Morse-Smale type:
\begin{coro}\label{c.central1D}
Let $K$ be a compact invariant set endowed with a partially hyperbolic structure whose center bundle is $1$-dimensional
and assume that $K$ has no strong connection.
Then, there is a compact neighborhood $U$ of $K$ and, for any $C^1$-neighborhood $\cU$ of  $f$, there is a diffeomorphism $g\in\cU$
such that the maximal invariant set $\La_g$ in $U$ consists in:
\begin{itemize}
\item[--] finitely many hyperbolic periodic orbits, contained in the interior of $U$,
\item[--] the orbits of finitely many compact segments, each of them contained in the transverse intersection of the stable and
unstable manifolds of periodic points $x,y\in \La_g$.
\end{itemize}    
\end{coro}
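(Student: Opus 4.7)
\begin{demo}[Proof sketch]
The plan is to use the Main Theorem together with Proposition~\ref{p.lissage} to reduce the local dynamics near $K$ to a $C^\infty$ dynamical system on a one-dimensional center manifold, apply a Peixoto-type density of Morse-Smale systems in dimension one, and finally lift the resulting perturbation back to the ambient manifold using the normally hyperbolic structure.

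First, depending on the type of partial hyperbolicity at $K$ (with both, or only one, of the strong bundles), I apply the Main Theorem or Corollary~\ref{c.saddle} to obtain a compact $C^1$ locally invariant submanifold $S$ tangent to $E^c$ containing $K$ in its interior. Then Proposition~\ref{p.lissage} provides a $C^1$-small perturbation $g_0$ of $f$, a neighborhood $U$ of $K$, and $C^\infty$ submanifolds $S'_{g_0}\subset S_{g_0}\cap g_0(S_{g_0})$, $C^1$-close to $S'\subset S$, such that the maximal invariant set $\Lambda_{g_0}$ of $g_0$ in $U$ is contained in $S_{g_0}$. Since $\dim E^c = 1$, $S_{g_0}$ is a compact $C^\infty$ $1$-manifold with boundary, hence a finite disjoint union of arcs and circles.

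Next, the induced map $g_0|_{S'_{g_0}}\to S_{g_0}$ is a $C^\infty$ partial diffeomorphism of a compact $1$-manifold. The classical density (Peixoto) of Morse-Smale diffeomorphisms in the $C^1$-topology on compact $1$-manifolds produces a $C^\infty$ map $h$ on $S_{g_0}$, $C^1$-close to $g_0|_{S_{g_0}}$ and coinciding with it near $\partial S'_{g_0}$, whose non-wandering set in the interior of $S'_{g_0}$ consists of finitely many hyperbolic periodic orbits, with every other orbit in the maximal invariant set being an arc in the stable manifold of one periodic orbit and the unstable manifold of another. I now extend $h$ to a $C^\infty$ diffeomorphism $g$ of $M$, $C^1$-close to $g_0$ (hence to $f$), which agrees with $g_0$ off a small tubular neighborhood of $S_{g_0}$ and whose restriction to $S_{g_0}$ is $h$. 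Such an extension is built using coordinates adapted to the strong stable/unstable bundles normal to $S_{g_0}$: one realizes $h - g_0|_{S_{g_0}}$ as a normal perturbation supported in a thin tube around $S_{g_0}$. The robustness statement Corollary~\ref{c.robustness} guarantees that for $g$ close enough to $f$, the maximal invariant set $\Lambda_g$ in $U$ lies in a submanifold $S_g$ close to $S_{g_0}$; for our specific extension $S_g = S_{g_0}$, so $\Lambda_g$ is exactly the maximal invariant set of $h$.

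Finally, each periodic orbit of $h$ in $S_{g_0}$ is hyperbolic for $g$ in $M$: the splitting $E^{ss}\oplus E^c\oplus E^{uu}$ restricted to the orbit (with the one-dimensional $E^c$-part already hyperbolic for $h$) gives the required hyperbolic decomposition, and in each component the stable (resp.\ unstable) manifold in $M$ is the $W^{ss}$- (resp.\ $W^{uu}$-)saturation of the one-dimensional stable (resp.\ unstable) manifold in $S_g$. The heteroclinic arcs in $S_g$ then correspond to transverse intersections in $M$: inside $S_g$ the intersection of a $1$-dimensional stable manifold and a $1$-dimensional unstable manifold is automatically transverse (being the whole arc), and transversality extends to $M$ by adding the complementary strong directions.

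The main obstacle is the extension step: one must ensure that perturbing the one-dimensional dynamics on $S_{g_0}$ is realized by a genuinely $C^1$-small perturbation of $g_0$ on the ambient manifold, while preserving the normally hyperbolic locally invariant submanifold containing $\Lambda_g$. This is handled through the tubular coordinates coming from the strong stable/unstable foliations combined with Corollary~\ref{c.robustness}; everything else then reduces to Peixoto's theorem and basic normal hyperbolicity.
\end{demo}
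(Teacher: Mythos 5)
Your proof is correct in its broad strokes but follows a genuinely different route from the paper's. The paper stays entirely in the $C^1$ world: it first perturbs $f$ directly in the ambient manifold so that $g$ is Kupka--Smale and every minimal set is a Hausdorff limit of periodic orbits (this is the $C^1$-generic closing/density theorem, not an explicit one-dimensional perturbation), then invokes Corollary~\ref{c.robustness} to obtain the curve $\Gamma_g$ for the perturbed $g$, and finally uses the specifically one-dimensional fact that non-periodic (Cantor) minimal sets are isolated from periodic orbits inside a one-dimensional locally invariant submanifold; this combined with the generic density of periodic orbits forbids Cantor minimal sets outright. You instead smooth first via Proposition~\ref{p.lissage}, achieve the Morse--Smale structure by perturbing the induced one-dimensional (partial) diffeomorphism, and push the perturbation back to $M$ -- this is the same extension mechanism the paper uses later for Corollary~\ref{c.dim2}. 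What your approach buys: it avoids the black-box $C^1$-generic density theorem and the Denjoy-type isolation argument for Cantor minimal sets. What it costs: you must justify that the Morse--Smale perturbation of the one-dimensional restriction can be taken compactly supported in the interior of $S'_{g_0}$ (``Peixoto'' is a slight misnomer here; you need a localised version of the elementary density of Morse--Smale one-dimensional dynamics, which is true but is not an off-the-shelf theorem), and you need the smoothing step, which the paper avoids. Two small imprecisions worth fixing: (i) you assert ``for our specific extension $S_g=S_{g_0}$'', but the submanifold produced by Corollary~\ref{c.robustness} is not canonical; the correct statement is that $S_{g_0}$ remains locally invariant for $g$, so the argument of Corollary~\ref{c.principal} (uniqueness of the point of intersection of strong unstable leaves with $S_{g_0}$) gives directly $\Lambda_g\subset S_{g_0}$; (ii) the tangential perturbation $\varphi$ with $h=\varphi\circ g_0|_{S_{g_0}}$ should be extended to $M$ as a diffeomorphism close to the identity via a tubular neighbourhood, not ``realised as a normal perturbation'' -- the displacement is along $S_{g_0}$, not normal to it.
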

For the initial dynamics $f$, any forward orbit in $K$ accumulates on a periodic circle
or on a periodic orbit, or on a Cantor set (a minimal set conjugated to the return map on a family of sections of an
``exceptional minimal set" for a $C^1$-vector field on a compact surface).
\smallskip

This result may be related with an extension by Pujals-Sambarino of Ma\~n\'e's theorem~\cite{mane1D}
about the hyperbolicity of one-dimensional endomorphisms to higher dimensional diffeomorphisms,
see~\cite{PS1} and generalizations~\cite{PS2,CPS}:
\smallskip

\noindent
\emph{Let $K$ be a compact invariant set of a $C^2$ Kupka-Smale diffeomorphism $f$, with a partially hyperbolic splitting
$T_{K}M=E^{ss}\oplus E^c$, where $E^c$ is one dimensional. 
Consider  the maximal invariant set $\La$ of $f$ in a small neighborhood of $K$.
Then the chain-recurrent set of $f$ in $\La$ consists in finitely many normally hyperbolic attracting periodic circles,
and finitely many hyperbolic sets.}

\paragraph{Palis' hyperbolicity conjecture.}
A conjecture by Palis claims that \emph{any diffeomorphism may be $C^1$-approximated by Axiom A diffeomorphisms,
or by diffeomorphisms which present a homoclinic tangency
\emph{(a non-transverse intersection between the stable and unstable manifolds of a
hyperbolic periodic orbit)} or a heterodimensional cycle
\emph{(two hyperbolic periodic orbits with different stable dimensions linked by two heteroclinic orbits)}}.

This conjecture has been solved on surfaces~\cite{PS1} and our Main Theorem allows to generalize in some cases to higher
dimensions\footnote{A point $x$ is called \emph{chain recurrent} if $f$ admits $\varepsilon$-pseudo orbits starting and ending at $x$,
for any $\varepsilon>0$. On the set $\cR(f)$ of chain recurrent points, one defines a equivalence relation as follows:
two points $x,y\in\cR(f)$ are equivalent if there are $\varepsilon$-pseudo obits starting at $x$ and ending at $y$,
and conversely starting at $y$ and ending at $x$, for any $\varepsilon>0$. The \emph{chain recurrence classes}
are the equivalence classes of this relation, inducing a partition of $\cR(f)$ in invariant compact sets.

A \emph{trapping region} of a diffeomorphism is an open set $U$ such that $f(\overline U)$ is contained in $U$.
A \emph{filtrating set} is the intersection of a trapping region $U$ of $f$ with a trapping region $V$ of $f^{-1}$.
One fundamental property of the trapping regions is that a chain recurrence class of $f$ meeting a trapping region is contained in it.}
(see also~\cite[section 2.7]{CP}).

\begin{coro}\label{c.dim2} Let $U$ be a filtrating set of a diffeomorphism $f$ such that the maximal invariant set $\La$ of $f$ in $U$
admits a partially hyperbolic splitting whose center bundle has its dimension equal to $2$. We assume furthermore that $\La$
has no strong connection. Then, in any $C^1$-neighborhood $\cU$ of $f$ there is a diffeomorphism $g\in\cU$ verifying one f the
two following properties:
\begin{itemize}
\item[--] either there is a hyperbolic  periodic saddle $x\in U$ of $g$ whose invariant manifolds present a homoclinic tangency
along an orbit f a point $y\in U$;
\item[--] or $g$ verifies the ``Axiom A+no cycle condition" in $U$: the chain-recurrent set $\cR(g)\cap U$ in $U$ consists in
finitely many hyperbolic transitive sets.
\end{itemize}
\end{coro}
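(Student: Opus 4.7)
The plan is to reduce the problem to the two-dimensional case of Palis' conjecture, which on surfaces was established by Pujals--Sambarino, and then to lift the conclusion back to the ambient manifold using the normal hyperbolicity of the center submanifold.

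First, I would apply the Main Theorem to the maximal invariant set $K=\Lambda$, obtaining a compact locally invariant $C^1$-submanifold $S$ with boundary, tangent to $E^c$ along $\Lambda$ and containing $\Lambda$ in its interior. Since $\dim E^c=2$, the submanifold $S$ is a surface. Using Proposition~\ref{p.lissage}, I may further assume, at the cost of an initial $C^1$-perturbation, that both $f$ and $S$ are $C^\infty$. By Corollary~\ref{c.principal}, after shrinking $U$ if necessary, the maximal invariant set $\Lambda$ of $f$ in $U$ lies in the interior of $S$.

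Next, I would consider the restriction $\varphi := f|_S$, which is a $C^\infty$ local diffeomorphism of the surface $S$. Since $U$ is filtrating for $f$ and $S$ is locally invariant, $V := U\cap S$ is filtrating for $\varphi$ in $S$, with $\Lambda$ as maximal invariant set. The surface case of Palis' conjecture, proved by Pujals--Sambarino in \cite{PS1}, then produces, in any $C^1$-neighborhood of $\varphi$, a diffeomorphism $\psi$ of $S$ which either exhibits a homoclinic tangency inside $V$, or satisfies the Axiom A + no cycles condition in $V$.

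Finally, I would lift $\psi$ back to an ambient $C^1$-perturbation. Persistence of normally hyperbolic invariant manifolds, in the spirit of Corollary~\ref{c.robustness}, allows one to extend any $C^1$-small perturbation $\psi$ of $\varphi$ to a $C^1$-small perturbation $g$ of $f$ on $M$ which preserves a $C^1$-close smooth surface $S_g$, with $g|_{S_g}$ conjugate to $\psi$; Corollary~\ref{c.principal} applied to $g$ then confines the maximal invariant set of $g$ in $U$ to $S_g$. A homoclinic tangency of $\psi$ inside $V$ lifts to an ambient homoclinic tangency, since the stable and unstable manifolds of a periodic saddle of $g|_{S_g}$ are contained in the ambient stable and unstable manifolds extended along $E^{ss}$ and $E^{uu}$, and a non-transverse intersection inside $S_g$ remains non-transverse in $M$. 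Likewise, Axiom A + no cycles on $S_g$ together with uniform hyperbolicity along $E^{ss}\oplus E^{uu}$ yields Axiom A + no cycles for $g$ in $U$. The main obstacle is this lifting step: one must verify that every $C^1$-small surface perturbation can be realized by an ambient $C^1$-perturbation preserving both the invariant surface and the filtrating structure, and that both alternatives of the surface dichotomy translate to the correct ambient statements; this ultimately rests on the $C^1$-robustness provided by Corollaries~\ref{c.robustness} and~\ref{c.principal}.
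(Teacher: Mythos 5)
Your strategy shares the broad outline of the paper's proof (reduce to the invariant surface, invoke Pujals--Sambarino, lift the dichotomy back to $M$ using Corollaries~\ref{c.robustness} and~\ref{c.principal}), but it differs in one essential place and that difference hides a real gap.

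The gap is in the step where you invoke ``the surface case of Palis' conjecture'' for $\varphi=f|_S$. The Pujals--Sambarino theorem on surfaces is stated for $C^1$-diffeomorphisms of a \emph{closed} surface, and produces a perturbation that globally exhibits a tangency or is globally Axiom A. Here $\varphi$ is only a local diffeomorphism of a compact surface with boundary, defined on part of $S$, and what you actually need is a \emph{filtrated} version of the dichotomy, localized to $V=U\cap S$. That version is not a direct consequence of the theorem as stated; establishing it amounts to re-running the PS1 argument locally, which is precisely the content the paper supplies. In particular, the paper does not treat Palis' conjecture on $S$ as a black box: it first proves (for generic $g$, assuming no tangency is created) that the center bundle over each non-trivial chain class admits a dominated splitting $E^c_1\oplus E^c_2$, using approximation of chain classes by periodic orbits~\cite{crovisier}, Pliss's lemma from~\cite{PS1}, and Gourmelon's tangency-creation result~\cite{gourmelon-tangence}. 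Only then does it apply the specific Pujals--Sambarino theorem about \emph{compact invariant sets with a dominated splitting} (the one quoted explicitly in the text), whose proof, as the authors point out, involves only the dynamics near the invariant set and so does survive the restriction to a non-closed, locally invariant surface. Your shortcut skips the dominated-splitting step entirely and relies on a global theorem in a setting where it has not been established.

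Two smaller points. First, the order of operations matters for regularity: if you smooth $(f,S)$ via Proposition~\ref{p.lissage} first and then look for perturbations of the surface dynamics, you must check that the further generic perturbations used in the PS1 machinery (hyperbolicity of periodic orbits, elimination of normally hyperbolic invariant circles, etc.) can be realized while keeping $g|_{\Si_g}$ at least $C^2$; the paper performs the smoothing \emph{after} reducing to the dominated-splitting situation, precisely so that the final application of Pujals--Sambarino is to a $C^2$ system. Second, the lifting of a surface perturbation to an ambient one is not automatic either; the paper gives the explicit recipe (decompose $h=\varphi\circ g|_{\Si_g}$ with $\varphi$ a compactly supported near-identity diffeomorphism of $\Si_g$ and extend $\varphi$ to $M$), and it is worth flagging that this is where the filtrating structure and Corollary~\ref{c.principal} are used to guarantee the maximal invariant set stays inside $S_g$.
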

\bigskip

The previous conjecture has motivated studies of diffeomorphisms
``far from homoclinic tangency" or ``far from heterodimensional cycles". 
For instance, Wen has shown that the minimally non-hyperbolic sets of diffeomorphisms $C^1$-far from tangencies and
from heterodimensional cycles are partially hyperbolic with a $1$ or $2$ dimensional center bundle (see \cite{We}
and its global generalization \cite{C}), reducing the conjecture to the partially hyperbolic setting.

The following proposition shows that the hypothesis ``without strong connection" and ``far from heterodimensional cycle" are related.
We remind that that two hyperbolic periodic orbits are \emph{homoclinically related}
if they belong to a same transitive hyperbolic set; the \emph{homoclinic class}
of a hyperbolic periodic point $p$ is the closure of the union of the transitive hyperbolic sets containing $p$.

\begin{prop}\label{p.strongconnection} Consider a diffeomorphism $f$ and a compact invariant set $K$ admitting
a partially hyperbolic structure whose center bundle is one-dimensional. If there exists a dense sequence
of periodic orbits $(O_n)$ in $K$ that are homoclinically related and whose center Lyapunov exponent is positive and converge to zero,
then:
\begin{itemize}
\item[--] either there are diffeomorphisms arbitrarily $C^1$-close to $f$ having a heterodimensional cycle;
\item[--] or for any periodic point $x\in K$ homoclinically related to the $O_n$
one has  $W^{uu}(x)\cap K=\{x\}$.
\end{itemize}
\end{prop}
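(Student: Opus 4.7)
\begin{demo}[Proof plan]
The plan is to argue the contrapositive: assuming there exists a periodic point $x\in K$ homoclinically related to the $O_n$ with some $y\in W^{uu}(x)\cap K\setminus\{x\}$, I will produce a heterodimensional cycle by an arbitrarily $C^1$-small perturbation of $f$. Since $\bigcup_n O_n$ is dense in $K$ and the center Lyapunov exponents $\lambda_c(O_n)$ are positive and tend to zero, I can pick $n$ arbitrarily large so that some point $p_n\in O_n$ lies within a prescribed $\varepsilon$ of $y$ and $\lambda_c(O_n)<\varepsilon$. Throughout, let $P$ denote the orbit of $x$; its stable index equals $d^s$, the common stable index of the $O_n$.

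The first step is to change the index of $O_n$ by a Franks-type perturbation. Because $\lambda_c(O_n)$ is tiny, multiplying the derivative along $O_n$ in the center direction by a factor slightly less than $1$ produces a $C^1$-small perturbation $f'$ of $f$, supported in an arbitrarily small tubular neighborhood of $O_n$ disjoint from $P$ and from a large compact piece of $W^s(P)\cup W^u(P)$, such that the continuation $Q$ of $O_n$ under $f'$ has negative center exponent and stable index $d^s+1$. Neither $P$ nor its local invariant manifolds are affected. The homoclinic relation between $x$ and $O_n$ provides a transverse intersection $W^u_f(x)\cap W^s_f(O_n)\neq\emptyset$; after perturbation $W^s_{f'}(Q)$ strictly contains the continuation of $W^s_f(O_n)$, so the transverse intersection persists as $W^u_{f'}(P)\cap W^s_{f'}(Q)\neq\emptyset$. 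This is one of the two heteroclinic connections needed for a cycle.

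For the other connection $W^u_{f'}(Q)\cap W^s_{f'}(P)$, a naive dimension count gives $\dim W^u_{f'}(Q)+\dim W^s_{f'}(P)=d-1$, so no intersection exists generically; this is where the strong unstable connection enters. The crucial observation is that the orbit of $y$ supplies a pseudo-heteroclinic relating $P$ and $Q$: by definition of $W^{uu}$, for $k$ large the backward iterate $f^{-k}(y)$ lies exponentially close to $f^{-k}(x)\in P$, hence within arbitrary distance of $W^s_{\mathrm{loc}}(f^{-k}(x))\subset W^s(P)$, while $y$ itself sits $\varepsilon$-close to $Q$. Combining this pseudo-orbit with the $\lambda$-inclination lemma applied at the already-created transverse intersection $W^u_{f'}(P)\cap W^s_{f'}(Q)$ (which forces $\overline{W^u_{f'}(Q)}$ to contain a neighborhood of $Q$ inside $W^u_{f'}(P)$, and in particular to approach the leaf $W^{uu}_f(x)$ through $y$), I apply Hayashi's $C^1$-connecting lemma, in the version for periodic points, to a small neighborhood of a suitable orbit segment avoiding $P$, $Q$, and the first heteroclinic. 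This produces a third, $C^1$-small perturbation $f''$ of $f'$ realizing a genuine intersection $W^u_{f''}(Q)\cap W^s_{f''}(P)\neq\emptyset$, and hence a heterodimensional cycle between $P$ and $Q$.

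The main obstacle is coordinating the three perturbations so that their supports are mutually disjoint and each one preserves the features already established: the Franks perturbation must leave $P$ and the pre-existing transverse homoclinic data intact, the connecting-lemma perturbation must leave $Q$, its stable/unstable manifolds near $Q$, and the first heteroclinic unaffected, and the pseudo-orbit provided by the strong unstable connection must survive both perturbations. The key geometric ingredient that makes the connecting lemma applicable in the correct direction is the coincidence that a single orbit (that of $y$) visits, in backward time, a neighborhood of $W^s(P)$ and, at the initial time, a neighborhood of $Q$; without the hypothesis $W^{uu}(x)\cap K\neq\{x\}$, no such orbit would exist, and the second heteroclinic intersection could not be manufactured.
\end{demo}
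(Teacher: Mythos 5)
Your plan has a directional error that cannot be repaired as stated. The point $y\in W^{uu}(x)\cap K\setminus\{x\}$ has its \emph{backward} orbit converging to the orbit $P$ of $x$, so $y\in W^u(P)$. The ``pseudo-heteroclinic'' you describe --- backward iterates shadowing $P$, time-zero point near $Q$ --- is therefore an orbit travelling in the $P\to Q$ direction, i.e.\ exactly the connection you already have from the persistent transverse intersection $W^u_{f'}(P)\cap W^s_{f'}(Q)$. It provides no orbit whose backward tail is near $W^u(Q)$ and whose forward tail is near $W^s(P)$, which is what any version of the connecting lemma would need in order to manufacture the missing intersection $W^u(Q)\cap W^s(P)$. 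Your appeal to the inclination lemma is also reversed: a transverse intersection $W^u(P)\pitchfork W^s(Q)$ forces $W^u(P)$ to accumulate on $W^u_{\mathrm{loc}}(Q)$, so $\overline{W^u(P)}\supset W^u_{\mathrm{loc}}(Q)$, not the inclusion $\overline{W^u_{f'}(Q)}\supset(\text{neighborhood of }Q\text{ in }W^u_{f'}(P))$ that you assert; in particular it does not make $W^u(Q)$ approach the leaf through $y$. As a result no step in your outline produces the $Q\to P$ heteroclinic.

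The paper's sketch (following Pujals, section~2.3.2, and Crovisier--Pujals, section~2.5) proceeds quite differently, and the difference is essential. One first uses the density of $\bigcup_n O_n$ in $K$ --- hence of $W^s$ of the orbit of $x$ in $K$ --- together with the connecting lemma to steer the \emph{forward} orbit of $y$ into the stable manifold of the orbit of $x$, creating a strong unstable \emph{homoclinic} intersection, an intersection of $W^{uu}$ of the orbit of $x$ with $W^s$ of the same orbit (non-transverse: the dimensions sum to $\dim M-1$). One then unfolds this and transfers it to some $O_n$ with center exponent close to~$0$, obtaining a strong unstable homoclinic for $O_n$; only at that stage is the Franks flip performed, so that the codimension-one leaf $W^{uu}(O_n)$ becomes the full unstable manifold of the flipped orbit $Q$ and the strong homoclinic becomes the needed $W^u(Q)\cap W^s(P)$ connection, which together with the persisting $W^u(P)\cap W^s(Q)$ gives the cycle. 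The forward orbit of $y$ and the density of the $O_n$ (as a source of dense stable manifolds) are the two ingredients your proposal never uses, and without the intermediate strong-homoclinic step there is no mechanism to close the cycle.
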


When the second case is not satisfied, one says that $K$ has a \emph{strong unstable connection at the periodic point} $x$.
Using a connecting lemma one can then by a $C^1$-perturbation create a strong unstable \emph{homoclinic}
intersection at $x$, i.e. an intersection between the strong unstable manifold of the orbit of $x$ and its stable manifold.
By unfolding this intersection, one can create a strong homoclinic intersection associated to other
periodic orbits: some of them have a center exponent close to $0$ and this allows to create a heterodimensional cycle by
another $C^1$-perturbation, which implies the proposition (see~\cite[section 2.3.2]{Pu} and~\cite[section 2.5]{CP}).
\medskip

At the time we obtained Proposition~\ref{p.strongconnection}, it became for us
the main motivation for this work. More precisely one can ask the following question.

\begin{ques}\label{q.strongconnection}
Let $f$ be a diffeomorphism,
$\cU$ be a $C^1$-neighborhood of $f$ and $p$ be a hyperbolic periodic point
such that for any $g\in \cU$ the homoclinic class $H(p,g)$ of the hyperbolic continuation $p_g$
of $p$ admits a partially hyperbolic structure whose center bundle is one-dimensional
and expanded along the orbit of $p$.
Then one of the following cases holds.
\begin{itemize}
\item[--] Either there is $g\in\cU$ such that $H(p,g)$ has a strong unstable connection at a periodic point $x$ homoclinically
related to $p_g$.
\item[--] Or there exists a non-empty open subset $\cV\subset \cU$ such that $H(p,g)$,
for any $g\in \cV$, has no strong unstable connection.
\end{itemize}
\end{ques}

Assuming that the center bundle of $H(p)$ is not uniformly expanded, one can expect to
show that there exist periodic orbits homoclinically related to $p$ whose center exponent is arbitrarily close to $0$.
In the first case of the question~\ref{q.strongconnection}, the proposition~\ref{p.strongconnection}
gives a heterodimensional cycle after a $C^1$-perturbation of $f$.
In the second case, the Main Theorem shows that the dynamics reduces
to a submanifold transverse to the strong unstable bundle: the center direction becomes
an extremal one-dimensional bundle and~\cite{PS2} contradicts the fact that it is not uniformly expanded.

A positive answer to the question~\ref{q.strongconnection}
would thus be an important progress for the Palis' conjecture.
It has been obtained recently in~\cite[theorem 10]{CP} for quasi-attractors
and (with other results, including the present paper) a weak version of the conjecture has been proved:
any diffeomorphism may be $C^1$-approximated by diffeomorphisms
that are essentially hyperbolic or that present a homoclinic tangency or a heterodimensional cycle.
\bigskip

\paragraph{Invariant foliations of surface hyperbolic sets.}
The local stable set of a hyperbolic set $K$ supports a natural invariant lamination whose leaves are the
stable manifolds. It is sometimes useful to extend it as a foliation $\cL^s$ which is \emph{locally invariant}:
there exists an neighborhood $U$ of $K$ such that for any $x$ close to $K$, the connected components
of $f(\cL^s_x)\cap U$ and of $\cL^s_{f(x)}\cap U$ containing $f(x)$ coincide. This has been used for instance
in the original works on the Newhouse phenomenon~\cite{Ne1,Ne2} and in the proof of the structural stability
for hyperbolic surface diffeomorphisms~\cite{demelo}.
The following well-know result becomes a simple consequence of our Main Theorem.
It asserts that a $C^2$ surface diffeomorphism near a hyperbolic set is ``$C^1$-conjugated to a product".

\begin{coro}\label{c.foliation}
Let $f$ be a $C^2$-surface diffeomorphism and $K$ be an invariant compact set which is hyperbolic.
Then, there exists a $C^1$-foliation locally invariant in a neighborhood of $K$ which is tangent to
the stable bundle of $K$.
If moreover $f$ is $C^r$, $r>2$, then the foliation can be chosen $C^{1,\alpha}$, for some $\alpha>0$.
\end{coro}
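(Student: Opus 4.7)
The plan is to reduce Corollary~\ref{c.foliation} to the Main Theorem by lifting $f$ to the projectivised tangent bundle. Since $M$ is a surface, $\pi\colon PTM\to M$ is a $3$-manifold with $1$-dimensional fibres, and $Df$ induces a diffeomorphism $\hat f\colon PTM\to PTM$, $(x,L)\mapsto(f(x),Df(x)\cdot L)$, of class $C^{r-1}$ when $f$ is $C^r$. The stable bundle defines a continuous section $\sigma\colon x\mapsto(x,E^s(x))$ over $K$, whose image $\hat K=\sigma(K)$ is a compact $\hat f$-invariant set. I want to verify that $\hat K$ satisfies the assumptions of the Main Theorem with $2$-dimensional center bundle (horizontal) and $1$-dimensional strong unstable bundle (vertical).

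For the partial hyperbolicity, I split $T_{PTM}|_{\hat K}=E^c\oplus E^{uu}$ into the horizontal part $E^c$ (naturally identified with $T_xM|_K=E^s\oplus E^u$) and the vertical part $E^{uu}=\ker D\pi$. Using the standard identification $E^{uu}_{(x,L)}\cong\text{Hom}(L,T_xM/L)$, one computes that $D\hat f$ acts on vertical vectors at $(x,E^s(x))$ with rate $\|Df|_{E^u(x)}\|\cdot\|Df|_{E^s(x)}\|^{-1}$. Denoting the uniform hyperbolic rates by $\lambda_s<1<\lambda_u$, the vertical expansion rate $\lambda_u/\lambda_s$ strictly dominates the maximal horizontal rate $\lambda_u$ by the uniform factor $\lambda_s^{-1}>1$, which gives partial hyperbolicity. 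For the no-strong-unstable-connection hypothesis, suppose $\hat y=\sigma(y)\in W^{uu}(\hat x)\cap\hat K$ with $\hat x=\sigma(x)$. By definition $d(\hat f^{-n}(\hat x),\hat f^{-n}(\hat y))$ decays at rate $(\lambda_u/\lambda_s)^{-n}$, so projecting gives the same decay rate for $d(f^{-n}(x),f^{-n}(y))$. But any two distinct points of the hyperbolic set $K$ whose backward orbits converge must lie on a common unstable manifold, along which the fastest backward approach is at rate $\lambda_u^{-n}$; this is strictly slower than $(\lambda_u/\lambda_s)^{-n}$ since $\lambda_s<1$. Hence $x=y$ and $W^{uu}(\hat x)\cap\hat K=\{\hat x\}$.

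Applying the Main Theorem to $\hat f$ and $\hat K$ produces a $C^1$ compact submanifold $\hat S\subset PTM$ with boundary, containing $\hat K$ in its interior, tangent to the horizontal bundle at $\hat K$, and locally $\hat f$-invariant. Because $\hat S$ is horizontal at $\hat K$, the projection $\pi|_{\hat S}$ is a local diffeomorphism there, so after shrinking $\hat S$ it projects diffeomorphically onto a neighborhood $U$ of $K$ in $M$. The inverse $\tau\colon U\to PTM$ is then a $C^1$ section extending $\sigma$, that is a $C^1$ line field on $U$ coinciding with $E^s$ on $K$. The local $\hat f$-invariance of $\hat S$ translates exactly into $Df$-invariance of $\tau$, so its integral curves form a $C^1$ foliation of $U$ that is tangent to $E^s$ at $K$ and locally $f$-invariant, as required.

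For the regularity improvement, when $f$ is $C^r$ with $r>2$ the lift $\hat f$ is $C^{r-1}$ with $r-1>1$, and the second item of Corollary~\ref{c.smooth} provides $\hat S$ of class $C^{1,\alpha}$ for some $\alpha>0$, producing a $C^{1,\alpha}$ foliation. The main point to check carefully is the uniformity of the partial hyperbolicity on $\hat K$ together with the quantitative comparison between the two convergence rates $\lambda_u^{-n}$ and $(\lambda_u/\lambda_s)^{-n}$; once that is in place, the remainder is a direct translation through the projective lift.
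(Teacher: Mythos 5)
Your overall plan is the paper's: lift to the projectivised tangent bundle, verify the hypotheses of the Main Theorem for the lift $\hat K$, project the resulting locally invariant surface back to a $C^1$ line field and integrate. You make the dual choice of lift, though: you lift via $E^s$, so that the fibres of $p$ become the strong unstable direction $E^{uu}$, whereas the paper lifts via $E^u$ (making the fibres strong stable) and passes through Proposition~\ref{p.lift2} and Remark~\ref{r.bunched} to obtain the dominated splitting, instead of your direct computation of the fibre rate $\|Df|_{E^u}\|\cdot\|Df|_{E^s}\|^{-1}$. Both choices work; your choice has the mild advantage of yielding the \emph{stable} foliation directly, which is what the statement of the corollary asks for. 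The partial hyperbolicity computation is correct because the ratio of vertical to maximal horizontal rate, taken along each orbit, is $\|Df^n|_{E^s(x)}\|^{-1}$, which is bounded below by $(\lambda_s^+)^{-n}$ for a uniform $\lambda_s^+<1$.

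There is, however, a genuine gap in the verification of $W^{uu}(\hat x)\cap\hat K=\{\hat x\}$. You compare the decay rate $(\lambda_u/\lambda_s)^{-n}$ of $d(\hat f^{-n}\hat x,\hat f^{-n}\hat y)$ with a claimed maximal approach rate $\lambda_u^{-n}$ along $W^u(x)$ and conclude from $\lambda_s<1$. But the two occurrences of $\lambda_u$ play opposite roles: the first is a lower bound on unstable expansion (controlling the vertical expansion of the lift, hence an upper bound on the decay along $W^{uu}$), while a lower bound for $d(f^{-n}x,f^{-n}y)$ along $W^u(x)$ requires an \emph{upper} bound on $\|Df|_{E^u}\|$. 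If the unstable expansion oscillates a lot on $K$, these uniform constants do not match up and the inequality does not close. The fix is to compare the two quantities \emph{pointwise} along the backward orbit, so that the factors $\|Df^n|_{E^u(f^{-n}x)}\|$ cancel and only $\|Df^n|_{E^s(f^{-n}x)}\|\to 0$ survives, forcing $d(x,y)=0$. Alternatively — and this is the paper's argument, in the dual picture — observe the structural fact that the fibres $p^{-1}(x)$ are permuted by $\hat f$ and are tangent to $E^{uu}$ at $\hat x\in\hat K$, hence $W^{uu}(\hat x)\subset p^{-1}(x)$ by uniqueness of the strong unstable manifold; since $\hat K$ is a section of $p$, that fibre meets $\hat K$ only at $\hat x$, with no quantitative estimate needed.
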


We do not assume that $K$ is the maximal invariant set in a neighborhood.
For a classical proof, see~\cite[apendix 1]{PT}.

\paragraph{Newhouse phenomenon in dimension larger or equal to $3$.}
Newhouse has shown~\cite{Ne1,Ne2} that among $C^2$-diffeomorphisms of a surface, the existence of a homoclinic
tangency for $f$ generates an open set $\cU$ of diffeomorphisms close exhibiting:
\begin{itemize}
\item[--] \emph{persistent tangencies}: there exists a transitive hyperbolic set whose local stable and local unstable sets
have a non-transverse intersection for any diffeomorphism $g\in \cU$),
\item[--] \emph{generic wild dynamics}: there exists infinitely many sinks or sources for
any diffeomorphism in a dense G$_\delta$ subset of $\cU$.
\end{itemize}
These properties have been generalized to higher dimension by Palis and Viana~\cite{PV} for diffeomorphisms
exhibiting a sectionally dissipative homoclinic tangency, whereas Romero~\cite{R} has obtained the first
property for diffeomorphisms exhibiting an arbitrary homoclinic tangency.
Their proof tries to reduce to the dimension $2$ by either building a locally invariant
$C^2$ surface which support part of the dynamics, or by building an ``intrinsic two-dimensional
differentiable structure".

The second author and Nicolas Gourmelon have noticed~\cite{CG}
that it is possible to recover these results using the two following ingredients:
\begin{itemize}
\item[--] After perturbation the homoclinic tangency satisfies a generic condition
and the Main Theorem can be applied: the dynamics in a neighborhood of the homoclinic tangency
is contained in a $C^{1,\alpha}$-surface.
This allows to reduce to the dimension $2$ as expected, however the smoothness of the
induced dynamics is a priori less than $C^2$.
\item[--] The Newouse phenomenon on surfaces also holds for $\alpha\in (0,1)$
in the space of $C^{1,\alpha}$-diffeomorphisms whose $C^{1,\alpha}$
norm is bounded, endowed with the $C^1$-topology, 
\end{itemize}

\subsection{Strategy of the proof and structure of the paper}
The Main Theorem is obtained in two steps:
\begin{itemize}
\item[--] In section~\ref{s.criterion} we use Whitney's extension theorem in order to
build a submanifold $S$ tangent to the center direction which contains $K$.
\item[--] In section~\ref{s.invariant} we implement a graph transform argument in order to
modify this submanifold and get the local invariance.
\end{itemize}
The section~\ref{s.preliminaries} is devoted to classical preliminary results. The corollaries are proved in section~\ref{s.consequence}.
\medskip

The general strategy of~\cite{CLY} also follows these two steps but there are two important differences
(beyond the fact that we deal with diffeomorphisms).
In the first step we relate the assumptions of Whitney's theorem
to the lack of strong connections. In the second step, we implement in a different way the
graph transform argument, which explains that the admissibility condition does not appear in our work. A key point in our proof is to choose carefully the neighborhood where the
graph transform is defined: it has to be small, and much thiner along the strong directions
(see Proposition~\ref{p.domaine}).
\medskip

\noindent
{\bf Acknowledgements.} The second author is grateful to Enrique Pujals and Genevi\`eve Raugel
for their comments related to this work.
\section{Preliminaries}\label{s.preliminaries}
In this section we recall results about distances to a compact set and
dominated splittings.

\subsection{Smoothing the distance to a compact set}
We will need to consider a smooth function which evaluates the distance to a compact set.

\begin{prop} \label{p.smoothing}
Let $\Si$ be a compact riemannian manifold with boundary.

Then, there exists a constant $C_\Si>0$ such that for any disjoint compact sets $K,L\subset \Si$,
there is a function $\varphi\colon \Si\to [0,1]$ which is as smooth as the manifold $\Si$,
such that $\varphi^{-1}(0)=K$, $\varphi^{-1}(1)=L$
and such that the norm of the differential $D\varphi$ is bounded by $\frac{C_\Si}{d(K,L)}$,
where $d(K,L)=\inf\{d(x,y)\mid  x\in K\ \mbox{and}\  y\in L\}$.
\end{prop}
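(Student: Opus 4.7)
Set $r=d(K,L)$. The plan is to build $\varphi$ in two stages. First I produce a smooth $\varphi_0\colon\Sigma\to[0,1]$ which is identically $0$ on an open neighborhood of $K$, identically $1$ on an open neighborhood of $L$, and satisfies $|D\varphi_0|\leq C_1/r$ for a constant $C_1=C_1(\Sigma)$. Then a small ``piercing'' modification reduces the zero set to $K$ itself and the $1$-set to $L$ itself, without harming the gradient bound up to a universal factor.

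\emph{Construction of $\varphi_0$.} Fix once and for all a smooth non-decreasing cutoff $\chi\colon\RR\to[0,1]$ with $\chi\equiv 0$ on $(-\infty,1/3]$ and $\chi\equiv 1$ on $[2/3,+\infty)$. The Lipschitz function $h(x)=\chi(d(x,K)/r)$ vanishes on the $r/3$-neighborhood of $K$, equals $1$ on the complement of the $2r/3$-neighborhood of $K$ (in particular on all of $L$), and has Lipschitz constant $\|\chi'\|_\infty/r$. I smooth $h$ by convolution in finitely many fixed coordinate charts, reassembled by a subordinate partition of unity, at a scale $\eta$ much smaller than $r$. Compactness of $\Sigma$ bounds the chart distortions uniformly, yielding a universal $C_1=C_1(\Sigma)$ with $|D\varphi_0|\leq C_1/r$. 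Because convolution at scale $\eta$ preserves the exact value $0$ (resp.~$1$) at any point whose $\eta$-neighborhood lies in $\{h=0\}$ (resp.~$\{h=1\}$), taking $\eta$ small enough guarantees that $\varphi_0$ is identically $0$ on an open neighborhood of $K$ and identically $1$ on an open neighborhood of $L$.

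\emph{Piercing.} By the standard fact that any closed subset of a smooth manifold is the zero set of some smooth nonnegative function (partition of unity on the complement), there exist smooth $\alpha_0,\beta_0\colon\Sigma\to[0,1]$ with $\alpha_0^{-1}(0)=K$ and $\beta_0^{-1}(0)=L$. Multiplying by a positive constant at most $1$, I may assume the rescaled functions $\alpha,\beta$ satisfy $\|D\alpha\|_\infty,\|D\beta\|_\infty\leq C_1/r$. Set
\[
\varphi \;=\; \varphi_0+\tfrac14\alpha(1-\varphi_0)-\tfrac14\beta\varphi_0 \;=\; \varphi_0\Bigl(1-\tfrac{\alpha+\beta}{4}\Bigr)+\tfrac{\alpha}{4}.
\]
The right-hand form is a sum of two nonnegative quantities (using $\alpha+\beta\leq 2$), so $\varphi(x)=0$ forces $\alpha(x)=0$, i.e.~$x\in K$; conversely, $x\in K$ gives $\alpha(x)=0=\varphi_0(x)$, hence $\varphi(x)=0$. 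A symmetric analysis yields $\varphi^{-1}(1)=L$, and an easy case analysis gives $\varphi\in[0,1]$. Finally
\[
|D\varphi|\;\leq\;|D\varphi_0|+\tfrac14\|D\alpha\|_\infty+\tfrac14\|D\beta\|_\infty\;\leq\;\tfrac{3C_1}{2r},
\]
so $C_\Sigma=3C_1/2$ does the job.

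The substantive work is the first stage: smoothing a cutoff distance function so that the Lipschitz constant is a universal multiple of $1/r$ while exact flatness is preserved on prescribed (small) neighborhoods of $K$ and $L$. The manifold-dependence of $C_\Sigma$ is carried entirely by the finite atlas used there; the piercing stage is then formal.
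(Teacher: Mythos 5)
Your argument is correct (modulo a harmless slip in the final derivative estimate: you drop the product-rule terms $\tfrac14\alpha\,D\varphi_0$ and $\tfrac14\beta\,D\varphi_0$, so the constant should be $2C_1$ rather than $\tfrac{3C_1}{2}$, which does not matter), and it takes a genuinely different route from the paper. The paper works in $\RR^n$ with a Whitney-type dyadic decomposition of $\RR^n\setminus K$: it sums scaled bump functions $a_k\,h((\cdot-V)2^{k+1})$ over dyadic cubes to produce a smooth $\varphi_K\geq 0$ that vanishes exactly on $K$, is bounded below by $1$ away from $K$, and has a uniformly bounded derivative (the amplitudes $a_k\to 0$ are chosen to make the extension by $0$ on $K$ smooth); forming the quotient $\varphi=\varphi_K/(\varphi_K+\varphi_L)$ then handles the exact level sets and the gradient bound in one stroke, after a homothety normalizes $d(K,L)$ so the denominator stays bounded below. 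The manifold case follows from charts and a partition of unity. You instead decouple the two requirements: the gradient bound comes from mollifying the distance cutoff $\chi(d(\cdot,K)/r)$, giving a $\varphi_0$ flat on neighborhoods of $K$ and $L$; the exact level sets come from piercing functions $\alpha,\beta$ that can be rescaled to have negligibly small gradient; and the bilinear combination $\varphi=\varphi_0\bigl(1-\tfrac{\alpha+\beta}{4}\bigr)+\tfrac{\alpha}{4}$ patches the two together. Your route is more modular, since the level-set requirement and the gradient estimate never compete; the paper's quotient trick is more economical, obtaining both from a single dyadic construction, at the cost of the more delicate choice of the $a_k$.
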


We first prove the result in $\RR^n$.

\begin{lemm}\label{l.smoothing}
For $n\geq 1$, there exists a constant $\Delta(n)>0$ such that for any disjoint compact subsets $K,L\subset \RR^n$
there is a smooth function $\varphi\colon \RR^n\to [0,1]$ such that $\varphi^{-1}(0)=K$, $\varphi^{-1}(1)=L$ and whose derivative has a norm bounded by $\Delta(n)\cdot d(K,L)$.
\end{lemm}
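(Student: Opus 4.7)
The plan is first to renormalize so that $d(K,L)=1$, then to build smooth auxiliary functions $\psi_K$ and $\psi_L$ vanishing exactly on $K$ and $L$ respectively, and finally to form $\varphi$ as a ratio that absorbs both. Setting $R:=d(K,L)$, $\widetilde K:=R^{-1}K$ and $\widetilde L:=R^{-1}L$ gives $d(\widetilde K,\widetilde L)=1$; any solution $\widetilde\varphi$ of the normalized problem with $\|D\widetilde\varphi\|\le\Delta(n)$ rescales to $\varphi(x):=\widetilde\varphi(x/R)$, whose derivative has norm bounded by $\Delta(n)/d(K,L)$. So it suffices to construct, in the case $d(K,L)=1$, a smooth $\varphi\colon\RR^n\to[0,1]$ with $\varphi^{-1}(0)=K$, $\varphi^{-1}(1)=L$ and $\|D\varphi\|_\infty$ bounded by a purely dimensional constant.

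For the key building block, for any closed $F\subset\RR^n$ I would produce a smooth function $\psi_F\colon\RR^n\to[0,1]$ with $\psi_F^{-1}(0)=F$, with $\|D\psi_F\|_\infty\le C_1(n)$ globally, and with $\psi_F\ge c_1(n)$ on $\{x:d(x,F)\ge 1/2\}$. The classical Whitney decomposition writes $\RR^n\setminus F$ as a union of essentially disjoint dyadic cubes $Q_i$ whose diameters $\delta_i$ are comparable to $d(Q_i,F)$, and admits a smooth partition of unity $\{\chi_i\}$ subordinate to a mild thickening of the $Q_i$ satisfying $\|D\chi_i\|\lesssim \delta_i^{-1}$ with overlap bounded by a constant depending only on $n$. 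Setting
\[\psi_F:=\Phi\Bigl(\sum_i\delta_i\chi_i\Bigr),\]
for a smooth non-decreasing $\Phi\colon[0,\infty)\to[0,1]$ with $\Phi(0)=0$, $\Phi(t)>0$ for $t>0$, and $\Phi\equiv 1$ for large argument, yields the desired function. The upper derivative bound follows from $\delta_i\|D\chi_i\|\lesssim 1$ together with the bounded overlap, and the lower bound follows from the fact that any Whitney cube meeting a point at distance $\ge 1/2$ from $F$ has diameter bounded below by a dimensional constant.

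Finally, since $K\cap L=\emptyset$ the denominator in
\[\varphi:=\frac{\psi_K}{\psi_K+\psi_L}\]
never vanishes, so $\varphi$ is a smooth $[0,1]$-valued function with $\varphi^{-1}(0)=K$ and $\varphi^{-1}(1)=L$. The derivative satisfies
\[D\varphi=\frac{\psi_L\,D\psi_K-\psi_K\,D\psi_L}{(\psi_K+\psi_L)^2},\]
and one splits $\RR^n$ into the three regions $\{d(\cdot,K)\le 1/2\}$, $\{d(\cdot,L)\le 1/2\}$, and their common complement. Because $d(K,L)=1$, on each of these regions at least one of $\psi_K,\psi_L$ is bounded below by $c_1(n)$, while $\|D\psi_K\|$ and $\|D\psi_L\|$ are uniformly controlled by $C_1(n)$; this produces a purely dimensional bound $\Delta(n)$ on $\|D\varphi\|$. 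The only delicate step of the argument is the Whitney construction of $\psi_F$ with matched upper derivative bound and lower value estimate away from $F$; the reduction and the ratio are elementary once these auxiliary functions are available.
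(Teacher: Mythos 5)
Your proposal is correct and follows essentially the same route as the paper: a Whitney-type dyadic decomposition of $\RR^n\setminus F$ to build a smooth, nonnegative auxiliary function vanishing exactly on $F$ with derivative bounded by a dimensional constant and bounded below away from $F$, then the ratio $\psi_K/(\psi_K+\psi_L)$ together with a homothety to normalize $d(K,L)$. The paper carries out the dyadic construction by hand (with an explicit bump $h_C$ on each cube, scaled by $a_k\leq 2^{-k}$, and with overlap bounded by $2^n$), whereas you invoke the classical Whitney decomposition with a subordinate partition of unity and a clamping function $\Phi$; this is the same idea packaged differently, and both give the correct bound. One remark worth making explicit: the bound in the statement should read $\Delta(n)/d(K,L)$ rather than $\Delta(n)\cdot d(K,L)$ (compare with Proposition~\ref{p.smoothing}, which has $C_\Sigma/d(K,L)$, and with the scaling step in both your argument and the paper's); your computation correctly produces the reciprocal bound, and the printed lemma statement contains a typo.
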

\begin{proof}
Let us choose $\varepsilon>0$ small and
introduce a smooth function $h\colon [-(1+\varepsilon),1+\varepsilon]^n\to [0,1]$ which coincides with zero on a neighborhood
of the boundary of the cube $[-(1+\varepsilon),1+\varepsilon]^n$ and with $1$ on a neighborhood of $[-1,1]^n$.

For $k\geq 0$, let $\cP_k$ be the dyadic partitions of $\RR^n$ which is the collection of cubes of the form
$$C=[-2^{-(k+1)},2^{-(k+1)}]+2^{-k}V,$$ where $V$ is a vector in $\ZZ^n$.
One also defines the larger cubes
$$C_{\varepsilon}=[-2^{-(k+1)}(1+\varepsilon),2^{-(k+1)}(1+\varepsilon)]+2^{-k}V,$$
$$\widehat C=[-3\times 2^{-(k+1)},3\times 2^{-(k+1)}]+2^{-k}V.$$
For $k>0$, the cube $\widehat C$ is a union of cubes of $\cP_{k}$.

We associate to the cube $C$ the function $h\colon C_\varepsilon\to [0,1]$ defined by
$$h_C\colon x\mapsto a_k \; h\left((x-V)2^{k+1}\right),$$
for some $a_k\in (0,2^{-(k)}]$. Its derivative is bounded by some constant $D$, uniform in $k$.
We choose $a_0=1$ so that for $k=0$ the map $h_C$ is bounded from below by $1$ on $C$.

Let $K\subset\RR^n$ be a compact set and consider the collection $\cC$
of cubes $C$ which satisfy:
\begin{itemize}
\item[--] $C\in \cP_k$ for some $k\geq 0$,
\item[--] the larger cube $\widehat C$ is disjoint from $K$,
\item[--] if $k\neq 0$, there is no cube $C'\in \cP_{k-1}$ containing $C$ such that $\widehat C'$ is disjoint from $K$.
\end{itemize}
Note that the cubes of $\cC$ cover $U=\RR^n\setminus K$ and have disjoint interior.
Moreover two cubes $C,C'\in \cC$ that are adjacent belong to partitions $\cP_k,\cP_{k'}$ with $|k'-k|\leq 1$.
In particular any point $x\in U$ belong to at most $2^n$ cubes $C_\varepsilon$ associated to $C\in \cC$. 

The function $\varphi_K$ defined on $\RR^n\setminus K$ by
$$\varphi_K=\sum_{C\in \cC} h_C$$
is thus positive, smooth and bounded by $2^d$.
It is bounded from below by $1$ outside the $2d$-neighborhood of $K$
(this neighborhood is covered by cubes $C\in \cC\cap \cP_0$).
Its derivative is bounded by $2^nD$.
Note also that if the sequence $a_k$ decreases fast enough to zero as $k\to +\infty$,
then one can extend $\varphi_K$ by $0$ on $K$ and get a smooth function of $\RR^n$.
\medskip

If $K,L\subset \RR^n$ are two disjoint compact subsets of $\RR^n$, one may define
$$\varphi=\frac{\varphi_K}{\varphi_K+\varphi_L}.$$
which is smooth, has values in $[0,1]$ and satisfies moreover $\varphi^{-1}(K)=0$ and $\varphi^{-1}(L)=1$.
Its derivative is bounded by
$$\|D\varphi\|\leq 2\frac{\|D\varphi_K\|+\|D\varphi_L\|}{\varphi_K+\varphi_L}.$$
Let us assume that the distance between $K$ and $L$ is equal to $2n$.
The sum $\varphi_K+\varphi_L$ is thus bounded from below by $1$ everywhere and $\|D\varphi\|$
is smaller than $\frac{2^{n+2}D}{2n}d(K,L)$.

One can reduce to the case the distance from $K$ to $L$ is equal to $2n$ by taking the image
by an homothety. The lemma thus holds for $\Delta(n)=\frac{2^{n+2}D}{2n}$.
\end{proof}

We now prove the manifold case.
\begin{proof}[Proof of Proposition~\ref{p.smoothing}]
Let us consider a finite collection of charts
$\psi_i\colon U_i\to \RR^n$ of $\Si$, $i=1\dots,\ell$,
whose union coincides with $\Si$.
Let us choose a partition of the unity, i.e.
some functions $\theta_i\colon U_i\to [0,1]$ such that
\begin{itemize}
\item[--] $\sum_i\theta_i(x)=1$ at any point $x\in \Si$,
\item[--] for each $i$, the support $Q_i$ of $\theta_i$ is a compact subset of $U_i$.
\end{itemize}

If $K,L$ are two compact subsets of $\Sigma$, one can considers
the function $\widetilde \varphi_i\colon \RR^n\to [0,1]$ associated to the compact sets
$K_i:=\psi_i(K\cap Q_i)$ and $L_i:=\psi_i(L\cap Q_i)$ by Lema~\ref{l.smoothing}.
Note that $\varphi_i:=\theta_i\times(\widetilde\varphi_i\circ \psi_i)$ satisfies:
\begin{itemize}
\item[--] $0<\varphi_i(x)<\theta_i(x)$ for points of the interior of $Q_i\setminus (K\cup L)$,
\item[--] $\varphi_i(x)=0$ for $x\in K$,
\item[--] $\varphi_i(x)=\theta_i$ for $x\in L$,
\item[--] the derivative of $\varphi_i$ is bounded by $C_i/d(\psi_i(K_i,L_i)$
where $C_i$ does not depend on $K,L$.
\end{itemize}
Since any $x\in \Si\setminus (K\cup L)$ belongs to the interior of some $Q_i$,
one deduces that
$\varphi:=\sum_i\varphi_i$
is equal to $0$ on $K$, to $1$ on $L$ and has values inside $(0,1)$ elsewhere.
Its derivative is bounded by
$$\|D\varphi\|\leq \sum_i \frac{C_i}{d(\psi_i(K_i),\psi_i(L_i))}
\leq \sum_i \frac{C_i.{L_i}}{d(K\cap Q_i,L\cap Q_i)}$$
where $L_i$ bounds the Lipschitz constant of $\psi_i^{-1}$.
The function $\varphi$ is as smooth as the charts $\psi_i$
ad the manifold $\Si$.
The proposition thus holds with $C_\Si= \sum_i C_i.{L_i}$.
\end{proof}

\subsection{Cone fields and dominated splitting}
We recall here well-known facts about dominated splitting and cone fields.
This section is used in order to control the smoothness of the center manifold
and can be skipped at a first reading.
\begin{defi}
A \emph{continuous cone field} $\cC$ of dimension $d$
is a family of closed cones $\cC(x)\subset T_xM$ such that:
\begin{itemize}
\item[--] $\cC(x)=\overline{\interior(\cC(x))}$ for the topology on $T_xM$;
\item[--] for each $x\in M$,
there exists a $d$-dimensional subspace contained in $\interior(\cC(x))\cup\{0\}$
and a $(\dim(M)-d)$-dimensional space disjoint from $\cC(x)\setminus \{0\}$;
\item[--] the set of unit vectors of $\cC(x)$
and the set of unit vectors of $T_xM\setminus \interior(\cC(x))$
depend continuously of $x$ for the Hausdorff topology.
\end{itemize}
\end{defi}
\noindent
The collection of cones $T_xM\setminus \interior(\cC(x))$
is a continuous cone field, called the \emph{dual cone field}.
A $d$-dimensional $C^1$-submanifold $S\subset M$ is \emph{tangent} to $\cC$ if $T_xS\subset \cC(x)$ for each $x\in S$.
\smallskip

\noindent
The cone field $\cC$ is \emph{transverse to a submersion $\pi\colon M\to \Sigma_0$}
if for each $x\in \Si_0$ and $z\in \pi^{-1}(z)$,
the tangent space at $z$ of the fiber $T_z\pi^{-1}(x)$ and $\cC(z)\setminus \{0\}$ are disjoint.

\subsubsection{Contracted cone fields}
The notion of contracted cone field is usually defined for diffeomorphisms.
We allow here non-surjective tangent maps, which will be necessary
when we will consider graph transforms.

Let us consider a $C^1$-map $\Psi\colon U\to M$ defined on an open subset $U\subset M$.

\begin{defi}\label{d.contraction}
For $r\geq 1$, the cone field $\cC$ is \emph{$r$-contracted by $\Psi$} if there exists $\lambda>1$ and $n_0\geq 1$ such that for any $n\geq n_0$,
any $x$ in $U\cap \Psi^{-1}(U)\cap\dots\cap \Psi^{-n+1}(U)$ we have:
\begin{itemize}
\item[--] $D\Psi^n(x).\cC(x)\subset \cC(\Psi^n(x))$,
\item[--] $D\Psi^n(x).u$ is non-zero if $u\in \cC(x)\setminus \{0\}$,
\item[--] for any unit vectors $u,v\in T_xM$ such that $u\in \cC(x)$ and $D\Psi^n(x).v\notin \cC(\Psi^n(x))$,
\begin{equation*}\label{e.contract-new}
\min(\|D\Psi^n(x).u\|, \|D\Psi^n(x).u\|^r)>  \lambda^{n} \|D\Psi^n(x).v\|.
\end{equation*}
\end{itemize}
When $r=1$ we simply say that the cone field is contracted.
\end{defi}

\begin{rems}
\begin{enumerate}
\item The second item implies that if a submanifold $S$ is tangent to $\cC$ and invariant by $\Psi$,
then the restriction $\Psi_{|S}$ is a local diffeomorphism.
\item We want that an $r$-contracted cone field is also $r'$-contracted for any $r'\in [1,r]$.
This is the reason why the minimum $\min(\|D\Psi^n(x).u\|, \|D\Psi^n(x).u\|^r)$
appears in the third item. Up to replace $U$ by an open set $U'$ relatively compact in $U$,
a contracted cone field is also $r$-contracted for some $r>1$
(with the same constant $n_0$).
\item If $\Psi$ is a diffeomorphism and if $\cC$ is contracted, the dual cone field
is contracted by $f^{-1}$.
\end{enumerate}
\end{rems}

Let us define for $n\geq 1$ and $z\in\Psi(U)\cap\dots\cap \Psi^{n}(U)$,
the cone $\cC^n(z):=D\Psi^n(\Psi^{-n}(z)).\cC(\Psi^{-n}(z))$
and for $x\in U\cap \dots\cap \Psi^{-n+1}(U)$,
the cone $\cC^{-n}(x):=D\Psi^{-n}(\Psi^{n}(x)).\cC(\Psi^{n}(x))$.
The following lemma justifies that the cone field is contracted.
\begin{lemm}\label{l.cone2}
If the cone field $\cC$ is contracted, there exist $C_1>0$, $\lambda>1$
such that for any $n\geq 1$ and $z\in\Psi(U)\cap\dots\cap \Psi^{n}(U)$,
the cone $\cC^n(z)$ is exponentially thin:
there exists a $d$-dimensional space $F\subset \cC^n(z)$ and,
for any unit vector $u\in \cC^n(z)$, there is $w\in F$ such that
$\|w-u\|\leq C_1\lambda^{-n}$. Similarly, for $x\in U\cap \dots\cap \Psi^{-n+1}(U)$,
the cone $T_xM\setminus \cC^{-n}(x)$ is exponentially thin.
\end{lemm}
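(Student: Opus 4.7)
The plan is to exhibit the $d$-plane $F$ explicitly as the image $D\Psi^n(y)\cdot F_y$ of a reference plane at $y:=\Psi^{-n}(z)$, with thinness deduced directly from the domination inequality in item~3 of Definition~\ref{d.contraction}. Working over a relatively compact part of $U$, I first fix reference subbundles $F^0\subset \operatorname{Interior}(\cC)$ of dimension $d$ and $E^0$ of codimension $d$ with $E^0(x)\cap(\cC(x)\setminus\{0\})=\emptyset$. Their pointwise existence is guaranteed by the definition of continuous cone field, and compactness provides a uniform constant $K>0$ with $\|v_E\|\le K\|v_F\|$ whenever $v=v_F+v_E\in \cC(x)$ is decomposed along $F^0(x)\oplus E^0(x)$.

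Given $n\ge n_0$ and $z$, set $F_y:=F^0(y)$ and $E_y:=(D\Psi^n(y))^{-1}(E^0(z))$. The preimage is well defined because $D\Psi^n(y)$ is a linear isomorphism: its restriction to $\cC(y)$ is injective by item~2 of Definition~\ref{d.contraction} and its image contains $\cC^n(z)$, which for $n\ge n_0$ lies inside $\cC(z)$; a dimension count then forces bijectivity. Moreover $E_y\cap\cC(y)=\{0\}$, since a non-zero vector in this intersection would be mapped into $E^0(z)\cap\cC(z)=\{0\}$, contradicting injectivity. Hence $T_yM=F_y\oplus E_y$.

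Given a unit vector $u\in \cC^n(z)$, write $u=D\Psi^n(y)v/\|D\Psi^n(y)v\|$ with $v\in \cC(y)\setminus\{0\}$ and decompose $v=v_F+v_E$ along $F_y\oplus E_y$. Then $D\Psi^n(y)v_F\in F:=D\Psi^n(y)\cdot F_y$ and $D\Psi^n(y)v_E\in E^0(z)$. Since $E^0(z)\cap\cC(z)=\{0\}$, item~3 of Definition~\ref{d.contraction} applied to the unit vectors $v_F/\|v_F\|\in \cC(y)$ and $v_E/\|v_E\|$ gives
\[
\|D\Psi^n(y)v_E\|\le \lambda^{-n}\,\frac{\|v_E\|}{\|v_F\|}\,\|D\Psi^n(y)v_F\|\le K\lambda^{-n}\,\|D\Psi^n(y)v_F\|.
\]
The uniform transversality between $F\subset \cC(z)$ and $E^0(z)$ yields a constant $c>0$ with $\|D\Psi^n(y)v\|\ge c\,\|D\Psi^n(y)v_F\|$, so setting $w:=D\Psi^n(y)v_F/\|D\Psi^n(y)v\|\in F$ we conclude
\[
\|u-w\|=\frac{\|D\Psi^n(y)v_E\|}{\|D\Psi^n(y)v\|}\le \frac{K}{c}\,\lambda^{-n},
\]
which is the desired exponential thinness. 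The analogous statement for $T_xM\setminus \cC^{-n}(x)$ follows by applying the same argument to the dual cone field, which is contracted by $\Psi^{-1}$ by Remark~3 after Definition~\ref{d.contraction}. The main obstacle is ensuring that $K$ and $c$ are uniform in $n$, $z$, and the chosen preimage $y$: continuous reference subbundles $F^0,E^0$ over a relatively compact domain together with compactness-based transversality bounds provide exactly this uniformity, after which the proof reduces to the one estimate above.
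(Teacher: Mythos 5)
Your overall plan mirrors the paper's: exhibit $F$ as the image $D\Psi^n(y)\cdot F^0(y)$ of a reference $d$-plane, pull the vector $u$ back to $y=\Psi^{-n}(z)$, decompose at the source, and deduce thinness from item~3 of Definition~\ref{d.contraction}. However, there is a genuine gap in the step where you bound $\|v_E\|/\|v_F\|$ by $K$. Your constant $K$ is established, by a compactness argument over the base, for the decomposition of a cone vector along the \emph{fixed} continuous bundles $F^0(y)\oplus E^0(y)$. But in the estimate you actually decompose $v$ along $F_y\oplus E_y$ with $E_y=(D\Psi^n(y))^{-1}(E^0(z))$, a subspace that depends on $n$ and $z$, and is in general different from $E^0(y)$. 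The inequality $\|v_E\|\le K\|v_F\|$ therefore does not follow from the compactness you invoked. The danger is precisely the one you flag at the end as ``the main obstacle'': as $n\to\infty$, nothing in a compactness argument over the base prevents $E_y$ from approaching the boundary of $\cC(y)$, in which case a cone vector $v$ near that boundary has $\|v_E\|/\|v_F\|$ unbounded. To get a uniform bound one must show that the subspaces $(D\Psi^n(y))^{-1}$ of vectors whose image leaves the cone stay uniformly transverse to $\cC(y)$, and this is a \emph{dynamical} fact, not a compactness fact: it is exactly what the paper establishes in Claim~\ref{c.angle} (the angle between $u\in\cC(x)$ and any $v$ with $D\Psi^n(x)\cdot v\notin\cC(\Psi^n(x))$ is bounded below by $\sigma$, a consequence of the contraction estimate applied to an intermediate iterate). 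Your proof is missing this claim, and without it the uniform constant $K$ in your key inequality is not justified.

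Two smaller remarks. First, your assertion that $D\Psi^n(y)$ is a linear isomorphism (``a dimension count then forces bijectivity'') is not supported by items~2--3 of Definition~\ref{d.contraction}, which only control the action on the cone and explicitly allow non-surjective tangent maps; fortunately you do not need it: $F_y\cap E_y=\{0\}$ (by item~2 and $E^0(z)\cap\cC(z)=\{0\}$) together with the general inequality $\dim(D\Psi^n(y))^{-1}(E^0(z))\ge \dim M-d$ already gives $T_yM=F_y\oplus E_y$. Second, once the uniform angle bound is in place, the remaining pieces of your argument (the estimate $\|D\Psi^n(y)v\|\ge c\|D\Psi^n(y)v_F\|$ from the uniform transversality of $d$-planes inside $\cC(z)$ with $E^0(z)$, and the dual-cone argument for $T_xM\setminus\cC^{-n}(x)$) are sound and essentially coincide with what the paper does, working at the source rather than at the target.
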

\begin{proof} The proof will use the following claim.
\begin{claim}\label{c.angle}
There exist $m_0\geq 1$ and  $\sigma>0$ such that
for any $n\geq m_0$, the angle between the vectors $u$ and $v$
in the third item of Definition~\ref{d.contraction}
is bounded from below by $\sigma$.
The same holds for the angle between $D\Psi^n(x).u$ and $D\Psi^n(x).v$,
if this last vector is not zero.
\end{claim}
\begin{proof}
One chooses $k$ in $[m_0/3,2m_0/3]$.
If $m_0$ is large enough we have $\lambda^{k}>2$.
By invariance of $\cC$ we get that
$D\Psi^{k}(x).v\notin \cC(\Psi^{k}(x))$ so that by the cone contraction
$\|D\Psi^{k}(x).u\|\geq 2 \|D\Psi^{k}(x).v\|$.
Since $\|D\Psi^{k}\|$ is bounded, this implies that the angle between $u$ and $v$ is bounded from below,
proving that the angle between $u$ and $v$ is bounded away from zero when $n\geq m_0$.
A similar argument holds for $D\Psi^n(x).u$ and $D\Psi^n(x).v$.
\end{proof}

Let us prove now the statement of the lemma. We set $x=\Psi^{-n}(z)$.
By the definition of the cone field, there exists a $d$-dimensional space $F_0\subset \cC(x)$
such that $F:=D\Psi^n(x).F_0$ is also $d$-dimensional.
Similarly, there exists a transverse $(\dim(M)-d)$-dimensional space $E\subset T_zM$ which is not contained in $\cC(z)$.
One can thus decompose any unit vector $u\in \cC^n(z)$ as $w+v$ with $w\in F$ and $v\in E$.
By definition $u$ has a preimage $u_0 \in \cC(x)$ and $w$ also. Hence, there exists a preimage $v_0\in T_xM$ of $v=u-w$ by $D\Psi^n(x)$.
The cone contraction gives since $u$ is a unit vector
$$\|v\|\leq \lambda^{-n}\frac{\|v_0\|}{\|u_0\|}.$$
One may assume $v_0\neq 0$.
Since $v_0$ is the preimage by $D\Psi^n(x)$ of a vector $v\notin\cC(z)$
and $u_0-v_0$ belongs to $\cC(x)$, the angle between $u_0-v_0$ and $v_0$ is bounded from below by $\sigma$.
One deduces that $\frac{\|v_0\|}{\|u_0\|}$ is uniformly bounded, hence there exists $C_1>0$
such that $\frac{\|v_0\|}{\|u_0\|}<C_1$. This gives the required estimate.

The argument for $T_xM\setminus \cC^{-n}(z)$ is very similar after
noting that $D\Psi^{-n}(z).E$ is a $(\dim(M)-d)$-linear space contained in
$(T_xM\setminus \cC(x))\cup \{0\}$.
\end{proof}

Let us denote by $m(D\Psi^{n}(x))$ the infimum of the
norms $\|D\Psi^{n}(x).u\|$ over unit vectors $u\in T_xM$.
Here is another consequence of cone contraction.

\begin{lemm}\label{l.minimal-norm}
If the cone $\cC$ is contracted, there exists $C_2>0$ such that
for any $n\geq 1$, any $x\in U\cap \Psi^{-1}(U)\cap\dots\cap \Psi^{-n+1}(U)$
and any unit vector $u\in T_xM$, we have
$$\|D\Psi^n(x).u\|\geq C_2 .m(D\Psi^n(x)_{|T_xM\setminus \cC^{-n}(x)}).$$
\end{lemm}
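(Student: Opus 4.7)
The plan is to bound $\|D\Psi^n(x).u\|$ from below by decomposing $u$ along an $n$-dependent direct sum of $T_xM$ adapted to the dynamics. Set $T := D\Psi^n(x)$, $V := \cC^{-n}(x)$ and $m := m(T_{|T_xM\setminus V})$. We may assume $m > 0$; this forces $T$ to be injective, since any $u \in \ker T\setminus\{0\}$ lies in $V$ (because $Tu = 0 \in \cC(\Psi^n(x))$) and applied to such a unit $u$ the conclusion would give $0 \geq C_2\, m > 0$.

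From the cone definition, fix a $d$-dimensional subspace $F \subset \interior(\cC(x))\cup\{0\}$ and, at each $y \in M$, a $(\dim M - d)$-dimensional subspace $E(y)$ disjoint from $\cC(y)\setminus\{0\}$. Define
$$E_n := (D\Psi^n(x))^{-1}\bigl(E(\Psi^n(x))\bigr),$$
a $(\dim M - d)$-dimensional subspace of $T_xM$. By construction, every nonzero $v \in E_n$ satisfies $Tv \in E(\Psi^n(x))\setminus\{0\}$, hence $Tv \notin \cC(\Psi^n(x))$, and so $v \in T_xM \setminus V$. Combined with $F \subset \cC(x) \subset V$ this yields $F \cap E_n = \{0\}$ and therefore $F \oplus E_n = T_xM$. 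Moreover, applied to the unit vectors $w/\|w\| \in F \subset \cC(x)$ and $v'/\|v'\|$ with $Tv' \notin \cC(\Psi^n(x))$, Claim~\ref{c.angle} bounds from below by $\sigma > 0$ the angle between any nonzero $w \in F$ and any nonzero $v' \in E_n$, as soon as $n \geq m_0$. It follows that the projections onto $F$ and onto $E_n$ have norm bounded by some $K > 0$ independent of $n$.

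Given a unit $u \in T_xM$, decompose $u = u_F + u_{E_n}$; at least one of $\|u_F\|,\|u_{E_n}\|$ exceeds $1/(2K)$. The second assertion of Claim~\ref{c.angle} bounds from below by $\sigma$ the angle between $Tu_F \in \cC(\Psi^n(x))$ and $Tu_{E_n} \notin \cC(\Psi^n(x))$ (when both are nonzero), hence
$$\|Tu\| \;\geq\; \sin(\sigma)\,\max\bigl(\|Tu_F\|,\,\|Tu_{E_n}\|\bigr).$$
If $\|u_F\| \geq 1/(2K)$, the cone contraction applied to $u_F/\|u_F\| \in \cC(x)$ versus any unit vector of $T_xM\setminus V$ gives $\|Tu_F\| > \lambda^n m \|u_F\| \geq m/(2K)$. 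If $\|u_{E_n}\| \geq 1/(2K)$, then $u_{E_n}/\|u_{E_n}\|$ is a unit vector of $T_xM\setminus V$, so $\|Tu_{E_n}\| \geq m\|u_{E_n}\| \geq m/(2K)$. Either way $\|Tu\| \geq (\sin(\sigma)/(2K))\, m$, and one may take $C_2 = \sin(\sigma)/(2K)$.

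The hard part is producing the splitting $F \oplus E_n$ with the two dual features that $E_n$ is contained in $(T_xM \setminus V) \cup \{0\}$ and is uniformly transverse to $F$; both rest on defining $E_n$ as the preimage of $E(\Psi^n(x))$ and on Claim~\ref{c.angle}. The finitely many values $n < \max(n_0, m_0)$ are absorbed in the constant $C_2$ by a compactness argument, after replacing $U$ by a relatively compact open subset as allowed in the remarks following Definition~\ref{d.contraction}.
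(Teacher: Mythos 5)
Your proof is correct and follows essentially the same strategy as the paper: decompose a unit vector $u$ into a part in the cone $\cC(x)$ and a part in $T_xM\setminus\cC^{-n}(x)$, invoke Claim~\ref{c.angle} for uniform transversality, and conclude via the cone contraction. The paper leaves the decomposition $u=u_1+u_2$ somewhat implicit, whereas you make it explicit by introducing the $n$-dependent complement $E_n=(D\Psi^n(x))^{-1}(E(\Psi^n(x)))$; this is a more careful rendering of the same argument, not a different one.
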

\begin{proof}
Any unit vector $u\in T_xM$ decomposes as $u=u_1+u_2$
such that $u_1\in \cC(x)$ and $u_2\in T_xM\setminus \cC^{-n}(x)$.
By the Claim~\ref{c.angle}, the angle between $D\Psi^k.u_1$ and $D\Psi^k.u_2$ is uniformly bounded
away from zero for any $k\in \{0,\dots,n\}$
(unless one of these vectors is zero).
As a consequence,
$$\|D\Psi^n.u\|\geq \widetilde C .\max(m(D\Psi^n_{|\cC(x)}).\|u_1\|, m(D\Psi^n_{|T_xM\setminus
\cC^{-n}(x)}).\|u_2\|).$$
This concludes the proof after noting that
$\max(\|u_1\|,\|u_2\|)$ is bounded away from below and that $m(D\Psi^n_{|\cC(x)})\geq m(D\Psi^n_{|T_xM\setminus \cC^{-n}(x)})$ by the cone contraction.
\end{proof}

\subsubsection{Dominated splitting}\label{ss.dominated}
In order to prove higher smoothness in Corollary~\ref{c.smooth},
we extend the usual definition of dominated splitting to the notion of
$r$-dominated splitting. It is related to the dominated splitting as the $r$-hyperbolicity
in~\cite{HPS} is related to the hyperbolicity.

The existence of a dominated splitting and of a contracted cone field are two close properties.
\begin{defi}
Let us consider an invariant compact set $K$ for a diffeomorphism $f$
and an invariant splitting $T_KM=E\oplus F$.
We say that \emph{$E$ is $r$-dominated by $F$}
if there exists $C'>0$, $\lambda>1$ and $n\geq 1$ such that
for any unit vectors $u\in E(x)$ and $v\in F(x)$ we have:
\begin{equation*}
\max(\|Df^n(x).u\|, \|Df^n(x).u\|^r)<  {C'}^{-1}\lambda^{-n} \|Df^n(x).v\|.
\end{equation*}
\end{defi}

One can extend the bundles $E^c,E^{uu}$ as two (non-invariant) continuous bundles $E,F$
over a neighborhood $U$ of $K$.
For any $x\in U$ and any $\beta>0$ one defines the \emph{cone field associated to
the splitting $E\oplus F$ and to the Riemannian metric}: 
\begin{equation}\label{e.dom2}
\cC_\beta(x)=\{w\in T_xM\mid\exists u\in E(x),\;\exists v\in F(x), w=u+v, \|u\|\leq \beta\|v\|\}.
\end{equation}

For $\beta'<\beta$ and $n\geq 1$ such that
$C'\lambda^{-n}\beta<\beta'$, we have for any $x$ close to $K$:
$$Df^n(\cC_\beta(x))\subset \cC_{\beta'}(f(x)).$$

\begin{lemm}\label{l.domination-cone}
If $\Psi$ is a diffeomorphism between $U$ and its image,
if $K\subset U$ is an invariant compact set and $d\geq 1$ an integer,
there exists a dominated splitting $T_KM=E\oplus F$ with $d=\dim(F(x))$ for each $x\in K$,
if and only if there exists a contracted cone field of dimension $d$ on a neighborhood of $K$.

The bundle $F$ is $r$-dominated by $E$ for $f=\Psi^{-1}$ if and only if there exist
a cone field of dimension $d$ on a neighborhood of $K$ which is $r$-contracted by $\Psi$.
\end{lemm}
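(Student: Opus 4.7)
The lemma is the standard dictionary between dominated splittings and invariant cone fields, refined to the $r$-version. The plan is to treat both equivalences in parallel, since the $r$-statement is the classical one with some exponent bookkeeping. The correspondence is: the $d$-dimensional subspace $F$ lies inside the cone and plays the role of the strong direction for $\Psi$, equivalently the weak direction for $f=\Psi^{-1}$.

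For the implication from splitting to cone, I would extend $E,F$ continuously (but not invariantly) to a neighborhood $U$ of $K$ and take $\cC_\beta$ as in~(\ref{e.dom2}), of dimension $d=\dim F$. The cone invariance $D\Psi^n(\cC_\beta(x))\subset \cC_{\beta'}(\Psi^n x)$ is the calculation already recorded just below~(\ref{e.dom2}), with the roles of the two bundles swapped so that it is the cone around the strongly expanded bundle of $\Psi$ which is invariant. To verify item~(3) of Definition~\ref{d.contraction}, I decompose a unit $u\in\cC_\beta(x)$ and a unit $v$ whose image under $D\Psi^n$ lies outside $\cC_\beta(\Psi^n x)$ along the extended splitting: the $F$-component of $u$ bounds $\|D\Psi^n u\|$ from below and the $E$-component of $v$ bounds $\|D\Psi^n v\|$ from above, so substituting into the $r$-domination inequality for $f$ yields $\min(\|D\Psi^n u\|,\|D\Psi^n u\|^r)>\lambda^n\|D\Psi^n v\|$, after shrinking $U$ and increasing $n_0$ to absorb the non-invariance of the extensions.

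Conversely, given an $r$-contracted cone field $\cC$ of dimension $d$, I would set $F(x):=\bigcap_{n\geq 0}\cC^n(x)$ and $E(x):=\{0\}\cup\bigcap_{n\geq 0}(T_xM\setminus \cC^{-n}(x))$. By Lemma~\ref{l.cone2}, both nested intersections are exponentially thin, so they are complementary linear subspaces of the expected dimensions, depending continuously on $x\in K$; invariance under $D\Psi$ is built into the definitions. For the domination estimate, take unit $u\in F(x)\subset\cC(x)$ and unit $v\in E(x)$; the fact that $v\in T_xM\setminus \cC^{-n}(x)$ for every $n\geq 0$ means precisely that $D\Psi^n(x)\cdot v\notin \cC(\Psi^n x)$, so item~(3) of Definition~\ref{d.contraction} applies to the pair $(u,v)$. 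Setting $u':=D\Psi^n u/\|D\Psi^n u\|$ at $\Psi^n x$ (and similarly $v'$), and using the identity $\|Df^n(\Psi^n x)\cdot u'\|=\|D\Psi^n(x)\cdot u\|^{-1}$, the cone bound rewrites as $\max(\|Df^n u'\|,\|Df^n u'\|^r)<\lambda^{-n}\|Df^n v'\|$, which is the $r$-domination estimate for $f$.

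The main delicate point is this last translation between the $\min$ in item~(3) and the $\max$ in the definition of $r$-domination. It requires a case split depending on whether $\|D\Psi^n u\|\geq 1$ or not: in one case the bound on $\|Df^n u'\|$ is direct and the bound on $\|Df^n u'\|^r$ is automatic since $\|Df^n u'\|\leq 1$, and in the other case the reverse. Apart from this, the rest is the standard cone-splitting dictionary, with the non-invariance of the extensions in the forward direction handled routinely by shrinking the neighborhood and adjusting the constants.
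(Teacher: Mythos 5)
Your proposal is correct and follows the same route as the paper's (very terse) proof: forward direction via the cone field~\eqref{e.dom2} around the extended bundle $F$, converse via the nested intersections $\bigcap_n\cC^n(x)$ and the dual cones, with Lemma~\ref{l.cone2} guaranteeing the limits are linear subspaces. You fill in the $r$-bookkeeping the paper omits entirely; note that the case split you describe can be shortened to the identity $\max(1/a,1/a^r)=1/\min(a,a^r)$, which converts item~(3) of Definition~\ref{d.contraction} for $\Psi$ directly into the $r$-domination inequality for $f=\Psi^{-1}$.
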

\begin{proof}
The contracted cone field can be defined from a dominated splitting
as in~\eqref{e.dom2}.
Conversely, if there exists a contracted cone field, $\cC$, we first note that the dual cone
field is contracted by $f^{-1}$. The intersection of the cones $\cC^n(x)$ as in lemma~\ref{l.cone2} defines at each point $x\in K$ a $d$-dimensional space $F(x)$. Considering the dual cones,
we also obtain a $(\dim(M)-d)$-dimensional space $E(x)$ and by the definition of contracted cones,
the splitting $T_xM=E(x)\oplus F(x)$ is dominated. The second part of the lemma is obtained
similarly.
\end{proof}

\subsubsection{Lift to Grasmannian bundles: the $r$-contracted case}
In order to prove that an invariant submanifold is $C^r$, we will prove that
its lift in a Grassmanian bundle is $C^{r-1}$. We  explain here how
to lift the dynamics. The $2$-domination allows to get a domination of the lift dynamics.

Let us fix a contracted continuous cone field $\cC$ of dimension $d$.
Let $p\colon G(d,M)\to M$ be the Grassmannian bundle of $d$-dimensional tangent spaces.
We define $\widehat U$, the interior of the set of $d$-dimensional tangent spaces
$E$ contained in a cone $\cC(x)$ for some $x\in U$.
One gets a surjective submersion $p\colon \widehat U\to U$.
By the second item of the Definition~\ref{d.contraction},
$D\Psi$ induces a continuous map
$$\widehat \Psi\colon \widehat U\to G(d,M).$$
Note that $\widehat \Psi$ is $C^{r-1}$ if $\Psi$ is $C^r$.
Moreover $\widehat \Psi$ is a diffeomorphism if $\Psi$
is a $C^2$-diffeomorphism.

\begin{prop}\label{p.fibre-contract}
If $\Psi$ is $C^2$, the map $\widehat \Psi$ contracts the fibers of $p\colon \widehat U\to U$.
More precisely, there exists $C_3>0$ such that for any $n\geq 1$ and
for $P\in \widehat U\cap \dots\cap \widehat \Psi^{-n+1}(\widehat U)$, denoting $x=p(P)$,
$$\|D\widehat \Psi(P)_{|p^{-1}(x)}\|\leq C_3 \|D\Psi^n(x)_{|T_xM\setminus \cC^{-n}(x)}\|\cdot m(D\Psi^n(x)_{|\cC(x)})^{-1}<C_3\lambda^n.$$
\end{prop}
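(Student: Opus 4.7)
The plan is to express $\widehat\Psi^n$ in well-chosen local charts on the two Grassmannian fibers $p^{-1}(x)$ and $p^{-1}(\Psi^n x)$, in which the restriction to the fiber becomes an affine map between spaces of linear maps, and then apply the standard formula for the norm of such a map. Throughout, write $L:=D\Psi^n(x)$.

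Fix $P\in \widehat U \cap \dots\cap \widehat \Psi^{-n+1}(\widehat U)$ and $x=p(P)$. Since $P$ lies in the interior of $\cC(x)$ and, by Lemma~\ref{l.cone2} applied to the dual cone field (which is contracted by $\Psi^{-1}$ because $\Psi$ is a diffeomorphism), the complement cone $T_xM\setminus \cC^{-n}(x)$ is exponentially thin around a $(\dim M-d)$-dimensional linear subspace $E\subset T_xM$. This $E$ is transverse to $P$ with an angle bounded from below by a constant independent of $n$ and of $P$. Parameterize subspaces close to $P$ in $p^{-1}(x)$ by the chart $\varphi\in \mathrm{Hom}(P,E)\mapsto \mathrm{graph}(\varphi)$; since the angle between $P$ and $E$ is controlled, the Riemannian metric on the Grassmannian at $P$ and the operator norm on $\mathrm{Hom}(P,E)$ are comparable up to a uniform multiplicative constant. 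On the target side, take the complement $E':=L(E)$ of $L(P)$: the image $L(E)$ lies close to the thin cone $T_{\Psi^n x}M\setminus \cC(\Psi^n x)$, while $L(P)\subset \cC^n(\Psi^n x)\subset \cC(\Psi^n x)$, so $L(P)$ and $E'$ again have a uniformly bounded angle, and the same comparison with the Grassmannian metric holds at $L(P)$.

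In these two charts, the fiber map $\widehat\Psi^n|_{p^{-1}(x)}$ is linear: direct computation shows that $\mathrm{graph}(\varphi)$ is mapped to $\mathrm{graph}(\varphi')$ with $\varphi' = L|_E\circ \varphi\circ (L|_P)^{-1}$, hence
$$\|\varphi'\|\leq \|L|_E\|\cdot m(L|_P)^{-1}\cdot \|\varphi\|.$$
Using that $E$ is $C_1\lambda^{-n}$-close to the thin cone $T_xM\setminus \cC^{-n}(x)$ (Lemma~\ref{l.cone2}), one has $\|L|_E\|\leq \mathrm{const}\cdot \|L|_{T_xM\setminus \cC^{-n}(x)}\|$, and since $P\subset \cC(x)$ one has $m(L|_P)\geq m(L|_{\cC(x)})$. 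Translating back to the Grassmannian metric via the comparisons above yields the first inequality of the proposition, for some uniform $C_3>0$. The second inequality is then immediate from the cone contraction estimate in Definition~\ref{d.contraction} applied with $r=1$ to a unit vector in $\cC(x)$ realizing $m(L|_{\cC(x)})$ against a unit vector in $T_xM\setminus \cC^{-n}(x)$ realizing $\|L|_{T_xM\setminus \cC^{-n}(x)}\|$.

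The main obstacle is the choice of complements $E$ and $E'$: they must be linear, controllably close to the relevant thin cones (so that the operator norms on $E$ and the Grassmannian distances match the cone quantities appearing in the statement), yet uniformly transverse to $P$ and $L(P)$ respectively (so that the chart-to-metric comparison is uniform and gives a single constant $C_3$). The fact that $\Psi$ is $C^2$ enters precisely to ensure that $\widehat\Psi$ is a well-defined $C^1$ map between the Grassmannian bundles, so that the derivative along the fiber makes classical sense; the rest of the estimate is purely linear and follows from the two thin-cone statements already proven.
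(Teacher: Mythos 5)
Your proposal is correct and follows essentially the same route as the paper. Both arguments parameterize the Grassmannian fiber by spaces of linear maps and read off the derivative of $\widehat\Psi^n$ as a conjugacy of $D\Psi^n$, then bound the operator norm by $\|D\Psi^n_{|E}\|\cdot m(D\Psi^n_{|P})^{-1}$ using the uniform angle bounds from Claim~\ref{c.angle} and the thin-cone control from Lemma~\ref{l.cone2}. The only difference is cosmetic: the paper works in the standard chart $\mathrm{Hom}(P,P^\perp)$ (and $\mathrm{Hom}(P_n,P_n^\perp)$), introducing the projection $\Pi(P_n,P_n^\perp)$ and then comparing $P^\perp$, $P_n^\perp$ with the well-chosen complements $E$, $D\Psi^n(E)$; you instead take $E$ and $E'=L(E)$ directly as the chart targets, which makes the conjugacy formula $\varphi'=L|_E\circ\varphi\circ(L|_P)^{-1}$ exact and avoids the explicit oblique projection, at the small cost of a chart-to-metric comparison on both ends. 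The substance — the inputs from Lemma~\ref{l.cone2}, Claim~\ref{c.angle}, and the cone contraction with $r=1$ for the final $\lambda$-estimate — is identical.
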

\begin{proof}
For any two $d$-spaces $P,P'$ in $T_xM$, one can consider the linear map
$L\colon P\to P^\perp$ whose graph is $P'$.
The tangent space at $P\in p^{-1}(x)$ to the fiber of $p$
may thus be identified to the space of linear maps $L\colon P\to P^\perp$
and $D\widehat \Psi$ acts by conjugacy. For $x\in U\cap \Psi^{-1}(U)\cap\dots\cap \Psi^{-n+1}(U)$,
$$D\widehat \Psi^n(P).L=\Pi(P_n,P_n^\perp)\circ D\Psi^n(x)\circ L \circ D\Psi^{-n}(f^n(x))$$
where $\Pi(F,E)$ denotes the orthogonal projection on $E$ parallel to $F$
and $P_n=D\Psi^n(x).P$.
Let us consider a $(\dim(M)-d)$-dimensional space $E'$ in $T_{\Psi^n(x)}M$
disjoint from $\cC(\Psi^n(x))\setminus \{0\}$ and its pre image
$E=D\Psi^{-n}(E')$.
The projection between $E$ and $P^\perp$ parallel to $P$
is uniformly bounded and has a uniformly bounded inverse.
The same holds for the projection between
$D\Psi^n(x).E$ and $(D\Psi^n(x).P)^{\perp}$ parallel to $D\Psi^n(x).P$
(this is a consequence of the claim~\ref{c.angle}).
One deduces that the norm of the linear map of $D\Psi^n(x)$ restricted to $E$
and the norm of $\Pi(P_n,P_n^\perp)\circ D\Psi^n(x)$ restricted to $P^\perp$ 
are equal up to a factor bounded by a uniform constant $C_3$:
$$\|D\widehat \Psi^n(P)\|\leq C_3.\|D\Psi^n(x)_{|E}\|.m(D\Psi^{n}(x)_{|P})^{-1}.$$
Together with the cone contraction, this concludes.
\end{proof}

\begin{prop}\label{p.lift}
If $\Psi$ is $C^2$ and if $\cC$ is $r$-contracted with $r\geq 2$, then
there exists a $(r-1)$-contracted cone field $\widehat \cC$ for $\widehat \Psi$,
of dimension $d$, which is transverse to $p$.
One can build $\widehat \cC$ to contain any
compact set of vectors $v\in TG(d,M)$ such that $Dp.v\in \cC\setminus \{0\}$.
If $\cC$ is transverse to a submersion $\pi$, then one can build $\widehat \cC$
to be transverse to $\pi\circ p$.
\end{prop}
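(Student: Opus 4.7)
The plan is to lift the cone $\cC$ to $TG(d,M)$ using the horizontal--vertical decomposition of the tangent bundle over the fibration $p$, and then to verify all the required properties from the corresponding properties of $\cC$ and the estimates on $\widehat\Psi$ established in the preceding propositions.

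First I would fix an auxiliary Riemannian metric on $G(d,M)$ and denote by $\cV:=\ker Dp\subset TG(d,M)$ the vertical bundle. Choose a continuous horizontal complement $\cH$, so that $Dp$ restricts to an isomorphism $\cH_P\to T_{p(P)}M$ at each $P$. For a continuous function $\beta\colon \widehat U\to(0,+\infty)$ to be chosen large enough, set
$$\widehat\cC(P):=\left\{v=v_h+v_v\in\cH_P\oplus\cV_P:\; v_h\in\cC(p(P)),\ \|v_v\|\leq\beta(P)\,\|v_h\|\right\}.$$
By construction $\widehat\cC$ is a $d$-dimensional continuous cone field (the horizontal lift of any $d$-subspace of $\cC(p(P))$ sits inside $\widehat\cC(P)$ and no $(d+1)$-subspace fits, because adding a vertical direction violates the tilt bound). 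Taking $\beta$ locally large, $\widehat\cC$ can be made to contain any prescribed compact set of vectors $v$ with $Dp(v)\in\cC\setminus\{0\}$. Transversality to $p$ is automatic since $v\in\widehat\cC\setminus\{0\}$ forces $v_h\neq 0$, hence $Dp(v)\neq 0$; and if $\cC$ is transverse to $\pi$, then $D(\pi\circ p)(v)=D\pi(v_h)\neq 0$ for $v\in\widehat\cC\setminus\{0\}$, giving transversality to $\pi\circ p$.

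Next I would check invariance and the contraction rate of $\widehat\cC$. Differentiating $p\circ\widehat\Psi=\Psi\circ p$ yields $Dp\circ D\widehat\Psi=D\Psi\circ Dp$, so the horizontal part of $D\widehat\Psi(v)$ equals $D\Psi(v_h)$. For $v\in\widehat\cC(P)$ this lies in $\cC(\Psi(p(P)))$ by invariance of $\cC$. The vertical part of $D\widehat\Psi(v)$ decomposes into the fiber derivative applied to $v_v$ (bounded by Proposition~\ref{p.fibre-contract}) and a connection-change term linear in $v_h$ whose norm is bounded by a constant times $\|v_h\|$ because $\widehat\Psi$ is $C^1$ (using $\Psi\in C^2$). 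Setting $A_n:=m(D\Psi^n|_{\cC(x)})$ and $B_n:=\|D\Psi^n|_{T_xM\setminus\cC^{-n}(x)}\|$, the $r$-contraction of $\cC$ gives $B_n<\lambda^{-n}\min(A_n,A_n^r)$, and Proposition~\ref{p.fibre-contract} bounds the fiber contraction at rate $C_3 B_n/A_n$. Hence the ratio (vertical)/(horizontal) of $D\widehat\Psi^n(v)$ is controlled by a constant times $B_n/A_n<\lambda^{-n}$, so for $n_0$ large enough one obtains $D\widehat\Psi^n(\widehat\cC(P))\subset\widehat\cC(\widehat\Psi^n(P))$ for $n\geq n_0$.

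For the $(r-1)$-contraction itself, a unit vector $u\in\widehat\cC(P)$ has a horizontal component of norm bounded from below, so $\|D\widehat\Psi^n u\|$ is at least of order $A_n$. For a vector $w$ such that $D\widehat\Psi^n w\notin\widehat\cC(\widehat\Psi^n P)$ there are two sub-cases: either $Dp(w)\notin\cC^{-n}(x)$, in which case Lemma~\ref{l.minimal-norm} controls the horizontal part $\|D\Psi^n Dp(w)\|$ by a constant times $B_n$; or $Dp(w)\in\cC^{-n}(x)$ but the image is over-tilted, and one must check that the accumulated connection-change terms keep $\|D\widehat\Psi^n w\|$ bounded by a constant times $B_n$. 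In either situation $\|D\widehat\Psi^n w\|\leq C'B_n$, and combining with $B_n<\lambda^{-n}\min(A_n,A_n^r)$ and a case analysis on whether $A_n\geq 1$ or $A_n\leq 1$ (using $r\geq 2$, so that $\min(A_n,A_n^r)\leq A_n\cdot\min(1,A_n^{r-2})\leq\min(A_n,A_n^{r-1})$) yields $\|D\widehat\Psi^n w\|<\lambda'^{-n}\min(\|D\widehat\Psi^n u\|,\|D\widehat\Psi^n u\|^{r-1})$ for a slightly smaller rate $\lambda'<\lambda$. The hard part will be this last sub-case: showing that an over-tilted image really forces $\|D\widehat\Psi^n w\|$ to be controlled by $B_n$ rather than by the a priori larger $A_n$ requires carefully tracking how the connection-change contributions accumulate along the orbit, which is where the $C^2$ regularity of $\Psi$ must be used in an essential way.
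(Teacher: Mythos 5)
Your construction of $\widehat\cC$ and identification of the key estimates is essentially the same as the paper's: the paper defines $\widehat\cC(P)$ as the vectors $v$ with $Dp.v\in\cC(p(P))$ and $\|Dp.v\|\geq\tfrac{1}{\sqrt 2}\|v\|$, for an adapted metric $\|.\|=(\|.\|_H^2+\varepsilon^2\|.\|_G^2)^{1/2}$ with $\varepsilon$ small; your tilt bound $\|v_v\|\leq\beta(P)\|v_h\|$ is the same device, with $\beta$ large playing the role of $\varepsilon$ small. The transversality, the bound on the fiber derivative via Proposition~\ref{p.fibre-contract}, and the use of the $r$-contraction of $\cC$ via the quantities you call $A_n$ and $B_n$ are all the right ingredients.

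The genuine gap is exactly the one you flag at the end, and your diagnosis of it is off. You worry about how ``connection-change contributions accumulate along the orbit'' over arbitrarily long times $n$, and speculate that $C^2$ regularity of $\Psi$ is needed in an essential way to control this accumulation. The paper avoids the accumulation problem entirely by an observation you have not made: it suffices to verify Definition~\ref{d.contraction} for $n$ in the bounded range $\{n_0,\dots,2n_0\}$, since the case of general $n\geq n_0$ follows by composition. Once $n$ is bounded, the map $D\widehat\Psi^n$ tilts a horizontal bundle into the fibers by an angle controlled by a single uniform constant $K$ (the paper writes $\|w^G\|_G\leq K\|w^H\|_H$ for $w=D\widehat\Psi^n.v^H$). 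This yields the decisive simplification for your ``over-tilted'' sub-case (the paper's Case 1, where $\|Dp(D\widehat\Psi^n.v)\|\leq\tfrac 12\|D\widehat\Psi^n.v\|$): since the image of the horizontal part $v^H$ can contribute only a bounded tilt, the dominant vertical component of $D\widehat\Psi^n.v$ must come from the image of the initial vertical part $v^G$, i.e.\ from the term $D\widehat\Psi^n.v^G$, and this is precisely what Proposition~\ref{p.fibre-contract} controls by $C_3\,B_n/A_n$. Combined with the $r$-contraction inequality $B_n<\lambda^{-n}\min(A_n,A_n^r)$ and $r\geq 2$, the required $(r-1)$-contraction follows. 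There is no accumulation to track and no hidden use of $C^2$ regularity beyond the fact that $\widehat\Psi=D\Psi$ is $C^1$; the bounded-$n$ reduction is the missing step in your argument.
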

\begin{proof}
We define at each point $P\in G(d,M)$ the space $G(P)$ tangent to the fibers of $p$
and $H(P)$ a transverse space (for instance the normal space to $G(P)$
for an arbitrary Riemannian structure), so that $G, H$ are two smooth transverse bundles
and $Dp$ induces an isomorphism between the bundles $H$ and $TM$.
One can thus pull back the Riemannian metric of $M$
as a metric $\|.\|_H$ on $H$.
Let us consider an arbitrary metric $\|.\|_G$ on the bundle $G$
and define $\|.\|=(\|.\|_H^2+\varepsilon^2 \|.\|_G^2)^{1/2}$
a Riemannian metric on $G(d,M)$ for some $\varepsilon>0$ small.
Note that for any vector $v$ at $P$ we have $\|Dp.v\|\leq \|v\|$
with equality if $v$ is tangent to $H(P)$.

We then define $\widehat \cC(P)$ as the set of vectors $v$ at $P$
such that $\|Dp.v\|\geq \frac 1 {\sqrt 2} \|v\|$ and $Dp.v\in \cC(x)$ with $x=p(P)$.
If $E$ is a $d$-dimensional subspace contained in $\cC(x)$ 
and $F$ a $(\dim(T)-d)$-dimensional transverse subspace disjoint from
$\cC(x)\setminus \{0\}$ then $\widehat \cC(P)$ contains $Dp^{-1}(E)\cap H_P$
and is disjoint from $Dp^{-1}(F)\setminus \{0\}$.
Hence $\widehat \cC$ is a continuous cone field of dimension $d$ transverse to $p$.
If $\cC$ is transverse to a submersion $\pi$,
one may choose for $F$ the tangent space at $x$ of the fiber of $\pi$,
which implies that $\widehat \cC$ is transverse to the submersion $\pi\circ p$.

Let us choose $n_0\geq 1$ large enough.
The small constant $\varepsilon>0$ will be fixed later.
In order to prove that the definition~\ref{d.contraction} is satisfied,
it will be enough to check it for any $n\in \{n_0,\dots,2n_0\}$.
For any $P\in \widehat U\cap\dots\cap \widehat \Psi^{-n+1}(\widehat U)$,
we set $x=p(P)$ and take any $v\in \widehat \cC(P)$.
By invariance of the cone field $\cC$
we have $Dp(\Psi^{n}(P)).(D\widehat \psi^{n}.v)\in \cC(\Psi^{n}(x))$.
One can decompose $v=v^H+v^G$ according to the splitting $H\oplus G$.
By definition of $\widehat \cC$, we have $v^H\in \cC(x)$.
By definition of the metric, we have $\|v^H\|_H\geq \varepsilon \|v^G\|_G$.

The image $w$ of $v^H$ by $D\widehat \Psi^{n}$ decomposes as $w^H+w^G$
with $\|w^G\|_G\leq K\|w^H\|_H$, where $K$ is a constant
which controls the angle between the image $D\widehat \Psi^{n}.E$
of the bundle $E$ and the fibers of $p$ for any $n_0\leq n\leq 2n_0$.
For $\varepsilon$ small we get:
\begin{equation}\label{e.first}
\|w^H\|_H\geq 2\varepsilon \|w^G\|_G.
\end{equation}
Since the metric on $H$ is defined by lifting the metric on $U$,
we get
$$\|w^H\|_H\geq m(D\Psi^{n}(x)_{|\cC(x)}).\|v^H\|_H.$$
By Proposition~\ref{p.fibre-contract} we have
$$C_0 \|D\Psi^{n}(P)_{|T_xM\setminus \cC^{-n}(x)}\|.
m(D\Psi^{n}(x)_{|\cC(x)})^{-1}.\|v^G\|_G\geq \|D\widehat \Psi^{n}(P).v^G\|_G.$$
Hence this gives
$$
m(D\Psi^{n}(x)_{|\cC(x)})^{-2}.\|D\Psi^{n}(P)_{|T_xM\setminus \cC^{-n}(x)}\|.\|w^H\|_H\geq \frac {\varepsilon} {C_0} 
\|D\widehat \Psi^{n}(P).v^G\|_G.$$
Since $\cC$ is $2$-contracted and $n$ is large,
$m(D\Psi^{n}(x)^{-2}_{|\cC(x)}).\|D\Psi^{n}(P)_{|T_xM\setminus \cC^{-n}(x)}\|$
is small and
$$\|w^H\|_H\geq 2{\varepsilon}
\|D\widehat \Psi^{n}(P).v^G\|_G.$$
With~\eqref{e.first}, one deduces
$\|w^H\|_H\geq \frac 1 2 \|D\widehat \Psi^{n}(P).v\|$, so that $D\widehat \Psi^{n}(P).v$
belongs to $\widehat \cC(\Psi^{n}(P))$. This gives the first item
of the definition~\ref{d.contraction}.

By the second item of the definition~\ref{d.contraction}
and since $\|w^H\|_H=\|D\Psi^{n}(x).v^H\|_H$ does not vanish,
the image $D\widehat \Psi^{n}.v$ is non zero. Hence the second
item of the definition~\ref{d.contraction} is satisfied.
\medskip

Let us fix two unit vectors $u\in \widehat \cC(P)$,
$v\in T\widehat U\setminus \widehat \cC^{-n}(P)$
and $n\in \{n_0,\dots,2n_0\}$.
We have
$$\|D\widehat \Psi^{n}(P).u\|\geq
\|Dp(D\widehat \Psi^{n}(P).u)\|=
\|D\Psi^{n}(x).(Dp.u)\|\geq 
\frac 1 2 m(D\Psi^{n}(x)_{|\cC(x)}).$$
For $D\widehat \Psi^n.v$, two cases are possible.
In the first case $\|Dp(D\widehat \Psi^n.v)\|\leq \frac 1 2 \|D\widehat \Psi^n.v\|$. We decompose
$v=v^H+v^G$ and the image $w=D\widehat \Psi^n.v^H$ as $w= w^H+w^G$.
As before we have $\|w^G\|_G\leq K\|w^H\|_H$.
The first case restates as $\|w^H\|_H\leq \varepsilon \|w^G+D\widehat \Psi^n.v^G\|_G$.
Hence if $\varepsilon$ has been chosen small enough
$\|w^G\|_G$ is much smaller than $\|D\widehat \Psi^n.v^G\|_G$.
With Proposition~\ref{p.fibre-contract} one gets
$$\|D\widehat \Psi^n.v\|\leq 3 \varepsilon \|D\widehat \Psi^n_{|p^{-1}(x)}\|_G.\|v^G\|_G
\leq 3 C_3 \|D\Psi^n(x)_{|T_xM\setminus \cC^{-n}(x)}\|\cdot m(D\Psi^n(x)_{|\cC(x)})^{-1}.\varepsilon\|v^G\|_G.$$
Hence by the $r$-contraction of the cone field $\cC$ and since $\varepsilon\|v^G\|_G\leq \|v\|=1$ this gives
$$\frac{\|D\widehat \Psi^n.v\|}{\min(\|D\widehat \Psi^{n}.u\|,
\|D\widehat \Psi^{n}.u\|^{r-1})}\leq
 \frac{6\varepsilon C_3\|D\Psi^n(x)_{|T_xM\setminus \cC^{-n}(x)}\|}
{\min(m(D\Psi^n(x)_{|\cC(x)})^2, m(D\Psi^n(x)_{|\cC(x)})^r)}\leq 6\varepsilon C_3\lambda^{-n}.$$
In the other case, $\|Dp(D\widehat \Psi^n.v))\|\geq \frac 1 2 \|D\widehat \Psi^n.v\|$
and $Dp(D\widehat \Psi^n.v)=D\Psi^n(Dp.v)$ belongs to $TU\setminus \cC$.
By the cone contraction, one gets
$$\|D\widehat \Psi^n.v\|\leq 2\|Dp(D\widehat \Psi^n.v))\|=2\|D\Psi^n(Dp.v)\|\leq
2\lambda^{-n}\min(\|D\Psi ^n(Dp.u)\|,\|D\Psi^n(Dp.u)\|^r).$$
We also have $\|D\Psi ^n(Dp.u)\|=\|Dp(D\widehat \Psi ^n.u)\|\leq
\|D\widehat \Psi ^n.u\|$. Consequently
$$\frac{\|D\widehat \Psi^n.v\|}{\min(\|D\widehat \Psi^{n}.u\|,
\|D\widehat \Psi^{n}.u\|^{r})}\leq 2\lambda^{-n}.$$
In both cases the third item of the Definition~\ref{d.contraction} holds,
hence the cone $\widehat \cC$ is $(r-1)$-contracted.
\medskip

For $\varepsilon>0$ small enough, $\widehat \cC$ contains any compact set of
vectors $v$ satisfying $Dp.v\in \cC\setminus \{0\}$.
\end{proof}

\subsubsection{Lifts to Grasmannian bundles: the bunched case}
In Corollary~\ref{c.foliation} we will prove the existence of locally constant foliations.
These are built from locally constant $C^1$-vector fields that are obtained as invariant section
of the tangent bundle. A different dominated splitting of the lift dynamics is used; it is a consequence
of a bunching property.

\begin{defi}
The cone field $\cC$ is \emph{bunched} if there exists $\lambda>1$ and $n_0\geq 1$ such that for any $n\geq n_0$,
any $x$ in $U\cap \Psi^{-1}(U)\cap\dots\cap \Psi^{-n+1}(U)$, and any unit vectors
$u,v\in \cC(x)$ and $w\in T_xM\setminus \cC^{-n}(x)$, we have:
 $$ \|D\Psi^n(x).w\|< \lambda^{-n} \frac{\|D\Psi^n(x).u\|}
{\|D\Psi^n(x).v\|}.$$
\end{defi}

\begin{rema}\label{r.bunched}
When $\Psi$ is a diffeomorphism and $K$ is a partially hyperbolic invariant set such that
$T_KM=E^{ss}\oplus E^c$ and $\dim(E^c)=1$, then the cone fields $\cC$ associated to $E^c$
as in~\eqref{e.dom2} on a neighborhood of $K$ are bunched.
Indeed by Lemma~\ref{l.minimal-norm}, the vectors in the cone field $TM\setminus \cC^{-n}$
are close to the bundle $E^{ss}$, hence are contracted by forward iterations,
while the vectors in the cone $\cC$ become close to the bundle $E^c$ after few iterations;
since $E^c$ is one-dimensional, the iterates of any two vectors $u,v\in \cC$ are almost
collinear and the ratio $\frac{\|D\Psi^n(x).u\|}{\|D\Psi^n(x).v\|}$ does not decay faster than
the strong stable contraction.
\end{rema}

\begin{prop}\label{p.lift2}
Let $\Psi$ be a $C^2$-diffeomorphism between $U$ and its image,
and $\cC$ be a contracted cone field of dimension $d$.
If the cone field dual to $\cC$ is bunched,
then there exists a continuous cone field $\widehat \cC$
of dimension $\dim(M)$ on $\widehat U\subset G(d,M)$ which is contracted by $\widehat \Psi$
and transverse to the submersion $p$.
\end{prop}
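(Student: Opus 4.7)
The plan is to mimic the construction of $\widehat \cC$ in Proposition~\ref{p.lift}, but now shaped to be a \emph{horizontal} cone of dimension $\dim(M)$ rather than a cone lying over $\cC$. Let $G\subset TG(d,M)$ denote the vertical subbundle $\ker Dp$, choose a smooth transverse bundle $H$ so that $Dp|_H$ is a fiberwise isomorphism, pull back the Riemannian metric of $M$ to $H$ via $Dp|_H$, equip $G$ with an arbitrary metric $\|.\|_G$, and set $\|v\|^2=\|v^H\|_H^2+\varepsilon^2\|v^G\|_G^2$ for a small parameter $\varepsilon>0$ to be fixed. Define
$$\widehat \cC(P)=\bigl\{v=v^H+v^G\in T_PG(d,M)\mid \|v^G\|_G\leq K\,\|v^H\|_H\bigr\}$$
for a large constant $K$. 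This is a continuous cone field of dimension $\dim(M)$ containing the horizontal distribution $H$ in its interior; since every nonzero vector of $\widehat \cC$ has a nonzero horizontal component, it is automatically transverse to $p$.

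To check that $\widehat \cC$ is contracted by $\widehat \Psi$, first note that $p\circ\widehat \Psi=\Psi\circ p$, so the vertical bundle $G$ is $D\widehat \Psi$-invariant. For $v=v^H+v^G\in\widehat \cC(P)$ with $x=p(P)$, the horizontal part of $D\widehat \Psi^n v$ projects by $Dp$ to $D\Psi^n(x)(Dp.v^H)$, whose norm is bounded below by $C_2\,m(D\Psi^n(x)_{|T_xM\setminus\cC^{-n}(x)})\cdot\|v^H\|$ by Lemma~\ref{l.minimal-norm}. Meanwhile, Proposition~\ref{p.fibre-contract} bounds the vertical part of $D\widehat \Psi^n v^G$ by $C_3\,\|D\Psi^n(x)_{|T_xM\setminus\cC^{-n}(x)}\|\cdot m(D\Psi^n(x)_{|\cC(x)})^{-1}\|v^G\|_G$, and a similar bound (with a uniform multiplicative constant coming from the twist between $H$ and its image) controls the vertical component produced from $v^H$. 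Dividing the vertical bound by the horizontal bound and using the bunching of the dual cone field at $x$, one sees that the resulting ratio is $\leq C\lambda^{-n}$ for large $n$; choosing first $K$ large, then $\varepsilon$ small, then $n_0$ large, this yields the inclusion $D\widehat \Psi^n\widehat \cC(P)\subset \widehat \cC(\widehat \Psi^n(P))$ together with the non-vanishing required in the second item of Definition~\ref{d.contraction}.

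The third item is checked by splitting vectors $w\notin\widehat \cC^{-n}(P)$: by backward invariance of the dual, such $w$ behaves essentially vertically under $n$-fold iteration, so $\|D\widehat \Psi^n w\|$ is bounded by the same fiber-contraction factor as above, while for any $u\in\widehat \cC(P)$ we have $\|D\widehat \Psi^n u\|\geq \|D\Psi^n(x)(Dp.u)\|\geq C_2\,m(D\Psi^n|_{T_xM\setminus\cC^{-n}(x)})\|u\|$. The same bunching estimate then gives $\|D\widehat \Psi^n w\|\leq \lambda^{-n}\|D\widehat \Psi^n u\|$. The principal obstacle is a bookkeeping one: each of the three constants $K$, $\varepsilon$, $n_0$ must be fixed in the right order so that the uniform twist constants in $D\widehat \Psi$ (controlling how $D\widehat \Psi.v^H$ leaks into $G$) do not spoil the invariance; the bunching of the dual cone is precisely the hypothesis that makes all three estimates compatible, since it gives exponential decay of exactly the ratio $\|D\Psi^n|_{T_xM\setminus\cC^{-n}}\|/(m(D\Psi^n|_{\cC})\cdot m(D\Psi^n|_{T_xM\setminus\cC^{-n}}))$ that appears when we compare horizontal expansion to fiber contraction.
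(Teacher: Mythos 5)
Your construction, the choice of metric $\|.\|^2=\|.\|_H^2+\varepsilon^2\|.\|_G^2$, and the three key estimates (Lemma~\ref{l.minimal-norm} for a lower bound on the horizontal growth, Proposition~\ref{p.fibre-contract} for the fiber contraction, and the bunching to control the ratio $\|D\Psi^n|_{T_xM\setminus\cC^{-n}}\|/\bigl(m(D\Psi^n|_{\cC})\,m(D\Psi^n|_{T_xM\setminus\cC^{-n}})\bigr)$) coincide with the paper's proof; the cone you define by $\|v^G\|_G\leq K\|v^H\|_H$ is the same, up to notation, as the paper's $\|Dp.v\|\geq\frac12\|v\|$ with $K\sim 1/\varepsilon$. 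So this is essentially the same argument.

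One detail in the bookkeeping, however, is stated backwards and would fail as written: you propose to choose ``first $K$ large, then $\varepsilon$ small, then $n_0$ large.'' The twist constant $K_n$ bounding how $D\widehat\Psi^n$ tilts $H$ into $G$ is only controlled uniformly over the range $n\in\{n_0,\dots,2n_0\}$, and it depends on $n_0$; a priori it can grow with $n$ (there is no invariant horizontal bundle available before the proposition is proved). Hence $n_0$ must be fixed \emph{first} (large enough that the bunching/contraction beats the uniform constants over $\{n_0,\dots,2n_0\}$), the twist bound for that window then determines how wide the cone (equivalently how small $\varepsilon$, or how large your $K$) must be, and the cone contraction is then verified only for $n$ in that window and extended to all $n\geq n_0$ by iteration. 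This is exactly the order the paper follows, and since you already flag the ordering as ``the principal obstacle,'' correcting it brings your sketch into full agreement with the paper's proof.
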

\begin{proof}
With the same notations as in the proof of Proposition~\ref{p.lift},
we define in this case $\widehat \cC(P)$ as the set of vectors $v$ at $P$
such that $\|Dp.v\|\geq \frac 1 2 \|v\|$ and we obtain in this way a continuous
cone field of dimension $\dim(M)$ on $G(d,M)$, transverse to $p$.
Since $\widehat \Psi$ is a local diffeomorphism, $D\widehat \Psi(x).u$
does not vanish on non-zero vectors.
It is enough to prove the cone contraction for any integer
$n\in \{n_0,\dots,2n_0\}$, where $n_0$ is large.

Let us consider $v\in \widehat \cC(P)$. It decomposes as $v=v^H+v^G$.
One can decompose the image $w=D\widehat \Psi ^n.v^H$ as $w= w^H+w^G$.
On the one hand, having chosen $\varepsilon$ small enough, one has
\begin{equation}\label{e.contract1}
2\varepsilon\|w^G\|_G\leq \|w^H\|_H.
\end{equation}
On the other hand, by Lemma~\ref{l.minimal-norm}
$$\|w^H\|_H\geq
m(D\Psi^n).\|v^H\|_H \geq m(D\Psi^n(x)).\varepsilon.\|v^G\|_G
\geq C_2.m(D\Psi^n(x)_{|T_xM\setminus \cC^{-n}(x)}).\varepsilon.\|v^G\|_G.$$
By Proposition~\ref{p.fibre-contract} and the bunching,
we have
\begin{equation*}
\begin{split}
2\varepsilon \|D\widehat \Psi^n.v^G\|_G&\leq
2\varepsilon
C_3 \|D\Psi^n(x)_{|T_xM\setminus \cC^{-n}(x)}\|. m(D\Psi^n(x)_{|\cC(x)})^{-1}.\|v^G\|_G\\
&\leq  2
C_3C_2^{-1}\frac{ \|D\Psi^n(x)_{|T_xM\setminus \cC^{-n}(x)}\|}
{m(D\Psi^n(x)_{|T_xM\setminus \cC^{-n}(x)}).m(D\Psi^n(x)_{|\cC(x)})}\|w^H\|_H\;\leq \|w^H\|_H.
\end{split}
\end{equation*}
Together with~\eqref{e.contract1}, this proves the first item of Definition~\ref{d.contraction}.
\medskip

Let us fix two unit vectors $u\in \widehat \cC(P)$,
$v\in T\widehat U\setminus \widehat \cC^{-n}(P)$
and $n\in \{n_0,\dots,2n_0\}$.
Arguing as in the proof of Proposition~\ref{p.lift},
on the one hand we have
$$\|D\widehat \Psi^{n}(P).u\|\geq
\frac 1 2 m(D\Psi^{n}(x))\geq \frac {C_2}2m(D\Psi^{n}(x)_{|T_xM\setminus \cC^{-n}(x)}) .$$
On the other hand we have
$$\|D\widehat \Psi^n.v\|\leq 3 C_3 \|D\Psi^n(x)_{|T_xM\setminus \cC^{-n}(x)}\|\cdot m(D\Psi^n(x)_{|\cC(x)})^{-1}.\varepsilon\|v^G\|_G.$$
Hence by the bunching of the cone field $\cC$ and since $\varepsilon\|v^G\|_G\leq \|v\|$ this gives
$$\frac{\|D\widehat \Psi^n.v\|}{\|D\widehat \Psi^{n}.u\|}\leq
 \frac{6\varepsilon C_3C_2^{-1} \|D\Psi^n(x)_{|T_xM\setminus \cC^{-n}(x)}\|}
{m(D\Psi^n(x)_{|T_xM\setminus \cC^{-n}(x)}).m(D\Psi^n(x)_{|\cC(x)})}\leq 6\varepsilon C_3C_2^{-1}\lambda^{-n}.$$
This gives the last item of the Definition~\ref{d.contraction},
hence the contraction of the cone field $\cC$.
\end{proof}

\section{Existence of submanifolds carrying a compact set}\label{s.criterion}
Let $K$ be a 
 subset of the $n$-dimensional manifold $M$.

\begin{defi}
At each point $z\in K$ the \emph{tangent set} $T_zK$ of $K$ is defined as follow.

For any chart $\varphi\colon U\to \RR^n$ centered at $z$ and for $\varepsilon>0$,
one considers the compact set
$$
\tau_\varepsilon= \operatorname{Closure}{\left\{v\in\RR^n, \exists x,y\in \varphi(K)\cap B(z,\varepsilon), x\neq y
\quad\mbox{and}\quad v=\frac {x-y}{\|x-y\|}\right\}}.
$$
One denotes by $\tau_0$ the intersection $\bigcap_{\varepsilon>0}\tau_\varepsilon$, and by
$T$ the linear subspace of $\RR^n$ generated  by $\tau_0$. The pull-back $T_zK:=(D_z\varphi)^{-1}(T)$
does not depends on the choice of the chart $\varphi$.
\end{defi}

It is clear that a necessary condition for $K$ to be contained in a $d$-dimensional submanifold of $M$ is
that each $T_xK$ is contained in a continuous subbundle of dimension $d$ of the restriction of $TM$ over $K$.
The next theorem is an easy consequence of Whitney's extension theorem and asserts that this condition is also sufficient.

\begin{theo}\label{t.global} If $K\subset M$ is a compact set and $x\mapsto E(x)\subset T_xM$ is a continuous $d$-dimensional
subbundle defined on $K$, such that $T_xK\subset E(x)$ for any $x\in K$,
then there is a compact $d$-dimensional $C^1$-submanifold with boundary $\Si\subset M$ which contains $K$ in its interior.
Furthermore, $\Si$ is tangent to $E(x)$ at each point $x\in K$.
\end{theo}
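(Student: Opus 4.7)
The plan is to apply Whitney's extension theorem in local coordinate charts and then patch the pieces inductively. The heart of the argument is to verify the Whitney compatibility condition from the hypothesis $T_zK\subset E(z)$ via a compactness argument.

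First, I would cover $K$ by a finite collection of coordinate charts $\varphi_i\colon U_i\to\RR^d\times\RR^{n-d}$, chosen small enough that in each chart $E(x)$ remains transverse to the $\{0\}\times\RR^{n-d}$ factor for every $x\in K\cap U_i$. Then $E(x)$ is represented in these coordinates as the graph of a continuous linear map $L_i(x)\colon\RR^d\to\RR^{n-d}$ defined on $\varphi_i(K\cap U_i)$. In each chart the goal is to construct a $C^1$-function $F_i\colon \varphi_i(U_i)\to\RR^{n-d}$ which vanishes on $\varphi_i(K\cap U_i)$ and whose differential at each such point has kernel the graph of $L_i(x)$; then $\{F_i=0\}$ is a $C^1$ submanifold containing $K\cap U_i$ and tangent to $E$ at its points.

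Whitney's extension theorem applied with the constant data $F_i\equiv 0$ and the prescribed linear map $(u,v)\mapsto v-L_i(x)u$ yields such an $F_i$ once two conditions are verified: continuity of $L_i$ (which is immediate from the continuity of $E$) and the uniform compatibility
$$|(v_y-v_x)-L_i(x)(u_y-u_x)|=o(|y-x|)\text{ as }|x-y|\to 0,\ x,y\in K\cap U_i.$$
I would prove this by contradiction: if it failed there would exist $\varepsilon>0$ and sequences $x_n,y_n\in K\cap U_i$ with $|x_n-y_n|\to 0$ and the ratio at least $\varepsilon$. By compactness, after extracting subsequences, $x_n$ and $y_n$ converge to a common point $z\in K$, and the unit direction $(y_n-x_n)/|y_n-x_n|$ converges to some $w$. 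By definition of the tangent set, $w\in T_zK$, and the hypothesis forces $w\in E(z)$; hence $w$ is fixed by the linear projection onto the $(n-d)$-factor along $E(z)$, i.e. the expression in the ratio tends to $0$, a contradiction.

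With $F_i$ in hand, $DF_i$ has full rank $n-d$ on $\varphi_i(K\cap U_i)$ and, by continuity, on an open neighborhood of it, so the implicit function theorem produces a $C^1$ submanifold $\Sigma_i\subset U_i$ of dimension $d$ containing $K\cap U_i$ and tangent to $E$ there. To globalize, I would proceed inductively over the finite cover: assuming a $C^1$ submanifold $\Sigma_{<j}$ has been built containing a neighborhood of $K\cap(U_1\cup\dots\cup U_{j-1})$, reapply Whitney in $U_j$ to the compact set $(K\cap U_j)\cup(\Sigma_{<j}\cap\overline V_j)$, where $V_j\Subset U_j$ is slightly shrunk; the prescribed tangent data is $E$ on $K$ and $T\Sigma_{<j}$ on the manifold part. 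These are compatible because $\Sigma_{<j}$ is already tangent to $E$ along $K$ and smooth elsewhere, so the Whitney condition reduces on the mixed pairs to the compactness argument above. Trimming the result to a compact neighborhood of $K$ produces $\Sigma$ with boundary.

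The main obstacle is the patching step: two local constructions $\Sigma_i$ and $\Sigma_j$ need not agree on their overlap, even though they both contain $K\cap U_i\cap U_j$ and carry the same tangent planes there, so one cannot just take their union. The inductive Whitney approach sidesteps this by freezing the manifold built so far as additional prescribed data in the next chart; the only subtle verification is that the compatibility condition survives on pairs $(x,y)$ with $x\in K$ and $y$ on the already constructed manifold, which again follows from continuity of $E$ and the fact that $\Sigma_{<j}$ has been arranged to be tangent to $E$ on $K$.
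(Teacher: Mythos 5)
Your first step (verifying the Whitney compatibility for pairs in $K$ via the compactness argument and extracting an adapted chart) matches the paper's Corollary~\ref{c.whitney} and Corollary~\ref{c.adapted}. The gap is in the patching step. You propose to re-apply Whitney in the chart $U_j$ to the compound set $A=(K\cap U_j)\cup(\Sigma_{<j}\cap\overline V_j)$. This runs into two problems. First, for Whitney's theorem (as stated in the paper, for functions on closed subsets of $\RR^d$) the set $A$ must be a \emph{graph} over the horizontal $\RR^d$-coordinates of the adapted chart. But a point of $K\cap U_j$ not yet captured by $\Sigma_{<j}$ (hence sitting at height $0$) can share its horizontal coordinate with a nearby point of $\Sigma_{<j}$ at a nonzero height; in that case $A$ fails to be a graph and Whitney simply does not apply. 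Second, even if $A$ were a graph, the Whitney extension would agree with $\Sigma_{<j}$ only \emph{on the $d$-dimensional set} $\Sigma_{<j}\cap\overline V_j$ (equal values and derivatives there), not on an open neighborhood; outside $\overline V_j$ but still in $U_j$ the two manifolds have no reason to coincide, so there is no way to assemble them into a single $C^1$ submanifold at the next stage of the induction. Relatedly, the mixed-pair Whitney compatibility you invoke ($x\in K$, $y\in\Sigma_{<j}$ with both near a common $z$) requires $f(y)=o(|y-x|)$, which need not follow from $C^1$-tangency at $z$: tangency controls $f(y)$ relative to $|y-z|$, not relative to the possibly much smaller $|y-x|$.

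The paper avoids both difficulties by never re-applying Whitney during the patching. Instead, Sublemma~\ref{l.graph} shows that after shrinking the previously built submanifold to a small tube $\Sigma_\varepsilon$ around $K\cap\overline O$, that tube is a graph $\Phi$ over the adapted-chart coordinates, \emph{and} — this is the crucial extra claim you need and do not establish — every point of $K$ lying over the domain of $\Phi$ already belongs to that graph (this uses the inductive ``carrying'' bookkeeping $K\cap W\subset\Sigma$). Then Sublemma~\ref{l.recollement} glues $\Phi$ to the horizontal plane via a cutoff function $\theta$: since $K$-points are at height $0$ in the adapted chart, the interpolant $\theta\Phi$ still passes through them with horizontal tangent, and the cutoff forces genuine coincidence with $\Sigma_\varepsilon$ on an open annular region, so the union is a bona fide $C^1$ submanifold. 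If you want to repair your argument, you need to (i) record and propagate an open set $W$ such that $K\cap W\subset\Sigma$ at each stage, (ii) shrink the already-built manifold to a thin tube before looking at it in the new chart so it becomes a graph with the crucial ``all of $K$ over the domain is on it'' property, and (iii) replace the second Whitney application with a cutoff interpolation.
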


We now consider the case $K$ is a partially hyperbolic set.
\begin{coro}\label{c.manifold}
Let $f$ be a $C^1$-diffeomorphism of $M$ and $K$ be a compact invariant set admitting a partially
hyperbolic structure $T_KM=E^c\oplus E^{uu}$ where $d=\dim(E^c)$.
If for each $x\in K$, the intersection $W^{uu}(x)\cap K$ is reduced to $\{x\}$,
then there exists a compact $d$-dimensional $C^1$-submanifold with boundary which contains $K$ in its interior.
Furthermore, at each $x\in K$, it is tangent to $E^c(x)$.
\end{coro}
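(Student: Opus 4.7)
The plan is simply to apply Theorem~\ref{t.global} to the continuous $d$-dimensional subbundle $E(x) := E^c(x)$ defined on $K$. This immediately produces a compact $C^1$-submanifold with boundary $\Si \subset M$ containing $K$ in its interior and tangent to $E^c$ at every point of $K$, which is precisely the statement. Everything therefore reduces to verifying the hypothesis of Theorem~\ref{t.global}: for every $x \in K$, $T_xK \subset E^c(x)$. This is the only place the assumption $W^{uu}(x) \cap K = \{x\}$ will be used.

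I argue by contradiction. Suppose there exist $x \in K$ and a unit vector $v \in T_xK$ with $v \notin E^c(x)$. Writing $v = v^c + v^{uu}$ with respect to the partially hyperbolic splitting at $x$, we have $v^{uu} \neq 0$. Unwinding the definition of the tangent set $T_xK$, one produces sequences $(x_n),(y_n) \subset K$ with $x_n \neq y_n$, both converging to $x$, whose chart-secants $(x_n - y_n)/\|x_n - y_n\|$ converge to $v$. Because $v$ has a definite $E^{uu}$-component, the pairs $(x_n, y_n)$ are uniformly far from being tangent to the center direction, and this will be amplified by the dynamics.

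The main step is to extract a pair of distinct points of $K$ on a common strong unstable leaf. Fix $\delta > 0$ smaller than a uniform size of local strong unstable manifolds of points of $K$, and let $k_n$ be the largest integer with $d(f^j(x_n), f^j(y_n)) < \delta$ for all $0 \leq j \leq k_n$. Since $d(x_n, y_n) \to 0$ one has $k_n \to +\infty$; by compactness of $K$, extract a subsequence so that $x'_n := f^{k_n}(x_n) \to x' \in K$ and $y'_n := f^{k_n}(y_n) \to y' \in K$, with $0 < d(x', y') \leq \delta$. The domination inequality
$$\|Df^n(z)\cdot u\| \leq C^{-1}\lambda_K^{-n}\|Df^n(z)\cdot w\|,\qquad u\in E^c(z),\; w\in E^{uu}(z),$$
applied along the orbits, combined with a strong unstable cone-field argument, forces the displacement $y'_n - x'_n$ (read in a chart at $x'_n$) to lie in an arbitrarily thin cone around $E^{uu}$ for $n$ large, and yields the geometric estimate $d(f^{-j}(x'_n), f^{-j}(y'_n)) \leq C\lambda_K^{-j}\delta$ for every $0 \leq j \leq k_n$, uniformly in $n$. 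Passing to the limit, $d(f^{-j}(x'), f^{-j}(y')) \leq C\lambda_K^{-j}\delta$ for all $j \geq 0$, which by the Hadamard--Perron characterization of the strong unstable manifold puts $y'$ in $W^{uu}(x')$. Since $x', y' \in K$ are distinct, this contradicts the hypothesis $W^{uu}(x')\cap K = \{x'\}$.

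The delicate technical point is the exponential geometric decay of the backward distances of the extracted pair: merely knowing that these distances stay below $\delta$ would not be enough to conclude that the limit $y'$ belongs to $W^{uu}(x')$ rather than to a larger center-unstable set. To make it rigorous one introduces a uniform cone field $\cC^{uu}$ around $E^{uu}$ on a neighborhood of $K$ that is forward-invariant (in the relative sense provided by the domination), observes that at time $k_n$ the displacement $y'_n - x'_n$ sits inside $\cC^{uu}$ thanks to the initial nontrivial $E^{uu}$-component of $v$, and uses the backward contraction of vectors in $\cC^{uu}$ at rate $\lambda_K^{-j}$ to obtain the required exponential bound, passing to the limit in $n$ only afterwards.
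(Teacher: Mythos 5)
Your overall strategy coincides with the paper's: reduce to Theorem~\ref{t.global} by showing $T_xK\subset E^c(x)$, argue by contradiction with a sequence of pairs whose secants have a definite $E^{uu}$-component, push the pairs forward until their distance reaches a fixed macroscopic scale $\delta$, extract a limit pair $(x',y')\in K\times K$ with $0<d(x',y')\le\delta$, and finally show $y'\in W^{uu}(x')$. This is exactly the structure of Proposition~\ref{p.manifold}, Lemma~\ref{l.cone}, and Lemma~\ref{l.critere} in the paper. The preliminary steps of your sketch are fine (in particular $d(x',y')>0$ follows from your choice of $k_n$ together with the bound $d(x'_n,y'_n)\ge\delta/\|Df\|$, which you use implicitly).

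The gap is in the last step, where you claim that the pure exponential bound
$d\bigl(f^{-j}(x'),f^{-j}(y')\bigr)\le C\,\lambda_K^{-j}\,\delta$
``by the Hadamard--Perron characterization of the strong unstable manifold puts $y'$ in $W^{uu}(x')$.'' This is not a valid characterization of $W^{uu}$, because the constant $\lambda_K$ from the partial hyperbolicity hypothesis need not lie in the spectral gap between the $E^c$- and $E^{uu}$-rates. Concretely, at a fixed point with $Df=\operatorname{diag}(3,6)$ ($E^c$ eigenvalue $3$, $E^{uu}$ eigenvalue $6$) one may take $\lambda_K=2$; a point $y=x+t\,e_c$ on the center direction then satisfies $d(f^{-n}(x),f^{-n}(y))=|t|\,3^{-n}\le C\lambda_K^{-n}\delta$ for every $n$, even though $y\notin W^{uu}(x)$. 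Thus your estimate, while true as an upper bound, is strictly weaker than what is needed: the correct criterion compares $d(f^{-n}(x'),f^{-n}(y'))$ with the backward contraction along $E^c$, requiring $d(f^{-n}(x'),f^{-n}(y'))\big/\min_{\|u\|=1,\,u\in E^c(x')}\|Df^{-n}(x')\cdot u\|\to 0$. This relative comparison is precisely what the paper establishes in the proof of Lemma~\ref{l.critere} (the inequality $\|Df^{-n}\cdot u\|\ge 2^n\,d(f^{-n}(x),f^{-n}(y))$), and it is the reason that lemma retains the hypothesis that the backward pairs stay \emph{in the cone field} $\cC_1$ rather than merely recording an exponential bound. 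Your argument does actually produce the stronger, relative estimate (since the displacement contracts backward at the rate of $E^{uu}$, which by domination beats the $E^c$ rate by a factor $\lambda_K^j$), but as written the step from the absolute exponential bound to membership in $W^{uu}$ is a non-sequitur; you need to retain either the cone condition or the comparison with the $E^c$-rate when passing to the limit.
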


\begin{rema}\label{r.smooth}
One can assume that $\Sigma\setminus K$ is smooth. Indeed, one can modify $\Sigma$
outside a small neighborhood $U$ of $K$ by an arbitrarily small $C^1$-perturbation, such that
$U\cap \Si$ is smooth. A converging sequence of such perturbation when $U$ decreases to $K$
gives the property.
\end{rema}

\subsection{Whitney's extension theorem: the solution of the local problem}

One can find in \cite[Appendix A]{AR} the following statement.

\begin{whitney} Let $A\subset \RR^d$ be a closed subset and $f\colon A\to \RR^{n-d}$ be a continuous function.
The two following properties are equivalent:
\begin{enumerate}
\item $f$ extends to a $C^1$ function $\Phi\colon\RR^d\to \RR^{n-d}$;
\item there is a continuous map $D$ from $A$ to the space of linear maps $L(\RR^d,\RR^{n-d})$
such that if one defines the function $R\colon A\times A\to \RR^{n-d}$ by 
$$R(x,y)= \left(f(y)-f(x)\right)- D(x)(y-x),$$ 
then for each $z\in A$ the quantity $\frac {\|R(x,y)\|}{\|y-x\|}$ tend to $0$ as $x\neq y$ tends to $z$.
\end{enumerate}
Moreover, if $f$ and $D$ verify the second property, then the extension $\Phi$ can be chosen so that $D_x\Phi=D(x)$
at each $x\in A$.
\end{whitney}

It can be restated as follows.

\begin{coro}\label{c.whitney} Let $K\subset \RR^n$ be a compact set such that
\begin{itemize}
\item[--] every $(n-d)$-dimensional affine space
$(x_1,\dots,x_d)\times \RR^{n-d}$ meets $K$ in at most one point,
\item[--] for every $x\in K$ there is a linear subspace $E(x)\subset \RR^n$ of dimension $d$,
transverse to $\{0\}^d\times\RR^{n-d}$ containing the tangent set $T_xK$ of $K$ at $x$,
\item[--] the map $x\mapsto E(x)$ is continuous.
\end{itemize}
Then $K$ is contained is the graph $\Ga$ of a $C^1$-map $\Phi\colon\RR^d\to\RR^{n-d}$ and
the tangent space $T_x\Ga$ coincides with $E(x)$ at each point $x$ of $K$.
\end{coro}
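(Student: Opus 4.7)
\medskip
\noindent\textbf{Proof plan for Corollary~\ref{c.whitney}.}
The plan is to recast the hypotheses so that Whitney's extension theorem can be applied coordinatewise, the main work being to verify the Taylor-type condition~(2) from the data on $K$.

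First I would use the first hypothesis to realize $K$ as a graph. Let $\pi\colon \RR^n=\RR^d\times\RR^{n-d}\to \RR^d$ be the projection on the first factor. Since each vertical fiber meets $K$ at most once, $K$ is the graph over $A:=\pi(K)$ of a map $f\colon A\to \RR^{n-d}$, and $f$ is continuous because $A$ is compact and $\pi|_K$ is a continuous bijection. Next, since each $E(x)$ is a $d$-plane transverse to $\{0\}^d\times \RR^{n-d}$, it is the graph of a unique linear map $D(\pi(x))\colon \RR^d\to \RR^{n-d}$; the continuity of $x\mapsto E(x)$ on $K$ translates into continuity of $D\colon A\to L(\RR^d,\RR^{n-d})$. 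By compactness and continuity of $E$, the transversality is uniform: there exists $c_0>0$ such that $\|u\|\geq c_0\|(u,v)\|$ for every $(u,v)\in E(x)$ and every $x\in K$.

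The heart of the argument is to check that $f$ and $D$ satisfy condition~(2) of Whitney's theorem. Suppose by contradiction that this fails at some $z\in A$: there exist $\varepsilon_0>0$ and sequences $a_n\neq b_n$ in $A$ converging to $z$ with
\[
\|R(a_n,b_n)\|\geq \varepsilon_0\,\|b_n-a_n\|,\qquad R(a,b):=f(b)-f(a)-D(a)(b-a).
\]
Set $x_n:=(a_n,f(a_n))$ and $y_n:=(b_n,f(b_n))$ in $K$. Extracting a subsequence, assume $w_n:=(y_n-x_n)/\|y_n-x_n\|$ converges to a unit vector $v\in \RR^n$. By the very definition of $T_zK$, the vector $v$ belongs to $\tau_0\subset T_zK$, hence to $E(z)$; writing $v=(v_1,v_2)$ we therefore have $v_2=D(z)\,v_1$. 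Moreover, by the uniform transversality above applied at points of $K$ close to $z$ (and continuity of $E$), the ratio $\|b_n-a_n\|/\|y_n-x_n\|$ is bounded below by a positive constant, so $\|R(a_n,b_n)\|/\|y_n-x_n\|$ also stays bounded below.

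Now I compute, using $D(a_n)\to D(z)$:
\[
\frac{R(a_n,b_n)}{\|y_n-x_n\|}=\frac{f(b_n)-f(a_n)}{\|y_n-x_n\|}-D(a_n)\cdot\frac{b_n-a_n}{\|y_n-x_n\|}\longrightarrow v_2-D(z)v_1=0,
\]
contradicting the lower bound. Hence Whitney's condition~(2) holds, and the theorem provides a $C^1$-extension $\Phi\colon\RR^d\to\RR^{n-d}$ of $f$ with $D\Phi(a)=D(a)$ for every $a\in A$. The graph $\Gamma$ of $\Phi$ contains $K$, and at each $x=(a,f(a))\in K$ its tangent space is $\{(u,D(a)u):u\in\RR^d\}=E(x)$, which is the conclusion.

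The only substantive obstacle is the step that extracts a limit secant and identifies it as an element of $E(z)$ using the definition of $T_zK$; everything else is linear algebra (turning the graph/transversality structure into the statement of Whitney) together with continuity of $D$ on $A$.
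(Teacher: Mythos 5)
Your proof is correct and rests on the same key ideas as the paper's: realize $K$ as the graph of a continuous $f$ over $A=\pi(K)$ using the single-point-per-fiber hypothesis, encode $E(x)$ as the graph of a linear map $D$ and verify Whitney's condition (2) by observing that limits of normalized secants of $K$ lie in $\tau_0\subset T_pK\subset E(p)$, so that the horizontal component is bounded below (transversality) and the second component matches $D(z)$ applied to the first. The only difference is cosmetic: you argue by contradiction with extracted subsequences, while the paper computes the limit directly (its chain of displayed estimates~\eqref{e.tangent2} etc.\ is the unrolled form of the same argument). The contradiction framing perhaps makes the role of compactness and of the uniform lower bound on the horizontal component more explicit, but the content is identical.
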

\begin{proof}
Let us denote by $A$  the projection of $K$ on $\RR^d\times \{0\}^{n-d}$ along the vertical direction:
$K$ is the graph of a function $f\colon A\to \RR^{n-d}$. Since $K$ is compact, this map is continuous.

For each point $x$, the $d$-dimensional space $E(x)$ has been assumed to be transverse to the vertical direction
so that it is the graph of a linear map $D(x)\colon \RR^d\to \RR^{n-d}$. The map $x\mapsto D(x)$ is continuous
since $x\mapsto E(x)$ is continuous.

Consider a point $z\in A$ and $p=(z,f(z))$ in $K$.
The hypothesis that $T_pK\subset E(p)$ means that every $v\in T_pK$ can be writen as $(u,D(z)u)$.  Hence for any $x,y\in A$
in a small neighborhood of $z$, the following quantity is very small:
\begin{equation*}
\frac{\left(x-y,f(x)-f(y)\right)}{\|\left(x-y,f(x)-f(y)\right)\|}-\frac{\left(x-y,D(z)(x-y)\right)}{\|\left(x-y,D(z)(x)-y)\right)\|}.
\end{equation*}

After multiplying by the uniformly bounded quotient $\frac{\left\|(x-y,D(z)(x-y))\right\|}{\|x-y\|}$,
we get that
\begin{equation}
\frac{\left\|(x-y,D(z)(x-y))\right\|}{\|\left(x-y,f(x)-f(y)\right)\|}
\frac{\left(x-y,f(x)-f(y)\right)}{\|x-y\|}-\frac{\left(x-y,D(z)(x-y)\right)}{\|x-y\|}
\to 0
\;
\mbox{when $x\neq y$ tend to $z$}.
\label{e.tangent2}
\end{equation}
Considering the projection on the horizontal coordinates $\RR^d$,
one deduces that $\frac{\left\|(x-y,D(z)(x-y))\right\|}{\|\left(x-y,f(x)-f(y)\right)\|}$ goes to $1$.
Since $\frac{\left\|(x-y,D(z)(x-y))\right\|}{\|x-y\|}$
is uniformly bounded, we deduce from~\eqref{e.tangent2} that
\begin{equation*}
\frac{\left(x-y,f(x)-f(y)\right)}{\|x-y\|}-\frac{\left(x-y,D(z)(x-y)\right)}{\|x-y\|}
\to 0
\;
\mbox{when $x\neq y$ tend to $z$}.
\end{equation*}
By projecting on the vertical coordinates $\RR^{n-d}$
and by continuity of $x\mapsto D(x)$ at $z$, one gets
\begin{equation*}
\frac{f(x)-f(y)-D(x)(x-y)}{\|x-y\|}
\to 0
\;
\mbox{when $x\neq y$ tend to $z$}.
\end{equation*}
This gives the second of the properties of Whitney's extension theorem and this theorem
concludes the proof of the corollary.
\end{proof}
\medskip

Consider now a subset $K$ of the $n$-dimensional manifold $M$ and a $d$-dimensional linear subspace $E(x)\subset T_xM$
at each point $x\in K$. We introduce two definitions:
\begin{itemize}
\item[--] A diffeomorphism $\varphi$ from an open set $U\subset M$ to
$]-1,1[^n$ is called an \emph{adapted chart} of $(K,E)$ if $\varphi(K\cap U)\subset ]-1,1[^d\times\{0\}^{n-d}$
and $D\varphi(x).E(x)$ coincides with the linear space $\RR^d\times\{0\}^{n-d}$ for each $x\in K\cap U$.
\item[--] A pair $(U,\Si)$, where $U\subset M$ is open and $\Si\subset U$ is a sub-manifold,
\emph{carries} $(K,E)$ if $K\cap U\subset \Si$ and $T_x\Si= E(x)$ for each $x\in K\cap U$.
\end{itemize}

\begin{coro}\label{c.adapted} Let us consider a compact set $K\subset M$.
If the map $x\mapsto E(x)$ is continuous on $K$, then
each point $x\in K$ is contained in an adapted chart of $(K,E)$.
\end{coro}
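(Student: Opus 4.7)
The plan is to reduce the manifold statement to the Euclidean Corollary~\ref{c.whitney}. Although only continuity of $E$ is written in the hypothesis, the intended setting (the one in which Corollary~\ref{c.adapted} is used in Theorem~\ref{t.global}) also supplies the pointwise inclusion $T_xK\subset E(x)$, which is necessary to force the image of $K$ into a horizontal slice; I will use this tangency condition throughout. Fix $x_0\in K$ and choose a smooth chart $\psi_0\colon V_0\to\RR^n$ with $\psi_0(x_0)=0$. Postcomposing $\psi_0$ with a linear automorphism of $\RR^n$, one may assume $D\psi_0(x_0)\cdot E(x_0)=\RR^d\times\{0\}^{n-d}$. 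Transporting by $\psi_0$ produces the compact set $K':=\psi_0(K\cap V_0)$ and the continuous bundle $E'(y):=D\psi_0(\psi_0^{-1}(y))\cdot E(\psi_0^{-1}(y))$ on $K'$, with $E'(0)=\RR^d\times\{0\}^{n-d}$; since the tangent set is defined chart-invariantly, the inclusion $T_yK'\subset E'(y)$ transfers.

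The next step is to restrict to a small closed ball $B=\overline{B(0,\rho)}\subset\RR^n$ and verify the hypotheses of Corollary~\ref{c.whitney} for $K'\cap B$. Continuity of $E'$ and the transversality of each $E'(y)$ to $\{0\}^d\times\RR^{n-d}$ are immediate for $\rho$ small. The delicate point is the uniqueness of vertical fibers over $K'\cap B$. For this I would first establish a uniform tangency estimate via a standard compactness/extraction argument: for every $\varepsilon>0$ there exists $\delta>0$ such that whenever $y,y'\in K'$ satisfy $0<\|y-y'\|<\delta$, the unit vector $(y'-y)/\|y'-y\|$ lies $\varepsilon$-close to $E'(y)$. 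Assuming this fails, one extracts sequences $y_n,y'_n\in K'$ accumulating on some $y_\infty$ with $(y'_n-y_n)/\|y'_n-y_n\|\to v\notin E'(y_\infty)$, contradicting $v\in T_{y_\infty}K'\subset E'(y_\infty)$. Choosing $\varepsilon$ small enough that no vector $\varepsilon$-close to any $E'(y)$ (for $y$ near $0$) is vertical, and then $\rho<\delta/2$, prevents two distinct points of $K'\cap B$ from sharing the same horizontal projection.

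With the three hypotheses in place, Corollary~\ref{c.whitney} produces a $C^1$ map $\Phi\colon\RR^d\to\RR^{n-d}$ whose graph $\Gamma$ contains $K'\cap B$ and satisfies $T_y\Gamma=E'(y)$ at every $y\in K'\cap B$. The $C^1$ diffeomorphism $\Psi(u,v):=(u,v-\Phi(u))$ straightens $\Gamma$ onto $\RR^d\times\{0\}^{n-d}$, and its differential at any $y\in K'\cap B$ sends $E'(y)=T_y\Gamma$ precisely onto $\RR^d\times\{0\}^{n-d}$. Composing $\Psi\circ\psi_0$ with an affine rescaling of $\RR^n$ preserving the splitting $\RR^d\times\RR^{n-d}$ (for instance a positive scalar homothety) and restricting the domain to an appropriate open neighborhood $U$ of $x_0$ whose image is exactly $(-1,1)^n$ yields the required adapted chart $\varphi\colon U\to(-1,1)^n$.

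The one nontrivial step is the uniform tangency estimate underlying the vertical-fiber injectivity: this is where compactness of $K$ and the pointwise inclusion $T_xK\subset E(x)$ combine. All other ingredients are routine transport into $\RR^n$, a direct application of Corollary~\ref{c.whitney}, and the standard straightening of a $C^1$ graph.
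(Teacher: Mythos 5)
Your argument is correct and follows the same strategy as the paper: transport to $\RR^n$ with $E(x_0)$ horizontal, use $T_xK\subset E(x)$ to obtain vertical-fiber injectivity near $x_0$, apply Corollary~\ref{c.whitney}, and straighten the resulting $C^1$ graph. Your compactness/extraction lemma just spells out the uniformity step that the paper compresses into ``shrinking the chart at $p$ if necessary,'' and you are right that the tangency hypothesis $T_xK\subset E(x)$, although omitted from the stated hypotheses of the corollary, is used in the paper's own proof and is supplied by the context of Theorem~\ref{t.global}.
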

\begin{proof}
Let us choose some coordinates around a point $\in K$ such that the vertical
plane $\{0\}^d\times \RR^{n-d}$ is transverse at $p$ to $E(p)$. As $T_pK\subset E(p)$,
shrinking the chart at $p$ if necessary, one can assume that for $x\neq y$ in $K$ close to $p$, $\frac{x-y}{\|x-y\|}$
does not belong to the vertical $(n-d)$-dimensional plane.
Hence, any vertical $(n-d)$-dimensional affine space in this chart
meets $K$ is at most one point. We can thus apply Corollary~\ref{c.whitney} and gets a $C^1$-graph $\Gamma$.
A chart at $p$ which trivializes the graph is an adapted chart of $(K,E)$.
\end{proof}

\subsection{From local to global}

Theorem~\ref{t.global} is now a consequence of corolary~\ref{c.adapted} and of the following proposition.

\begin{prop}\label{p.global}
Let $K\subset M$ be a compact subset, $d>0$ be an integer and at each $x\in K$ let
$E(x)\subset T_xM$ be a $d$-dimensional subspace such that $K$ is covered by charts adapted to $(K,E)$.

Then there exists an open $d$-dimensional submanifold $\Si\subset M$
such that $K\subset \Si$ and $T_x\Si=E(x)$ for $x\in K$.
\end{prop}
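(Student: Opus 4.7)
\begin{demo}[Proof proposal]
The plan is to reduce to a finite adapted cover by compactness and then glue the local models by induction, using a bump-function interpolation. The key observation is that the horizontal plane in an adapted chart is already a perfectly good smooth submanifold carrying the relevant piece of $K$, so only the transitions between charts need care.

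\textbf{Finite cover and local models.} By compactness of $K$, the hypothesis yields finitely many adapted charts $\varphi_i\colon U_i\to{]-1,1[}^n$ with $K\subset U_1\cup\cdots\cup U_\ell$. For each $i$, set $\Sigma_i:=\varphi_i^{-1}({]-1,1[}^d\times\{0\}^{n-d})$; this is a $C^\infty$ $d$-submanifold of $U_i$, contains $K\cap U_i$, and has tangent space $E(x)$ at every $x\in K\cap U_i$. Thus each $(U_i,\Sigma_i)$ carries $(K,E)$ in the sense defined above.

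\textbf{Inductive gluing.} I construct inductively, for $k=1,\ldots,\ell$, an open neighborhood $V^{(k)}$ of $K\cap(U_1\cup\cdots\cup U_k)$ and a $C^1$ $d$-submanifold $\Sigma^{(k)}\subset V^{(k)}$ which carries $(K,E)$ on $U_1\cup\cdots\cup U_k$. Take $V^{(1)}=U_1$, $\Sigma^{(1)}=\Sigma_1$. For the inductive step, combine $\Sigma^{(k-1)}$ with $\Sigma_k$, working in the chart $\varphi_k$. Since $\Sigma^{(k-1)}$ is $C^1$ and has tangent space $\RR^d\times\{0\}^{n-d}$ in this chart at each point of the compact set $\varphi_k(K\cap U_k\cap V^{(k-1)})$, the implicit function theorem (together with the compactness of $K\cap U_k$) allows one, after shrinking $V^{(k-1)}$, to represent $\varphi_k(\Sigma^{(k-1)}\cap V^{(k-1)}\cap U_k)$ as the graph of a $C^1$ map $f\colon\Omega\to\RR^{n-d}$ with $f(p)=0$ and $Df(p)=0$ at every $p\in\varphi_k(K\cap U_k\cap V^{(k-1)})$; in the same chart $\Sigma_k$ is the graph of the zero map. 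Pick now a $C^1$ cutoff $\chi\colon M\to[0,1]$ equal to $1$ on a neighborhood of the compact set $K\cap U_k\setminus V^{(k-1)}$ and supported in a small open subset of $U_k$ disjoint from $K\setminus U_k$. Define $g:=(1-\chi)f$ in the chart $\varphi_k$ (extending $f$ by $0$ where needed, which is legitimate on a neighborhood of $\{\chi=1\}$ since that region can be arranged to lie in the domain $\Omega$ or to have $1-\chi=0$). Set $\Sigma^{(k)}$ equal, inside $U_k$, to the graph of $g$ and, outside $\mathrm{supp}(\chi)$, to $\Sigma^{(k-1)}$.

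\textbf{Verification and main obstacle.} On $\{\chi=0\}$ one has $g=f$, so the two definitions agree on the piece of $\Sigma^{(k-1)}$ contained in the chart; on $\{\chi=1\}$ one has $g=0$, so $\Sigma^{(k)}$ coincides locally with $\Sigma_k$ and therefore contains the new piece $K\cap U_k\setminus V^{(k-1)}$. Hence $\Sigma^{(k)}$ is a $C^1$ $d$-submanifold containing $K\cap(U_1\cup\cdots\cup U_k)$. Tangency to $E$ at $x\in K$ follows from $Dg(p)=-D\chi(p)\,f(p)+(1-\chi(p))\,Df(p)=0$, where $p=\varphi_k(x)$, since $f(p)=Df(p)=0$; thus the tangent space of the graph of $g$ at $(p,0)$ is $\RR^d\times\{0\}^{n-d}$, which pulls back to $E(x)$. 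After $\ell$ steps, $\Sigma:=\Sigma^{(\ell)}$ is the required open submanifold. The main obstacle is the graph-representation step of the inductive argument: one has to show that, after a suitable shrinking of $V^{(k-1)}$, the $C^1$ piece $\Sigma^{(k-1)}$ really is a graph over $\RR^d\times\{0\}^{n-d}$ in the new chart $\varphi_k$ on a neighborhood of $K\cap U_k\cap V^{(k-1)}$. This rests on the continuity of the tangent bundle of $\Sigma^{(k-1)}$ at points of $K$, where it equals $E$, and on the compactness of $K\cap U_k$; once it is secured, the bump-function gluing is routine.
\end{demo}
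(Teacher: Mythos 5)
Your approach is the same as the paper's in structure and spirit: take a finite adapted cover, glue chart by chart, and at each step use a bump function to interpolate between the previously built submanifold (written as a graph in the new chart) and the horizontal plane of the new chart, relying on $f(p)=Df(p)=0$ at points of $K$ to preserve tangency to $E$. That is indeed the right plan, and the tangency computation at the end of your Verification paragraph is correct.

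There are, however, two genuine gaps. First, the compactness claims are false as stated: $K\cap U_k\cap V^{(k-1)}$ and $K\cap U_k\setminus V^{(k-1)}$ are intersections of a compact set with an \emph{open} set $U_k$, hence in general not compact. Your cutoff $\chi$ must equal $1$ on a neighborhood of the ``new'' points $K\cap U_k\setminus V^{(k-1)}$ while having compact support in $U_k$; if that set has points accumulating on $\partial U_k$ this is impossible, and more subtly, ``shrinking $V^{(k-1)}$'' to make the graph representation work cannot be reconciled with the inductive requirement that $V^{(k-1)}$ still contain $K\cap(U_1\cup\cdots\cup U_{k-1})$. The paper avoids this by fixing, once and for all, relatively compact open sets $V_i$ with $\overline{V_i}\subset U_i$ still covering $K$, and by building the induction around the compact sets $O_i=\overline{V_1\cup\cdots\cup V_i}$ rather than $K\cap(U_1\cup\cdots\cup U_k)$; the neighborhood $W_i$ carried at step $i$ only needs to contain $O_i$, which makes all the compactness and shrinking arguments close.

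Second, you correctly flag the graph-representation step as the main obstacle, but you do not resolve it, and the outline you give (``continuity of the tangent bundle $\ldots$ and compactness'') is missing the essential mechanism. One must replace $\Sigma^{(k-1)}$ by its intersection $\Si_\varepsilon$ with a small $\varepsilon$-neighborhood $W_\varepsilon$ of the compact $O_{k-1}$; this forces $\Si_\varepsilon$ to lie close to $K$ and allows one to show, by a compactness argument at each point of the horizontal slice of the new chart, that (a) $\Si_\varepsilon\cap{]-\delta,\delta[}^n$ is genuinely a single-valued $C^1$ graph $\Phi$ over an open set $S$, with no stray far-from-$K$ sheets, and (b) every point of $K$ lying in $S\times{]-\delta,\delta[}^{n-d}$ already lies on that graph (this uses the adapted-chart property that a vertical fiber meets $K$ at most once). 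Point (b) is needed to ensure that after interpolation no point of $K$ is left off the final submanifold. This is precisely the content of Sublemma~\ref{l.graph} in the paper, and without it the gluing does not go through. The bump-function interpolation itself is, as you say, routine once these are in place.
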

\begin{proof}
Consider a finite covering $\{U_i\}_{i\in\{1,\dots,\ell\}}$ of $K$ by charts adapted to $(K,E)$
and for each $i$ fix some open subset $V_i$ whose closure is contained in $U_i$, such that $K\subset\bigcup_i V_i$.
By induction one will build open sets $W_i$, for $i=1,\dots,\ell$,
containing $O_i:=\overline{\bigcup_{j=1}^i V_j}$ and a submanifold $\Si_i$ such that the pair $(W_i,\Si_i)$ carries $(K,E)$.
The open submanifold $\Si_\ell$ obtained this way will satisfy the conclusion of Proposition~\ref{p.global}
since $(W_\ell,\Si_\ell)$ carries $(K,E)$ and since $K$ is contained in $W_\ell$. 

For the first step of the induction,
one chooses $W_1=U_1$ and $\Si_1$ is the horizontal $d$-dimensional plane in the coordinates of $U_1$.
The other steps are obtained by applying the next Lemma~\ref{l.recurrence} to $U_{i+1},V_{i+1},W_i,\Si_i$ and $O_i$.
\end{proof}

\begin{lemm}\label{l.recurrence}
Let $U$ be an adapted chart of $(K,E)$ and $(W,\Si)$ be a pair carrying $(K,E)$.
Consider an open set $V$ whose closure is contained in $U$ and an open set $O$ whose closure is contained in $W$.
Then there is a pair $(W',\Si')$ carrying $(K,E)$ such that $\overline{V\cup O}\subset W'$.
\end{lemm}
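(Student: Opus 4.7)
The plan is to glue $\Si$ with the horizontal plane $\Si_U := \varphi^{-1}(\RR^d\times\{0\})$ of the adapted chart via a bump-function interpolation performed inside $U$. Both submanifolds pass through $K\cap W\cap U$ with a common tangent space $E$, which will make the interpolated submanifold tangent to $E$ at every $K$-point of $W'$.

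Writing chart coordinates $(x,y)\in\RR^d\times\RR^{n-d}$ and letting $\pi\colon U\to(-1,1)^d$ be the first projection, I first use the continuity of $T\Si$ to realize $\Si\cap N$ as the graph $y = g(x)$ of a $C^1$ function $g\colon D := \pi(N)\to \RR^{n-d}$ on some open neighborhood $N\subset W\cap U$ of $K\cap W\cap U$, with $g$ and $Dg$ vanishing on $\pi(K\cap W\cap U)$. After shrinking $N$, the norm $\|g\|_{C^0(D)}$ can be made as small as desired. Then I extend $g$ to a $C^1$ function $\tilde g\colon (-1,1)^d\to \RR^{n-d}$ supported in $D$ and agreeing with $g$ on a slightly smaller open set $D_0\subset \overline{D_0}\subset D$ containing $\pi(K\cap W\cap U)$.

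Next I choose open sets $V_1', V_1$ with $\overline V\subset V_1'\subset \overline{V_1'}\subset V_1\subset \overline{V_1}\subset U$ such that the shell $V_1\setminus \overline{V_1'}$ is contained in $N\cap \pi^{-1}(D_0)$; this is arranged by absorbing the compact ``bad'' set $\overline V\setminus N\subset U$ into the interior of $V_1'$, then taking $V_1$ close to $V_1'$. Using Proposition~\ref{p.smoothing} I pick a smooth bump $\rho\colon M\to [0,1]$ supported in $\overline{V_1'}$ and equal to $1$ on a neighborhood of $\overline V\cup (K\cap V_1\setminus W)$. In the chart I then set
\[
F(x,y) := y - \bigl(1-\rho(x,y)\bigr)\,\tilde g(x),\qquad \Si'_U := F^{-1}(0)\cap V_1.
\]
With $\|g\|_{C^0}$ chosen small compared to $\|D_y\rho\|^{-1}$, the partial derivative $D_yF = I + \tilde g\otimes D_y\rho$ is everywhere invertible, so $\Si'_U$ is a $C^1$-submanifold of $V_1$ by the implicit function theorem. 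The candidate pair is $W' := V_1\cup (W\setminus\overline{V_1'})$ and $\Si' := \Si'_U\cup (\Si\cap (W\setminus\overline{V_1'}))$.

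Finally I check the properties. On the overlap $V_1\setminus\overline{V_1'}\subset N\cap \pi^{-1}(D_0)$, $\rho$ vanishes and $\tilde g = g$, so $F = y - g(x)$ and $\Si'_U$ coincides there with $\Si\cap N$; hence $\Si'$ is a well-defined $C^1$-submanifold of $W'$, and $\overline{V\cup O}\subset W'$ is immediate from the construction. For $p\in K\cap W'$ with $\varphi(p) = (x_0, 0)$: if $p\in W\setminus\overline{V_1'}$ then $\Si' = \Si$ locally carries $p$; if $p\in V_1$, then either $p\in W$ (forcing $x_0\in D_0$ with $\tilde g(x_0) = 0$ and $D\tilde g(x_0) = 0$) or $p\notin W$ (forcing $\rho\equiv 1$ near $(x_0, 0)$), and in either subcase $F(x_0, 0) = 0$ and $DF(x_0, 0) = (0, I)$, so $p\in \Si'_U$ with horizontal tangent $=E(p)$. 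The hard part is the coordinated shrinking: arranging the shell $V_1\setminus \overline{V_1'}$ inside $N\cap \pi^{-1}(D_0)$ while $V_1'$ contains a neighborhood of both $\overline V$ and $K\cap V_1\setminus W$ — this is possible because the bad set $\overline V\setminus N$ is compactly contained in $U$ and disjoint from $\overline O\subset W$.
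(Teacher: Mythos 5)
Your overall strategy — interpolate between $\Si$ and the horizontal chart plane with a bump function, and check the carrying property pointwise using $T_pK\subset E(p)$ — is the same as the paper's. But there is a genuine gap exactly where you flag it: the ``coordinated shrinking'' cannot, in general, be arranged, and the justification you give for it (compactness of $\overline V\setminus N$ in $U$, disjointness from $\overline O$) does not address the actual obstruction.

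The obstruction is this. Your constraints $\overline V\subset V_1'\subset\overline{V_1'}\subset V_1\subset\overline{V_1}\subset U$ together with $V_1\setminus\overline{V_1'}\subset N\cap\pi^{-1}(D_0)$ force $\partial V_1\subset\overline{N\cap\pi^{-1}(D_0)}\subset\overline N\subset W\cap U$ (a boundary point of $V_1$ outside $\overline{N\cap\pi^{-1}(D_0)}$ would be a limit of points of $V_1\setminus(N\cap\pi^{-1}(D_0))\subset\overline{V_1'}\subset V_1$, impossible). But $\partial V_1$ topologically separates $\overline V$ from $M\setminus\overline{V_1}\supset M\setminus U$, and nothing forces $\overline V$ to stay near $K\cap W\cap U$ or even inside $W$. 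Concretely, take $M=\RR^2$, $K=\{(0,0)\}$, $E=\RR\times\{0\}$, $U=(-1,1)^2$ an adapted chart, $W=(-1,1)\times(-\tfrac12,\tfrac12)$, $\Si$ a $C^1$ curve in $W$ through the origin tangent to the horizontal, $V=(-0.9,0.9)^2$, $O=(-0.4,0.4)^2$. Here $N\subset W\cap U$ has $|y|<\tfrac12$, while any compact $\overline{V_1}\subset U$ containing $[-0.9,0.9]^2$ has boundary points with $|y|\geq 0.9$, so $\partial V_1\not\subset W$ and the shell condition fails. Without the shell condition your argument that $\Si'_U$ and $\Si\cap(W\setminus\overline{V_1'})$ agree on the overlap breaks down, and $\Si'$ need not be a submanifold (the two pieces can intersect transversally in the interpolation zone of $\tilde g$).

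What causes this is that your $W'=V_1\cup(W\setminus\overline{V_1'})=V_1\cup W$ retains all of $W$; you are then forced to match $\Si$ with the chart plane across a region that can be far from $K$. The paper avoids this by first replacing the carrying pair $(W,\Si)$ by $(W_\varepsilon,\Si_\varepsilon)$, where $W_\varepsilon$ is a small neighborhood of $\overline O$ only (Sublemma~\ref{l.graph}), so that the gluing region and the final $W'=W_\varepsilon\cup T\cup{]{-}\eta,\eta[}^n$ are both controlled; in particular the paper's $W'$ does \emph{not} contain all of $W$. Your construction needs an analogous preliminary shrinking of the carried piece near $\overline O$ before the bump-function interpolation can be carried out.
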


The proof of Lemma~\ref{l.recurrence} is obtained after two intermediate lemmas.
\medskip

In the first lemma we prove that in the coordinates of the adapted chart $U$,
$\Sigma$ can be considered as a graph.
We remind that the adapted chart identifies $U$ with $]-1,1[^n$ and we introduce some constants:
\begin{itemize}
\item[--] $\eta<\delta$ in $]0,1[$ are chosen to define the smaller rectangles $]-\delta,\delta[^n,]-\eta,\eta[^n$ of $U$.
By chosing $\eta$ close to $1$, one can assume that they contain $\overline{V}$,
\item[--] $\varepsilon>0$ allows to define the (open) $\varepsilon$-neighborhood $W_\varepsilon$ of $\overline{O}$ and
the intersection $\Si_\varepsilon:=\Si\cap W_\varepsilon$.
Notice that the pair $(W_\varepsilon, \Si_\varepsilon)$ still carries $(K,E)$.
\end{itemize}

\begin{sublemm}\label{l.graph}
For $\varepsilon>0$ small enough, the intersection $\Si_\varepsilon\cap ]-\delta,\delta[^n$ is the graph of a $C^1$-function
$\Phi\colon S\to ]-\eta,\eta[^{n-d}$ defined on an open subset of $]-\delta,\delta[^d$.
Furthermore, if $z\in S\times]-\delta,\delta[^{n-d}$ belongs to $K$, then $z$ belongs to the graph of $\Phi$.
\end{sublemm}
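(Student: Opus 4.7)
The plan is to use the adapted chart to reduce the graph property to a transversality statement and then combine the implicit function theorem with a global injectivity argument. In the coordinates of $U=\;]-1,1[^n$, the set $K\cap U$ lies in the horizontal plane $\RR^d\times\{0\}^{n-d}$, and for each $x\in K\cap W\cap U$ one has $T_x\Si=\RR^d\times\{0\}^{n-d}$ by combining the adapted-chart property with the fact that $(W,\Si)$ carries $(K,E)$. Since $\Si$ is a $C^1$-submanifold of $W$, the field $y\mapsto T_y\Si$ is continuous on $\Si$.

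First I would use the compactness of $K\cap\overline O\cap [-\delta,\delta]^n$ and the continuity of $T\Si$ to produce an open neighborhood $N\subset\Si$ of this set on which (i) $T_y\Si$ makes a small angle with the horizontal direction $\RR^d\times\{0\}^{n-d}$, so that the horizontal projection $\pi\colon(x_1,\dots,x_n)\mapsto(x_1,\dots,x_d)$ is an immersion, and (ii) the vertical coordinate of $y$ lies in $]-\eta,\eta[^{n-d}$. The implicit function theorem then presents $N$ locally as the graph of a $C^1$-function over open subsets of $]-\delta,\delta[^d$ with values in $]-\eta,\eta[^{n-d}$.

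The main obstacle is to pass from this local graph picture to a global one on $\Si_\varepsilon\cap\;]-\delta,\delta[^n$. I would first fix $\varepsilon>0$ small enough that $\Si_\varepsilon\cap\;]-\delta,\delta[^n\subset N$, using that after possibly replacing $\Si$ by a smaller neighborhood of $K$ in $\Si$ (cf. Remark~\ref{r.smooth}), the points of $\Si$ lying in the $\varepsilon$-neighborhood of $\overline O\cap [-\delta,\delta]^n$ remain close to $K$. Then I would establish the injectivity of $\pi$ on $\Si_\varepsilon\cap\;]-\delta,\delta[^n$: two distinct points above the same horizontal coordinate are both obtained from local graphs, and the transversality in (i) together with the smallness of $\varepsilon$ forces these local graphs to coincide (otherwise the vertical range in (ii) would have to exceed the distance between the two points, contradicting~(ii) after choosing $\eta$ and $\varepsilon$ small enough). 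Once injectivity is secured, we set $S:=\pi(\Si_\varepsilon\cap\;]-\delta,\delta[^n)$, which is an open subset of $]-\delta,\delta[^d$, and define $\Phi\colon S\to\;]-\eta,\eta[^{n-d}$ as the inverse of $\pi|_{\Si_\varepsilon\cap\;]-\delta,\delta[^n}$ followed by the vertical projection onto $\RR^{n-d}$.

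For the final assertion, let $z\in S\times\;]-\delta,\delta[^{n-d}$ belong to $K$. In the adapted coordinates $z=(z',0)$ with $z'\in S$, so by definition of $S$ there exists $w\in\Si_\varepsilon\cap\;]-\delta,\delta[^n$ with $\pi(w)=z'$. Since $(W,\Si)$ carries $(K,E)$, we have $z\in K\cap W\subset\Si$; moreover $z$ lies in the vertical fiber above $z'$ close to $w\in W_\varepsilon$, so $z\in W_\varepsilon$ and hence $z\in\Si_\varepsilon\cap\;]-\delta,\delta[^n$. The injectivity of $\pi$ on this set then forces $z=w$, giving $\Phi(z')=0$ and placing $z$ on the graph of $\Phi$.
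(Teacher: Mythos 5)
Your plan has the right skeleton (local graph structure near $K$, $\varepsilon$-shrinking, patching) and is a sensible global reformulation of the paper's pointwise tube analysis: the paper treats each $z\in\;]-1,1[^d\times\{0\}^{n-d}$ separately, distinguishing $z\notin K\cap\overline O$ from $z\in K\cap\overline O$, and then passes to a finite subcover of $[-\delta,\delta]^d$. But the injectivity step, which is the heart of the sublemma, contains a genuine gap. Your parenthetical justification --- ``otherwise the vertical range in (ii) would have to exceed the distance between the two points'' --- does not produce a contradiction: if $p\neq q\in\Si_\varepsilon\cap\;]-\delta,\delta[^n$ lie on the same vertical fiber then $|p-q|<2\eta$ automatically, which is fully compatible with (i) and (ii). Nothing in $N$ alone prevents $\Si$ from containing two nearly horizontal sheets at distinct heights $h_1,h_2\in\;]-\eta,\eta[^{n-d}$. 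What rules this out is the adapted-chart hypothesis that you state at the outset but never invoke at the crucial moment: each vertical fiber in $U$ meets $K$ in at most one point. Combined with the fact (obtained by shrinking $\Si$ and taking $\varepsilon$ small) that $\Si_\varepsilon\cap\;]-\delta,\delta[^n$ lies in a small neighborhood of $K\cap\overline O$, this forces $p$ and $q$ to lie near the \emph{same} point $z_0\in K\cap\overline O$; and since $\Si$ is a single $C^1$-graph over the horizontal plane in a uniform neighborhood of $z_0$ (uniform by compactness of $K\cap\overline O\cap[-\delta,\delta]^n$), one concludes $p=q$. This is exactly the role played by the neighborhoods $V_z$ and $V'_z$ in the paper's case $z\in K\cap\overline O$, which your argument omits.

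Two smaller points. The appeal to Remark~\ref{r.smooth} is misplaced: that remark concerns smoothing $\Si\setminus K$, not replacing $\Si$ by a smaller neighborhood of $K$. The replacement itself is harmless --- for any open $\Si_0\subset\Si$ containing $K\cap W$ the pair $(W,\Si_0)$ still carries $(K,E)$ --- and it is in fact needed to make $\Si_\varepsilon$ cluster on $K\cap\overline O$, a point the paper's own proof leaves implicit; so this part of your plan is correct in substance even if not in citation. Finally, in the last paragraph the step ``$z$ lies\dots close to $w\in W_\varepsilon$, so $z\in W_\varepsilon$'' is too fast: membership in an open set does not pass to unquantified nearby points. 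It is cleaner to take a point $z_0\in K\cap\overline O$ with $d(w,z_0)$ small and observe that, $z$ and $z_0$ both being horizontal, $d(z,\overline O)\le d(z,z_0)\le d(w,z_0)<\varepsilon$ once $\Si$ has been shrunk enough.
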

\begin{proof}
Consider a point $z=(x,0)\in ]-1,1[^d\times \{0\}^{n-d}$. 
\begin{itemize}
\item[--] Assume first that $z\notin K\cap \overline{O}$.
Then the fiber $\{x\}\times [-\delta,\delta]^{n-d}$  is a compact set disjoint from $K\cap\overline{O}$.
For $\varepsilon>0$ small enough, $\Si_\varepsilon$ is contained in an arbitrarily small neighborhood of $K\cap \overline{O}$.
As a consequence there is an open neighborhood $S_z$ of $z$ in $]-1,1[^d\times \{0\}^{n-d}$
and a number $\varepsilon(z)>0$  such that  $(S_z\times [-\delta,\delta]^{n-d})\cap \Si_{\varepsilon(z)}$ is empty. 

\item[--] Assume now that $z\in K\cap \overline{O}$.  The submanifold $\Si$ is tangent at $z$ to
$]-1,1[^d\times \{0\}^{n-d}$; as a consequence, there is an open neighborhood  $V_z$ of $z$, contained in
$(]-1,1[^d\times]-\eta,\eta[^{n-d})\cap W$, such that the intersection of $\Si$ with $V_z$ is a graph over
$V_z\cap (]-1,1[^d\times \{0\}^{n-d})$.
Notice that the difference  $\left(\{x\}\times [-\delta,\delta]^{n-d}\right)\setminus V_z$ is a compact set disjoint from $K$
(as $U$ is an adapted chart). Hence there is  $\varepsilon(z)>0$, and a open neighborhood $V'_z$ of
$\left(\{x\}\times [-\delta,\delta]^{n-d}\right)\setminus V_z$  such that $V'_z$ is disjoint from $\Si_{\varepsilon(z)}$ and $K$.
One chooses an open neighborhood $\tilde S_z$ of $z$ in $]-1,1[^d\times \{0\}^{n-d}$ small enough such that
$\tilde S_z\times[-\delta,\delta]^{n-d}$ is contained in the neighborhood $V_z\cup V'_z$ of $\{x\}\times [-\delta,\delta]^{n-d}$. 

By construction $(\tilde S_z\times[-\delta,\delta]^{n-d})\cap \Si_{\varepsilon(z)}$ is the graph  of a $C^1$-function, defined
over an open neighborhood $S_z$ of $z$ in $]-1,1[^d\times\{0\}^{n-d}$ and with values in $]-\eta,\eta[^{n-d}$.

Moreover if $y\in K\cap (S_z\times[-\delta,\delta]^{n-d})$ then $y$ belongs to $V_z$, hence to $W$.
In particular $y\in \Si$; hence $y$ belongs to the graph of the function above, in particular $y\in\Si_{\varepsilon(z)}$.
\end{itemize}
The constructions above associates to each point $z\in ]-1,1[^d\times \{0\}^{n-d}$ an open neighborhood
$S_z$ (in $]-1,1[^d\times \{0\}^{n-d}$) and a constant $\varepsilon(z)$.
By compactness of $[-\delta,\delta]^d$, one can choose a finite set $X$ such that the open sets $S_z$, $z\in X$,
cover $[-\delta,\delta]^d$. One fixes $\varepsilon>0$ less that the  $\varepsilon(z)$, $z\in X$.
Then $\Si_\varepsilon\cap]-\delta,\delta[^n$
is the graph of a $C^1$-map $\Phi$ from an open subset $S\subset ]-\delta,\delta[^d$ to
$]-\eta,\eta[^{n-d}$.

Finally let $y$ be a point in $K\cap (S\times]-\delta,\delta[^{n-d})$. Then $y$ belongs to some
$S_z\times [-\delta,\delta]^{n-d}$, $z\in X\cap K\cap \overline O$  and we have seen that $y\in \Si_{\varepsilon(z)}$,
hence $y$ belongs to the graph of $\Phi$.
\end{proof}
\medskip

Let $S$ be the open set given by the previous lemma and let $T$ be the open set $S\times]-\delta,\delta[^{n-d}$.
We have proved that $(W_\varepsilon\cup T,\Si_\varepsilon)$ carries $(K,E)$.
We now modify $\Si_\varepsilon$ in the chart $U$ in order to glue it to the horizontal rectangle
$]-\eta,\eta[^d\times \{0\}^{n-d}$.
We thus choose a smooth function $\theta\colon]-1,1[^d\to [0,1]$ which takes the value $0$ in a neighborhood of $[-\eta,\eta]^d$
and takes the value $1$ in a neighborhood of $]-1,1[^d\setminus ]-\delta,\delta[^d$.

\begin{sublemm} \label{l.recollement}
There is an open submanifold $\Si_\varepsilon'$ which coincides with $\Si_\varepsilon$ on $M\setminus]-\delta,\delta[^n$ and
with the graph of $\theta.\Phi\colon S\to ]-\eta,\eta[^{n-d}$ in $]-\delta,\delta[^n$.
Moreover, $(W_\varepsilon\cup T,\Si'_\varepsilon)$ carries $(K,E)$.
\end{sublemm}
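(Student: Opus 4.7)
The plan is to define
$$
\Sigma'_\varepsilon := \bigl(\Sigma_\varepsilon \cap (M\setminus ]-\delta,\delta[^n)\bigr)\cup \mathrm{graph}(\theta\cdot\Phi),
$$
and then to verify separately that $\Sigma'_\varepsilon$ is an open $C^1$-submanifold, that it satisfies the two compatibility conditions of the statement, and that the carrying property for $(K,E)$ over $W_\varepsilon\cup T$ holds.

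First I would record the geometric features of the two pieces. Since $\theta$ takes values in $[0,1]$, the graph of $\theta\cdot\Phi$ is contained in $]-\delta,\delta[^d\times ]-\eta,\eta[^{n-d}$, and in particular never reaches the vertical faces of $\partial\,]-\delta,\delta[^n$; the only potentially delicate gluing is along the horizontal face $\partial\,]-\delta,\delta[^d\times\,]-\delta,\delta[^{n-d}$. Choose an open neighborhood $N\subset\,]-1,1[^d$ of $\partial\,]-\delta,\delta[^d$ on which $\theta\equiv 1$. On $N\cap\,]-\delta,\delta[^d$ we have $\theta\cdot\Phi=\Phi$, so the graph of $\theta\cdot\Phi$ coincides with the graph of $\Phi$, which by Sublemma~\ref{l.graph} is $\Sigma_\varepsilon\cap\,]-\delta,\delta[^n$ over that set. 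Hence $\Sigma'_\varepsilon$ agrees with $\Sigma_\varepsilon$ on an open collar of the horizontal face of $\partial\,]-\delta,\delta[^n$, which shows that $\Sigma'_\varepsilon$ is a $C^1$-submanifold and simultaneously gives the two compatibility conditions claimed in the sublemma.

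For the carrying property, I would take $x\in K\cap(W_\varepsilon\cup T)$. If $x\notin\,]-\delta,\delta[^n$, then $x\in W_\varepsilon\cap\Sigma_\varepsilon\subset\Sigma'_\varepsilon$ immediately. If $x\in\,]-\delta,\delta[^n$, then Sublemma~\ref{l.graph} (applied whether $x$ lies in $W_\varepsilon$ or only in $T$) places $x$ on the graph of $\Phi$, so $x=(x_0,\Phi(x_0))$ for some $x_0\in S$. The fact that $U$ is an adapted chart forces the vertical coordinate of $x$ to vanish, hence $\Phi(x_0)=0$ and $x=(x_0,\theta(x_0)\Phi(x_0))$ lies on the graph of $\theta\cdot\Phi$. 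For the tangency, the hypothesis $T_x\Sigma_\varepsilon=E(x)=\RR^d\times\{0\}^{n-d}$ translates in the chart to $D\Phi(x_0)=0$; combined with $\Phi(x_0)=0$ and the product rule
$$
D(\theta\cdot\Phi)(x_0)\cdot v=\bigl(D\theta(x_0)\cdot v\bigr)\,\Phi(x_0)+\theta(x_0)\,D\Phi(x_0)\cdot v,
$$
both terms vanish, giving $T_x\,\mathrm{graph}(\theta\cdot\Phi)=\RR^d\times\{0\}^{n-d}=E(x)$.

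The only real subtlety in the argument is the smooth matching along the horizontal face of $\partial\,]-\delta,\delta[^n$; this is dispatched by the identity $\theta\cdot\Phi=\Phi$ on the collar $N\cap\,]-\delta,\delta[^d$ together with the vertical confinement $\theta\cdot\Phi\in\,]-\eta,\eta[^{n-d}$. The rest of the verification is essentially bookkeeping using the adapted chart structure and Sublemma~\ref{l.graph}.
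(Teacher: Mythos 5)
Your proof is correct and takes essentially the same route as the paper: same definition of $\Si'_\varepsilon$, same smooth gluing via the observation that $\theta\equiv 1$ near $\partial\,]-\delta,\delta[^d$ together with the confinement of both pieces to $S\times\,]-\eta,\eta[^{n-d}$ (the paper phrases the collar as $]-\delta,\delta[^d\setminus[-\delta',\delta']^d$ and covers $M$ by $M\setminus[-\delta',\delta']^n$ and $]-\delta,\delta[^n$), and the same case analysis for the carrying property using Sublemma~\ref{l.graph}. The one step you spell out that the paper leaves implicit is the product-rule computation showing $T_x\,\mathrm{graph}(\theta\cdot\Phi)=E(x)$ at $x\in K$, via $\Phi(x_0)=0$ and $D\Phi(x_0)=0$.
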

\begin{proof}
Let consider the set $\Si'_\varepsilon$ union of $\Si_\varepsilon\setminus]-\delta,\delta[^n $ with the graph of $\theta.\Phi$
(contained in  $S\times]-\eta,\eta[^{n-d}$).
We will cover $M$ by two open sets and verify that the intersection of $\Si'_\varepsilon$
with each of them is a submanifold of dimension $d$:
\begin{itemize}
\item[--] In $M\setminus [-\delta',\delta']^n$, where $\delta'$ is close to $\delta$,
the sets $\Si'_{\varepsilon}$ and $\Si_{\varepsilon}$ coincide.
Indeed by Sublemma~\ref{l.graph} neither $\Si'_{\varepsilon}$, nor
$\Si_{\varepsilon}$ meet $[-\delta,\delta]^d\times (]-\delta,\delta[^{n-d}\setminus ]-\eta,\eta[^{n-d})$.
Moreover taking $\delta'<\delta$ such that
$\varphi$ is equal to $1$ on $]-\delta,\delta[^d\setminus ]-\delta',\delta'[^d$,
one deduces that $\Si'_{\varepsilon}$ and $\Si_{\varepsilon}$ coincide on
$(]-\delta,\delta[^d\setminus [-\delta',\delta']^d)\times ]-\delta,\delta[^{n-d}$.
\item[--] In $]-\delta,\delta[^n$, the set $\Si'_\varepsilon$ is the graph of a $C^1$-function defined on an open set of
$]-\delta,\delta[^d$.
\end{itemize}

This implies that $\Si'_\varepsilon$ is a $d$-dimensional submanifold of $M$, which is
contained in $W_\varepsilon\cup T$ (since $W_\varepsilon$ contains $\Si_\varepsilon$ and $T$ contains the graph of $\theta.\Phi$). 
\medskip

Consider any $z\in K\cap (W_\varepsilon\cup T)$.
If $z\in T$ then Sublemma~\ref{l.graph} implies that $z$ coincides with $(x,\Phi(x))$ for some $x\in S$.
But $\Phi(x)=0$ when $x\in K$ so that $\theta(x).\Phi(x)=\Phi(x)=0$ and $z$ belongs to the graph of $\theta\Phi$
hence to $\Si'_\varepsilon$.
If $z\in W_\varepsilon\setminus T$ then $z$ belongs to $W_\varepsilon\setminus ]-\delta,\delta[^n$
(the points of $K\cap ]-\delta,\delta[^n\cap W_\varepsilon$ belong to $]-\delta,\delta[^n\cap\Si_\varepsilon$
and hence in $T$, by Sublemma~\ref{l.graph}); in particular $x$ belongs to $\Si_\varepsilon\setminus ]-\delta,\delta[^n$ hence to
$\Si'_\varepsilon$. 
\end{proof}
\medskip

\begin{proof}[End of the proof of the Lemma~\ref{l.recurrence}]
By Sublemma~\ref{l.recollement}
the union $\Si'=\Si'_\varepsilon\cup(]-\eta,\eta[^d\times\{0\}^{n-d})$ is an open $C^1$-submanifold of dimension $d$.
Let $W'$ be the open set $W_\varepsilon\cup T\cup ]-\eta,\eta[^n$.
Notice that $\bar O\subset W_\varepsilon\subset W'$ and $\bar V\subset ]-\eta,\eta[^n\subset W'$.

One concludes the proof by showing that $(W',\Si')$ carries $(K,E)$. If $x\in K\cap W'$ then:
\begin{itemize}
\item[--]  Either $x\in ]-\eta,\eta[^n$. Then $x\in ]-\eta,\eta[^d\times\{0\}^{n-d}$ and
since $U$ is a adapted chart of $(K,E)$, the space $E(x)$ is tangent to $]-\eta,\eta[^d\times\{0\}^{n-d}$.
So $x\in\Si'$ and $E(x)$ is tangent to $\Si'$.
\item[--] Or $x\in W_\varepsilon\cup T$. Then $x\in \Si'_\varepsilon\subset \Si'$ and $E(x)$ is tangent to $\Si'_\varepsilon$
(and to $\Si'$) by Sublemma~\ref{l.recollement}.
\end{itemize} 
\end{proof}

The Proposition~\ref{p.global} and the Theorem~\ref{t.global} are now proved.

\subsection{Application to partially hyperbolic sets without strong connections}\label{ss.manifold}

By Theorem~\ref{t.global}, the Corollary~\ref{c.manifold} is a direct consequence of:

\begin{prop}\label{p.manifold}
Under the assumptions of Corollary~\ref{c.manifold}, $T_xK\subset E^c(x)$ at each point $x\in K$.
\end{prop}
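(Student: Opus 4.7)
The plan is to argue by contradiction. Suppose there exist $x\in K$ and a unit vector $v\in T_xK$ with a non-trivial $E^{uu}(x)$-component. The goal is to produce two distinct points $y^*\neq z^*$ in $K$ lying on a common strong unstable manifold, contradicting the hypothesis $W^{uu}(y)\cap K=\{y\}$.

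I would first fix continuous extensions of $E^c,E^{uu}$ to a neighborhood of $K$ and introduce two transverse cone fields: a thin strong unstable cone $\cC^{uu}$ around $E^{uu}$ and a thin center cone $\cC^c$ around $E^c$. The standard cone-field criterion for partial hyperbolicity (Lemma~\ref{l.domination-cone}) ensures that $\cC^{uu}$ is strictly $Df$-invariant with uniform expansion at some rate $\lambda>1$ along it, and that any vector outside $\cC^c$ lands in $\cC^{uu}$ after a fixed number $m$ of iterates. By definition of $T_xK$, one can pick sequences $y_n,z_n\in K$ with $y_n,z_n\to x$ whose unit displacements $u_n=(y_n-z_n)/\|y_n-z_n\|$, read in a chart at $x$, converge to $v$; for $n$ large, $u_n\notin \cC^c(x)$.

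Next I iterate forward. Because $d_n:=\|y_n-z_n\|\to 0$, at each step the chart (or exponential-map) displacement $\delta_n^k$ between $f^k(y_n)$ and $f^k(z_n)$ equals $Df$ applied to $\delta_n^{k-1}$ up to a second-order error in $\|\delta_n^{k-1}\|$. A standard bootstrap argument shows that once $\delta_n^k$ enters $\cC^{uu}$ (after at most $m$ steps) it remains there, and its norm grows by at least a factor slightly smaller than $\lambda$ per iterate, as long as it stays below a fixed small threshold $\epsilon_0$. I can therefore choose $k_n\to\infty$ so that $d(f^{k_n}(y_n),f^{k_n}(z_n))\in[\epsilon_0/2,\epsilon_0]$. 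Passing to subsequences, $f^{k_n}(x)\to x^*$, $f^{k_n}(y_n)\to y^*$, $f^{k_n}(z_n)\to z^*$ in $K$ with $y^*\neq z^*$.

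The last step is to verify $z^*\in W^{uu}(y^*)$. For fixed $j\geq 0$ and $n$ large enough that $k_n\geq j+m$, the intermediate displacement $\delta_n^{k_n-j}$ lies in $\cC^{uu}$, and the cone expansion applied to the $j$ forward steps from time $k_n-j$ to $k_n$ gives $\|\delta_n^{k_n-j}\|\leq C\lambda^{-j}\|\delta_n^{k_n}\|\leq C\epsilon_0\lambda^{-j}$. Letting $n\to\infty$ yields $d(f^{-j}(y^*),f^{-j}(z^*))\leq C\epsilon_0\lambda^{-j}$ for all $j\geq 0$, placing $z^*$ on the local strong unstable manifold of $y^*$ and yielding the desired contradiction. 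The main technical point is the nonlinear bootstrap in the middle step --- one must check that the second-order errors accumulated over the long segments of length $k_n$ neither push the displacement out of $\cC^{uu}$ nor destroy its exponential growth. This is handled by taking $\epsilon_0$ small, since the total nonlinear correction is controlled by $\sum_k\|\delta_n^k\|^2$, itself dominated by a geometric series of order $\epsilon_0^2$.
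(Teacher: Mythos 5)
Your argument is essentially the paper's proof: both proceed by contradiction, iterate forward a pair $(y_n,z_n)\to x$ whose displacement is transverse to $E^c$ until the gap reaches a fixed small scale $\epsilon_0$ (using cone invariance and expansion to control the nonlinear bootstrap), pass to a subsequential limit to extract distinct $y^*\neq z^*\in K$, and then use the same cone expansion in reverse to show the backward iterates of $y^*,z^*$ collapse exponentially, so $z^*\in W^{uu}(y^*)$. The paper packages the bootstrap step as Lemmas~\ref{l.critere} and~\ref{l.cone} using the geodesic formulation ``the pair $(x,y)$ is in $\cC_1$'' rather than chart displacements, and normalizes $v$ into the unstable cone by one application of $Df^\ell$ rather than absorbing the finitely many steps needed to enter $\cC^{uu}$ into the iteration, but these are cosmetic differences; the only minor imprecision in your writeup is the interval $[\epsilon_0/2,\epsilon_0]$, which should be $[\epsilon_0/\|Df\|,\epsilon_0]$.
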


As in section~\ref{ss.dominated}, one considers the cone fields $\cC_\beta$ associated to
a continuous extension of the bundles $E^c, E^{uu}$.
To prove the proposition, it is enough to replace $f$ by an iterate $f^k$, hence one can assume
the following properties for any $x$ close to $K$:
$$Df(\cC_1(x))\subset \cC_{\frac12}(f(x)),$$
$$ 2\|Df(u)\|\leq \|Df(v)\| \text{ and } \|Df(v)\|\geq 3 \text{ for any unitary
$u\in E(x),v\in \cC_1(x)$.}$$

For $\varepsilon>0$ less that the radius of the injectivity of the exponential map associated to the metric on the manifold $M$
(if $M$ is not compact, it suffices to consider a compact neighborhood of $K$) and given any two points $x,y\in M$ with
$d(x,y)<\varepsilon$, there is a unique geodesic of length less than $\varepsilon$
joining $x$ to $y$. We will denote by $[x,y]$ this geodesic.
If $\varepsilon$ is small and if $V\subset U$ is a small neighborhood of $K$,
for any two points $x,y\in V$ with $d(x,y)<\varepsilon$, one has $[x,y]\subset U$.

\begin{defi}
\emph{The pair $(x,y)$ is in the cone field $\cC_1$} if the tangent vector of the geodesic segment $[x,y]$ at each point $z\in[x,y]$
belongs to the cone $\cC_1$.
\end{defi}

The pairs of points contained in a same strong unstable leaf can be characterized:

\begin{lemm} \label{l.critere} Let $x, y\in K$ and $m\in \NN$
such that for every $n\geq m$ one has
$$
d(f^{-n}(x),d(f^{-n}(y))\leq\varepsilon\quad \mbox{\emph{and the pair $(f^{-n}(x),f^{-n}(y))$ is in the conefield $\cC_1$}}.
$$ 
Then $x$ and $y$ belong to the same strong unstable manifold.
\end{lemm}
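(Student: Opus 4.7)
My plan is to prove that $d(f^{-n}(x), f^{-n}(y))$ decays exponentially in $n$, which by the characterization of the strong unstable manifold places $y$ in $W^{uu}(x)$. I will work in exponential charts along the backward orbits of $x$ and $y$ and use the expansion of $\cC_1$-vectors by $Df$ to drive the decay.

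\textbf{Linearization in exponential charts.} For $n\geq m$, set $x_n := f^{-n}(x)$, $y_n := f^{-n}(y)$ and $v_n := \exp_{x_n}^{-1}(y_n) \in T_{x_n}M$. Since $d(x_n, y_n) \leq \varepsilon$ is below the injectivity radius, $v_n$ is well-defined and $\|v_n\|$ is comparable to $d(x_n,y_n)$. The initial tangent vector of the geodesic $[x_n,y_n]$ is a positive multiple of $v_n$, and by hypothesis it lies in $\cC_1(x_n)$; hence $v_n \in \cC_1(x_n)$. The identities $y_n = f(y_{n+1})$ and $x_n = f(x_{n+1})$, expressed through the map $g_n := \exp_{x_n}^{-1}\circ f \circ \exp_{x_{n+1}}$ with $g_n(0)=0$ and $Dg_n(0) = Df(x_{n+1})$, yield a first-order Taylor expansion $v_n = Df(x_{n+1})\cdot v_{n+1} + \rho_n$, where the remainder satisfies $\|\rho_n\|/\|v_{n+1}\| \to 0$ uniformly in $n$ as $\varepsilon\to 0$ (using the $C^1$-continuity of $f$ and the compactness of $K$).

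\textbf{Exponential decay and conclusion.} Since $v_{n+1}\in\cC_1(x_{n+1})$, the cone expansion $\|Df\cdot u\|\geq 3\|u\|$ for $u\in\cC_1$ (which holds after the initial reduction to an iterate of $f$) gives $\|Df(x_{n+1})\cdot v_{n+1}\|\geq 3\|v_{n+1}\|$. Choosing $\varepsilon$ small enough that $\|\rho_n\|\leq \|v_{n+1}\|$ for all $n\geq m$, we deduce $\|v_n\|\geq 2\|v_{n+1}\|$, hence $d(x_{n+1},y_{n+1})\leq \tfrac{1}{2} d(x_n,y_n)$. Iterating from $n=m$ gives $d(x_{m+k},y_{m+k})\leq 2^{-k}\varepsilon$, so the backward iterates of $x$ and $y$ converge to each other exponentially fast; by the definition of the strong unstable manifold recalled in the introduction, this places $y$ in $W^{uu}(x)$.

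\textbf{Main obstacle.} The main subtlety is the uniform control of the Taylor remainder $\rho_n$: because we work only with a $C^1$-diffeomorphism, the remainder is only $o(\|v_{n+1}\|)$ rather than $O(\|v_{n+1}\|^2)$, so I must exploit the compactness of $K$ together with uniform continuity of $Df$ to make the error uniformly negligible. The margin between the cone expansion factor $3$ and the required factor $2$ is precisely what makes room for this error, and is the reason one begins by passing to a sufficiently high iterate of $f$.
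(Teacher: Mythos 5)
Your mechanics are sound and run parallel to the paper's argument: working in exponential charts, the Taylor expansion $v_n = Df(x_{n+1})\cdot v_{n+1} + \rho_n$, the uniform $o(\|v_{n+1}\|)$ control of the remainder via $C^1$-continuity and compactness of $K$, and the cone expansion $\|Df(x_{n+1})\cdot v_{n+1}\|\geq 3\|v_{n+1}\|$ yielding $\|v_n\|\geq 2\|v_{n+1}\|$ are all correct, and this is essentially how the paper squeezes an exponential gain per step.

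However, the final deduction has a genuine gap. You pass from the bound $d(f^{-n}(x),f^{-n}(y))\leq 2^{m-n}\varepsilon$ to the conclusion $y\in W^{uu}(x)$ by invoking the informal description of $W^{uu}$ in the introduction. That implication is false in general: in a partially hyperbolic splitting $E^c\oplus E^{uu}$ the center bundle may itself be expanded by $Df$ at a rate larger than $2$, as long as it remains dominated by $E^{uu}$ (the normalization used here, $2\|Df(u)\|\leq\|Df(v)\|$ and $\|Df(v)\|\geq 3$ for unit $u\in E$, $v\in\cC_1$, allows $\|Df(u)\|$ to be, say, $5$). In that case a point $y'$ displaced from $x$ purely in the center direction also satisfies $d(f^{-n}(x),f^{-n}(y'))\sim 5^{-n}<2^{-n}$ yet $y'\notin W^{uu}(x)$. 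Exponential decay of the backward distance by itself does not characterize the strong unstable leaf; what does is decay that beats the center contraction. That is precisely what the paper establishes: the comparative estimate $\|Df^{-n}\cdot u\|\geq 2^n\,d(f^{-n}(x),f^{-n}(y))$ for unit $u\in E^c(x)$, i.e.\ the ratio $d(f^{-n}(x),f^{-n}(y))/\|Df^{-n}\cdot u\|$ tends to $0$ exponentially, which is the correct hypothesis of HPS, Theorem 5.1. To close the gap you already have the needed ingredient—the fact that $v_n\in\cC_1(x_n)$ for all $n\geq m$ keeps the displacement direction uniformly transverse to $E^c$—but you must then invoke the domination $2\|Df(u)\|\leq\|Df(v)\|$ to compare the decay of $\|v_n\|$ against $\|Df^{-n}\cdot u\|$, rather than settling for an absolute exponential bound on $d(f^{-n}(x),f^{-n}(y))$.
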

\begin{proof}
From the definition of partial hyperbolicity, for each $n\geq 0$ and each unitary $u\in E^c(x)$:
$$\|Df^{-n}.u\|\geq 2^n\; d(f^{-n}(x),f^{-n}(y)).$$
This implies that $y$ belongs to the strong unstable manifold of $x$
(see~\cite[theorem 5.1]{HPS}).
\end{proof}

The property for a pair of points of $K$ to be in the cone field $\cC_1$ is invariant by positive iterates, as long as the distance
between $x$ and $y$ remains small:
\begin{lemm} \label{l.cone} There is $\delta\in ]0,\varepsilon[$ such that  for any pair $(x,y)$ in $\cC_1$
with $x,y\in K$ and $d(x,y)<\delta$:
\begin{itemize}
\item[--] $d(f(x),f(y))<\varepsilon$,
\item[--]  the pair $(f(x),f(y))$ is in the conefield $\cC_1$,
\item[--]  $d(f(x),f(y))\geq 2 d(x,y)$.
\end{itemize}
\end{lemm}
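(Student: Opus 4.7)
The plan is to choose $\delta>0$ small in several stages so that the infinitesimal properties $Df(\cC_1)\subset\cC_{\frac12}$ and $\|Df(v)\|\geq 3$ for unit $v\in\cC_1$ translate into statements about the pair of endpoints $(f(x),f(y))$ and the geodesic joining them. The first item is immediate: on a compact neighborhood $V\subset U$ of $K$, $f$ has a Lipschitz constant $L$, so $d(x,y)<\delta\leq\varepsilon/L$ gives $d(f(x),f(y))<\varepsilon$ and ensures that $[f(x),f(y)]$ is a minimizing geodesic of length less than $\varepsilon$ lying in $U$.

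For the third item, I work in normal coordinates at $x$ and $f(x)$. Write $y=\exp_x(v)$ with $\|v\|=d(x,y)$, so that $v$ is a positive multiple of the initial tangent to $[x,y]$ and hence lies in $\cC_1(x)$. Similarly write $f(y)=\exp_{f(x)}(w)$ with $\|w\|=d(f(x),f(y))$. Since $f$ is $C^1$ on the compact set $V$, the asymptotic identity
$$w=Df(x).v+o(\|v\|)$$
holds uniformly in $x\in V$. The hypothesis $\|Df(x).v\|\geq 3\|v\|$ then forces $\|w\|\geq 2\|v\|$ as soon as $\delta$ is small enough for the $o(\|v\|)$ error to be dominated by $\|v\|$.

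For the second item, the initial velocity $w/\|w\|$ of the geodesic $[f(x),f(y)]$ is, by the same asymptotic, as close as desired to the unit vector $Df(x).v/\|Df(x).v\|$. The latter lies in $\cC_{\frac12}(f(x))$, which by the explicit formula~\eqref{e.dom2} is contained in the interior of $\cC_1(f(x))$ with an angular margin bounded away from zero. Taking $\delta$ small therefore places $w/\|w\|$ in $\cC_1(f(x))$. At an interior point of $[f(x),f(y)]$ the tangent vector is the parallel transport of $w/\|w\|$ along a path of length less than $\varepsilon$, and the cone field $\cC_1$ is continuous; shrinking $\delta$ once more so that both the parallel transport deviation and the variation of $\cC_1$ along $[f(x),f(y)]$ stay within the margin separating $\cC_{\frac12}$ from $\cC_1$, one concludes that the entire geodesic remains in $\cC_1$.

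The main obstacle is precisely this second item: the hypothesis $Df(\cC_1)\subset\cC_{\frac12}$ is infinitesimal at the single point $f(x)$ and must be promoted to a statement about a positive-length geodesic segment emanating from $f(x)$. The argument succeeds because $\cC_{\frac12}$ lies \emph{strictly} inside $\cC_1$, giving a uniform angular gap on the compact set $V$ that dominates all the first-order corrections coming from the $o(\|v\|)$ error, from parallel transport along a short geodesic, and from the continuous variation of the cone field, once $\delta$ is sufficiently small.
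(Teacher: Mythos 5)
Your argument is correct and uses the same essential ingredients as the paper (the uniform angular gap between $\cC_{1/2}$ and $\cC_1$, the first-order asymptotics of $f$ in exponential coordinates, the expansion bound $\|Df(v)\|\geq 3$), but the route differs in one notable way. The paper compares two \emph{curves}: it observes that $f([x,y])$ is tangent to $\cC_{1/2}$ at every point and has length at least $3\,d(x,y)$ (both by direct application of the infinitesimal hypotheses along $[x,y]$), and that $f([x,y])$ is $C^1$-close to the geodesic $[f(x),f(y)]$ when $d(x,y)$ is small. The second and third conclusions then fall out at once, because $C^1$-closeness transports both the tangency-to-$\cC_{1/2}$ and the length estimate to the geodesic, and the gap $\cC_{1/2}\subsetneq\cC_1$ absorbs the $C^1$ error. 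Your version instead analyzes the \emph{endpoint data}: you show the initial velocity $w/\|w\|$ of $[f(x),f(y)]$ is close to $Df(x).v/\|Df(x).v\|\in\cC_{1/2}(f(x))$, and then propagate along the geodesic by combining parallel transport (which fixes the tangent along a geodesic) with the continuity of the cone field. Both are valid; the paper's approach is arguably more uniform, since it never singles out a base point on the geodesic and handles the length and cone estimates simultaneously through the single statement ``$f([x,y])$ is $C^1$-close to $[f(x),f(y)]$,'' whereas yours has to verify the tangency at the endpoint and then separately extend it to interior points, but it makes the role of the asymptotic $w=Df(x).v+o(\|v\|)$ more explicit.
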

\begin{proof}
The first point follows from uniform continuity and the two others from these properties:
\begin{itemize}
\item[--] If $d(x,y)$ is small, the image $f([x,y])$ is $C^1$-close to the geodesic segment $[f(x),f(y)]$. In particular,
the ratio of their length is arbitrarily close to $1$.
\item[--] For a pair $(x,y)$ in $\cC_1$ the length of $f([x,y])$ is greater than $3$ times $d(x,y)$ (by
\eqref{e.dom2}). 
\item[--] For a pair $(x,y)$ in  $\cC_1$, the segment $f([x,y])$  is tangent at each point to the cone field $\cC_{\frac12}$.
\end{itemize}
\end{proof}
\medskip

\begin{proof}[Proof of Proposition~\ref{p.manifold}] Assume that there is some point $z\in K$
such that $T_zK\neq E^c(z)$: there exist $v \in T_zK\setminus E^c(z)$
and two sequences $(x_n),(y_n)$ in $K$ converging to $z$
such that in any chart at $z$ the vectors $\frac{y_n-x_n}{\|y_n-x_n\|}$
converge to $v$. Note that by replacing $v$ by an an iterate $Df^\ell.v/\|Df^\ell.v\|$
one can assume $v\in \cC_1(z)$ and hence each pair $(x_n,y_n)$ is in the cone field $\cC_1$. 

For  large $n$ the distance $d(x_n,y_n)$ is arbitrarily small.
The Lemma~\ref{l.cone} implies that there is $k_n>0$ such that $d(f^{k_n}(x_n),f^{k_n}(y_n))\in [\frac\delta{\|Df\|},\delta]$,
and such that $d(f^{k}(x_n),f^{k}(y_n))<\frac\delta{\|Df\|}$ for any $k\in\{0,\cdots,k_n-1\}$.
By taking a subsequence, one can assume that the pairs $(f^{k_n}(x_n),f^{k_n}(y_n))$
converge to a pair $(x,y)$ of points of $K$. We will prove now that $y\in W^{uu}(x)$.

For any $i>0$ the pair $(f^{-i}(x),f^{-i}(y))$ is limit of the pairs $(f^{k_n-i}(x_n), f^{k_n-i}(y_n))$. By the choice of $k_n$
the distances $d(f^{k_n-i}(x_n), f^{k_n-i}(y_n)))$ are less than $\delta$ so that $d(f^{-i}(x),f^{-i}(y))<\varepsilon$.
Applying inductively the Lemma~\ref{l.cone} one gets that the pair $d(f^{k_n-i}(x_n), f^{k_n-i}(y_n)))$ is in the cone field $\cC_1$.
Notice that the geodesic segment $[f^{(k_n-i)}(x_n), f^{(k_n-i)}(y_n)]$ converges (in the $C^1$-topology) to the geodesic segment
$[f^{-i}(x),f^{-i}(y)]$. 
As a consequence  the pair $(f^{-i}(x),f^{-i}(y))$ is in the cone $\cC_1$. 

The Lemma~\ref{l.critere} now concludes that $x$ and $y$ belong to the same strong unstable manifold. Notice that $x\neq y$ because $x$
and $y$ are joined by a geodesic segment with length in
$]0,\varepsilon[$. This contradicts the hypotheses on $K$ in the statement of corollary~\ref{c.manifold}.
\end{proof}
\section{Invariant center manifold}\label{s.invariant}
We explain here how to replace the submanifold given by theorem~\ref{t.global}
by an invariant submanifold.

\begin{theo} \label{t.invariant}
Let $f$ be a diffeomorphism of a manifold $M$ and $K$ be an invariant compact set
contained in the interior of a (a priori non-invariant) compact submanifold with boundary
$\Si$. One assumes furthermore that $K$ admits a partially hyperbolic splitting
$TM=E^c\oplus E^{uu}$ such that $E^c(x)=T_x\Si$ at each point $x\in K$.

Then, one can replace $\Si$ by a submanifold $S$ which is locally invariant:
$S\cap f(S)$ contains a neighborhood of $K$ in $S$. 
\end{theo}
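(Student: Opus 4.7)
The plan is to produce $S$ by a graph transform argument over the submanifold $\Sigma$ using the dynamics $f^{-1}$, which contracts the strong unstable direction. First I would fix a small neighborhood $V$ of $K$ and a tubular structure identifying $V$ with $\Sigma_0\times D$, where $\Sigma_0$ is a neighborhood of $K$ in $\Sigma$ and the fibers $\{x\}\times D$ are transverse to $\Sigma$ and approximately tangent to the continuous extension of $E^{uu}$ (obtained for instance by exponentiating the normal bundle of $\Sigma$, then adjusting using the cone field $\cC_\beta$ of section~\ref{ss.dominated}). A candidate submanifold $S$ close to $\Sigma$, tangent to $E^c$ on $K$ and coinciding with $\Sigma$ on $K$, corresponds exactly to a graph $G(\phi)=\{(x,\phi(x)):x\in \Sigma_0\}$ of a $C^1$-map $\phi\colon \Sigma_0\to D$ such that $\phi|_K=0$, $D\phi|_K=0$, and with small $C^1$-norm on $\Sigma_0$.

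The graph transform $\Psi$ is then defined by the rule $\Psi(\phi)=\psi$ where $G(\psi)$ is the piece of $f^{-1}(G(\phi))$ lying over $\Sigma_0$. That $f^{-1}(G(\phi))$ is again a graph over (an open subset of) $\Sigma_0$ rests on two facts: on $K$ the map $Df^{-1}$ preserves the splitting, so $f^{-1}(G(\phi))$ is tangent to $E^c$ along $K$; and the center cone field $\cC^c$ is $Df^{-1}$-invariant with contraction factor $\lambda_K^{-1}$ in the dual unstable direction (this is exactly the domination, in the form of Lemma~\ref{l.domination-cone}). Consequently $f^{-1}$ sends a $\phi$ with $D\phi$ in a small center cone to one whose derivative is in an even narrower cone, so the graph property is preserved and the vertical size (the $C^0$-norm of $\phi$) contracts by a factor essentially $\lambda_K^{-1}$. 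A standard Banach fixed point argument applied to the contraction $\Psi$ on the space $\cF$ of admissible $\phi$ produces $\phi^{*}$ with $\Psi(\phi^{*})=\phi^{*}$, hence $f^{-1}(G(\phi^{*}))\cap V=G(\phi^{*})\cap V$ in a neighborhood of $K$; applying $f$ and intersecting with a smaller neighborhood where $f^{-1}$ is also defined gives the local invariance $f(S)\cap S\supset $ neighborhood of $K$ in $S$. Finally one cuts $G(\phi^{*})$ along a smooth boundary just outside $K$ to recover a compact submanifold with boundary $S$ containing $K$ in its interior, with $T_xS=E^c(x)$ on $K$ because $D\phi^{*}=0$ on $K$ (the only fixed point of the linearized transform on $K$, by strict unstable contraction).

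The main obstacle, and the reason the argument requires care beyond the classical Hirsch-Pugh-Shub setting, is the choice of the domain $V$ on which $\Psi$ is well-defined. The issue is that although $f^{-1}$ contracts the vertical direction, it may expand in the center, so $f^{-1}(G(\phi))$ is a graph over a \emph{larger} piece of $\Sigma_0$; unless the shape of $V$ is chosen correctly the restriction back to $\Sigma_0$ can lose material near $\partial \Sigma_0$, or conversely the image can escape $D$ before we can form $\Psi(\phi)$. The fix, anticipated in the introduction and formalized in the forthcoming Proposition~\ref{p.domaine}, is to take $V$ much thinner in the strong unstable direction than in the center direction: the vertical thickness is chosen so that even after compounding the errors from the non-invariance of the tubular coordinates and from the fact that $\phi$ itself is nonzero, the image $f^{-1}(G(\phi))\cap V$ still projects onto all of $\Sigma_0$ and has vertical size within $D$. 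This asymmetric domain is what replaces the admissibility condition of Chow-Liu-Yi and is where the argument departs from the classical scheme; once this geometric setup is in place, the contraction estimate and the Banach fixed point step are essentially routine.
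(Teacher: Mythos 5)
Your overall strategy --- a graph transform in a tubular neighborhood of $\Sigma$, with a domain chosen much thinner in the $E^{uu}$-direction than in the $E^c$-direction --- is exactly the paper's, and you correctly identify the asymmetric domain (Proposition~\ref{p.domaine}) as the ingredient that replaces the admissibility hypothesis of Chow--Liu--Yi. However, two substantial steps are either missing or circular as written.

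First, the gluing. You define $\Psi(\phi)$ as ``the piece of $f^{-1}(G(\phi))$ lying over $\Sigma_0$,'' but that piece does not vanish near the boundary of $\Sigma_0$, so $\Psi$ does not map your candidate function space into itself and cannot be iterated as stated. The paper cures this by interpolating, via the fiber contraction $\Theta$ of Proposition~\ref{p.theta}, between the transformed graph $h'$ and the zero section, using a cutoff $\varphi_m$ equal to $1$ near $K$ and to $0$ outside $\Sigma_m$; the output $G_m(h)=\phi_m(h')$ is then again in the admissible space $Lip_{m,\beta}$. This is not a cosmetic patch: the cutoff contributes an extra derivative term of size governed by $\|D\varphi_m\|\cdot \|h'\|_{C^0}\sim C_\Sigma\, m/\varepsilon(m)$, and the fact that $m/\varepsilon(m)\to 0$ (Proposition~\ref{p.domaine}.(\ref{i.domaine1})) is precisely what keeps the horizontal cone invariant (Proposition~\ref{p.contracte-cone}). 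So the asymmetry you describe is used twice --- once for the domain-of-definition issue you mention, and once to tame the cutoff derivative --- and the second use does not appear in your sketch.

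Second, the $C^1$-smoothness of the fixed graph. Your contraction estimate is a $C^0$ contraction (you say so yourself), so the Banach fixed-point argument must take place in a $C^0$-complete space. If $\cF$ consists of $C^1$ maps with $D\phi|_K=0$ and small $C^1$ norm, it is not $C^0$-complete; its $C^0$-closure consists of Lipschitz graphs. The paper accordingly runs the fixed-point argument in $Lip_{m,\beta}$ (compact in the $C^0$-distance by Arzela--Ascoli) and then, in a separate step (Proposition~\ref{p.smooth}), shows that the fixed Lipschitz graph is $C^1$ by trapping its tangent sets in the exponentially thin iterated cones $\cC^n$ of Lemma~\ref{l.cone2}. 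Your appeal to ``$D\phi^*=0$ on $K$'' presupposes that $\phi^*$ is $C^1$, which is exactly the point that requires this extra argument; without something like Proposition~\ref{p.smooth}, the proposal produces a locally invariant Lipschitz graph but not the $C^1$ submanifold $S$ claimed in the theorem.
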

\medskip

The proof follows the usual construction of center manifolds for fixed points:
one considers graphs of functions $h\colon \Si\to \RR^{n-d}$
where $\RR^{n-d}$ is a local coordinates transverse to $\Si$.
The action of $f^{-1}$ can be modified outside a neighborhood of $K$ so that
it preserves the space of Lipschitz graphs. The domination between $E^c$ and $E^{uu}$
ensures that this action is a contraction, hence has a fixed point: this is the center manifold.

In our setting the fixed point has been replaced by an invariant compact set $K$
which makes the argument more delicate: the action of $f^{-1}$ in a neighborhood of $K$
can not be approximated by a linear map and the local transverse coordinates are obtained by
the construction of a tubular neighborhood.

\subsection{First constructions}
In this section we build a tubular neighborhood of the submanifold $\Si$
which will allow us to define the space of graphs.

\subsubsection{Bundles $E,F$ around $K$}
Since the splitting $T_KM=E^c\oplus  E^{uu}$ is partially hyperbolic,
there exist $\lambda_K>1$
and $n_K\geq 1$ such that for each $x\in K$ each $n\geq n_K$
and each unit vector $u\in E^c(x), v\in E^{uu}(x)$ one has:
$$
\|Df^n(x)\cdot v\| > \lambda_K^n \|Df^n(x)\cdot u\| \text{ and }
\|Df^n(x)\cdot v\| > \lambda_K^n.$$
From now on we assume $n_K=1$ (this is always possible by changing the initial metric, see~\cite{G}).

We also extend the bundles $E^c,E^{uu}$, defined on $K$ as two (a priori non-invariant) continuous bundles
$E,F$ on a neighborhood of $K$. One can reduce $\Sigma$ and assume that at each $x\in \Sigma$
$E(x),F(x)$ are defined and that $E(x),T_x\Si$ are transverse to $F(x)$.

\subsubsection{Tubular neighborhoods of $\Si$}\label{ss.tubular}
The next proposition provides us with a tubular neighborhood $T$ of an open submanifold $\Si_0\subset \Si$:
\begin{defi}
A \emph{tubular neighborhood} of an open submanifold $\Si_0\subset \Si$
is a smooth surjective submersion $\pi\colon T\to \Si_0$ on an open neighborhood $T$ of $\Si_0$
which induces the identity on $\Si_0$.
\end{defi}

A vector $u\in T_xM$ for $x\in T$ is {\em vertical} if it is tangent to the fibers of $\pi$
(i.e. $D\pi(x)\cdot u=0$) and {\em horizontal} if it is tangent to $E$.
The set $V_x$ of vertical vectors at $x$ is the {\em vertical space}.
Any tangent vector $u\in T_xM$ decomposes as a sum $u_v+u_h$
where $u_v$ is vertical and $u_h$ is horizontal.
For any $\beta>0$ we denote by $\cC^h_\beta(x)$ the \emph{horizontal cone}
associated to the splitting $E\oplus V$:
$$\cC^h_\beta(x)=\{u\in T_xM, \; \beta \|u_h\| \geq \|u_v\|\}.$$

\begin{prop}\label{p.tubulaire}
 For any $\lambda_0\in]1,\lambda_K[$ and any $\eta,\beta,\delta>0$
there is a neighborhood $\Si_0$ of $K$ in $\Si$
and a tubular neighborhood $\pi\colon T\to \Si_0$ with the following properties:
\begin{enumerate}
\item\label{i.tubulaire1} For any vertical vector $u_v$ at $x\in T\cap f^{-1}(T)$:
$$ \|Df(x)\cdot u_v\| \geq \lambda_0 \|u_v\| \ \mbox{and}\ \|D\pi\circ Df(x)\cdot u_v\|\leq \eta \|u_v\|.$$

\item\label{i.tubulaire2} For any horizontal vector $u_h$ at $x\in T$:
$$(1-\delta)\; \|u_h\|\leq \|D\pi(x)\cdot u_h\|\leq (1+\delta)\;\|u_h\|.$$

\item\label{i.tubulaire3} For any $x\in T\cap f(T)$, one has
$$Df^{-1}(x)\cdot \cC^h_{\beta}(x)\subset \cC^h_{\frac{\beta}{\lambda_0}}(f^{-1}(x)).$$
\end{enumerate}
\end{prop}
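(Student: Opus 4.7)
The strategy is to build $\pi$ so that its vertical bundle is close to $F=E^{uu}$ near $K$. Then on $K$ each of the three estimates is forced by partial hyperbolicity, and they extend to a neighborhood of $K$ in $\Sigma$ by uniform continuity. For the construction I would first produce a $C^1$ (or smoother) bundle $\widetilde F$ of $(n-d)$-planes defined over a neighborhood of $\Sigma$, $C^0$-close to the continuous extension $F$ of $E^{uu}$, and transverse to $T\Sigma$; this is obtained by partition-of-unity smoothing. The map
\[
\Phi(y,v)=\exp_y(v),\qquad y\in\Sigma,\ v\in\widetilde F(y),\ \|v\|<r,
\]
has invertible differential at each $(y,0)$ by transversality, so for $r$ small and $\Sigma_0$ a small neighborhood of $K$ in $\Sigma$, $\Phi$ is a diffeomorphism onto an open tube $T\supset\Sigma_0$. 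Its inverse composed with projection to the first factor defines the required smooth submersion $\pi\colon T\to\Sigma_0$ inducing the identity on $\Sigma_0$, whose vertical space at $y\in\Sigma_0$ is exactly $\widetilde F(y)$.

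At any $x\in K$ one has $E(x)=E^c(x)=T_x\Sigma=T_x\Sigma_0$ and the vertical space is close to $E^{uu}(x)$. The verification on $K$ is then immediate:
\begin{itemize}
\item for (\ref{i.tubulaire1}), a vertical unit vector $u_v$ is close to $E^{uu}(x)$, so partial hyperbolicity gives $\|Df\cdot u_v\|$ close to a number $\geq\lambda_K$, while $Df\cdot u_v$ is close to $E^{uu}(f(x))$, hence close to the vertical direction $\widetilde F(f(x))$, so $D\pi\circ Df\cdot u_v$ is close to zero;
\item for (\ref{i.tubulaire2}), $D\pi|_{T_x\Sigma_0}=\mathrm{id}$ and $E(x)=T_x\Sigma_0$, so $D\pi$ acts as the identity on $E(x)$;
\item for (\ref{i.tubulaire3}), a unit vector $u=u_h+u_v\in\cC^h_\beta(x)$ decomposes, after small perturbation, into components essentially in $E^c(x)$ and $E^{uu}(x)$, and domination yields $\|Df^{-1}u_v\|/\|u_v\|<\lambda_K^{-1}\|Df^{-1}u_h\|/\|u_h\|$, so $\|Df^{-1}u_v\|\leq \lambda_K^{-1}\beta\|Df^{-1}u_h\|$; since the horizontal/vertical splitting at $f^{-1}(x)\in K$ is likewise close to $E^c\oplus E^{uu}$, the image $Df^{-1}u$ lies in $\cC^h_{\beta/\lambda_K}(f^{-1}(x))\subset\cC^h_{\beta/\lambda_0}(f^{-1}(x))$, using $\lambda_0<\lambda_K$.
\end{itemize}

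The extension from $K$ to a neighborhood is a standard continuity argument: all objects involved — $Df$, $D\pi$, the bundles $E$ and $\widetilde F$, and the associated horizontal/vertical decomposition — are continuous on $T$, the inequalities above are strict on the compact set $K$, and $f(K)=K$ is again compact. Given $\lambda_0<\lambda_K$ and $\eta,\beta,\delta>0$, one shrinks $\Sigma_0$, the tube thickness $r$, and the $C^0$-distance of $\widetilde F$ to $F$ until all three bounds hold simultaneously with the prescribed constants. The main technical nuisance is coordinating these three shrinkings so that the three bounds hold at once on the same pair $(\Sigma_0,T)$; once the construction above is in place this reduces to a uniform continuity argument on the compact set $K$, with a mild extra care off $\Sigma_0$ where the horizontal bundle $E$ may differ slightly from $T\Sigma_0$ lifted via $\pi$, an error that is controlled by choosing the tube thinner.
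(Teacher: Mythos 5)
Your construction is essentially identical to the paper's: smooth $F$ to a bundle $\widetilde F$ $C^0$-close to $E^{uu}$, use the fibered exponential map to produce the tubular neighborhood $\pi\colon T\to\Si_0$ with vertical bundle $\widetilde F$, verify the three estimates at points of $K$ using partial hyperbolicity and the proximity of $E\oplus\widetilde F$ to $E^c\oplus E^{uu}$, then extend to a small tube by compactness and continuity. The paper presents this somewhat more tersely, but the strategy, the key estimates, and the continuity argument all coincide.
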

\begin{proof}
Let $\widetilde F$ be a smooth vector bundle arbitrarily $C^0$-close to $F$,
let $\Si_0$ be a small open neighborhood of $K$ in $\Si$ and $m_0>0$ be a small constant.
We denote by $\cT_{m_0}$ the $m_0$-neighborhood of the zero section of the restriction
$\widetilde F_{|\Sigma_0}$. The exponential map sends $\cT_{m_0}$ diffeomorphically on a set $T\subset M$.
The canonical projection $\cT_{m_0}\subset \widetilde F_{|\Si_0}$ onto $\Si_0$ induces by identification
a projection $\pi\colon T\to \Si_0$. One checks easily that $T$ is a tubular neighborhood of $\Si_0$.

At points $x$ of $K$ the bundle $E^{uu}$ is preserved, expanded by $Df$ by $\lambda_K$
and the cone $\cC^h_\beta(x)$ is mapped into $\cC^h_{\beta/\lambda_K}(f(x))$.
Since the splitting $E\oplus \widetilde F$ is close to the splitting
$E^c\oplus E^{uu}$, the three items of the proposition follow by continuity.
\end{proof}

As a consequence we obtain:

\begin{lemm}\label{l.verticale}
Under the setting of Proposition~\ref{p.tubulaire} and if $\beta<\frac{\lambda_0-2\eta}{6\beta}$,
for any vertical vector $u_v\neq 0$ at $x\in T\cap f^{-1}(T)$, the image $w=Df(x).u_v$ does not intersect
the cone $\cC_{3\beta}^h(f(x))$.
\end{lemm}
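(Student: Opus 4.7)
The plan is to bound the horizontal and vertical components $w_h, w_v$ of $w:=Df(x)\cdot u_v$ separately and check that the ratio $\|w_v\|/\|w_h\|$ strictly exceeds $1/(3\beta)$; by the definition of $\cC^h_{3\beta}$, this is exactly the negation of $w\in\cC^h_{3\beta}(f(x))$.

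First, I would invoke item~\ref{i.tubulaire1} of Proposition~\ref{p.tubulaire} to extract two complementary estimates on $w$: the lower bound $\|w\|\geq \lambda_0\|u_v\|$ guaranteeing the expansion of the vertical direction, together with the upper bound $\|D\pi(f(x))\cdot w\|\leq \eta\|u_v\|$ measuring the horizontal leakage of the image. Since $w_v$ lies in the kernel of $D\pi(f(x))$, the latter translates to $\|D\pi(f(x))\cdot w_h\|\leq \eta\|u_v\|$, and then item~\ref{i.tubulaire2} of the same proposition converts this into $\|w_h\|\leq (1-\delta)^{-1}\eta\|u_v\|$.

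Second, the triangle inequality $\|w_v\|\geq \|w\|-\|w_h\|$ combined with the two bounds above gives $\|w_v\|\geq \left(\lambda_0-(1-\delta)^{-1}\eta\right)\|u_v\|$, whereas $3\beta\|w_h\|\leq 3\beta(1-\delta)^{-1}\eta\|u_v\|$. The desired strict inequality $\|w_v\|>3\beta\|w_h\|$ thus reduces to a purely numerical relation among $\lambda_0,\eta,\beta$ and the slack $\delta$, which is precisely what the hypothesis on $\beta,\eta,\lambda_0$ ensures (provided $\delta$ has been chosen sufficiently small when applying Proposition~\ref{p.tubulaire}).

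This is essentially a three-line computation once Proposition~\ref{p.tubulaire} is available, and I would not expect any serious obstacle. The only mild subtlety is that the splitting $E\oplus V$ need not be orthogonal, so one cannot relate $\|w\|^2$ to $\|w_h\|^2+\|w_v\|^2$ by Pythagoras; the triangle inequality, however, is enough, since item~\ref{i.tubulaire2} has been crafted to absorb the failure of orthogonality into the small slack $\delta$ linking $\|w_h\|$ with $\|D\pi\cdot w_h\|$.
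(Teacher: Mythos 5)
Your proof is correct and follows the same route as the paper's: decompose $w=w_h+w_v$, use item~(\ref{i.tubulaire1}) of Proposition~\ref{p.tubulaire} for $\|w\|\geq\lambda_0\|u_v\|$ and $\|D\pi\cdot w\|\leq\eta\|u_v\|$, convert the latter into $\|w_h\|\leq(1-\delta)^{-1}\eta\|u_v\|$ via item~(\ref{i.tubulaire2}), and apply the triangle inequality to get $\|w_v\|\geq\big(\lambda_0-(1-\delta)^{-1}\eta\big)\|u_v\|$; the paper simply replaces $(1-\delta)^{-1}$ by the cruder bound $2$ (valid since $\delta<1/2$), so the threshold reads $\frac{\lambda_0-2\eta}{2\eta}>3\beta$, which the hypothesis on $\beta$ supplies (note the displayed hypothesis $\beta<\frac{\lambda_0-2\eta}{6\beta}$ is a typo for $\beta<\frac{\lambda_0-2\eta}{6\eta}$, cf.\ \eqref{e.choixcone}). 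One slip in your opening sentence: the negation of $w\in\cC^h_{3\beta}$ is $\|w_v\|/\|w_h\|>3\beta$, not $>1/(3\beta)$ — you do use the correct inequality $\|w_v\|>3\beta\|w_h\|$ in your second paragraph, so this is only a misstatement, not a gap.
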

\begin{proof}
One first decomposes $w$ in $w_h+w_v$ and denotes by $w_\pi=D\pi(x).w_h=D\pi(x).w$ the projection by $D\pi$.
We have $\|w\|\geq \lambda_0\|u_v\|$ and $\|w_\pi\|\leq \eta \|u_v\|$ by
Proposition~\ref{p.tubulaire}.(\ref{i.tubulaire1}). Moreover $\|w_\pi\|\geq \frac{1}{2}\|w_h\|$
by Proposition~\ref{p.tubulaire}.(\ref{i.tubulaire2}). Hence,
$$\|w_v\|\geq \|w\|-\|w_h\|\geq \lambda_0\|u_v\|-2\|w_\pi\|\geq (\lambda_0-2\eta)\|u_v\|
\text{ and } \|w_h\|\leq 2\eta\|u_v\|.$$
We thus get $\|w_v\|\geq \frac{\lambda_0-2\eta}{2\eta}\|w_h\|> 3\beta \|w_h\|$ by our choice of the constant $\beta$.
\end{proof}

\subsubsection{Contraction of the tubular neighborhood}\label{ss.tub-contract}
Let $d_\pi$ be the distance along the fibers of the tubular neighborhood $\pi\colon T\to \Si_0$,
 given by the induced metric.
For any $m>0$ small, one denotes by $T_m\subset T$ the set of points $z\in T$
such that $d_\pi(z,\pi(z))<m$.

For any $\theta\in [0,1]$ and $z\in T_m$, one defines $\theta.z\in T_m$ as the barycenter of $z$ and $\pi(z)$
along the geodesic segment joining them in the fiber $\pi^{-1}(\pi(z))$. This gives a map $\Theta\colon (\theta,z)\mapsto \theta.z$
on $[0,1]\times T_m$.
For each $\theta\in [0,1]$, we also denote by $\Theta_\theta\colon T_m\to T_m$, the map $z\mapsto \theta.z$.

Equivalently, let $\widetilde F$ be the tangent bundle on $T$ to the fibers of $\pi$,
let $\cT_m$ be the $m$-neighborhood of the zero section of the restriction $\widetilde F_{|\Si_0}$
and let $\widetilde \Theta\colon [0,1]\times \cT_m\to \cT_m$ be the map which sends $(\theta,v)\in[0,1]\times
\widetilde F(x)$ on $\theta.v$ in the vector space $\widetilde F(x)$.
The maps $\Theta$ and $\widetilde \Theta$ are conjugated by the fibered exponential map.

\begin{prop}\label{p.theta}
\begin{enumerate}
\item\label{i.theta1} The map $\Theta$ is $C^1$.
\item\label{i.theta2} The image of the derivative $D_\theta \Theta$ at a point $(\theta,z)\in [0,1]\times T_m$
is contained in the vertical space $V(z)$ of $z$.
It has a norm bounded by the distance $d_\pi(z,\pi(z))$ in the fiber of $z$.
\item\label{i.theta4} For any $\rho>1$, if $m>0$ is small enough
the following property holds:

For any $\theta\in [0,1]$, any $z,z'\in T_m$ such that $\pi(z)=\pi(z')$, we have
$d_\pi(\theta.z,\theta.z')\leq \rho d_\pi(z,z')$.
\item\label{i.theta3} For any constants $\bar \beta>\beta'>0$, if $m>0$ is small enough
the following property holds:

For any $\theta\in [0,1]$ and $z\in T_m$, the differential $D\Theta_\theta(z)$ sends $\cC_{\beta'}^h(z)$ inside
$\cC_{\bar\beta}^h(\theta.z)$.
\end{enumerate}
\end{prop}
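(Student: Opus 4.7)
The plan is to reduce every statement to the corresponding one for the explicit linear map $\widetilde\Theta(\theta,v)=\theta v$ on $\cT_m\subset \widetilde F_{|\Si_0}$, and then transfer the properties by conjugacy with the fibered exponential $\exp^\pi\colon \cT_m\to T_m$. The key observation is that $\exp^\pi$ is smooth, fiber-preserving, and equals the identity on the zero section $\Si_0$, with $D\exp^\pi$ equal to the canonical identification along $\Si_0$; so for $m$ small, $\exp^\pi$ and its inverse are $C^1$-close to this canonical identification on the whole of $\cT_m$, uniformly on the relatively compact base $\Si_0$.

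For items (\ref{i.theta1}) and (\ref{i.theta2}), smoothness of $\widetilde\Theta$ is immediate since it is fiberwise linear in a smooth vector bundle; conjugation by $\exp^\pi$ then yields that $\Theta$ is $C^1$. In the vector bundle picture one has $\partial_\theta\widetilde\Theta(\theta,v)=v\in \widetilde F(\pi(v))$, and pushing forward by $D\exp^\pi(\theta v)$ produces a vector at $\theta.z$ tangent to the fiber of $\pi$ (hence vertical), whose norm is bounded by the bundle norm of $v$; by construction of the fiber metric this bundle norm coincides, up to a factor arbitrarily close to $1$ for $m$ small, with the fiber distance $d_\pi(z,\pi(z))$.

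For items (\ref{i.theta4}) and (\ref{i.theta3}), I would observe that in the linear model $\widetilde\Theta_\theta$ is simply fiberwise multiplication by $\theta\in[0,1]$: it is a $\theta$-Lipschitz (hence $1$-Lipschitz) contraction in the fiber direction, and its derivative sends a horizontal-plus-vertical decomposition $u_h+u_v$ to $u_h+\theta u_v$, so it maps every horizontal cone $\cC^h_\gamma$ into $\cC^h_{\theta\gamma}\subset \cC^h_\gamma$. Conjugating by $\exp^\pi$ distorts these estimates by an amount controlled uniformly by the $C^1$-distance between $\exp^\pi$ (and its inverse) and the canonical identification of $\cT_m$ near the zero section; taking $m$ small enough makes this distortion as small as desired, yielding the Lipschitz constant $\rho$ in (\ref{i.theta4}) and the cone inclusion $D\Theta_\theta(z)\cdot \cC^h_{\beta'}(z)\subset \cC^h_{\bar\beta}(\theta.z)$ in (\ref{i.theta3}).

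The main obstacle, and essentially the only nontrivial point, will be controlling this distortion uniformly on $\Si_0$ as $m\to 0$. This is a standard tubular-neighborhood estimate: since $D\exp^\pi$ equals the canonical identification on the zero section and $\Si_0$ has compact closure, continuity of the second-order derivatives of $\exp^\pi$ gives uniform $C^1$-closeness of $\exp^\pi$ to this identification on $\cT_m$ as $m\to 0$. Once this is set up carefully, the estimates in the linear model transfer mechanically and give all four items of the proposition.
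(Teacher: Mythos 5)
Your proposal follows essentially the same route as the paper's proof: reduce to the explicitly linear fiberwise model $\widetilde\Theta(\theta,v)=\theta v$ on $\cT_m$, then transfer by conjugation through the fibered exponential map. The only place where the paper is a bit sharper than your sketch is item~(2): you bound $\|D_\theta\Theta\|$ by $\|v\|$ ``up to a factor arbitrarily close to $1$,'' but the statement asserts the clean bound $d_\pi(z,\pi(z))$. The paper gets this exactly, not approximately, by observing that the fibered exponential restricted to the ray $\RR v$ is an isometry onto the geodesic through $\pi(z)$ and $z$ (arc-length parametrization of geodesics), so $\|D_\theta\Theta(\theta,z)\|=\|v\|=d_\pi(z,\pi(z))$ on the nose; $C^1$-closeness alone would only give the approximate version. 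For items~(3) and~(4) your ``small $m$ kills the distortion'' argument is exactly the paper's, phrased slightly differently (the paper invokes the derivative being an isometry at the zero section and then continuity of $D\Theta$). So: same decomposition, same key mechanism, with a small imprecision in item~(2) that the arc-length/Gauss-lemma observation resolves.
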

\begin{proof}
The proposition may be obtained easily from the map $\widetilde \Theta$:
\begin{enumerate}
\item The map $\widetilde \Theta$ is $C^1$, so is $\Theta$.
\item For any $v\in \cT_m$, the map $\theta\mapsto \widetilde \Theta(\theta,v)$ is linear.
The norm of its derivatives is equal to $\|v\|$. Let $z\in T_m$ be the image of $v$ by the fibered exponential map.
Since the exponential map sends isometrically the line $\RR.v$ on the geodesic $\RR.z$, one deduces that the derivative
$D_\theta\Theta$ at the point $(\theta,z)$ has a norm equal to $\|v\|=d_\pi(z,\pi(z))$.
\item For any $v,v'\in \cT_m$ which belong to the same fiber $\widetilde F(x)$ of $\widetilde F_{|\Si_0}$
and for any $\theta\in [0,1]$, we have $\|\theta.v-\theta.v'\|\leq \|v-v'\|$.
We consider the images $z,z'\in T_m$ of $v,v'$
by the exponential map. At the point $(x,0)\in \cT_m$, the
derivative of the exponential map is an isometry. Hence, for any $\rho>1$,
if $v$ and $v'$ are close enough to $(x,0)$, we have $d_\pi(\theta.z,\theta.z')\leq \rho d_\pi(z,z')$.
\item By construction, for any point $x\in \Si_0$, and any $\theta\in [0,1]$, the differential $D\Theta_\theta$
of the map $\Theta_\theta$ at $x$ maps the cone $\cC_{\bar\beta}^h(x)$ into itself: the horizontal vectors at $x$ are
preserved and the vertical vectors are contracted. The last item of the proposition thus follows by continuity
of the differential of $\Theta$.
\end{enumerate}
\end{proof}

\begin{rema}\label{r.smooth2}
One can assume that the map $\pi$ and $\Theta$ are smooth on
$\pi^{-1}(\Si_0\setminus K)$ and $[0,1]\times \pi^{-1}(\Si_0\setminus K)$.
Indeed, $\Si_0$ can be taken smooth outside $K$ by Remark~\ref{r.smooth}
and the vector field $\widetilde F$ tangent to the fibers of $\pi$ has been chosen smooth also.
\end{rema}

\subsubsection{Lipschitz functions and graphs over $\Si_0$}
Let us consider $\lambda_0,\eta,\beta$ and a tubular neighborhood $\pi\colon T\to \Si_0$ as given by Proposition~\ref{p.tubulaire}.
\begin{defi}
A \emph{function of $T$} is a map $h\colon U\to T$ where $U$ is a subset of $\Si_0$
such that $\pi\circ h(x)=x$ at any $x\in U$. The image $h(U)$ will be called the \emph{graph} of $h$.

$h$ is \emph{$\beta$-Lipschitz} if the tangent space $T_zh(U)$ is contained in the cone $\cC^h_\beta(z)$
for each $z\in U$.
\end{defi}

The distance inside Lipschitz graphs is bounded:
\begin{lemm}\label{l.length}
Let $\Si'\subset \Si_0$ be a neighborhood of $K$ in $\Si_0$ and $h\colon \Si'\to T$ a $\beta$-Lipschitz
function. Then, for any curve $\sigma\subset \Si'$, we have the estimate
$$|h(\sigma)|\leq 2(1+\beta)|\sigma|$$
where $|\sigma|$ and $|h(\sigma)|$ are the lengths of the curves $\sigma$ and $h(\sigma)$.
\end{lemm}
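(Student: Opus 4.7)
The plan is to reduce the length estimate to a pointwise bound on tangent vectors and then integrate. Assume first that $\sigma:[0,1]\to\Si'$ is $C^1$. Then $h(\sigma)$ is a curve tangent at $h(\sigma(t))$ to the tangent space of the graph of $h$, so its velocity vector $v(t)$ lies in the cone $\cC^h_\beta(h(\sigma(t)))$. Decomposing $v(t)=v_h(t)+v_v(t)$ into horizontal and vertical parts, the cone condition gives $\|v_v(t)\|\leq \beta\|v_h(t)\|$ and hence
$$\|v(t)\|\leq \|v_h(t)\|+\|v_v(t)\|\leq (1+\beta)\|v_h(t)\|.$$

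The next step is to relate $\|v_h(t)\|$ to $\|\dot\sigma(t)\|$ using the key identity $\pi\circ h=\operatorname{id}_{\Si'}$. Differentiating gives $D\pi(h(\sigma(t)))\cdot v(t)=\dot\sigma(t)$, and since $v_v(t)$ is vertical (so killed by $D\pi$), this reduces to
$$D\pi(h(\sigma(t)))\cdot v_h(t)=\dot\sigma(t).$$
Now I invoke Proposition~\ref{p.tubulaire}.(\ref{i.tubulaire2}), which says that $D\pi$ is a near-isometry on horizontal vectors with distortion $\delta$. Provided the tubular neighborhood was built with the parameter $\delta\leq 1/2$ (which is free, since Proposition~\ref{p.tubulaire} lets us choose $\delta$ at will), we obtain $\|\dot\sigma(t)\|\geq (1-\delta)\|v_h(t)\|\geq \tfrac12\|v_h(t)\|$, hence $\|v_h(t)\|\leq 2\|\dot\sigma(t)\|$.

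Combining the two bounds yields the pointwise estimate $\|v(t)\|\leq 2(1+\beta)\|\dot\sigma(t)\|$. Integrating over $t\in[0,1]$ gives $|h(\sigma)|\leq 2(1+\beta)|\sigma|$, as claimed.

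The only delicate point is a regularity one: the definition of $\beta$-Lipschitz in the text posits the existence of the tangent space $T_zh(U)$ at every $z$, so on a smooth $\sigma$ the curve $h(\sigma)$ carries a well-defined continuous velocity and the integration above is unambiguous. For a general rectifiable $\sigma$ one can either reduce to the $C^1$ case by approximation or invoke the standard fact that a Lipschitz curve in the graph has the same length computed from its a.e.-defined tangent vectors; in either case the bound propagates without loss. I do not expect any real obstacle here, only bookkeeping with the constants $\delta$ and $\beta$ chosen in Proposition~\ref{p.tubulaire}.
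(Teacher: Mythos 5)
Your proposal is correct and follows essentially the same route as the paper: decompose the tangent vector of $h(\sigma)$ into its horizontal and vertical parts, bound the vertical part by the cone condition, bound the horizontal part via the near-isometry in Proposition~\ref{p.tubulaire}.(\ref{i.tubulaire2}) (the paper also implicitly uses $\delta<1/2$, fixed later in Section~\ref{ss.constantes}), and integrate. You simply spell out the $\pi\circ h=\operatorname{id}$ differentiation and the regularity bookkeeping that the paper leaves implicit.
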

\begin{proof}
At any point $z\in T$, any vector $u\in \cC_\beta^h(z)$ decomposes as $u_h+u_v$ such that
$\|u_v\|\leq \beta\|u_h\|$. By Proposition~\ref{p.tubulaire}.(\ref{i.tubulaire2}),
we also have $\|u_h\|\leq 2\|v\|$ where $v=D\pi(z)\cdot u$ so that $\|u\|\leq 2(1+\beta)\|v\|$.
This gives the lemma.
\end{proof}

The next lemma will show that the distance between Lipschitz graphs is contracted by $f^{-1}$.
\begin{lemm}\label{l.contracte}
Fix a constant $\gamma>(\lambda_0-4\eta(1+\beta))^{-1}$
and a neighborhood $\Si'\subset \Si_0$ of $K$ in $\Si_0$. Then, there exists a
neighborhood $U_\gamma\subset T$ of $K$ which satisfies the following:

For any graph $S$ of a $\beta$-Lipschitz function $h\colon \Si' \to T$ and any points $z_1,z_2, \widetilde z_2$
in $U_\gamma$ such that $z_1,\widetilde z_2\in S$, $\pi(z_2)=\pi(\widetilde z_2)$ and $\pi(f^{-1}(z_1))=\pi(f^{-1}(z_2))$, we have
$$d_\pi(f^{-1}(z_1),f^{-1}(z_2))\leq \gamma \; d_\pi(z_2,\widetilde z_2).$$
\end{lemm}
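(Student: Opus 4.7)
\emph{Plan.} I would compare four auxiliary curves linking the relevant points. Set $y_i=f^{-1}(z_i)$, so that $y_1,y_2$ share the fiber $\pi^{-1}(\pi(y_1))$. Let $\alpha\colon[0,1]\to\pi^{-1}(\pi(y_1))$ parameterize the fiber geodesic from $y_1$ to $y_2$ at constant speed $d_\pi(y_1,y_2)$; let $p=f\circ\alpha$, a curve from $z_1$ to $z_2$; let $\sigma=\pi\circ p$, a curve from $\pi(z_1)$ to $\pi(z_2)$ in $\Si_0$; and let $q=h\circ\sigma$, its lift to the graph $S$, which runs from $z_1=h(\pi(z_1))$ to $\tilde z_2=h(\pi(z_2))$. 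The tangent vector $\dot\alpha$ is vertical, so Proposition~\ref{p.tubulaire}(\ref{i.tubulaire1}) gives $|\sigma|\leq\eta\,d_\pi(y_1,y_2)$, and Lemma~\ref{l.length} applied to $h$ yields $|q|\leq 2(1+\beta)|\sigma|\leq 2\eta(1+\beta)d_\pi(y_1,y_2)$.

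The key dynamical input comes from the calculation already made inside the proof of Lemma~\ref{l.verticale}: each vector $\dot p(t)=Df(\dot\alpha(t))$ decomposes as $\dot p_h+\dot p_v$ with $\|\dot p_v\|\geq(\lambda_0-2\eta)\|\dot\alpha\|$ and $\|\dot p_h\|\leq 2\eta\|\dot\alpha\|$. On the graph side, the $\beta$-Lipschitz bound together with Proposition~\ref{p.tubulaire}(\ref{i.tubulaire2}) gives $\|\dot q_v\|\leq\beta\|\dot q_h\|\leq 2\beta\|\dot\sigma\|$. Thus the integrated ``vertical length'' of $p$ is at least $(\lambda_0-2\eta)d_\pi(y_1,y_2)$ while that of $q$ is at most $2\beta\eta\,d_\pi(y_1,y_2)$.

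To turn these pointwise estimates into a lower bound on the fiber distance $d_\pi(z_2,\tilde z_2)$, I choose $U_\gamma$ small enough that all four curves sit inside a single trivializing chart of the vertical bundle $\widetilde F$ (via the fiberwise exponential map), in which the fiberwise distance agrees with the Euclidean distance up to a multiplicative error as small as we please. In such a chart write $z_1=(a_1,H(a_1))$, $z_2=(a_2,b_2)$, $\tilde z_2=(a_2,H(a_2))$, with $H$ representing $h$; then $d_\pi(z_2,\tilde z_2)$ is essentially $|b_2-H(a_2)|$ and
\[
|b_2-H(a_2)|\;\geq\;|b_2-b_1|\;-\;|H(a_1)-H(a_2)|\;\geq\;|b_2-b_1|\;-\;\beta|a_1-a_2|.
\]
Integrating the (approximately constant direction of the) vertical component of $\dot p$ gives $|b_2-b_1|\geq(\lambda_0-2\eta-\varepsilon)d_\pi(y_1,y_2)$, where $\varepsilon\to 0$ as the diameter of $U_\gamma$ shrinks; and $\beta|a_1-a_2|\leq\beta|\sigma|\leq\beta\eta\,d_\pi(y_1,y_2)$. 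Combining yields
\[
d_\pi(z_2,\tilde z_2)\;\geq\;\bigl(\lambda_0-2\eta(1+\beta)-\varepsilon\bigr)\,d_\pi(y_1,y_2),
\]
and taking $U_\gamma$ small enough so that $\varepsilon<2\eta(1+\beta)$ one obtains $d_\pi(y_1,y_2)\leq\gamma\,d_\pi(z_2,\tilde z_2)$ for any prescribed $\gamma>(\lambda_0-4\eta(1+\beta))^{-1}$.

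The main obstacle is the last step: passing from the vertical-velocity estimates along the varying-fiber curves $p$ and $q$ to an honest fiberwise distance at the endpoint. The vertical direction rotates as the basepoint moves along $\sigma$, and both $f$ and the tubular projection are only approximately linear, so some linearization error is unavoidable. The point is that the bound we need has strict slack (the factor $4\eta(1+\beta)$ rather than $2\eta(1+\beta)$), and this slack can absorb all the nonlinearity/holonomy errors provided one is willing to shrink $U_\gamma$ sufficiently, which is exactly what the statement allows.
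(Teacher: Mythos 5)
Your proof is correct and in essence the same as the paper's: you introduce the same four auxiliary curves (the fiber geodesic from $f^{-1}(z_1)$ to $f^{-1}(z_2)$, its $f$-image, its projection $\sigma'=\pi\circ f(\sigma)$, and the lift $h(\sigma')$ to the graph), use the same length bounds from Proposition~\ref{p.tubulaire} and Lemma~\ref{l.length}, and shrink the neighborhood to absorb first-order errors.

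The one genuine difference is the final bookkeeping, and here the paper's device is cleaner than yours. Rather than passing to a trivializing chart of $\widetilde F$ and comparing vertical coordinates, the paper never decomposes any vector: it picks $\lambda\in(1,\lambda_0)$ with $\gamma>(\lambda-4\eta(1+\beta))^{-1}$, bounds $d(z_1,z_2)\geq\lambda|\sigma|$ (expansion along the fiber, with $\lambda<\lambda_0$ absorbing the linearization error), bounds $d(z_1,\tilde z_2)\leq |h(\sigma')|\leq 4\eta(1+\beta)|\sigma|$, and finishes with the triangle inequality $d(z_1,z_2)\leq d(z_1,\tilde z_2)+d(\tilde z_2,z_2)$ together with $d(\tilde z_2,z_2)\leq d_\pi(\tilde z_2,z_2)$. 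This completely sidesteps the ``vertical direction rotates'' issue that you correctly identify as the main obstacle in your version and absorb into the error $\varepsilon$: no horizontal/vertical decomposition of $\dot p$ or $\dot q$ is needed, only ambient distances. Your chart argument is sound (your actual constant $\lambda_0-\eta(2+\beta)-\varepsilon$ is even slightly sharper than the $\lambda_0-2\eta(1+\beta)-\varepsilon$ you record, and both comfortably exceed $\lambda_0-4\eta(1+\beta)$ for small $\varepsilon$), but the triangle-inequality route reaches the same constant with less to control.
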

\begin{proof}
One chooses $\lambda\in ]1,\lambda_0[$ such that $\gamma>(\lambda-4\eta(1+\beta))^{-1}$.
Let $\sigma$ be a geodesic segment joining the points $f^{-1}(z_1)$ and $f^{-1}(z_2)$.
Since $U_\gamma$ is small, the length of $\sigma$ is small.

The length $|f(\sigma)|$ is close to $\|D f(z).u\|.|\sigma|$
where $z$ is a point of $\sigma$ and $u$ a unit vertical vector tangent to $\sigma$.
Proposition~\ref{p.tubulaire}.(\ref{i.tubulaire1}) gives $|f(\sigma)|\geq \lambda_0|\sigma|$.
The distance between $z_1$ and $z_2$ in $M$ is close to the length of $f(\sigma)$, hence:
\begin{equation}\label{e.compare}
d(z_1,z_2)\geq \lambda|\sigma|.
\end{equation}
Similarly, the length of $\sigma'=\pi\circ f(\sigma)$ is close to $\|D\pi\circ D f(z).u\|.|\sigma|$
and Proposition~\ref{p.tubulaire}.(\ref{i.tubulaire1}) gives
\begin{equation}\label{e.releve}
|\sigma'|\leq 2\eta |\sigma|.
\end{equation}
Since $h$ is $\beta$-Lipschitz function, we have by Lemma~\ref{l.length} that
\begin{equation}\label{e.lip}
|h(\sigma')|\leq 2(1+\beta)|\sigma'|.
\end{equation}
Since the segment $h(\sigma')$ joins the points $z_1$ and $\widetilde z_2=h(\pi(z_2))$, one gets
by~(\ref{e.lip}) and~(\ref{e.releve}) the estimate $d(z_1,\widetilde z_2)\leq 4\eta(1+\beta)|\sigma|$.
Writing $d(z_1,z_2)\leq d(z_1,\widetilde z_2)+d(\widetilde z_2,z_2)$, and using~(\ref{e.compare}), one gets
$$\lambda|\sigma|\leq 4\eta(1+\beta)|\sigma| + d(\widetilde z_2,z_2).$$
Since $|\sigma|=d_\pi(f^{-1}(z_1),f^{-1}(z_2))$, this gives as required
$$d_\pi(f^{-1}(z_1),f^{-1}(z_2))\leq (\lambda-4\eta(1+\beta))^{-1}d(\widetilde z_2,z_2)
\leq \gamma \; d_\pi(z_2,\widetilde z_2).$$
\end{proof}

\subsubsection{Choice of the constants and of the tubular neighborhood}\label{ss.constantes}

We explain what are the constants used in the proofs below and
how to choose them:
\begin{enumerate}
\item One chooses the constants required by proposition~\ref{p.tubulaire}:
one first fixes $\lambda_0\in ]1,\lambda_K[$,
then $\eta>0$ small and finally $\beta,\delta\in]0,1/2[$ small enough so that
\begin{equation}\label{e.choixcone}
\beta<\frac{\lambda_0-2\eta}{6\eta},\quad\quad
\lambda_0 -4\eta(1+\beta)>1
\text{ and } \beta+\delta<\frac 1 {10}.
\end{equation}

\item We also consider $\gamma>0$ and then $\rho >1$ such that
\begin{equation}\label{e.contraction}
(\lambda_0-4\eta(1+\beta))^{-1}<\gamma<1 \text{ and } \gamma \rho <1.
\end{equation}
\end{enumerate}
The first part of~(\ref{e.choixcone}) is used in Lemma~\ref{l.verticale}.
The second part guaranties the existence of $\gamma$ satisfying~\eqref{e.choixcone}
as required in  Lemma~\ref{l.contracte}.
The third one is used in the proof of the Proposition~\ref{p.contracte-cone}
for the cone contraction.
The constant $\rho>1$ appears in Proposition~\ref{p.theta} and the condition $\gamma\rho<1$
will ensure the contraction of the graph transformation in the later constructions.

\begin{enumerate}
\setcounter{enumi}{2}
\item In order to obtain contraction of the horizontal cone fields, we choose $\bar \beta\in ]\frac \beta {\lambda_0}, \beta[$.

\item Proposition~\ref{p.smoothing} applied to $\Si$ gives $C_\Si>0$
and $c_f>1$ is any bound on $\|Df\|$ and $\|Df^{-1}\|$.
\end{enumerate}

One can now fix the tubular neighborhood $T$ which satisfies the conclusions of Proposition~\ref{p.tubulaire}
for the constants $\lambda_0,\eta,\beta$ and the conclusions of Proposition~\ref{p.theta} for $\beta'=\beta/\lambda_0$
and $\bar \beta$.

\subsection{A graph transformation}
\subsubsection{Geometry of the graph images}
Before defining the graph transformation, we need to check that
the images by $f^{-1}$ of the Lipschitz graphs above $\Si_0$ remain Lipschitz graphs.
We recall that $c_f>1$ is a constant which bounds $\|Df\|, \|Df^{-1}\|$.
\begin{prop}\label{p.domaine}
There exists a function $\varepsilon\colon ]0,m_1[\to ]0,+\infty[$, with the following properties.
Let $\Si_m, \widehat \Si_m$
denote the open $\varepsilon(m)$ and $2c_f.\varepsilon(m)$-neighborhoods
of $K$ in $\Si_0$. Then:
\begin{enumerate}
\item\label{i.domaine1} $\lim_{m\to 0}\frac m{\varepsilon(m)}=0$.
\item\label{i.domaine2} For any $m\in ]0,m_1[$,
$f^{-1}(T_m)\cap \pi^{-1}(\widehat \Si_m)\subset T_m$.
\item\label{i.domaine3} For any $m\in ]0,m_1[$, the image $f^{-1}(S)$ of the graph $S$ of any $\beta$-Lipschitz function
$h\colon \widehat \Si_{m}\to T_{m}$
contains the graph of a $\frac{\beta}{\lambda_0}$-Lipschitz function $h'\colon \Si_m\to T_m$ over $\Si_m$.
\end{enumerate}
\end{prop}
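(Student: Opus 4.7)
The plan is to construct the function $\varepsilon(m)$ by an implicit relation that simultaneously satisfies two competing constraints, and then to prove item (3) using item (2) as a key ingredient. The key piece of data is the map $\Phi_0 := \pi\circ f^{-1}|_{\Sigma_0}$, which is a $C^1$-local diffeomorphism in a neighborhood of $K$ in $\Sigma_0$ (since $\Phi_0|_K = f^{-1}|_K$ and $D\Phi_0(k) = Df^{-1}(k)|_{E^c(k)}$ is invertible for $k\in K$), together with a modulus of tangency $\varphi$ (with $\varphi(r)\to 0$ as $r\to 0$) controlling how much $f^{-1}(\Sigma_0)$ deviates from $\Sigma_0$ in the vertical direction near $K$.

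For item (2), given $z\in T_m$ with $x' := \pi(f^{-1}(z))\in \widehat\Sigma_m$, I would pick the auxiliary point $z^* := \Phi_0^{-1}(x')\in\Sigma_0$, which lies close to $K$. Applying Lemma~\ref{l.contracte} to the zero graph $S=\Sigma_0$ with $z_1 = z^*$, $z_2 = z$ and $\widetilde z_2 = \pi(z)$ (the three points being close to $K$ and satisfying $\pi(f^{-1}(z_1)) = \pi(f^{-1}(z_2)) = x'$) gives $d_\pi(f^{-1}(z),f^{-1}(z^*))\leq \gamma m$. The remaining term $d_\pi(f^{-1}(z^*), x')$ is the vertical distance at $x'$ between $\Sigma_0$ and $f^{-1}(\Sigma_0)$; by the $C^1$-tangency of these two submanifolds along $K$ (both being tangent to $E^c$, which is $Df$-invariant on $K$), it is bounded by $\varphi(2c_f\varepsilon(m))\cdot 2c_f\varepsilon(m)$. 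Provided $\varepsilon(m)$ is chosen so this error is strictly less than $(1-\gamma)m$, we conclude $d_\pi(f^{-1}(z),x')<m$.

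For item (3), the cone invariance from Proposition~\ref{p.tubulaire}.(\ref{i.tubulaire3}) immediately gives that $f^{-1}(S)$ has tangent spaces in $\cC^h_{\beta/\lambda_0}$, so wherever it is a graph it is $\beta/\lambda_0$-Lipschitz. It thus remains to show $\pi(f^{-1}(S))\supset \Sigma_m$. For this I would study $\Phi(x) := \pi(f^{-1}(h(x)))$, which agrees with $\Phi_0$ at points of $K$ and, by integrating along the vertical segment from $x$ to $h(x)$ (of length $<m$), differs from $\Phi_0$ on $\widehat\Sigma_m$ by at most $m\cdot\sup_V\|D\pi\circ Df^{-1}\|$. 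This last supremum is of order $\eta$: because Proposition~\ref{p.tubulaire}.(\ref{i.tubulaire1}) says $Df$ sends vertical vectors to near-vertical vectors, the vertical bundle is approximately $Df$-invariant, so $D\pi\circ Df^{-1}$ is small on vertical vectors. Meanwhile $\Phi_0$ sends $\partial\widehat\Sigma_m$ to points at distance at least $2\varepsilon(m)$ from $K$ (since $\|D\Phi_0^{-1}\|\leq c_f$ near $K$); combined with the perturbation estimate $\|\Phi-\Phi_0\|=O(\eta m)\ll \varepsilon(m)$, a degree argument yields $\Phi(\widehat\Sigma_m)\supset\Sigma_m$ with $\Phi$ injective over $\Sigma_m$. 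Then $h'(y):=f^{-1}(h(\Phi^{-1}(y)))$ defines the desired graph over $\Sigma_m$, with $h'(y)\in T_m$ ensured by item (2) applied to $z=h(\Phi^{-1}(y))$.

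The main obstacle is choosing $\varepsilon(m)$ to satisfy \emph{both} the estimate $\varphi(2c_f\varepsilon(m))\cdot 2c_f\varepsilon(m)\leq (1-\gamma)m$ required by (2) \emph{and} the condition $m/\varepsilon(m)\to 0$ required by (3). This is feasible because $\varphi(r)\to 0$ as $r\to 0$: after replacing $\varphi$ by a continuous monotone majorant, I would define $\varepsilon(m)$ implicitly by the equation $\varphi(2c_f\varepsilon)\cdot 2c_f\varepsilon = cm$ with $c<1-\gamma$, which yields simultaneously $\varepsilon(m)\to 0$ and $m/\varepsilon(m) = c/\varphi(2c_f\varepsilon(m)) \to 0$ as $m\to 0$.
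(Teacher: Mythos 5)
Your handling of item (1) and item (2) follows the paper's route almost exactly: both set up a sub-linear modulus controlling the tangency of $f^{-1}(\Sigma_0)$ to $\Sigma_0$ along $K$ (the paper's $\nu(\eta)$, your $\varphi$), define $\varepsilon(m)$ implicitly from it so that $m/\varepsilon(m)\to 0$, and prove (2) by applying Lemma~\ref{l.contracte} to the zero graph $\Sigma_0$ and absorbing the residual vertical error via the tangency estimate. For item (3), however, you take a genuinely different route. The paper first proves Lemma~\ref{l.graphe} — that $f^{-1}(S)$ is globally a graph — by a contradiction argument: extracting sequences of pairs with the same $\pi\circ f^{-1}$-image and showing, via Lemma~\ref{l.verticale}, that the joining geodesics would have to be tangent both to $\cC^h_{2\beta}$ and to the complement of $\cC^h_{3\beta}$. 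It then establishes the separation estimate~\eqref{e.disjoint} and combines the two with a connectedness argument ($\Sigma_m\cap\partial D=\emptyset$ implies $\Sigma_m\subset D$). You instead work directly with $\Phi=\pi\circ f^{-1}\circ h$ and run a degree/homotopy argument: $\Phi$ is a perturbation of the local diffeomorphism $\Phi_0$ of order $O(\eta m)\ll\varepsilon(m)$ (your claim that $\sup\|D\pi\circ Df^{-1}\|_{|V}=O(\eta)$ is correct and can be seen from the block structure of $Df^{-1}$: the $E$–$V$ off-diagonal block of $Df^{-1}$ is $-(\text{block }(1,1)\text{ of }Df^{-1})\cdot B\cdot D^{-1}$ where $\|B\|\le\eta$ by Proposition~\ref{p.tubulaire}.(\ref{i.tubulaire1})), while $\Phi_0$ pushes $\partial\widehat\Sigma_m$ outside $\Sigma_m$; degree $1$ then gives surjectivity onto $\Sigma_m$, and, combined with $D\Phi$ being invertible (since $Dh\subset\cC^h_\beta$, hence $Df^{-1}\circ Dh\subset\cC^h_{\beta/\lambda_0}$ is transverse to the $\pi$-fibers), also injectivity, so $h'=f^{-1}\circ h\circ\Phi^{-1}$ is well defined and lands in $T_m$ by item (2). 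Your topological argument thus absorbs both of the paper's auxiliary steps (the graph property from Lemma~\ref{l.graphe} and the boundary separation~\eqref{e.disjoint}) into a single degree count; the trade-off is that you must verify, as you should make explicit, that $\Phi$ is everywhere defined on $\widehat\Sigma_m$ (automatic for small $m$ since $\widehat\Sigma_m$ shrinks to $K$) and that the sign of $\det D\Phi$ is locally constant so degree~$1$ really forces a single preimage — both routine, but the paper's hands-on contradiction argument sidesteps them.
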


The proof uses two preliminary lemmas.
\begin{lemm}\label{l.graphe} For each small neighborhood $\Si'$ of $K$ in $\Si_0$,
there exists a positive constant $m=m(\Si')$ verifying the following property:

Consider the graph $S$ of a $\beta$-Lipschitz function $h\colon \Si'\to T_{m(\Si')}$.
Then, $f^{-1}(S)$ is contained in $T$ and is the graph of a function $h'$
over the subset $\pi(f^{-1}(S))$ of $\Si_0$.
\end{lemm}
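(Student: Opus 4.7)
The plan is to establish the two conclusions of the lemma separately. For the inclusion $f^{-1}(S)\subset T$: I restrict $\Si'$ to a small enough neighborhood of $K$ in $\Si_0$ so that $f^{-1}(\overline{\Si'})$ is a compact subset of the open tubular neighborhood $T$, which is possible since $f^{-1}(K)=K\subset\Si_0\subset T$. Then since $S\subset T_m$ stays at $M$-distance of order $m$ from $\Si'$, and $f^{-1}$ is Lipschitz, the set $f^{-1}(S)$ lies in an $O(m)$-neighborhood of $f^{-1}(\Si')\subset T$, so for $m=m(\Si')$ small enough $f^{-1}(S)\subset T$. Moreover, a continuity argument (using the closeness of $f^{-1}(\pi(y))$ to $\Si_0$ for $\pi(y)\in \Si'$ close to $K$) shows that, further shrinking $\Si'$ and $m$, one has $f^{-1}(S)\subset T_{m''}$ for some $m''$ as small as desired.

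For the graph property I split the argument into a local and a global part. \textbf{Local graph.} At any $z\in f^{-1}(S)$, the $\beta$-Lipschitz assumption on $h$ gives $T_{f(z)}S\subset \cC^h_\beta(f(z))$; applying Proposition~\ref{p.tubulaire}(\ref{i.tubulaire3}) gives
\[T_zf^{-1}(S)=Df^{-1}(f(z))\cdot T_{f(z)}S\subset \cC^h_{\beta/\lambda_0}(z),\]
so $f^{-1}(S)$ is tangent everywhere to a horizontal cone. Combined with Proposition~\ref{p.tubulaire}(\ref{i.tubulaire2}), this shows $D\pi$ is injective on these cones with uniform norm estimates, so $\pi|_{f^{-1}(S)}$ is a local diffeomorphism. \textbf{Global injectivity.} Suppose for contradiction that $p\neq q$ in $f^{-1}(S)$ share the same projection $x_0=\pi(p)=\pi(q)$; by the previous paragraph both lie in $T_{m''}$, so the fiber geodesic $\sigma$ from $p$ to $q$ has length $\ell\leq 2m''$. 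The image curve $f(\sigma)$ joins $f(p)\in S$ to $f(q)\in S$, and its tangent $w=Df(u_v)$ at each point satisfies $\|D\pi\cdot w\|\leq\eta$ and $\|w\|\geq\lambda_0$ by Proposition~\ref{p.tubulaire}(\ref{i.tubulaire1}); combined with Proposition~\ref{p.tubulaire}(\ref{i.tubulaire2}) this gives $\|w_h\|\leq 2\eta$ and $\|w_v\|\geq\lambda_0-2\eta$. Its projection $\sigma_0:=\pi(f(\sigma))$ is a curve of length at most $\eta\ell$ in $\Si_0$; for $m$ small, $\sigma_0$ is short enough that it stays inside $\Si'$ (both endpoints being in $\Si'$), so it can be lifted via $h$ to a curve $\sigma_1\subset S$ of length at most $2(1+\beta)\eta\ell$ by Lemma~\ref{l.length}. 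Working in a coordinate chart that trivializes $\pi$ near $f(p)$, the two curves $f(\sigma)$ and $\sigma_1$ share the endpoints $f(p)$ and $f(q)$: integrating the vertical component along $f(\sigma)$ produces a vertical displacement of at least $(\lambda_0-2\eta-o(1))\ell$ as $\ell\to 0$, whereas the vertical displacement along $\sigma_1$ is bounded by its length, hence at most $2(1+\beta)\eta\ell$. By~\eqref{e.choixcone}, $\lambda_0-2\eta>2(1+\beta)\eta$, which yields the desired contradiction for $\ell$ small; hence $p=q$.

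The main obstacle I anticipate is the combined smallness requirement: one must shrink $\Si'$ and choose $m(\Si')$ small enough so that (i) $f^{-1}(S)\subset T_{m''}$ for $m''$ small (making fiber geodesics between potential preimages with the same $\pi$-image short), and (ii) the short projected curve $\pi(f(\sigma))$ stays inside $\Si'$ so that the lift via $h$ is defined. Both are handled by standard continuity and compactness arguments given the compactness of $K$ and the openness of the tubular data, but laying out these dependencies cleanly—rather than the cone/coordinate comparison itself—is the most delicate bookkeeping of the proof.
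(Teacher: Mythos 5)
Your overall strategy matches the paper's: first establish $f^{-1}(S)\subset T$ (and in fact $\subset T_{m''}$), then prove that $\pi$ is injective on $f^{-1}(S)$ by contradiction, comparing the "mostly vertical" curve $f(\sigma)$ (where $\sigma$ is the fiber geodesic between two hypothetical preimages $p,q$) with a "mostly horizontal" curve in or near $S$ joining $f(p)$ to $f(q)$. Your inclusion and local-graph steps are fine, and your use of Proposition~\ref{p.tubulaire}(\ref{i.tubulaire1})--(\ref{i.tubulaire2}) to see that $Df$ maps vertical vectors to vectors with large vertical and small horizontal parts is a re-derivation of exactly what Lemma~\ref{l.verticale} records.

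The one step that does not go through as written — and you correctly flag it as the delicate point — is the choice of comparison curve. You take $\sigma_1 = h(\sigma_0)$ with $\sigma_0 = \pi(f(\sigma))$, which requires $\sigma_0\subset\Si'$. Having both endpoints $\pi(f(p)),\pi(f(q))$ in $\Si'$ and $|\sigma_0|$ small does not force this: the curve $\sigma_0$ stays within $|\sigma_0|\leq\eta\ell$ of $\pi(f(p))$, and $\pi(f(p))$ can lie arbitrarily close to $\partial\Si'$ for any choice of $m(\Si')$, so the lift via $h$ need not be defined. No amount of shrinking $m$ alone fixes this, because $\pi(f(p))$ is not constrained to be deep inside $\Si'$.

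The paper sidesteps this by never lifting anything by $h$. It compares $f(\sigma_n)$ with the ambient geodesic segment $[f(p),f(q)]\subset M$, which always exists, and argues via a preliminary "$\nu$-claim" (for $x,y\in h(U)$ with $d(x,y)\leq\nu$, the segment $[x,y]$ is tangent to $\cC^h_{2\beta}$) together with Lemma~\ref{l.verticale}: $f(\sigma_n)$ is never tangent to $\cC^h_{3\beta}$, while $[f(p),f(q)]$ is tangent to $\cC^h_{2\beta}$, and as these curves shrink their directions at $f(p)$ must coincide in the limit — a contradiction. Substituting the ambient geodesic for $\sigma_1$ in your displacement estimate produces the same contradiction and repairs the gap, since the $\nu$-claim gives exactly the bound $\|$vertical part of $f(q)-f(p)\|\le 2\beta\|$horizontal part$\|$ you wanted from $\sigma_1$. (Whether the $\nu$-claim itself is entirely trivial for a graph over a possibly ragged $\Si'$ is a separate subtlety that the paper glosses over; your formulation makes that dependence more visible, which is worth noting, but the ambient-geodesic route remains the cleaner one.)

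Two smaller stylistic differences: the paper structures the argument as a compactness/sequence contradiction ($m_n\to 0$), while you estimate directly with an $o(1)$; and the paper proves $d(x_n,y_n)\to 0$ as a separate preliminary step using that $f^{-1}(\Si_1)$ is a graph, whereas your $f^{-1}(S)\subset T_{m''}$ observation gives this automatically. Neither changes the substance.
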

\begin{proof}
Working with small charts (where the metric $\|~\|$ and the bundles $E$ and $F$ are almost constant) that cover the tubular neighborhood
$T$, one gets the following property:

\emph{There exists $\nu>0$ such that for any $\beta$-Lipschitz function $h\colon U\to T$ over a subset $U\subset \Si_0$ and
for any points $x,y\in h(U)$ such that $d(x,y)\leq \nu$, then
the geodesic segment joining $x$ to $y$ is tangent to the cone $\cC_{2\beta}^h$ at each point.}

Since $f(K)=K$, since $\Si_0$ is tangent at each point $x\in K$ to $E(x)$ and since $E_{|K}$ is invariant by $Df$,
there exists a neighborhood $\Si_1\subset \Si_0$ of $K$ in $\Si_0$ such that $f^{-1}(\Si_1)$ is contained in $T$
and is the graph of a function over a subset of $\Si_0$.

We now consider $\Si'\subset \Si_1$ and prove that it satisfies the lemma with some constant $m>0$.
Let us assume, by contradiction, that there exists a sequence $(m_n)$ going to $0$, a sequence of graphs $(S_n)$
of $\beta$-Lipschitz functions over $\Si'$ such that $S_n\subset T_{m_n}$ and two sequences of points $(x_n)$
and $(y_n)$ such that for each $n$, the points $x_n$ and $y_n$ are distinct,
contained in $S_n$ and verify $\pi(f^{-1}(x_n))=\pi(f^{-1}(y_n))$.

We first prove that the distance $d(x_n,y_n)$ goes to $0$: in the other case, one would obtain, by considering some
subsequences and using the fact that $m_n$ goes to $0$, two distinct points $x,y$ in $\Si'$ whose images by
$\pi\circ f^{-1}$ coincide, contradicting that $f^{-1}(\Si')\subset f^{-1}(\Si_1)$ is a graph.

We consider the geodesic segment $\sigma_n$ joining $f^{-1}(x_n)$ to $f^{-1}(y_n)$ in the fiber of $\pi^{-1}$.
Then by Lemma~\ref{l.verticale} and our choice of $\beta$, the segment
$f(\sigma_n)$, joining $x_n$ to $y_n$, is never tangent to the cones $\cC_{3\beta}^h$.
On the other hand for $n$ large enough, the points $x_n$ and $y_n$ are at distance less than $\nu$
so that the geodesic segment joining $x_n$ to $y_n$ is tangent to the cone $\cC_{2\beta}^h$.
When $n$ tends to $\infty$, the angles between these two segments at $x_n$ go to $0$ leading to a contradiction.
\end{proof}

\begin{lemm}\label{l.domaine0}
There exists a function $\varepsilon_1\colon ]0,+\infty[\to ]0,+\infty[$, with the following properties:
\begin{enumerate}
\item\label{i.domaine01} $\lim_{m\to 0}\frac m{\varepsilon_1(m)}=0$.
\item\label{i.domaine02} $T_m$ contains the image by $f^{-1}$ of the $\varepsilon_1(m)$-neighborhood  of $K$ in $\Si_0$,
for any $m>0$ small.
\end{enumerate}
\end{lemm}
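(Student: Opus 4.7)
The plan is to exploit the fact that, at each $x\in K$, the submanifold $f^{-1}(\Si_0)$ is tangent to $\Si_0$, so the vertical displacement of $f^{-1}(y)$ from $\Si_0$ is sub-linear in $d(y,K)$. Concretely, I would define
$$\phi(y) := d_\pi\bigl(f^{-1}(y),\pi(f^{-1}(y))\bigr)$$
for $y$ in a small neighborhood of $K$ in $\Si_0$ (where $f^{-1}(y)\in T$, which is fine since $f^{-1}(K)=K\subset \Si_0$ and $T$ is open). The goal is to show there is a continuous modulus $\omega\colon [0,+\infty)\to [0,+\infty)$ with $\omega(0)=0$ and $\omega(t)/t\to 0$ as $t\to 0$, such that $\phi(y)\le \omega(d(y,K))$ for $y$ close to $K$ in $\Si_0$. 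Given such an $\omega$, setting $\varepsilon_1(m):=\sup\{\varepsilon>0 : \omega(\varepsilon)\le m\}$ yields $\varepsilon_1(m)\to 0$, $\omega(\varepsilon_1(m))\le m$, and $m/\varepsilon_1(m)=\omega(\varepsilon_1(m))/\varepsilon_1(m)\to 0$, proving both items.

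To obtain the modulus, I would fix $x\in K$, $y\in \Si_0$ close to $x$, and write
$$f^{-1}(y)=f^{-1}(x)+Df^{-1}(x)\cdot(y-x)+ r(x,y),\qquad \|r(x,y)\|=o(\|y-x\|),$$
in a local chart at $x$. Since $\Si$ is tangent to $E^c(x)$ at $x\in K$, the vector $y-x$ decomposes as a vector in $E^c(x)$ plus a second-order error, and since $Df^{-1}(x)\cdot E^c(x)=E^c(f^{-1}(x))=T_{f^{-1}(x)}\Si$, the principal part $Df^{-1}(x)\cdot(y-x)$ is, up to $o(\|y-x\|)$, tangent to $\Si_0$ at $f^{-1}(x)\in K$. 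Projecting to the vertical fibres of $\pi$ then gives $\phi(y)=o(d(y,K))$ at each $x$. Compactness of $K$ and uniform continuity of $Df^{-1}$, together with uniform continuity of the map $x\mapsto T_x\Si$ on $\Si_0$, upgrade this pointwise statement to a uniform one: there exists a single modulus $\omega$ that works for all $x\in K$.

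The main obstacle is getting the uniform modulus, since $\Si_0$ is merely $C^1$ and the projection $\pi$ is smooth only outside $K$ (see Remark~\ref{r.smooth2}), so one cannot appeal directly to a $C^2$-Taylor bound. I would handle this by working in a finite cover of $K$ by trivializing charts of the tubular neighborhood, in each of which $\Si_0$ is a $C^1$-graph over the horizontal factor whose differential equals the identity of $E^c$ at points of $K$ and is therefore uniformly continuous; combined with the uniform continuity of $Df^{-1}$ and of $x\mapsto E^c(x)$ on $K$, this yields a single modulus of continuity $\omega$ controlling the vertical component of $f^{-1}(y)-f^{-1}(x)$ by $d(y,x)\cdot \omega'(d(y,x))$ for some $\omega'(t)\to 0$. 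Minimizing over $x\in K$ (so that $d(y,x)$ is close to $d(y,K)$) gives the desired $\omega$, and the two conclusions of the lemma follow.
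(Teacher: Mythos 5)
Your proposal is correct and follows essentially the same route as the paper's proof: both establish that the vertical displacement of $f^{-1}(y)$ above $\Si_0$ is sub-linear in $d(y,K)$ with a uniform modulus $\omega$, and then define $\varepsilon_1$ by inverting $\omega$. The only difference is in packaging: the paper obtains the sub-linearity by invoking Lemma~\ref{l.graphe}, which exhibits $f^{-1}(\Si_1)$ as the graph of a $C^1$-function $h$ with $h$ and $Dh$ vanishing on $K$ and then estimates $\|Dh\|$ near $K$, whereas you re-derive the same estimate from the first-order Taylor expansion of $f^{-1}$ together with the $Df$-invariance of $E^c$ and the $C^1$-tangency of $\Si$ to $E^c$ along $K$.
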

\begin{proof}
By Lemma~\ref{l.graphe}, there is a neighborhood $\Si_1$ of $K$ in $\Si_0$ such that
$f^{-1}(\Si_1)$ is the graph of a $C^1$-function $h$ which is tangent to the bundle $E$ at points of $K$.
Hence, one can cover $K$ by finitely many open sets $U_i$ of $\Si_0$ and charts $\varphi_i\colon U_i\to V_i\times ]0,1[^{\dim(M)-d}$
such that $V_i$ is an open set of $\Si_0$ and $h$ writes in this chart as a $C^1$-map
from $V_i$ to $V_i\times ]0,1[^{\dim(M)-d}$ of the form $x\mapsto (\pi(x),h_i(x))$.
Moreover, $h_i(x)$ and $Dh_i(x)$ are equal to zero at points $x$ of $K$.

For any $\eta>0$, let $\nu_0(\eta)>0$ be the supremum of the norm of the derivatives
of the maps $h_i$ on the $\eta$-neighborhood of $K$ in $\Si_0$. We set $\nu(\eta)=\max(\eta,\nu_0(\eta))$.
The map $\eta\mapsto \nu(\eta)$ is continuous, increasing and goes to $0$ with $\eta$ (since $Dh_i$ vanishes at points of $K$).
For $m$ small enough, one defines $\eta(m)>0$ as the minimum of the numbers $\eta$ such that $m\leq \eta \nu(\eta)$.
Clearly, $\eta(m)$ goes to $0$ with $m$ so that $\frac{m}{\eta(m)}=\nu(\eta(m))$ goes to $0$ with $m$.

By construction, (using the inequality $m\leq \eta\nu(\eta)$
and the facts that $h$ vanishes on $K$ and has a derivative bounded by $\nu(\eta)$ on the $\eta$-neighborhood of $K$),
the $C^0$-norm of $h$ is bounded by $m$ on the $\eta(m)$-neighborhood of $K$ in $\Si_0$.
In other terms, the graph $f^{-1}(S)$ over the $\eta(m)$-neighborhood of $K$ in $\Si_0$ is contained in $T_m$.
The differential of $f^{-1}$ is bounded by $c_f>1$. Hence, the function $\varepsilon_1(m)=\frac{\eta(m)}{c_f}$
satisfies the statement of the lemma.
\end{proof}
\bigskip

\begin{proof}[Proof of Proposition~\ref{p.domaine}]
We set $\varepsilon(m)=\frac{1}{2c_f}\varepsilon_1(m)$ with $\varepsilon_1$ as in Lemma~\ref{l.domaine0}.
\begin{enumerate}
\item The first item of the lemma is an easy consequence of Lemma~\ref{l.domaine0}.(\ref{i.domaine01}).

\item Let $z'_1$ be a point in $f^{-1}(T_m)\cap \pi^{-1}(\widehat \Si_m)$ and
$z'_2\in f^{-1}(\Si_1)$ such that $x'_1:=\pi(z'_1)=\pi(z'_2)$.
We also let $z_1:=f(z'_1)\in T_m$ and $\widetilde z_1\in \Si_0$ such that
$\pi(\widetilde z_1)=\pi(z_1)$. Since $z_1\in T_m$ we have
$d_\pi(z_1,\widetilde z_1)\leq m$. By Lemma~\ref{l.contracte}, we have
\begin{equation}\label{e.hauteur}
d_\pi(z_1',z'_2)\leq \gamma d_\pi(z_1,\widetilde z_1)\leq \gamma m.
\end{equation}
Since $\Si$ and $f^{-1}(\Si)$ are tangent at points of $K$, for $m$
small we have
$$d_\pi(x'_1,z'_2)\leq d(z'_1,K)\leq 2c_f\varepsilon(m)\leq (1-\gamma).m$$
for $m$ small, by Lemma~\ref{l.domaine0}.(\ref{i.domaine01}).
This proves $d_\pi(x'_1,z'_1)\leq m$ and gives the second item.

\item We now prove the following property:
\begin{equation}\label{e.disjoint}
f^{-1}(T_m\cap \pi^{-1}(\partial \widehat \Si_m))\cap \pi^{-1}(\Si_m)=\emptyset.
\end{equation}

We consider a point $z_1$ in $T_m\cap \pi^{-1}(\partial \widehat \Si_m)$ and $z_2=\pi(z_1)$.
Let $z'_1, z'_2$ be their images by $f^{-1}$ and $x'_1,x'_2$ the projections of $z'_1,z'_2$ by $\pi$.
It is enough to show that $x'_1$ and $K$ are at distance larger than $\varepsilon(m)$ in $\Si_0$.
In particular, one can assume that $x'_1\in \widehat \Si_m$.

The distance between $z_2$ and $K$ in $\Si_0$ equals $2c_f\varepsilon(m)$,
hence the distance between $z'_2$ and $K$ is larger than $2\varepsilon(m)$ in $f^{-1}(\Si_0)$
and (since $f^{-1}(\Si)$ and $\Si$ are tangent at points of $K$)
the distance between $x'_2$ and $K$ is larger than $\frac 3 2 \varepsilon(m)$.
We will show that the distance between $x'_1$ and $x'_2$ in $\Si_0$ is smaller than $\frac{\varepsilon(m)}{2}$
and this will give the announced property.

Since $x'_1\in \widehat \Si_m$, by the second item one has $z'_1\in T_m$, hence $d_\pi(x'_1,z'_1)\leq m$.
Since $d_\pi(z_1,z_2)\leq m$, we have $d(z'_1,z'_2)\leq 2c_f m$ where $c_f$ bounds the derivative of $f^{-1}$.
Since $z_2\in \widehat \Si_m$, it belongs to the $2c_f\varepsilon(m)$-neighborhood of $K$ in
$\Si_0$. We have $2c_f\varepsilon(m)\leq \varepsilon_1(m)$, hence
$z'_2\subset T_{m}$ by Lemma~\ref{l.domaine0}.(\ref{i.domaine02}); that is
$d_\pi(z'_2,x'_2)$ is less than $m$. The distance between $x'_1$ and $x'_2$ is thus bounded by $2(1+c_f)m$.\\
Since $\frac{m}{\varepsilon(m)}$ may be taken arbitrarily small, one deduces
that the distance in $\Si_0$ between $x'_1$ and $x'_2$ is smaller than $\frac{\varepsilon(m)}{2}$, as required,
proving~(\ref{e.disjoint}).

Let us come to the proof of the last item.
By Lemma~\ref{l.graphe}, the image by $f^{-1}$ of the graph $S$ of the function $h$ over $\widehat \Si_m$
is the graph $S'$ of a function $h'$ defined over a subset $D$ of $\Si_0$. By Proposition~\ref{p.tubulaire}.(\ref{i.tubulaire3}),
$h'$ is $\frac{\beta}{\lambda_0}$-Lipschitz. By the second item, $S'\cap \pi^{-1}(\Si_m)$ is contained in $T_m$.
By~(\ref{e.disjoint}), $\Si_m$ and the boundary of $D$ does not intersect. Hence, $\Si_m$ is contained in $D$.
This shows that $S'$ contains the graph of a function defined over $\Si_m$.
\end{enumerate}
\end{proof}

\subsubsection{Definition of the graph transformation}\label{ss.transform}
For $m$ small, we denote:
\begin{description}

\item[$U_m$] the open set $T_m\cap (\pi\circ f)^{-1}(T_m)$. For $m$ small
it is an arbitrarily small neighborhood of $K$.

\item[$Lip_{m,\beta}$] the set of $\beta$-Lipschitz functions $h\colon\Si_0\to T_m$,
which vanish outside $\Si_m$, i.e. $h(x)=x$ for $x\in\Si_0\setminus \Si_m$.
It is endowed with the $C^0$-distance: if $h,h'\in Lip_{m,\beta}$ are two Lipschitz functions, we set
$$d(h,h')=\sup_{x\in \Si_m} d_\pi(h(x),h'(x)).$$
By Arzela-Ascoli's theorem, this space is compact.

\item[$Lip_{m,\beta}(K)$] the subset of functions $h\in Lip_{m,\beta}$ vanishing on $K$
(i.e. $\forall x\in K,\; h(x)=x$).

\item[$\varphi_m$] a smooth function $\Si_0\to [0,1]$,
given  by Proposition~\ref{p.smoothing}, equal to $1$ in the $\frac{\varepsilon(m)}2$-neighborhood of $K$
and to $0$ in a neighborhood of $\Si_0\setminus \Si_m$.
Its derivative is bounded by $\frac{2C_\Si}{\varepsilon(m)}$,
where $C_\Si$ is the constant associated to the manifold $\Si$.

\item[$V_m$] is an open neighborhood of $\Si_0\setminus \Si_m$ in $\Si_0$
where $\varphi_m$ vanishes.

\item[$\phi_m(h)$] the function $\Si_0\to T_m$ associated to a function $h\colon\Si_m\to T_m$
as follow: it is equal to $\varphi_m\cdot h$ on $\Si_m$ and to the identity outside $\Si_m$.
With the notations of Section~\ref{ss.tub-contract}, $\phi_m(h)(x)=\Theta(\varphi_m(x),h(x))$
for each point $x\in \Si_m$.

\item[$G_m(h)$] the function $\Si_0\to T_m$ associated to $h\in Lip_{m,\beta}$ as follow:
by Proposition~\ref{p.domaine}.(\ref{i.domaine3}), the image of the graph of $h$ by $f^{-1}$
contains the graph of a Lipschitz function $h'\colon\Si_m\to T_m$. We set $G_m(h)=\phi_m(h')$.
\end{description}
The map $G_m$ is called the \emph{graph transformation}.
\bigskip

One can see the graph $S'$ of $G_m(h)$ as the image of the graph $S$ of $h$
by some $C^1$-map. Indeed, if one defines in $f(T)$ the map:
\begin{equation}\label{e.psi}
\Psi_m\colon z\mapsto \Theta(\varphi_m\circ \pi\circ f^{-1}(z),f^{-1}(z))
\end{equation}
then the graph $S'$ is the union of $\Psi_m(S)\cap \pi^{-1}(\Si_m)$
with $\Si_0\setminus \Si_m$.

By construction, $S'$ and $\Si_0$ coincide on the open set $V_m\subset \Si_0$.
Note that $S'$ is the union of $V_m$ with $\Psi_m(S\cap U_m)$.

\begin{rema}\label{r.smooth3}
From Remarks~\ref{r.smooth} and~\ref{r.smooth2},
one can construct the map $\Psi_m$ as smooth as the diffeomorphism $f$.
\end{rema}

\subsubsection{Invariance of the space of Lipschitz graphs}
The next proposition shows that the image $G_m(h)$ of a Lipschitz graph is also Lipschitz.
\begin{prop}\label{p.transform}
For $m>0$ small, $G_m$ preserves $Lip_{m,\beta}$.
\end{prop}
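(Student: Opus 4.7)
The plan is to verify three properties of $G_m(h)=\phi_m(h')$, where $h'\colon\Si_m\to T_m$ is the $(\beta/\lambda_0)$-Lipschitz function whose graph lies in $f^{-1}(S)$ (provided by Proposition~\ref{p.domaine}.(\ref{i.domaine3})): that $G_m(h)$ takes values in $T_m$; that it equals the identity outside $\Si_m$; and that its graph is tangent to the horizontal cone $\cC_\beta^h$. The first two are essentially definitional: $\Theta_\theta$ preserves $T_m$, so $\phi_m(h')$ takes values in $T_m$; and on $V_m\supset\Si_0\setminus\Si_m$ the cut-off $\varphi_m$ vanishes, so $\phi_m(h')(x)=\Theta(0,h'(x))=\pi(h'(x))=x$, matching the identity prescribed on $\Si_0\setminus\Si_m$.

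For the Lipschitz estimate, recall that $S'=V_m\cup\Psi_m(S\cap U_m)$. On $V_m\subset\Si_0$ the tangent spaces lie in $E\subset\cC_\beta^h$, so it suffices to treat $\Psi_m(S\cap U_m)$. Fix $z\in S\cap U_m$ and $u\in T_zS\subset\cC_\beta^h(z)$; differentiating~\eqref{e.psi} by the chain rule yields $D\Psi_m(z)\cdot u=v_1+v_2$, with $v_1:=D\Theta_{\theta(z)}\bigl(Df^{-1}(z)\cdot u\bigr)$ and $v_2:=D_\theta\Theta\cdot D(\varphi_m\circ\pi\circ f^{-1})(z)(u)$, where $\theta(z)=\varphi_m(\pi(f^{-1}(z)))$.

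Proposition~\ref{p.tubulaire}.(\ref{i.tubulaire3}) sends $u$ into $Df^{-1}(z)\cdot u\in\cC_{\beta/\lambda_0}^h$, and Proposition~\ref{p.theta}.(\ref{i.theta3}), applied with $\beta'=\beta/\lambda_0$ and $\bar\beta$ as fixed in Section~\ref{ss.constantes}, gives $v_1\in\cC_{\bar\beta}^h$. Writing $v_1=v_{1,h}+v_{1,v}$, one has $\|v_{1,v}\|\leq\bar\beta\|v_{1,h}\|$; and since $\pi\circ\Theta_\theta=\pi$ we have $D\pi(v_1)=D\pi(Df^{-1}(z)\cdot u)$, so combining the cone condition on $Df^{-1}(z)\cdot u$, the bound $\|Df^{-1}\|\leq c_f$, and Proposition~\ref{p.tubulaire}.(\ref{i.tubulaire2}) yields a uniform lower bound $\|v_{1,h}\|\geq c_1\|u\|$ with $c_1>0$ independent of $m$. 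On the other hand Proposition~\ref{p.theta}.(\ref{i.theta2}) makes $v_2$ vertical with $\|v_2\|\leq d_\pi(f^{-1}(z),\pi(f^{-1}(z)))\cdot\|D(\varphi_m\circ\pi\circ f^{-1})(z)\|\cdot\|u\|$; using $f^{-1}(z)\in T_m$ (Proposition~\ref{p.domaine}.(\ref{i.domaine2})) and $\|D\varphi_m\|\leq 2C_\Si/\varepsilon(m)$, this is at most $C\cdot\frac{m}{\varepsilon(m)}\,\|u\|$ for some constant $C$ depending only on $C_\Si$, $c_f$, and the tubular neighborhood. As $v_2$ is vertical, $(v_1+v_2)_h=v_{1,h}$ and $(v_1+v_2)_v=v_{1,v}+v_2$, so $\|(v_1+v_2)_v\|\leq\bigl(\bar\beta+\tfrac{C}{c_1}\cdot\tfrac{m}{\varepsilon(m)}\bigr)\|(v_1+v_2)_h\|$, and the parenthesis falls below $\beta$ for $m$ small by Proposition~\ref{p.domaine}.(\ref{i.domaine1}).

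The crux of the argument is the vertical correction term $v_2$: its size is driven by $\|D\varphi_m\|\sim 1/\varepsilon(m)$, which diverges as $m\to 0$. The machinery is engineered precisely to tame this: Proposition~\ref{p.domaine} provides the neighborhood $\Si_m$ of radius $\varepsilon(m)\gg m$, so that the vertical drift $m/\varepsilon(m)$ still tends to zero and can be absorbed into the gap $\beta-\bar\beta$ left by Propositions~\ref{p.tubulaire}.(\ref{i.tubulaire3}) and~\ref{p.theta}.(\ref{i.theta3}).
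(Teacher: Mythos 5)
Your argument is correct and mirrors the paper's own proof, which packages the same content as Proposition~\ref{p.contracte-cone} together with Claim~\ref{c.estimation-u}: identical chain-rule decomposition of $D\Psi_m$ into the barycentric part $D\Theta_\theta\circ Df^{-1}$ and the cut-off drift $D_\theta\Theta\cdot D(\varphi_m\circ\pi\circ f^{-1})$, identical use of Propositions~\ref{p.tubulaire}.(\ref{i.tubulaire3}) and \ref{p.theta}.(\ref{i.theta3}) to land the main term in $\cC^h_{\bar\beta}$, and identical taming of the drift via $\|D\varphi_m\|\le 2C_\Si/\varepsilon(m)$ together with $m/\varepsilon(m)\to 0$ absorbed into the slack $\beta-\bar\beta$. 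The one presentational gap is that invoking Proposition~\ref{p.domaine}.(\ref{i.domaine2}) to place $f^{-1}(z)$ in $T_m$ requires $\pi(f^{-1}(z))\in\widehat\Si_m$, so the case $\pi(f^{-1}(z))\notin\Si_m$ must be handled separately (as the paper does in Claim~\ref{c.estimation-u}); there $\varphi_m$, and hence $D\varphi_m$, vanishes on a neighborhood, so $v_2=0$ and the conclusion holds trivially.
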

\medskip

It follows immediately from the next result.

\begin{prop}\label{p.contracte-cone} For $m>0$ small, and $z\in T_m\cap f(T)$,
the image of the cone $\cC_\beta^h(z)$ is contained in the cone $\cC_\beta^h(\Psi_m(z))$.
Moreover $D\Psi_m(z).u_0$ does not vanish for $u_0\in \cC_\beta^h(z)$.
\end{prop}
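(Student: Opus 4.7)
The plan is to apply the chain rule to the defining formula
\[
\Psi_m(z)=\Theta\bigl(\varphi_m(\pi(f^{-1}(z))),\, f^{-1}(z)\bigr),
\]
and write $D\Psi_m(z)\cdot u_0$ as the sum of a \emph{main term}
\[
\cA:=D\Theta_{\varphi_m(x')}(z')\cdot Df^{-1}(z)\cdot u_0,
\]
coming from differentiating $\Theta$ in its point variable, and a \emph{correction term}
\[
\cB:=\bigl(D\varphi_m(x')\cdot D\pi(z')\cdot Df^{-1}(z)\cdot u_0\bigr)\cdot D_\theta\Theta\bigl(\varphi_m(x'),z'\bigr),
\]
coming from the variation of the parameter, where $z':=f^{-1}(z)$ and $x':=\pi(z')$. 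My strategy is to show that $\cA$ already lies in a slightly thinner horizontal cone $\cC^h_{\bar\beta}$ (with $\bar\beta<\beta$ as fixed in Section~\ref{ss.constantes}), while $\cB$ is a purely vertical perturbation whose size is controlled by Proposition~\ref{p.domaine}.

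For the main term I would set $u_1:=Df^{-1}(z)\cdot u_0$ and invoke Proposition~\ref{p.tubulaire}.(\ref{i.tubulaire3}) to get $u_1\in\cC^h_{\beta/\lambda_0}(z')\setminus\{0\}$. Proposition~\ref{p.theta}.(\ref{i.theta3}), applied with $\beta'=\beta/\lambda_0$ and with $\bar\beta\in(\beta/\lambda_0,\beta)$, then places $\cA$ in $\cC^h_{\bar\beta}(\Psi_m(z))$ once $m$ is small. Since each map $\Theta_\theta$ preserves the fibers of $\pi$, the identity $D\pi\circ D\Theta_\theta=D\pi$ holds, so $D\pi(\Psi_m(z))\cdot\cA=D\pi(z')\cdot u_1$; by Proposition~\ref{p.tubulaire}.(\ref{i.tubulaire2}) this is non-zero and comparable in norm to the horizontal component of $\cA$.

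For the correction term I would use Proposition~\ref{p.theta}.(\ref{i.theta2}) to note that $\cB$ is vertical and has norm at most $d_\pi(z',x')\cdot\|D\varphi_m(x')\|\cdot\|D\pi(z')\cdot u_1\|$, up to a uniform factor. The key observation is that $\cB$ is non-zero only where $D\varphi_m$ is non-zero, hence only at points with $x'\in\Si_m\subset\widehat\Si_m$; Proposition~\ref{p.domaine}.(\ref{i.domaine2}) then forces $z'\in T_m$, so that $d_\pi(z',x')\leq m$. Together with $\|D\varphi_m\|\leq 2C_\Si/\varepsilon(m)$, this yields a bound of the form $\|\cB\|\leq C'\cdot(m/\varepsilon(m))\cdot\|D\pi(z')\cdot u_1\|$ with $C'$ independent of $m$, whose prefactor tends to $0$ with $m$ by Proposition~\ref{p.domaine}.(\ref{i.domaine1}).

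Putting the two contributions together, the horizontal part of $D\Psi_m(z)\cdot u_0$ equals the horizontal part $w_h$ of $\cA$ (since $\cB$ is purely vertical), and the vertical part is bounded by $\bar\beta\|w_h\|+\|\cB\|$, which is at most $(\bar\beta+\epsilon)\|w_h\|$ with $\epsilon$ as small as desired once $m$ is small enough. Since $\bar\beta<\beta$, this gives the cone inclusion $D\Psi_m(z)\cdot\cC_\beta^h(z)\subset\cC_\beta^h(\Psi_m(z))$, and non-vanishing of $D\Psi_m(z)\cdot u_0$ follows from the non-vanishing of its horizontal projection $D\pi(z')\cdot u_1$. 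The main obstacle is the correction term: the derivative of the cutoff $\varphi_m$ blows up like $1/\varepsilon(m)$, and the whole geometry of Proposition~\ref{p.domaine} has been engineered precisely so that, in the region where this blow-up is active, the moment arm $d_\pi(z',x')$ is at most $m$, with $m/\varepsilon(m)\to 0$ restoring control.
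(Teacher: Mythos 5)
Your decomposition of $D\Psi_m(z)\cdot u_0$ into a main term $\cA=D\Theta_{\varphi_m(x')}(z')\cdot Df^{-1}(z)\cdot u_0$ and a vertical correction $\cB$ proportional to $D_\theta\Theta$, together with your use of Propositions~\ref{p.tubulaire}.(\ref{i.tubulaire3}), \ref{p.theta}.(\ref{i.theta2})--(\ref{i.theta3}) and~\ref{p.domaine}.(\ref{i.domaine1})--(\ref{i.domaine2}), is exactly the paper's argument (it writes $\Psi_m=P_m\circ f^{-1}$ with $P_m\colon x\mapsto\Theta(\varphi_m\pi(x),x)$ and performs the identical decomposition and estimates inside Claim~\ref{c.estimation-u}). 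Your treatment of $\cB$ via $D\varphi_m=0$ outside $\Si_m$ and $\|D\varphi_m\|\leq 2C_\Si/\varepsilon(m)$ together with the moment-arm bound $d_\pi(z',x')\leq m$ is the same, and your non-vanishing argument through $D\pi\circ D\Theta_\theta=D\pi$ is a clean (slightly more direct) version of the paper's $\|u_1\|\geq\frac{1}{10}\|\widetilde u_1\|$ estimate.

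One loose end: you apply Proposition~\ref{p.theta}.(\ref{i.theta3}) to $\cA$ globally, but that item requires $z'\in T_m$, which Proposition~\ref{p.domaine}.(\ref{i.domaine2}) gives only when $x'\in\widehat\Si_m$. In the region where $x'\notin\Si_m$ (so $\varphi_m(x')=0$, $D\varphi_m(x')=0$ and $\cB=0$), $\Theta_0$ degenerates to the projection $\pi$ and you cannot invoke Proposition~\ref{p.theta}.(\ref{i.theta3}); instead $\cA=D\pi(z')\cdot u_1$ lands in $T_{x'}\Si_0$, which lies in $\cC^h_\beta(x')$ because $\Si_0$ is tangent to $E$ at points of $K$ and has been taken small. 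This is precisely the separate first case the paper treats in the proof of Claim~\ref{c.estimation-u}; once you add it, your proof coincides with the paper's.
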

\begin{proof}
Let us fix $u_0\in \cC^h_\beta(z)$ and denote $u_1=D\Psi(z).u_0$.
We also set $\widetilde z_1=f^{-1}(z)$ and $\widetilde u_1=Df^{-1}(z).u_0$.
By Proposition~\ref{p.tubulaire}.(\ref{i.tubulaire3}),
$\widetilde u_1$ belongs to $\cC_{\frac{\beta}{\lambda_0}}^h(\widetilde z_1)$.
Note that $u_1$ is the image of $\widetilde u_1$ by the tangent map at $\widetilde z_1$ of
$$P_m\colon x\mapsto \Theta(\varphi_m\pi (x),x).$$
We are aimed to compare $u_1$ with $\widetilde u_1$.
\medskip

\begin{claim}\label{c.estimation-u}
If $m$ is small, for any $\widetilde z_1\in f^{-1}(T_m)$ and $\widetilde u_1\in \cC_{\frac{\beta}{\lambda_0}}^h$
we have
\begin{equation}\label{e.estimation-u}
\bigg|\|DP_m.\widetilde u_1\|-\|\widetilde u_1\|\bigg|\leq 9(\beta+\delta).\|\widetilde u_1\|.
\end{equation}
\end{claim}
\begin{proof}
If $\widetilde z_1$ belongs to $\pi^{-1}(\Sigma\setminus \Sigma_m)$,
then $P_m$ coincides locally with the projection $\pi$ on $\Sigma_0$.
In particular,
the image of $DP_m(\widetilde z_1)$ coincides with the tangent space to $\Sigma_0$, hence is contained in $\cC_{\beta}^h(\Psi_m(z))$ and by Proposition~\ref{p.tubulaire}.\eqref{i.tubulaire2}
we have
$$\bigg|\|DP_m.\widetilde u_1\|-\|\widetilde u_1\|\bigg|\leq \delta \|\widetilde u_1\|.$$

Otherwise, $\widetilde z_1$ belongs to $f^{-1}(T_m)\cap \pi^{-1}(\Si_m)$ and it also belongs to $T_m$
by Proposition~\ref{p.domaine}.(\ref{i.domaine2}), so that
\begin{equation}\label{e.distance}
d_\pi(\widetilde z_1, x_1)\leq m.
\end{equation}
We set $\widetilde u_{1,\pi}=D\pi(\widetilde z_1).\widetilde u_1$,
$x_1=\pi(\widetilde z_1)$
and $\theta_1=\varphi_m(x_1)$.
The image $u_1:=DP_m(z).\widetilde u_1$ is equal to $D\Theta(\theta_1,\widetilde z_1).(D\varphi_m.\widetilde u_{1,\pi},\widetilde u_1)$
and decomposes as:
$$u_1=v+w=D_\theta\Theta(\theta_1,\widetilde z_1)\circ D\varphi_m(x_1).\widetilde u_{1,\pi}+
D\Theta_{\theta_1}(\widetilde z_1).\widetilde u_1,$$
where $\Theta_{\theta_1}$ is the map $z\mapsto\Theta(\theta_1,z)$ as before.
 
By Proposition~\ref{p.theta}.(\ref{i.theta2}), the vertical vector $v$
has a norm bounded by $d_\pi(\widetilde z_1,x_1).\|D\varphi_m\|. \|\widetilde u_{1,\pi}\|$.
The first term of this product is bounded by $m$ by (\ref{e.distance}) and the second by $\frac{2C_\Si}{\varepsilon(m)}$.
Hence,
\begin{equation}\label{e.vert}
\|v\|\leq \frac{2C_\Si m}{\varepsilon(m)}\|\widetilde u_{1,\pi}\|.
\end{equation}
The second vector $w=D\Theta_{\theta_1}(\widetilde z_1).\widetilde u_1$ decomposes as the sum $w_h+w_v$ of a horizontal
and a vertical vectors.
By construction $\pi\circ \Theta_{\varphi(x_1)}=\pi$ so that $D\pi(z_1).w_h=D\pi(\widetilde z_1).\widetilde u_1=\widetilde u_{1,\pi}$.
By Proposition~\ref{p.tubulaire}.\eqref{i.tubulaire2}, $\|w_h\|\geq (-\delta)\; \|\widetilde u_{1,\pi}\|$.
One the other hand, using that $\widetilde u_1$ is contained in a cone $\cC^h_{\beta/\lambda_0}$,
and $\bar\beta>\beta/\lambda_0$,
Proposition~\ref{p.theta}.(\ref{i.theta3}) implies that $w$ belongs to a cone $\cC^h_{\bar \beta}$.
Hence, $\|w_v\|\leq \bar \beta\|w_h\|$.\\
Let us recall that $\bar \beta<\beta$.
By these estimates and~(\ref{e.vert}), the vertical component $u_{1,v}=v+w_v$ of $u_1$ is bounded by
$$\|u_{1,v}\|\leq \|v\|+\|w_v\|\leq \left(\frac{4 C_\Si m}{\varepsilon(m)}\ + \bar\beta \right) \|w_h\|.$$
The horizontal component $u_{1,h}$ of $u_1$ is $w_h$.

If one chooses $m$ small enough, $\frac{2 C_\Si}{\varepsilon(m)}m$ may be assumed arbitrarily small
by Proposition~\ref{p.domaine}.(\ref{i.domaine1}),
so that $\frac{2 C_\Si}{\varepsilon(m)}m +\bar \beta$
is less than $\beta$. Hence $u_1$ belong to the cone $\cC^h_{\beta}(z_1)$.
Moreover we have:
$$\bigg|\|u_1\|-\|\widetilde u_1\|\bigg|\;\leq\;
\bigg|\|\widetilde u_1\|-\|D\Theta_{\theta_1}(\widetilde z_1).\widetilde u_1\|\bigg|+\frac{2C_\Si m}{\varepsilon(m)}\|\widetilde u_{1,\pi}\|.$$
For $m$ small,
$z_1:=P_m(\widetilde z_1)$ and $\widetilde z_1$ are arbitrarily close.
Furthermore $\widetilde u_1$ and $D\Theta_{\theta_1}(\widetilde z_1).\widetilde u_1$
have the same projection by $D\pi$ and are tangent to $\cC_\beta^h$.
Recalling that $\beta,\delta\in ]0,1/2[$,
and using Proposition~\ref{p.tubulaire}.\eqref{i.tubulaire2} we also
deduce
$$\bigg|\|\widetilde u_1\|-\|D\Theta_{\theta_1}(\widetilde z_1).\widetilde u_1\|\bigg|\leq 8(\beta+\delta).\|\widetilde u_1\|.$$
The claim is thus proved in al the cases.
\end{proof}

We have proved that $u_1$ belongs to $\cC_\beta^h$
which gives the first part of the Proposition~\ref{p.contracte-cone} (and the
first item of the Definition~\ref{d.contraction}).
Note that if $u_0$ is non zero, the same holds for $\widetilde u_1=Df^{-1}(z).u_0$.
If $\beta+\delta<1/10$ the estimate~\eqref{e.estimation-u} gives
$\|u_1\|\geq \frac 1 {10} \|\widetilde u_1\|$, hence $u_1$ does not vanishes.
In particular we have obtained the second part of the proposition
(and the second item of the Definition~\ref{d.contraction}).
\end{proof}
\bigskip

In order to control the smoothness of the center manifold we will need the following
additional result that can be skipped at a first reading.

\begin{addendum}\label{a.contracte-cone}
Let us assume that $K$ is $r$-normally hyperbolic. One can choose the tubular neighborhood
$\pi\colon T\to \Si_0$ such that for $\beta>0$ and $m>0$ small enough,
the cone $\cC_\beta^h$ is $r$-contracted by the restriction of $\Psi_m$ to $U_m$.
\end{addendum}
\begin{proof}
Note that the two first items of the Definition~\ref{d.contraction} are satisfied.
In order to get the third one, one has to choose the tubular neighborhood $\pi\colon T\to \Si_0$
carefully.

Since $K$ is $r$-normally hyperbolic and
the decomposition $T_KM=E^c\oplus E^{uu}$ is dominated,
there exists $n_K\geq 1$ such that
for $x\in K$ and each unit vectors
$u^c\in E^c(x)$, $v^u\in E^{uu}(x)$ we have
$$\min(\|Df^{-n_K}(x).u^c\|,\|Df^{-n_K}(x).u^c\|^r)>3 \|Df^{-n_K}(x).v^u\|.$$
The same holds for any unit vectors in some thin continuous cone fields
$\cC^c$ $\cC^{uu}$ containing the bundles $E^c$, $E^{uu}$ respectively
and defined on a neighborhood of $K$.

One will choose the cone field $\cC^{uu}$ to be invariant by $Df$.
For instance for $a>0$ small enough,
one may consider at points $x\in K$ the cone $\cC^{uu}(x):=\cC^{uu}_a(x)$
of vectors $w\in T_xM$ such that the norm of the component along $E^c(x)$
is smaller than $a$ times the norm of the component along $E^{uu}(x)$:
then the image of $\cC^{uu}_a(x)$ by $Df^{-1}$ is contained in $\widetilde {\cC^{uu}}(f(x)):=
\cC^{uu}_{a/ \lambda_0}(f(x))$.
The cone fields $\cC^{uu}$ and $\widetilde{\cC^{uu}}$
may be extended continuously to a neighborhood of $K$.

Let us choose $b>0$ small.
The cones $\cC^{uu}$ and $\widetilde{\cC^{uu}}$ beeing chosen
as above with respect to the $r$-normal hyperbolicity and the domination,
we assume that the tangent spaces to the fibers of $F$ are close enough to
the bundle $E^{uu}$.  Moreover $DP_m$ contracts the fibers of $\pi$,
while its restriction to $E^c_{|K}$ is the identity. Consequently
for any points $x\in K$ and any $m$ small, if we have
$DP_m(x).u\in \widetilde {\cC^{uu}}(x)$ then $DP_m(x).u$ and $u$ are
almost collinear, $u$
belongs to $\cC^{uu}(x)$
and $\|DP_m(x).u\|$ is smaller than $(1+b).\|u\|$.
In particular for any $z\in U_m$ and $u\in T_zM$ we have
$$D\Psi_m(z).u\in \cC^{uu}(z) \Rightarrow
\begin{cases}
u\in \cC^{uu}(z),\\
\|D\Psi_m(z).u\|\leq (1+b)\|Df^{-1}(z).u\|\\
\left\|
\frac {D\Psi_m(z).u}{\|D\Psi_m(z).u\|}- \frac {Df^{-1}(z).u}{\|Df^{-1}(z).u\|}
\right\|\leq b.
\end{cases}$$
Arguing in a similar way, the cone field $\widetilde {\cC^{uu}}$ has the same properties.
\medskip

If $\beta,m>0$ are small and $y$ is close to $K$,
any unit vector $u\in \cC_{\beta /{\lambda_0}}^h(y)$ is close to its image
by $DP_m(y)$ by the Claim~\ref{c.estimation-u};
moreover the cones $\cC^h_\beta$ are contained in the cones $\cC^c$.
In particular we have for any $z\in U_m$,
$$u\in \cC^h_\beta(z) \Rightarrow
\begin{cases}
D\Psi_m(z).u\in \cC^{h}_\beta(\Psi_m(z)),\\
\|D\Psi_m(z).u\|\geq (1-b)\|Df^{-1}(z).u\|.\\
\left\|
\frac {D\Psi_m(z).u}{\|D\Psi_m(z).u\|}- \frac {Df^{-1}(z).u}{\|Df^{-1}(z).u\|}
\right\|\leq b.
\end{cases}$$

In particular for any  $z\in U_m\cap\dots\cap \Psi_m^{-n_K+1}(U)$
and any unit vectors $u,v$ with $u\in \cC^h_\beta(z)$ and
$D\Psi_m^{n_K}(z).v\in \cC^{uu}(\Psi_m^{n_K}(z))$ we have
\begin{equation}\label{e.contract}
\min(\|D\Psi_m^{n_K}(z).u\|,\|D\Psi_m^{n_K}(z).u\|^r)>2 \|D\Psi_m^{n_K}(z).v\|.
\end{equation}

Let us now consider $n\geq 1$ large,
$z\in U_m\cap\dots\cap \Psi_m^{-n+1}(U)$
and any unit vectors $u,v$ with $u\in \cC^h_\beta(z)$ and
$D\Psi_m^{n}(z).v\not\in \cC^{h}_\beta(\Psi_m^{n}(z))$.

One can decompose $v$ as a sum $v^c+v^u$ such that
$D\Psi_m^{n}(z).v^u\in \widetilde{\cC^{uu}}$
and $D\Psi_m^{n}(z).v^c\in D\Psi_m^n(\cC^h_{\beta/\lambda_0})$.
Note that since the image of $v$ is not in $\cC_\beta$, there exists $\widetilde C>0$ uniform such that
$$\|D\Psi_m^{n}(z).v^u\|\geq \widetilde C^{-1}.\|D\Psi_m^{n}(z).v^c\|.$$
If $n_0=\ell .n_K$ we have
by~\eqref{e.contract} that
$$\|D\Psi_m^{n-n_0}(z).v^c\|\leq \widetilde C 2^{-\ell} \|D\Psi_m^{n-n_0}(z).v^u\|.$$
Since $D\Psi_m^{n-n_0}(z).v^u$ belongs to $\widetilde{\cC^{uu}}$,
one deduces that
$D\Psi_m^{n-n_0}(z).v$ belongs to $\cC^{uu}$.
One can thus apply~\ref{e.contract} to any iterate
$D\Psi^k_m.u$, $D\Psi^k_m.v$ such that $0\leq k\leq n-n_0-n_K$
and prove for some $C>0$ uniform the required estimate
$$\|D\Psi^n_m(x).v\|\leq C 2^{-n/n_K} \min(\|D\Psi^n_m(x).u\|, \|D\Psi^n_m(x).u\|^r).$$

\end{proof}

\subsection{Fixed point of the graph transformation}

\subsubsection{Existence of the fixed point}\label{ss.fixed}
\begin{prop}\label{p.fixedpoint}
For $m>0$ small enough, $G_m$ has a unique fixed point in $Lip_{m,\beta}$.
\end{prop}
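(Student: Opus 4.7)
The plan is to apply Banach's fixed point theorem to $G_m$ acting on the complete metric space $(Lip_{m,\beta},d)$; completeness is immediate from the Arzela--Ascoli compactness noted in Section~\ref{ss.transform}, and Proposition~\ref{p.transform} ensures $G_m(Lip_{m,\beta})\subset Lip_{m,\beta}$. The heart of the argument is to establish the contraction estimate
$$d(G_m(h_1),G_m(h_2))\leq \gamma\rho\, d(h_1,h_2),$$
with $\gamma,\rho$ the constants fixed in Section~\ref{ss.constantes}, so that $\gamma\rho<1$ by~(\ref{e.contraction}).

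To bound $d_\pi(G_m(h_1)(x),G_m(h_2)(x))$ at a fixed $x\in \Sigma_m$ (the distance vanishes outside $\Sigma_m$, where both functions coincide with the identity), I would first unwind the definitions: by Proposition~\ref{p.domaine}.(\ref{i.domaine3}), for each $i=1,2$ there is a unique point $h'_i(x)$ of $f^{-1}$ of the graph of $h_i$ lying in the fiber $\pi^{-1}(x)$, and
$$G_m(h_i)(x)=\Theta(\varphi_m(x),h'_i(x)).$$
Since $h'_1(x)$ and $h'_2(x)$ share the same $\pi$-fiber, Proposition~\ref{p.theta}.(\ref{i.theta4}) gives
$$d_\pi\bigl(G_m(h_1)(x),G_m(h_2)(x)\bigr)\leq \rho\, d_\pi\bigl(h'_1(x),h'_2(x)\bigr).$$

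The remaining factor $\gamma$ comes from Lemma~\ref{l.contracte} applied with the right reference point. Set $z_i=f(h'_i(x))$, so $z_i$ lies in the graph of $h_i$ and $\pi(f^{-1}(z_1))=\pi(f^{-1}(z_2))=x$. Define the auxiliary point $\widetilde z_2=h_1(\pi(z_2))$ on the graph $S$ of $h_1$: then $z_1,\widetilde z_2\in S$ and $\pi(\widetilde z_2)=\pi(z_2)$, so all hypotheses of Lemma~\ref{l.contracte} are met (provided $m$ is small enough that the relevant points lie in $U_\gamma$). The lemma yields
$$d_\pi\bigl(h'_1(x),h'_2(x)\bigr)=d_\pi\bigl(f^{-1}(z_1),f^{-1}(z_2)\bigr)\leq \gamma\, d_\pi(z_2,\widetilde z_2),$$
and because $z_2=h_2(\pi(z_2))$ while $\widetilde z_2=h_1(\pi(z_2))$, the last term is at most $d(h_1,h_2)$. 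Taking the supremum over $x\in\Sigma_m$ gives the required contraction.

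The only genuinely delicate choice is using $h_1$ (not $h_2$) to define $\widetilde z_2$, so as to match the hypothesis of Lemma~\ref{l.contracte} requiring $z_1$ and $\widetilde z_2$ to lie on the \emph{same} graph; everything else is bookkeeping and the verification that the points involved belong to the neighborhood $U_\gamma$ of $K$ for $m$ sufficiently small, which follows by taking $\Sigma_0$ small and using Proposition~\ref{p.domaine}.(\ref{i.domaine2}). Banach's theorem then provides the unique fixed point.
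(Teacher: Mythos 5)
Your proof is correct and follows essentially the same route as the paper: use compactness (or completeness) of $Lip_{m,\beta}$, show $G_m$ is a $\gamma\rho$-contraction by defining $z_i=f(h'_i(x))$ and the auxiliary point $\widetilde z_2=h_1(\pi(z_2))$, and apply Lemma~\ref{l.contracte} together with Proposition~\ref{p.theta}.(\ref{i.theta4}). The only cosmetic difference is that you write the two estimates in the opposite order and spell out the intermediate bound $d_\pi(z_2,\widetilde z_2)\le d(h_1,h_2)$, which the paper leaves implicit.
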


Since $Lip_{m,\beta}$ is compact, the next lemma implies Proposition~\ref{p.fixedpoint}.

\begin{lemm}
For $m$ small, $G_m$ is a contraction of $Lip_{m,\beta}$.
\end{lemm}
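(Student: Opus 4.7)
The plan is to estimate $d(G_m(h_1),G_m(h_2))$ for any two $h_1,h_2\in Lip_{m,\beta}$ by composing two estimates we already have at hand: the fiberwise contraction of $f^{-1}$ on Lipschitz graphs (Lemma~\ref{l.contracte}) and the fiberwise $\rho$-Lipschitz dependence of $\Theta_\theta$ (Proposition~\ref{p.theta}.(\ref{i.theta4})). Since the constants were chosen in Section~\ref{ss.constantes} so that $\gamma\rho<1$, the composition will give the desired contraction factor.

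First I would fix any $x\in\Si_m$ (outside $\Si_m$ both functions agree with the identity, so the sup defining $d(G_m(h_1),G_m(h_2))$ is attained on $\Si_m$). By Proposition~\ref{p.domaine}.(\ref{i.domaine3}), each graph $S_i$ of $h_i$ has a preimage $f^{-1}(S_i)$ which contains the graph of a $\tfrac{\beta}{\lambda_0}$-Lipschitz function $h'_i\colon\Si_m\to T_m$, and by construction $G_m(h_i)(x)=\Theta(\varphi_m(x),h'_i(x))$. Thus both points $G_m(h_i)(x)$ lie in the same fiber $\pi^{-1}(x)$ and differ only by the second argument of $\Theta$ (the scalar $\theta=\varphi_m(x)$ is the same for both). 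Applying Proposition~\ref{p.theta}.(\ref{i.theta4}) gives
$$d_\pi\bigl(G_m(h_1)(x),G_m(h_2)(x)\bigr)\leq \rho\cdot d_\pi\bigl(h'_1(x),h'_2(x)\bigr).$$

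Next I would bound $d_\pi(h'_1(x),h'_2(x))$ using Lemma~\ref{l.contracte} applied to the graph $S=S_1$ of $h_1$. Let $z_i=f(h'_i(x))\in S_i$ and let $\widetilde z_2=h_1(\pi(z_2))\in S_1$, so that $\pi(z_2)=\pi(\widetilde z_2)$ and $\pi(f^{-1}(z_1))=x=\pi(f^{-1}(z_2))$. For $m$ small enough we have $T_m\subset U_\gamma$, so $z_1,z_2,\widetilde z_2\in U_\gamma$ and the lemma gives
$$d_\pi(h'_1(x),h'_2(x))=d_\pi(f^{-1}(z_1),f^{-1}(z_2))\leq \gamma\cdot d_\pi(z_2,\widetilde z_2)=\gamma\cdot d_\pi\bigl(h_2(y),h_1(y)\bigr),$$
where $y=\pi(z_2)\in\Si_m$ (by Proposition~\ref{p.domaine}.(\ref{i.domaine2}), $z_2\in T_m$, so $y$ lies in the $\varepsilon(m)$-neighborhood of $K$, hence in $\Si_m$). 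The right-hand side is at most $\gamma\, d(h_1,h_2)$.

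Combining the two estimates gives $d_\pi(G_m(h_1)(x),G_m(h_2)(x))\leq \gamma\rho\, d(h_1,h_2)$ for every $x\in\Si_m$, hence $d(G_m(h_1),G_m(h_2))\leq \gamma\rho\, d(h_1,h_2)$, and $\gamma\rho<1$ by~(\ref{e.contraction}). I do not foresee a real obstacle here: the proof is essentially bookkeeping in the tubular neighborhood, and the only point that requires mild care is checking that the various auxiliary points ($z_1,z_2,\widetilde z_2$, the projected base point $y$) indeed lie in the neighborhoods where Lemma~\ref{l.contracte} and Proposition~\ref{p.theta}.(\ref{i.theta4}) are valid, which is ensured by taking $m$ sufficiently small.
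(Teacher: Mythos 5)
Your proof is correct and follows essentially the same route as the paper: both apply Lemma~\ref{l.contracte} with the auxiliary points $z_1=f(h'_1(x))$, $z_2=f(h'_2(x))$, $\widetilde z_2=h_1(\pi(z_2))$ to extract the factor $\gamma$, and Proposition~\ref{p.theta}.(\ref{i.theta4}) to extract the factor $\rho$, concluding by $\gamma\rho<1$. Your write-up is merely a little more explicit about which neighborhoods the points land in, which the paper leaves implicit.
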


\begin{proof}
Let us consider two Lipschitz functions $h_1,h_2\in Lip_{m,\beta}$.
By Proposition~\ref{p.domaine}.(\ref{i.domaine3}), the images by $f^{-1}$ of their graphs
contain the graphs of two Lipschitz functions $h_1',h_2'\colon\Si_m\to T_m$.
Let $x$ be a point in $\Si_m$. One first wants to estimate the length $d_\pi(h_1'(x),h_2'(x))$.

Let us denote by $z_1$ and $z_2$ the images by $f$ of $h'_1(x)$ and $h'_2(x)$.
We introduce their projections $x_1,x_2\in \Si_m$ by $\pi$, so that $z_1=h_1(x_1)$ and $z_2=h_2(x_2)$.
We also consider the point $\widetilde z_2=h_1(x_2)$.
By Lemma~\ref{l.contracte}, we have
$$d_\pi(h'_1(x),h'_2(x))\leq \gamma d(h_1,h_2).$$
By Proposition~\ref{p.theta}.(\ref{i.theta4}), we also have
$$d_\pi(\varphi_m(x).h'_1(x),\varphi_m(x).h'_2(x))\leq \rho d_\pi(h'_1(x),h'_2(x)).$$
Thus, one gets
$$d(G_m(h_1),G_m(h_2))\leq \gamma\rho d(h_1,h_2).$$
We have chosen $\gamma\rho<1$, so this implies the contraction property for $G_m$.
\end{proof}

\subsubsection{$C^1$-smoothness of the fixed graph}\label{ss.regularity}

In order to prove that the graph $S$ of the function $h\in Lip_{m,\beta}$ fixed by $G_m$
is $C^1$, we apply the following proposition to
the tubular neighborhood $\pi\colon T_m\to \Si_0$, the graph $S$, the map $\Psi_m$
and the open sets $U_m, V_m$.

\begin{prop}\label{p.smooth}
Let us consider a $C^1$ submersion $\pi\colon T\to \Si_0$ and a section $S$.
We assume furthermore that
\begin{enumerate}
\item there exists a $C^1$-map $\Psi\colon U\to T$
defined on an open set $U\subset T$,
which preserves $S$: the restriction of $\Psi$ to $S\cap U$
is a homeomorphism to its image and $\Psi(S\cap U)\subset S$,
\item $S$ is $C^1$ on an open set $V$ of $S$ containing $S\setminus \Psi(S\cap U)$,
\item there exists a continuous cone field $\cC$ on $T$ of dimension
$d=\dim(\Si_0)$ that is transverse to the fibration $\pi$,
contracted by $\Psi$ and such that $S$ is tangent to $\cC$.
\end{enumerate}
Then $S$ is a $C^1$-submanifold.
\end{prop}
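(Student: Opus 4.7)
The plan is to construct at each point $z \in S$ a candidate tangent $d$-plane $E(z) \subset T_zT$, show that $z \mapsto E(z)$ is continuous, and then prove that $S$ is differentiable at $z$ with tangent space $E(z)$. Since $\cC$ is a $d$-dimensional cone field transverse to $\pi$ and $S$ is tangent to $\cC$, the set $S$ is already locally the graph of a Lipschitz function over $\Si_0$, so all the work is to upgrade this Lipschitz graph to a $C^1$ graph whose derivative is determined by $E$.

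For the definition of $E(z)$, I would use the dynamics. By hypothesis~2, every point of $S$ either lies in $V$ or in $\Psi(S \cap U)$. If $z \in V$ we set $E(z) := T_zS$. Otherwise, $z$ has a $\Psi$-preimage $z_{-1} := (\Psi|_{S \cap U})^{-1}(z) \in S$, and iterating, either the backward orbit $z_{-n}$ eventually enters $V$ at some first time $n_0$, in which case we set $E(z) := D\Psi^{n_0}(z_{-n_0}) \cdot T_{z_{-n_0}}S$, or the backward orbit stays forever in $S \cap U$, in which case we set
\[
E(z) := \bigcap_{n \geq 0} D\Psi^n(z_{-n}) \cdot \cC(z_{-n}),
\]
which is a $d$-dimensional subspace by Lemma~\ref{l.cone2} (the cones are nested and exponentially thin). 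In both cases $E(z) \subset \cC^n(z)$ for every $n$ for which $z_{-n}$ is defined, since $T_{z_{-n}}S \subset \cC(z_{-n})$. Continuity of $E$ on $V$ comes from hypothesis~2; continuity at other points follows from Lemma~\ref{l.cone2}: for any $\varepsilon > 0$ choose $N$ large so that $\cC^N$ is within $\varepsilon$ of its limiting $d$-plane everywhere, then use continuity of $\cC$ and $C^1$-regularity of $\Psi$ to get that $z \mapsto \cC^N(z)$, and hence $E$ up to error $\varepsilon$, is continuous.

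For the differentiability, work in a chart around $z$ in which $\pi$ is linear projection onto $\RR^d$ and $\cC(z)$ is a narrow cone around the horizontal, so that $S$ is locally the graph of a Lipschitz function $h$. Fix $\varepsilon > 0$ and choose $N$ so large that $\cC^N(z')$ lies within $\varepsilon$ of $E(z')$ for every $z'$ in a neighborhood of $z$. For a nearby point $z' \in S$, the injectivity of $\Psi|_{S \cap U}$ provides backward iterates $z_{-N}, z'_{-N} \in S$ that are themselves close (by continuity of the inverse on $S$). Because $S$ is tangent to $\cC$ near $z_{-N}$, the secant vector $z'_{-N} - z_{-N}$ in an appropriate chart has direction nearly in $\cC(z_{-N})$, and by a first-order Taylor expansion of the $C^1$-map $\Psi^N$,
\[
z' - z \;=\; D\Psi^N(z_{-N}) \cdot (z'_{-N} - z_{-N}) \;+\; o(|z'_{-N} - z_{-N}|).
\]
The leading term lies in $D\Psi^N(z_{-N}) \cdot \cC(z_{-N}) = \cC^N(z)$, hence within $\varepsilon$ of $E(z)$. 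This proves that the secant direction converges to $E(z)$, i.e.\ $h$ is differentiable at $\pi(z)$ with derivative having graph $E(z)$; since $E$ is continuous, $h$ is $C^1$.

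The main obstacle is the quantifier juggling in the last step: one must first fix $\varepsilon$, then choose $N = N(\varepsilon)$ using the uniform exponential thinness from Lemma~\ref{l.cone2}, and only then let $z' \to z$ so that the $C^1$-Taylor error of the fixed iterate $\Psi^N$ becomes negligible compared to $|z'_{-N} - z_{-N}|$. In particular one must show that $|z'_{-N} - z_{-N}|$ and $|z' - z|$ remain comparable (up to a factor depending on $N$) so that the $o(\cdot)$ error really is small relative to $|z' - z|$; this comparability follows from the fact that $\Psi$ and $\Psi^{-1}|_S$ are locally Lipschitz, with constants depending on $N$ but not on $z'$.
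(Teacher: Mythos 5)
Your proposal is correct and follows essentially the same approach as the paper: define the candidate tangent $d$-plane at each $z\in S$ by pushing forward $T_{z_{-n}}S$ (or the cone $\cC$) along the backward $\Psi$-orbit, distinguishing between orbits that reach $V$ and those that remain forever in $S\cap U$, invoke Lemma~\ref{l.cone2} for the exponential thinness of $\cC^n$ to get a well-defined limit plane and its continuity, and conclude that $S$ is a $C^1$ transverse section. Your Taylor-expansion step merely makes explicit the ``$S$ is tangent to the exponentially thin cone $\cC^n$ near $z_0$, hence tangent to $L_{z_0}$'' argument that the paper states more briefly.
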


\begin{coro}
The fixed point $h\colon \Si_0\to T$ of the map $G_m$ is a $C^1$ function.
\end{coro}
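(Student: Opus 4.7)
The plan is to apply Proposition~\ref{p.smooth} to the tubular submersion $\pi\colon T_m\to\Si_0$, the section $S:=\operatorname{graph}(h)$, the map $\Psi:=\Psi_m$ defined on $U:=U_m\subset T_m$, and the horizontal cone field $\cC:=\cC_\beta^h$ of dimension $d=\dim(\Si_0)$. The work then reduces to checking the three hypotheses of that proposition using what has already been assembled.

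The cone field hypothesis is immediate: $\cC_\beta^h$ is continuous and transverse to $\pi$ by construction; contraction under $\Psi_m$ is Proposition~\ref{p.contracte-cone}; and tangency of $S$ to $\cC_\beta^h$ is exactly the condition that $h\in Lip_{m,\beta}$.

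For the invariance hypothesis I would first use Remark~\ref{r.smooth3} to get the $C^1$-regularity of $\Psi_m$, and then read the fixed point identity $G_m(h)=h$ through the decomposition $S'=V_m\cup\Psi_m(S\cap U_m)$ recorded at the end of Section~\ref{ss.transform}. This gives $S=V_m\cup\Psi_m(S\cap U_m)$, so in particular $\Psi_m(S\cap U_m)\subset S$ and $S\setminus \Psi_m(S\cap U_m)\subset V_m$. The homeomorphism property of $\Psi_m\restriction_{S\cap U_m}$ would follow by combining the non-vanishing of $D\Psi_m$ on $\cC_\beta^h$ (second assertion of Proposition~\ref{p.contracte-cone}) with the fact that $\Psi_m$ is built from the diffeomorphism $f^{-1}$ and the fiberwise scalings $\Theta(\theta,\cdot)$: any two points of $S\cap U_m$ with the same $\Psi_m$-image must share the same $\pi\circ f^{-1}$-projection, hence by Lemma~\ref{l.graphe} their $f^{-1}$-images coincide and they are equal.

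It remains to exhibit the open subset of $S$ on which $S$ is $C^1$ and which contains $S\setminus \Psi_m(S\cap U_m)$. I would take the portion of $S$ lying over $V_m$: since $\varphi_m\equiv 0$ on $V_m$, the definition of $\phi_m(h)$ forces $h(x)=x$ for $x\in V_m$, so that piece of $S$ coincides with $V_m\subset\Si_0$ itself. Remark~\ref{r.smooth} lets us have chosen $\Si_0\setminus K$ smooth, and $V_m$ is disjoint from $K$ (it lies outside the $\tfrac{\varepsilon(m)}{2}$-neighborhood of $K$ on which $\varphi_m=1$), so this piece is smooth as a subset of $S$ and, by the previous paragraph, contains $S\setminus \Psi_m(S\cap U_m)$. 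Proposition~\ref{p.smooth} then yields that $S$ is a $C^1$-submanifold, equivalently that $h$ is a $C^1$ function. The only step requiring any real care is the homeomorphism claim for $\Psi_m\restriction_{S\cap U_m}$; the remaining verifications are direct unwindings of the definitions and of results established earlier.
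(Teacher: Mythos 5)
Your proposal is correct and follows the paper's own (one-line) proof: apply Proposition~\ref{p.smooth} to $\pi\colon T_m\to\Si_0$, $S=\operatorname{graph}(h)$, $\Psi_m$ and the open sets $U_m$, $V_m$; your text just fills in the hypothesis checks that the paper leaves implicit. One small caveat on the citation: Proposition~\ref{p.contracte-cone} by itself only yields the first two items of Definition~\ref{d.contraction} (invariance of $\cC_\beta^h$ under $D\Psi_m$ and non-vanishing of $D\Psi_m$ on it), whereas the proof of Proposition~\ref{p.smooth} needs the full contraction, including the third (ratio) item, which comes from Addendum~\ref{a.contracte-cone} applied with $r=1$ (always available, since partial hyperbolicity is $1$-normal hyperbolicity) — this is how the paper itself cites it in the proof of Corollary~\ref{c.smooth}.
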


\begin{proof}[Proof of Proposition~\ref{p.smooth}]
Let us consider a point $z_0\in S$. There are two cases:
\begin{itemize}
\item[--] Either there exists an integer $n\geq 0$ and a point $z_{-n}\in S\cap V$
such that for each $0\leq k< n$, the point $\Psi^k(z_{-n})$ belongs
to $S\cap U$ and $\Psi^n(z_{-n})=z_0$.
Since $z_{-n}$ belongs to $V$, the graph $S$ is $C^1$ in a neighborhood of $z_{-n}$.
By definition of contracted cone fields, the restriction of $D\Psi^n$ to the tangent bundle
of $S\cap V$ is non-degenerate: the restriction of $\Psi$ to a neighborhood of $z_0$ in $S$ is a diffeomorphism
to its image and by invariance $S$ is $C^1$ in a neighborhood of $z_0$.
In particular $S$ is tangent to a $d$-dimensional space $L_z\subset T_zM$ transverse to $\pi$
at each point $z$ close to $z_0$.
\item[--] Or, there exists an infinite sequence $(z_{-n})_{n\in \NN}$ of points in $S\cap U$
such that $\Psi(z_{-n})=z_{-n+1}$ for each $n\geq 1$.
In this case, using that $\Psi_{|S\cap U}$ is a homeomorphism on its image,
for each $n\geq 0$, the graph $S$ is tangent to the continuous cone field
$\cC^n=D\Psi^n.\cC$ in a neighborhood of $z_0$. This cone field is exponentially thin around a $d$-dimensional subspace of $T_{z_0}M$
by lemma~\ref{l.cone2}.
The intersection of the $\cC^n(z_0)$ is thus a $d$-dimensional space $L_{z_0}$ and $S$ is tangent to $L_{z_0}$ at $z_0$.
\end{itemize}

We now prove that $z\mapsto L_z$ is continuous at any point. It is clear in the first case.
In the second case, it comes from the fact that $L_z$ is tangent to the thin continuous cone field $\cC^n$
in a neighborhood of $z_0$, for arbitrarily large $n$.

We have thus proved that the section $S$ of $\pi$ has continuous tangent spaces transverse to $\pi$:
this is a $C^1$ transverse section, hence a $C^1$-submanifold.
\end{proof}
\bigskip

Under stronger assumptions, we can prove a higher smoothness.
It is not used in the proof of Theorem~\ref{t.invariant} and may be skipped at a first reading.
\begin{addendum}\label{a.smooth}
Under the assumptions of the Proposition~\ref{p.smooth},
let us suppose furthermore that for some $\alpha\in (0,1]$, the map $\Psi$
and the manifolds $S\cap V$ are $C^{1,\alpha}$ and that the cone field $\cC$
is $(1+\alpha)$-contracted. Then $S$ is the graph of a $C^{1,\alpha}$ function.
\end{addendum}
\begin{proof}
Let us consider a point $z\in S$ and a point $z'\in S$ close to $z$.
We have to estimate the difference between the slopes of $T_zS$ and $T_{z'}S$.
\medskip

In the following we define for $x\in S$
the distance $d(u,T_xS)$ between $T_xS$ and a non-zero vector $u\in T_xM$
as the norm of the linear projection of the unit vector $u/\|u\|$ to the
tangent space of the fiber of $\pi$ containing $x$, parallel to $T_xS$.

By the Claim~\ref{c.angle}, the angle between $T_xS$ and the fiber containing $x$ is uniformly bounded
away from zero, hence the linear projection on the tangent space of the fiber at $x$ and parallel to $T_xS$
has a norm bounded by a constant $C_1>0$.
Similarly, for any point $x\in S\cap U\cap \Psi^{-1}(U)\cap\dots\cap \Psi^{-n_0+1}(U)$ and
$v\in T_xM$ such that $D\Psi^{n_0}(x).v\notin \cC(\Psi^{n_0}(x))$, the angle of $v$ with $T_xS$ is bounded
away from zero so that its projection has norm larger than $C_2^{-1}\|v\|$ for another constant $C_2>0$.
\medskip

The $(1+\alpha)$-contraction of the cone field gives an integer $N\geq 1$ satisfying the following.
\begin{lemm}
There exists $N\geq 1$ such that
for any $x\in S\cap U\cap \Psi^{-1}(U)\cap\dots\cap \Psi^{-N+1}(U)$,
and any unit vector $u\in T_xM$ close to $T_xS$ and any unit $w\in \cC(x)$
$$d(D\Psi^N(x).u, T_{\Psi^N(x)}S)
\leq \frac 1 4\min(\|D\Psi_{|S}^{-N}(x)\|^{-\alpha}, 1)d(u, T_{x}S).$$
\end{lemm}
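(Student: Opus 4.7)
The plan is to study $D\Psi^N$ in the splitting $T_zM = T_zS\oplus V(z)$ along the orbit $x,\Psi(x),\dots,\Psi^N(x)$, where $V(z)$ is the tangent space of the fiber of the tubular neighborhood $\pi$ at $z\in S$, and then to invoke the $(1+\alpha)$-cone contraction. Since $\Psi$ preserves $S$, the matrix of $D\Psi^N(x)$ is block upper triangular in this splitting,
\[
D\Psi^N(x) = \begin{pmatrix} A^{(N)} & B^{(N)} \\ 0 & C^{(N)} \end{pmatrix},
\]
with $A^{(N)} = D\Psi^N_{|S}$ of conorm $m(A^{(N)}) = \|D\Psi_{|S}^{-N}(x)\|^{-1}$. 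Decompose $u = u_T + u_v$. For a unit vector $u$ close to $T_xS$ one has $\|u_T\|$ close to $1$ and $\|u_v\|$ comparable to $d(u,T_xS)$ up to the uniform projection constant of Claim~\ref{c.angle}; similarly, since the vertical part of $D\Psi^N u$ equals $C^{(N)}u_v$,
\[
d(D\Psi^N u,T_{\Psi^N x}S)\;\le\; C_1\frac{\|C^{(N)}u_v\|}{\|D\Psi^N u\|},
\]
while $\|D\Psi^N u\|\ge C^{-1}\|A^{(N)}u_T\|\ge C^{-1} m(A^{(N)})$. The lemma therefore reduces to proving an estimate of the form
\[
\|C^{(N)}u_v\|\;\le\; C_2\lambda^{-N}\min\bigl(m(A^{(N)}),\,m(A^{(N)})^{1+\alpha}\bigr)\|u_v\|
\]
for a uniform $C_2>0$; taking $N$ large then absorbs the constant and yields the factor $1/4$.

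To derive this estimate, apply the third item of Definition~\ref{d.contraction} with the test vectors $w := u_T/\|u_T\|\in T_xS\subset \cC(x)$ and $v:= u_v/\|u_v\|$, the latter being vertical, hence outside $\cC(x)$ by transversality of $\cC$ to the fibration. In the case where $D\Psi^N v\notin \cC(\Psi^N x)$, the cone contraction directly gives
\[
\|D\Psi^N v\|\le\lambda^{-N}\min\bigl(\|A^{(N)}w\|,\|A^{(N)}w\|^{1+\alpha}\bigr),
\]
and since the projection onto $V(\Psi^N x)$ parallel to $T_{\Psi^N x}S$ has uniformly bounded norm (Claim~\ref{c.angle}) and $\|A^{(N)}w\|\ge m(A^{(N)})$, the required bound on $\|C^{(N)}u_v\|$ follows.

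The main obstacle is the complementary case $D\Psi^N v \in \cC(\Psi^N x)$, for which Definition~\ref{d.contraction} gives no direct information. The plan is to split the orbit of $v$ at the last index $k_0\in\{0,\dots,N-1\}$ for which $D\Psi^{k_0} v\notin \cC$; such a $k_0$ exists because $v\notin \cC$ and $\cC$ is forward invariant. The cone contraction up to time $k_0$ controls $\|D\Psi^{k_0} v\|$, while from time $k_0+1$ onward the image lies in $D\Psi^{N-k_0-1}(\cC(\Psi^{k_0+1}x)) = \cC^{N-k_0-1}(\Psi^N x)$, which by Lemma~\ref{l.cone2} is contained in a $C_1\lambda^{-(N-k_0-1)}$-neighborhood of the invariant $d$-plane, namely $T_{\Psi^N x}S$ at points of $S$. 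Combining the cone-contraction bound on $\|D\Psi^{k_0}v\|$ with this thinness (which forces $\|C^{(N)}u_v\|$ to be an exponentially small fraction of $\|D\Psi^N u_v\|$) yields the required estimate, up to replacing $\lambda$ by a slightly smaller $\lambda_1>1$ that absorbs the intermediate growth of $\|D\Psi\|$ between times $k_0$ and $N$; this last step is harmless since, up to passing to a fixed iterate of $\Psi$ beforehand, $\lambda$ may be taken as large as desired. Enlarging $N$ if necessary then produces the factor $1/4$.
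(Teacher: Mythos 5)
Your approach differs from the paper's at a crucial point: how the complement of $T_xS$ in $T_xM$ is chosen. You split $u = u_T + u_v$ in the splitting $T_xS\oplus V(x)$, so that $u_v$ is vertical at the starting point $x$. The paper instead writes $u = (u+v) + (-v)$ where $v$ is the component of $u$ along the space $(D\Psi^N(x))^{-1}\bigl(V(\Psi^N(x))\bigr)$ (a complement of $T_xS$ because $D\Psi^N(x)$ maps $T_xS$ isomorphically onto $T_{\Psi^N(x)}S$). In other words, $v$ is chosen so that $D\Psi^N(x)\cdot v$ is vertical at the \emph{end} of the orbit segment. Since $\cC$ is transverse to $\pi$, this automatically gives $D\Psi^N(x)\cdot v\notin\cC(\Psi^N(x))$, so the third item of Definition~\ref{d.contraction} applies directly with $n=N$ to the pair $\bigl(w,\,v/\|v\|\bigr)$. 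There is no case analysis at all; the ``hard case'' you worry about never arises.

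With your splitting, the hypothesis of the cone contraction sits at the wrong end: $u_v\notin\cC(x)$ does not imply $D\Psi^N(x)\cdot u_v\notin\cC(\Psi^N(x))$, and you correctly identify the complementary case as the obstacle. The repair you propose, however, does not close the gap. Once $D\Psi^{k_0+1}u_v$ enters the cone, Lemma~\ref{l.cone2} only controls the \emph{direction} of its further iterates, not their norm: you obtain $\|C^{(N)}u_v\|\le C_1\lambda^{-(N-k_0-1)}\|D\Psi^N u_v\|$, but the factor $\|D\Psi^N u_v\|$ carries the uncontrolled intermediate expansion, bounded only by $\|D\Psi\|^{N-k_0-1}\,\|D\Psi^{k_0+1}u_v\|$. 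You propose to absorb this into a slightly smaller $\lambda_1$ after passing to an iterate of $\Psi$, but replacing $\Psi$ by $\Psi^M$ raises $\lambda$ and $\|D\Psi\|$ to the same power $M$, so the ratio $\|D\Psi\|/\lambda$ does not improve; if $\|D\Psi\|>\lambda$ (the generic situation), the factor $(\|D\Psi\|/\lambda)^{N-k_0}$ is unbounded as $k_0$ ranges over $\{0,\dots,N-1\}$. There is also a secondary issue: Definition~\ref{d.contraction} only gives information for $n\ge n_0$, so if $k_0<n_0$ the cone contraction cannot be applied to bound $\|D\Psi^{k_0}u_v\|$ at all, and the whole estimate then rests on the thinness argument, which again bounds directions rather than norms.

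In summary: the paper sidesteps the case analysis entirely by pulling the vertical splitting back from the end of the orbit segment, so the cone-contraction hypothesis $D\Psi^N(x)\cdot v\notin\cC(\Psi^N(x))$ is built in. Anchoring the splitting at the start of the orbit, as you do, puts you in a regime where the cone contraction (as stated) gives no control over the competing norm growth, and the proposed remedy does not work.
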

\begin{proof}
The proof is similar to the contraction lemma~\ref{l.cone2}:
we may choose $v$ such that $u+v$ belongs to $T_xS$
and $D\Psi^N(x).v$ is tangent to the fiber of $\pi$ at $\Psi^N(x)$.
One deduces:
$$d(D\Psi^N(x).u, T_{\Psi^N(x)}S)=\frac{\|D\Psi^N(x).v\|}{\|D\Psi^N(x).u\|}.$$

The distance $d(u,T_xS)$ is the norm of the projection of $u$ to the fiber of $x$
parallel to $T_xS$. It is thus equal to $d(v,T_xS)$ and is larger than:
$$d(u, T_xS)\geq C_2^{-1}\|v\|.$$

The $(1+\alpha)$-cone contraction gives, for any unit vector $w\in \cC(x)$,
$$\frac{\|D\Psi^N(x).v\|}{\|v\|}\leq C\lambda^{-N}\min(\|D\Psi^N(x).w\|^{1+\alpha}, \|D\Psi^N(x).w\|).$$
In particular if $w$ is the most contracted unit vector in $\cC(x)$,
$$\frac{\|D\Psi^N(x).v\|}{\|D\Psi^N(x).u\|}
\leq C\lambda^{-N}\min(\|D\Psi^N(x).w\|^\alpha, 1)\|v\|
\leq C\lambda^{-N}\min(\|D\Psi_{|S}^{-N}(x)\|^{-\alpha}, 1)\|v\|.$$
Putting the inequalities together, one gets the announced estimate, provided $CC_2\lambda^{-N}\leq \frac 1 4$.
\end{proof}

Working in charts, one can identify the tangent spaces $T_xM$ and $T_x'M$
at points close.
Since $D\Psi^N$ is $\alpha$-H\"older continuous, there exits
a constant $C_3>0$ such that for points $x,x'$ close and any unit vector $u$,
$$\|D\Psi^N(x).u-D\Psi^N(x').u\|\leq C_3 d(x,x')^\alpha.$$
\medskip

Let us now finish the proof of the addendum.
Let us denote $(z_{-i})_{0\leq i\leq \ell}$ the backward orbit of $z$ by $\Psi^N$ in $S\cap U$:
it is infinite ($\ell=\infty$) or defined for $i$ smaller than some integer $\ell$.
We fix $\sigma>0$ (small and independent of $z,z'$)
so that the point $z'\in S$ has backward iterates $z'_{-i}=\Psi^{-iN}(z')$ by $\Psi^N$
in $S$ whenever the distance $d(z'_{-i},z_{-i})$ is smaller than $\sigma$.
If $\sigma$ has been chosen small enough,
the distance between $z_{-i}$ and $z'_{-i}$ is smaller than
$$d(z_{-i},z'_{-i})\leq \;2^{i}\prod_{j=0}^{i-1} \|D\Psi^{-N}_{|S}(z_{-j})\|\;d(z,z').$$
Let us consider a sequence of unit vectors $u_{-i}\in TS$ at $z'_{-i}$
such that $u_{-i}$ is colinear to $D\Psi^N(z'_{-(i+1)}).u_{-(i+1)}$ for each $i$.
We denote $v_{-i}$ the unit vector colinear to $D\Psi^N(z_{-(i+1)}).u_{-(i+1)}$

We estimate inductively the distance between $u_{-i}$ and $TS$. There exists $C_4>0$ satisfying:
\begin{equation*}
 \begin{split}
d(u_{-i}, T_{z_{-i}}S)&\leq C_1 \| u_{-i}- v_{-i}\|+
d(v_{-i}, T_{z_{-i}}S)\\
&\leq 2C_1 \frac{C_3d(z_{-(i+1)},z'_{-(i+1)})^\alpha}{\|D\Psi^N(z'_{-(i+1)}).u_{-(i+1)}\|}
+\frac 1 4 \min(1, \|D\Psi^{-N}_{|S}(z_{-i})\|^{-\alpha}) d(u_{-(i+1)}, T_{z_{-(i+1)}}S)\\
&\leq C_4d(z_{-(i+1)},z'_{-(i+1)})^\alpha+\frac 1 4 \min(1, \|D\Psi^{-N}_{|S}(z_{-i})\|^{-\alpha}) d(u_{-(i+1)}, T_{z_{-(i+1)}}S).
 \end{split}
\end{equation*}
We thus obtain for any $k\leq \ell$, 
\begin{equation*}
 \begin{split}
&d(u_0, T_{z}S)\leq  \\
&\quad \leq \sum_{i=1}^{k}4^{-i}C_4\prod_{j=0}^{i-1} \|D\Psi^{-N}_{|S}(z_{-j})\|^{-\alpha}d(z_{-i},z'_{-i})^\alpha+
4^{-k}\prod_{j=0}^{k-1} \min(1, \|D\Psi^{-N}(z_{-j})\|^{-\alpha})d(u_{-k}, T_{z_{-k}}S)\\
&\quad \leq C_4 d(z,z')^{\alpha}+
\min\left(2^{-k}\frac{d(z,z')^\alpha}{d(z_{-k},z'_{-k})^{\alpha}}, 4^{-k}\right).
 \end{split}
\end{equation*}
Three cases are possible:
\begin{itemize}
\item[--] The backward orbit of $z$ is infinite (i.e. $\ell=\infty$) and
the distance $d(z_{-k},z'_{-k})$ is smaller than $\sigma$ for any $k$. In this case
$k$ can be taken arbitrarily large.
\item[--] There is $k\leq \ell$ such that
$d(z_{-k},z'_{-k})$ is of the order of $\sigma$: for some constant $C_5>0$ we have:
$$2^{-k}\frac{d(z,z')^\alpha}{d(z_{-k},z'_{-k})^{\alpha}}\leq C_5 d(z,z')^\alpha.$$
\item[--] The distance $d(z_{-k},z'_{-k})$ is smaller than $\sigma$ for any $k\leq \ell$.
Moreover there exist $j\in \{1,\dots,N\}$ and a point
$$\widetilde z\in (S\setminus \Psi(S\cap U))\cap (S\cap U)\cap\dots\cap \Psi^{-j}(S\cap U)$$
such that $\Psi^j(\widetilde z)=z_{-\ell}$.
Since $\sigma>0$ has ben chosen small, there also exists a point
$\widetilde z'$ in a compact neighborhood of $S\setminus \Psi(S\cap U)$
contained in $V$ such that $\Psi^{j}(\widetilde z')=z'_{-\ell}$.
Since $S$ is $C^{1+\alpha}$ on $V$, one deduces that there exists $C_6>0$
uniform such that
$$d(u_{-\ell}, T_{z_{-\ell}}S)\leq C_6 d(z_{-\ell},z'_{-\ell})^{\alpha}\;
 \leq C_6\;2^{\ell}\prod_{j=0}^{\ell-1} \|D\Psi^{-N}_{|S}(z_{-j})\|^\alpha\;d(z,z')^\alpha$$
so that $d(u_0, T_{z}S)\leq C_4 d(z,z')^\alpha + 2^{-\ell}C_6d(z,z')^\alpha$.
\end{itemize}
In any case, we have shown that the distance between $T_{z}S$ and $T_{z'}S$ is smaller
than $C_7\;d(z,z')^\alpha$ for a uniform constant $C_7$, which ends the proof of the addendum.
\end{proof}

\subsection{Conclusion of the proof of Theorem~\ref{t.invariant}}
The graph transform also fixes the space $Lip_{m,\beta}(K)$,
hence the graph $S$ of the function $h\in Lip_{m,\beta}$ fixed by $G_m$
is a $C^1$-submanifold, having the same dimension as $\Si$ and containing $K$ in its interior.
Note that it can be extended as a submanifold with boundary, still denoted by $S$, by taking the union
with $\Si\setminus \Si_0$.
By definition of $G_m$, the submanifolds $S$ and $f(S)$ coincide on a neighborhood of $K$.
Moreover $T_KS$ is an invariant subbundle transverse to $E^{uu}$, hence coincides with $E^c$ at points of $K$.
The proofs of Theorem~\ref{t.invariant} and of the Main Theorem are now complete.
\section{Consequences}\label{s.consequence}

\subsection{Dynamics in a neighborhood: proof of Corollaries~\ref{c.principal} and~\ref{c.saddle}}
Under the setting of the Main Theorem, one considers $0<\delta\ll\varepsilon$ small, and a neighborhood $U$ of $K$.
Provided $U$ is small enough, any point $x$ in the maximal invariant set of $U$ by $f$ has a strong unstable manifold of size
$\varepsilon$ which intersects $S$ at some unique point $\pi(x)$. Moreover the intersection is transverse,
$d(x,\pi(x))<\delta$
and $\pi(x)$ belongs to a small neighborhood of $K$ in $S$.
In particular $f(\pi(x))$ still belongs to the unstable manifold of size $\varepsilon$ of $f(x)$ and to $S$.
One deduces that for any $n\in \ZZ$, one has $$f^n(\pi(x))=\pi(f^n(x)).$$
Taking $n$ arbitrarily large, the distance $d(x,\pi(x))$ is exponentially smaller than $d(f^n(x),\pi(f^n(x)))$
which is bounded by $\delta$. This proves that $x=\pi(x)$. The maximal invariant set of $U$ is thus contained in $S$.
This proves Corollary~\ref{c.principal}.
\medskip

The Corollary~\ref{c.saddle} is obtained by applying the Main Theorem to $f$ and $f^{-1}$ respectively.
The submanifolds $S$ is built as the intersection of two locally invariant submanifolds $S^{cs},S^{cu}$
containing $K$ and tangent to $E^{ss}\oplus E^c$ and $E^c\oplus E^{uu}$ respectively. 

\subsection{Robustness of the submanifold: proof of Corollary~\ref{c.robustness}}
The definition of the graph transform and the results of sections~\ref{ss.transform}, \ref{ss.fixed}
and~\ref{ss.regularity} allow some flexibility: once the space $Lip_{m,\beta}$
and the function $\varphi_m$ have been chosen, the set $K$ is not considered any more
and the graph transform may be modified into a map which is $C^1$-close to the initial transformation.
In particular one can replace $f$ by any diffeomorphism $g$ that is $C^1$-close to $f$.
The map $\Psi_m$ introduced in~\eqref{e.psi} is then be modified as a $C^\infty$-map
$$\Psi_{m,g}\colon z\mapsto\Theta(\varphi_m\circ g^{-1}(z), g^{-1}(z))$$
which defines a new graph transform, producing a new $C^1$-submanifold $S_g$.
By considering a small neighborhood $U$ of $K$ and arguing as for the proof of Corollary~\ref{c.principal},
one gets that the maximal invariant set of $g$ in $U$ is contained in $S_g$.

The graphs $S_g$ and $g(S_g)$ coincide over an open set of $\Si_0$ which is independent from the diffeomorphism $g$
(the points where $\varphi_m$ equals $1$ in Section~\ref{ss.transform}).
This proves that the restriction of the graph $S_g$ to this open set defines a submanifold $S'_g$
which depends continuously on $g$ for the $C^1$-topology, is contained in $S_g\cap g(S_g)$,
and such that $S'_f$ contains a neighborhood of $K$ in $S_f$.

Provided that two diffeomorphisms $g_1$ and $g_2$ are close enough for the $C^1$-topology,
the fixed points in the space $Lip_{m,\beta}$ are close enough. This proves that the graphs $S_{g_1}$ and
$S_{g_2}$ are $C^0$ close. Both are tangent to some thin cone fields obtained by iteration
(see the section~\ref{ss.regularity}). When $g_1,g_2$ are $C^1$-close, these cone fields are close, hence $S_{g_1},S_{g_2}$
are $C^1$-close. This proves that $g\mapsto S_g$ varies continuously for the $C^1$-topology
and this ends the proof of Corollary~\ref{c.robustness}.

\subsection{Higher regularity}

\subsubsection{$C^r$-regularity of the center manifold: proof of Corollary~\ref{c.smooth}}
Let us continue the proof of the Section~\ref{s.invariant} under the assumption that $f$ is $C^r$
and that the partially hyperbolic set $K$ is $r$-normally hyperbolic for some $r>1$.
The argument to prove that $S$ is $C^r$ follows the ideas of the $C^r$-section theorem
in~\cite{HPS} (although we were not able to apply this theorem directly since
the map $\Phi_m$ which is used for the graph transform is not defined on an invariant domain).

Note that by Remarks~\ref{r.smooth2} and~\ref{r.smooth3},
one can assume that the submersion $\pi\colon T\to \Si_0$ is smooth
and that the graph transform $\Psi:=\Psi_m$ introduced in Section~\ref{ss.transform} is $C^r$.
The Proposition~\ref{p.contracte-cone} and the Addendum~\ref{a.contracte-cone}
provide us with a cone field $\cC:=\cC^h_\beta$ on $T$ of dimension $d$
which is transverse to the submersion $\pi$ and $r$-contracted by $\Psi$.
Moreover $S$ is tangent to $\cC$.
Let us consider the domain $U:=U_m$
and the open set $V:=V_m$, then the Proposition~\ref{p.smooth} applies.
If $r=1+\alpha$ with $\alpha\in (0,1)$, the Addendum~\ref{a.smooth} proves that the submanifold $S$ is $C^r$ also. It remains thus to consider the case $r\geq 2$.

We introduce the Grassmannian bundle $p\colon \widehat T \to T$
of $d$-dimensional tangent subspaces of $T$.
Since $r\geq 2$, the Addendum~\ref{a.smooth} proves that $S$ is the graph of a $C^{1,1}$ map.
Consequently, the tangent spaces to $S$ define a Lipschitz graph $\widehat S$
of the fibration $\widehat \pi:=\pi\circ p \colon \widehat T\to \Si_0$.
The preimages $\widehat U:=p^{-1}(U)$ and $\widehat V:=p^{-1}(V)\cap \widehat S$
are open subsets of $\widehat T$ and $\widehat S$ respectively.
The tangent map $D\Psi$ induces a $C^{r-1}$ map $\widehat \Psi\colon \widehat U\to \widehat T$
and the two first properties of the Proposition~\ref{p.smooth} hold.

Since $\widehat S$ is Lipschitz, the angle between the tangent space to $\widehat S$
and the tangent space to th fibers of $p$ is uniformly bounded away from zero.
The unit tangent vectors to $\widehat S$ are thus contained in a compact set of vectors
$v$ satisfying $Dp.v\neq 0$.
One can apply the Proposition~\ref{p.lift} and get a continuous cone field
$\widehat \cC$ on $\widehat T$, of dimension $d$,
which is transverse to the fibration $\widehat \pi$ and
$(r-1)$-contracted by $\widehat \Psi$. Moreover one can require that
$\widehat S$ is tangent to $\widehat \cC$; indeed the collection of unit vectors in the
tangent sets of $\widehat S$ are contained in a compact set of vectors $v$ satisfying
$Dp.v\in \cC\setminus \{0\}$.

The Proposition~\ref{p.smooth} and the Addendum~\ref{a.smooth} now imply that
$\widehat S$ is a $C^{1,\alpha}$-submanifold of $\widehat T$,
hence that $S$ is $C^{2,\alpha}$, where $\alpha=\min(1,r-2)$.
For any integer $k\leq r-1$, one can repeat this argument inductively $k$ times
and conclude that $S$ is $C^{k,\alpha}$ where $\alpha=\min(1,r-k-1)$.
This proves that $S$ is $C^r$ and gives the first item of Corollary~\ref{c.smooth}.
\bigskip

If $f$ is $C^r$ for some $r>1$,
and if we only assume that $K$ is partially hyperbolic,
the continuity of the tangent map and the compactness of the unit bundle
inside the central bundle $E^c$ on $K$,
imply that $K$ is $(1+\alpha)$-normally hyperbolic for some $\alpha>0$ small.
One deduces that $S$ can be chosen $C^{1,\alpha}$, proving the second
item of Corollary~\ref{c.smooth}.

\subsubsection{Smoothing the submanifold: proof of Proposition~\ref{p.lissage}}
Let $S'\subset int(S\cap f(S))$ be a submanifold with boundary which contains
$K$ in its interior. Let $U$ be a small neighborhood of $K$.
Let us consider a $C^\infty$-diffeomorphism $g_0$ and a $C^\infty$-submanifold $S_g$
close to $f$ and $S$ for the $C^1$-topology.
The image $g_0(S_g)$ is $C^1$-close to $S$ and in particular is arbitrarily $C^1$-close to $S_g$
in a neighborhood of $S'$. More precisely, there exists $S'_g\subset S_g$ and
$\widetilde S'_g\subset g_0(S_g)$ which both project on $S'$ by $\pi$.
One can thus consider a diffeomorphism $\tau$ of a neighborhood of $S'$,
which is a translation along each curve $\pi^{-1}(x)$ of the tubular neighborhood $T$
and which maps $\widetilde S'_g$ on $S'_g$.
Since $\pi$ is $C^\infty$, and
since $S'_g$ and $\widetilde S_g$ are $C^\infty$-submanifolds
which are $C^1$-close, one deduces that $\tau$ is a $C^\infty$-diffeomorphism
which is $C^1$-close to the identity. It can be extended as a smooth diffeomorphism of $M$.
The $C^\infty$ diffeomorphism $g:=\tau\circ g_0$ is $C^1$-close to $f$ and by construction
$S'_g\subset S_g\cap g(S_g)$.
Arguing as in the proof of Corollary~\ref{c.principal}, one shows that the maximal invariant set
$\Lambda_g$ of $U$ is contained in $S_g$. This gives the proposition.

\subsection{Consequences when the center dimension equals $1$ or $2$}

\subsubsection{One-dimensional center bundle: proof of corollary~\ref{c.central1D}}
The arguments for one-di\-men\-si\-onal invertible systems are classical and we only recall the main ideas.
Let $K$ be a compact invariant set endowed with a partially hyperbolic structure whose center bundle is $1$-dimensional and assume
that $K$ has no strong connection.
By Corollary~\ref{c.saddle}, the set $K$ is contained in a family of curves and circles
$\gamma_1,\dots,\gamma_k$ that are tangent at $E^c$ at points of $K$
and such that for any point $x\in \Gamma:=\cup_i\gamma_i$ close to $K$, the image of $x$
is still contained in $\Gamma$.
Any minimal subset $C$ of $K$ is either a periodic circle, or a periodic orbit,
or a Cantor set. In the third case, the orbit of any point $x\in \Gamma$ close to $C$ accumulates
on $\Lambda$ in the past or in the future. In particular there exist at most finitely many
non-periodic minimal sets and any orbit in $K$ accumulates in the future and in the past to
minimal sets.

One can $C^1$-approximate $f$ by a diffeomorphism $g$ whose periodic orbits are hyperbolic
and whose minimal sets are limit for the Hausdorff topology of periodic orbits.
By Corollary~\ref{c.robustness}, the maximal invariant set $\Lambda_g$ for $g$ in a neighborhood $U$ of $K$
is still contained in a one-dimensional $C^1$-submanifold $\Gamma_g$ and the dynamics of $g$ on $\Lambda_g$ satisfies the same properties
as $(K,f)$.
However, for any minimal set $C\subset \Lambda_g$, there exists a periodic orbit $O$ contained in an arbitrarily
small neighborhood of $C$. One deduces that $O$ is contained in $\Gamma_g$. Since the non-periodic
minimal sets are isolated in $\Gamma_g$ from the periodic orbits, hence cannot exist for $g$.
There are at most finitely many periodic orbits since they are hyperbolic.
This gives the conclusion of Corollary~\ref{c.central1D}.

\subsubsection{Two-dimensional center bundle: proof of Corollary~\ref{c.dim2}}
By Corollary~\ref{c.robustness}, for any diffeomorphism $g$ that is $C^1$-close to $f$
the maximal invariant set $\Lambda_g$ in $U$ has no strong connexion
and is contained in a locally invariant $C^1$-surface $\Si_g$.
Let us make two remarks:
\begin{itemize}
\item[--] $\Si_g$ is in general not a boundaryless compact manifold but some known results
for surface dynamics which only involve local arguments extend to this setting.
\item[--] If $h$ is a $C^1$-perturbation of the restriction $g_{|\Si_g}$ supported on  an arbitrarily small neighborhood of $\Lambda_g$,
then it extends as a diffeomorphism
of $M$ that is close to $f$ for the $C^1$-topology such that $\Si_g$ contains $\Lambda_h$
and is locally invariant in a neighborhood of this set.
Indeed, one can decompose $h=\varphi\circ g_{|\Si_g}$ where $\varphi$ is a diffeomorphism
of $\Si_g$ which is $C^1$-close to the identity and supported inside a small neighborhood of
$\Lambda_f$. Since $\varphi$ is isotopic to the identity among diffeomorphisms of $\Si_g$
close to the identity with compact support, one can extend $\varphi$ to a diffeomorphism of $M$
close to the identity.
\end{itemize}

Since $U$ is a filtrating set, the intersection $\cR(g)\cap U$ is a union of chain-recurrence classes $C$.
\begin{lemm}
Let us assume that the first case of the Corollary~\ref{c.dim2} does not hold.
Taking $g$ in a dense $G_\delta$-subset of $\cU$,
the center bundle over any non-trivial chain recurrence class $C\subset U$ has a \emph{dominated splitting}
$E^c_{|C}=E^c_1\oplus E^c_2$, i.e.
there exists $N\geq 1$ such that for any $x\in C$ and any $u,\in E^c_{1}(x)$, $v\in E^c_{2}(x)$ one has
$\|Dg^N.u\|\leq \frac 1 2 \|Dg^n.v\|$.
\end{lemm}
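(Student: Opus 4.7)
The plan is to argue by contradiction using the principle ``$C^1$-far from homoclinic tangencies implies dominated splitting''. First I would replace $\cU$ by a residual subset $\cG\subset \cU$ on which generic properties hold: every $g\in\cG$ is Kupka-Smale, every non-trivial chain recurrence class $C$ of $g$ in $U$ contains a hyperbolic periodic orbit $p$ and coincides with the homoclinic class $H(p,g)$ (by the connecting lemma of Bonatti-Crovisier), and $C$ is the Hausdorff limit of periodic orbits homoclinically related to $p$. These properties hold on a residual subset of $\cU$ and suffice to reduce the study of $C$ to the study of such periodic orbits.

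Fix $g\in \cG$ and a non-trivial chain recurrence class $C\subset U$. Since $C\subset\Lambda_g$, it carries the induced partially hyperbolic splitting $T_CM=E^{ss}\oplus E^c\oplus E^{uu}$ with $\dim(E^c)=2$, and I want to further split $E^c=E^c_1\oplus E^c_2$ in a dominated fashion. Suppose, for contradiction, that no such dominated splitting exists. Applying the criterion of Bonatti-D\'iaz-Pujals together with Gourmelon's perturbation lemma for the derivative along a periodic orbit, one can then, by an arbitrarily small $C^1$-perturbation $g'$ of $g$, produce a hyperbolic periodic orbit $O\subset U$ homoclinically related to $p$ whose derivative $Dg'^{n}$ at the period $n$, restricted to $E^c$, is conjugate to a homothety (or the composition of a rotation with a homothety). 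In particular $Dg'^n_{|E^c}$ has no invariant one-dimensional subspace, so any dominated decomposition of $E^c$ along $O$ is ruled out.

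The next step is to invoke Gourmelon's tangency creation theorem at $O$: since the action of $Dg'^n$ on $E^c$ is conformal and $O$ admits a transverse homoclinic intersection (being homoclinically related to $p$), one can freely rotate the direction of the local unstable manifold inside $E^c$ by a further arbitrarily small $C^1$-perturbation supported in a small neighborhood of a transverse homoclinic point, turning it into a homoclinic tangency. Because $U$ is filtrating, the orbit $O$ and its local invariant manifolds stay trapped in $U$, so the perturbation and the resulting tangency are localized inside $U$. This contradicts our standing assumption that the first alternative of Corollary~\ref{c.dim2} is not met.

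The main obstacle is the second perturbation step, which requires combining the Franks-type derivative perturbation that realizes a conformal action on $E^c$ with Gourmelon's tangency creation, while remaining in $\cU$ and keeping the support inside $U$. The filtrating property of $U$ provides the localization, and the quantitative bounds in Gourmelon's lemma ensure compatibility with the size of $\cU$; one also needs to verify that the partial hyperbolicity of $\Lambda_g$ (inherited from Corollary~\ref{c.robustness}) is preserved by these small perturbations, so that the argument can be applied uniformly along the residual set $\cG$.
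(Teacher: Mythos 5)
Your overall plan --- use generic properties to reduce to periodic orbits of the class, then appeal to Gourmelon's tangency creation to derive a contradiction from the failure of domination --- is the right framework, and it is also the one the paper uses. But the middle step (producing a periodic orbit with conformal derivative on $E^c$ and then rotating the unstable direction inside $E^c$) does not work, and the paper's proof is structured precisely to avoid this trap.

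The difficulty is that if you perturb so that $Dg'^n$ restricted to $E^c$ over a periodic orbit $O$ is a rotation composed with a homothety, then $E^c$ over $O$ is either uniformly contracted, uniformly expanded, or neutral. In the neutral case $O$ is not hyperbolic and the homoclinic-tangency machinery does not apply. In the contracting (resp.\ expanding) case $O$ is a sink (resp.\ source) \emph{inside} $\Si_g$: its stable manifold in $M$ is tangent to $E^{ss}\oplus E^c$ (resp.\ $E^{ss}$) and its unstable manifold is tangent to $E^{uu}$ (resp.\ $E^c\oplus E^{uu}$). There is then no ``direction of the local unstable manifold inside $E^c$'' to rotate, and, more importantly, a non-transverse intersection between $W^s(O)$ and $W^u(O)$ would require tangency between the $E^{ss}$-like and $(E^c\oplus E^{uu})$-like cone fields, which the robust partial hyperbolicity of $\Lambda_g$ (which you yourself invoke from Corollary~\ref{c.robustness}) forbids under small perturbations. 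Finally, making $E^c$ over $O$ entirely contracting or expanding changes the stable index of $O$, so $O$ can no longer be homoclinically related to $p$, undercutting the transverse homoclinic point you wanted to bend into a tangency.

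The paper's argument sidesteps all of this by inserting Pliss' lemma (\cite[theorem 2.1]{PS1}) before Gourmelon: after a perturbation of $g_{|\Si_g}$ one first ensures that the approximating periodic orbits $O_n$ are \emph{saddles inside} $\Si_g$, i.e.\ their stable and unstable spaces each meet $E^c$ in a line. This provides an honest invariant splitting $E^c_1\oplus E^c_2$ over $\bigcup_n O_n$. Only then does one apply Gourmelon's theorem (\cite{gourmelon-tangence}) to a saddle for which the stable/unstable directions genuinely live in $E^c$, so the tangency it produces stays inside $\Si_g$ and is compatible with the ambient partial hyperbolicity. In the non-tangency case the dominated splitting over $\bigcup_n O_n$ extends to the closure $C$ by \cite[Appendix B.1.1]{bdv}. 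If you replace your BDP-conformality step by the Pliss-saddle step, your argument becomes essentially the paper's.
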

\begin{proof}
Taking $g$ in a dense $G_\delta$-subset of $\cU$,
one can assume that any chain-recurrence class $C$ which is not a periodic orbit
is limit of a sequence of hyperbolic periodic orbits $(O_n)$ (see~\cite{crovisier})
and then argue as in~\cite{PS1}.

A result by Pliss (see~\cite[theorem 2.1]{PS1}) asserts that by perturbation of $g_{|\Si_g}$ one can
turn one of the periodic orbits $O_n$ to be a saddle inside $\Si_g$.
By a standard Baire argument this implies that $C$ is also the limit of hyperbolic periodic orbits
whose stable and unstable spaces intersect $E^c$ along one-dimensional subspaces,
inducing an invariant splitting $E^c_1\oplus E^c_2$ of $E^c$ over the union of the orbits $O_n$.
If this splitting is not dominated one can create by perturbation of
$g_{|\Si_g}$ a homoclinic tangency for one of these saddles (see~\cite{gourmelon-tangence}) .
This perturbation may be extended
as a diffeomorphism of $M$ and the first case of the corollary holds.
Otherwise there exists a dominated splitting on the union of the $O_n$,
hence on their closure and on $C$ (see~\cite[Appendix B.1.1]{bdv}).
\end{proof}

As a consequence the set $\cR(g)\cap U$ decomposes into finitely many isolated periodic orbits
and a set whose center bundle has a dominated splitting.
The previous argument shows that (up to reduce $U$ and replace $f$ by a diffeomorphism $C^1$-close)
restrict to the case the center bundles of $\Lambda_f$ has a dominated splitting $E^c=E^c_1\oplus E^c_2$.
This also holds for any diffeomorphism $g$ in a neighborhood $\cU$.

By proposition~\ref{p.lissage}, one can consider $g\in \cU$ such that $\Si_g$ and
$g_{|\Si_g}$ are smooth. By perturbation of $g_{|\Si_g}$, one can furthermore assume that
all the periodic orbits in $\Si_g$ are hyperbolic and that there does not exist minimal sets in $\Si_g$
which is a finite union of circles that is normally hyperbolic.
One can now apply the result of~\cite{PS1}
(once again, the argument involves only the dynamics in a neighborhood of $K$
and the diffeomorphism $g$ may be only defined on a neighborhood of $K$).

\begin{theo*}[Pujals-Sambarino]
Consider a $C^2$-diffeomorphism $g$ of a surface $S$, and an invariant compact set
$K$ with a dominated splitting
$T_KS=E^c_1\oplus E^c_2$ such that $K$ does not contain any sink or source
and does not contain a minimal set which is a finite union of circles that is normally hyperbolic.
Then $K$ is a hyperbolic set.
\end{theo*}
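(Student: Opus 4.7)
The plan is to reduce the hyperbolicity of $K$ to a statement about one-dimensional dynamics and then invoke a Ma\~n\'e-type argument that relies essentially on $C^2$-distortion control. First I would use the dominated splitting $T_KS=E^c_1\oplus E^c_2$ to construct, in a neighborhood $U$ of $K$, two transverse locally invariant one-dimensional $C^1$-foliations $\cF^1$ and $\cF^2$, tangent to continuous extensions of $E^c_1$ and $E^c_2$ respectively. This is a consequence of the domination and can be obtained by the graph transform of Section~\ref{s.invariant} applied twice (each subbundle playing in turn the role of the strong unstable direction for $g$ or $g^{-1}$). The dynamics of $g$ then acts by local diffeomorphisms on the leaves of each foliation.

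Second, I would argue that $E^c_1$ is uniformly contracted; the symmetric argument applied to $g^{-1}$ will then give uniform expansion of $E^c_2$, which together with the domination is exactly the hyperbolicity of $K$. Suppose for contradiction that $E^c_1$ is not uniformly contracted on $K$. Applying Pliss's lemma to the Birkhoff sums of $\log\|Dg|_{E^c_1}\|$ and passing to an accumulation point, one produces a minimal $g$-invariant compact set $\Lambda\subset K$ on which $\|Dg^n|_{E^c_1}\|$ does not decay exponentially.

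Third, I would apply a Denjoy--Schwartz bounded-distortion argument along plaques of $\cF^1$ meeting $\Lambda$. The $C^2$-regularity of $g$ together with the domination of $E^c_1$ by $E^c_2$ controls the nonlinearity of the iterates $g^{-n}|_{\cF^1}$; combined with the non-contraction property, this produces a plaque $I\subset\cF^1$ whose backward iterates are pairwise disjoint and have lengths bounded away from zero. A finite-area argument on $S$ then forces the backward $\omega$-limit of $I$ to be either a sink for $g^{-1}$ (excluded by hypothesis on $K$), a normally hyperbolic periodic circle (excluded by hypothesis), or a minimal set of circle-shift type, which via bounded distortion again yields a normally hyperbolic periodic circle, contradicting the hypothesis.

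The main obstacle is the bounded-distortion / wandering-interval step, which is where the $C^2$ hypothesis intervenes essentially: the Denjoy counter-examples show that it cannot be dropped. A subtle technical point is that the foliations $\cF^i$ are only \emph{locally} invariant, so at each step of the iteration one has to carefully identify the plaque through $g^{-1}(x)$ with the $g^{-1}$-image of the plaque through $x$, absorbing the mismatch into uniform distortion constants; this is exactly where the strength of the domination enters quantitatively.
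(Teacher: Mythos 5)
This theorem is not proved in the paper: it is quoted as an external result of Pujals and Sambarino~\cite{PS1}, so there is no ``paper's own proof'' to compare against. Judged on its own merits, your sketch captures the right macroscopic plan of~\cite{PS1} (reduce to one-dimensional dynamics along center curves, localize non-contracting behavior via Pliss, and use $C^2$ distortion control to force a sink, a normally hyperbolic periodic circle, or a wandering interval), but the very first step has a genuine gap. A dominated splitting $E^c_1\oplus E^c_2$ does \emph{not} yield two transverse locally invariant one-dimensional $C^1$-foliations. What the domination does give, via the pseudo-hyperbolic graph transform of Hirsch--Pugh--Shub, is a family of locally invariant \emph{curves} $W^{cs}_\varepsilon(x)$, $W^{cu}_\varepsilon(x)$ through each $x\in K$ tangent to $E^c_1(x)$, $E^c_2(x)$; these curves are not unique, need not be coherent, and do not laminate a neighborhood of $K$. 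In particular the graph transform of Section~\ref{s.invariant} cannot be invoked as you propose: it assumes the complementary bundle is genuinely uniformly expanded (so that a strong unstable lamination exists and meets $K$ in single points), and it produces a submanifold \emph{containing} $K$, not a foliation. The one place the paper does build a locally invariant foliation is Corollary~\ref{c.foliation}, and that argument already assumes $K$ hyperbolic and uses the bunching of the strong bundle; invoking it here would be circular.

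The distortion step is also much more delicate than ``a Denjoy--Schwartz bounded-distortion argument along plaques''. Precisely because there is no foliation, Pujals--Sambarino cannot iterate a single transversal with a well-defined holonomy; they set up a box structure adapted to the center curves, prove a Main Lemma controlling the lengths of iterates of arcs inside the local center-stable manifolds, and combine it with a detailed case analysis (returning arcs, the exceptional-minimal-set case, the periodic-circle case). The coherence mismatch you flag at the end---that the $g^{-1}$-image of a plaque through $x$ need not coincide with the chosen plaque through $g^{-1}(x)$---is exactly the point that makes this hard, and it is not resolved by ``absorbing the error into uniform distortion constants''; it requires the adapted boxes and the arc-length estimates of~\cite{PS1}. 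Your outline points in the right direction and correctly isolates where the $C^2$ hypothesis enters, but both the foliation claim and the one-line distortion step stand in for the hardest technical content of the Pujals--Sambarino proof.
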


For the dynamics of $g_{|\Si_g}$,
the set $\cR(g)\cap \Lambda_g$ is thus the union of sinks, of sources and of a saddle compact set $K$.
In particular the number of sinks and sources is finite. By Smale's spectral decomposition theorem
the set $K$ decomposes into finitely many transitive subsets as announced by Corollary~\ref{c.dim2}.

\subsection{Invariant foliations for surface hyperbolic sets: proof of Corollary~\ref{c.foliation}}
Let $f$ be a $C^2$-surface diffeomorphism and $K$ be an invariant compact set which is hyperbolic.
In particular $E^u$ is $2$-dominated by $E^s$ for $f^{-1}$ and
by Lemma~\ref{l.domination-cone},  there exists a continuous cone field $\cC$ of dimension $1$
which is $2$-contracted in a neighborhood of $K$: we have $E^u(x)\subset \cC(x)$
at each $x\in K$.

Let $\widehat M$ denotes the projectivization of the tangent bundle $TM$
(that is the Grassmannian bundle of $1$-dimensional tangent spaces)
and $p\colon \widehat M\to M$ the natural projection.
The tangent dynamics $Df$ induces a $C^1$-diffeomorphism $\widehat f$ of
$\widehat M$. Since the unstable bundle $E^u$ on $K$ is one-dimensional, it induces
a point $\widehat x$ in each fiber $p^{-1}(x)$ with $x\in K$,
defining a lift $\widehat K\subset \widehat M$ of $K$ which is invariant by $\widehat f$.
Since $x\mapsto E^u(x)$ is continuous, the set $\widehat K$ is compact.
By Proposition~\ref{p.lift2} and Remark~\ref{r.bunched},
there exist neighborhoods $\widehat U$ of $\widehat K$ and $U=p(\widehat U)$ of $K$
and a contracted continuous cone field $\widehat \cC$ of dimension $2$ on $\widehat U$,
that is transverse to $p$. By Lemma~\ref{l.domination-cone}, this proves that
$\widehat K$ has a dominated splitting $T_{\widehat K} \widehat M=\widehat E\oplus \widehat F$
where $\widehat F$ has $2$-dimensional spaces.
Since the tangent spaces to the fibers of $p$ at points of $\widehat K$ are preserved by
$\widehat f$ and since the fibers of $p$ are contracted by $\widehat f$
(see Proposition~\ref{p.fibre-contract}), one deduces that $\widehat K$
is partially hyperbolic with a dominated splitting $T_{\widehat K} \widehat M=\widehat E^{ss}\oplus \widehat E^c$.  The projection $Dp$ is an isomorphism between $\widehat E^c$ and
$T_KM$.

Since the fibers of $p$ are invariant by $\widehat f$
and tangent to $\widehat E^{ss}$ at points $\widehat x$ of $\widehat K$,
one deduces that each strong stable manifold $W^{ss}(\widehat x)$
is contained in $p^{-1}(p(x))$. In particular it intersects $\widehat K$
in a single point and the Main Theorem applies.

Let $S\subset \widehat M$ be a locally invariant $C^1$ surface containing $\widehat K$
and tangent to $\widehat E^c$ at points of $\widehat K$.
The projection $p\colon S\to M$ is a local diffeomorphism, injective on $\widehat K$,
hence injective on a neighborhood of $\widehat K$:
reducing $S$ if necessary, $p$ is a diffeomorphism between $S$ and a neighborhood
$U$ of $K$. Moreover $U$ is endowed with a $C^1$ line field
$\cL\colon x\mapsto p^{-1}(x)\cap S$ which is locally invariant by $Df$ by construction.
This line field uniquely integrates as a foliation $\cF^u$ on $U$ that is locally invariant
on a neighborhood of $K$ and that is tangent to $E^u$ at points of $K$.
In particular the leaves $\cF^u_x$ at points $x\in K$ contain the local stable manifolds
of $x$.

If $f$ is $C^r$, with $r>2$, then $\widehat f$ is $C^{r-1}$ and
$S$ can be chosen $C^{1,\alpha}$ for some $\alpha>0$
by Corollary~\ref{c.smooth}. In particular the line field $\cL$ and the foliation $\cF^s$ are $C^{1,\alpha}$.


\vskip 10pt

\begin{tabular}{l l l}
\emph{Christian Bonatti}
& \quad\quad \quad &
\emph{Sylvain Crovisier}
\medskip\\

Institut de Math\'ematiques de Bourgogne
&& Laboratoire de Math\'ematiques d'Orsay\\
CNRS - URM 5584
&& CNRS - UMR 8628\\
Universit\'e de Bourgogne
&& Universit\'e Paris-Sud 11\\
Dijon 21004, France
&& Orsay 91405, France.
\end{tabular}


\begin{thebibliography}{ZZZZZ}

\bibitem[AR]{AR}  R. Abraham, J. Robbin,
\emph{Transversal mappings and flows.}
W. A. Benjamin, Inc., New York-Amsterdam (1967). 

\bibitem[BDV]{bdv}
C. Bonatti, L. D\'\i az, M. Viana, \textit{Dynamics Beyond Uniform Hyperbolicity}. Springer, Berlin, 2004.

\bibitem[CLY]{CLY}
S-N. Chow, W. Liu, Y. Yi, Yingfei,
Center manifolds for invariant sets.
\emph{J. Differential Equations} \textbf{168} (2000), 355--385. 


\bibitem[C$_1$]{crovisier} S. Crovisier,
Periodic orbits and chain-transitive sets of $C^1$-diffeomorphisms.
\textit{Publ. Math. Inst. Hautes \'Etudes Sci.} \textbf{104} (2006), 87--141.

\bibitem[C$_2$]{C} S. Crovisier,
Partial hyperbolicity far from homoclinic bifurcations.
\textit{Advances in Math.} \textbf{226} (2011), 673--726.

\bibitem[CG]{CG} S. Crovisier, N. Gourmelon
\emph{The Newhouse phenomenon in $C^{1+\alpha}$-topology.}
In preparation.

\bibitem[CP]{CP} S. Crovisier, E. R. Pujals,
\textit{Essential hyperbolicity versus homoclinic bifurcations.}
Pr\'epublication. ArXiv:1011.3836.

\bibitem[CPS]{CPS} S. Crovisier, E. R. Pujals, M. Sambarino,
\textit{Hyperbolicity of extremal bundles.}
In preparation.

\bibitem[dM]{demelo} W. de Melo,
Structural stability of diffeomorphisms on two-manifolds.
\emph{Invent. Math.} \textbf{21} (1973), 233--246. 

\bibitem[G$_1$]{G}
N. Gourmelon, Adapted metrics for dominated splittings.
\textit{Ergodic Theory \& Dynam. Systems} \textbf{27} (2007), 1839--1849.

\bibitem[G$_2$]{gourmelon-tangence}
N. Gourmelon,
Generation of homoclinic tangencies by $C^1$-perturbations.
\textit{Discrete Contin. Dyn. Syst.} \textbf{26} (2010), 1--42.

\bibitem[HPS]{HPS} M. Hirsch, C. Pugh, M. Shub, \textit{Invariant Manifolds.}
\textit{Lecture Notes in Mathematics} \textbf{583}, Springer Verlag, Berlin (1977).

\bibitem[M]{mane1D}
R. Ma\~n\'e, Hyperbolicity, sinks and measure in one dimensional dynamics.
\textit{Commun. Math. Phys.} \textbf{100} (1985), 495--524 and
\textit{Comm. Math. Phys.} \textbf{112} (1987), 721--724.

\bibitem[N$_1$]{Ne1} S. Newhouse,
Diffeomorphisms with infinitely many sinks, \textit{Topology} \textbf{13} (1974), 9--18.

\bibitem[N$_2$]{Ne2} S. Newhouse,
The abundance of wild hyperbolic sets and nonsmooth stable sets for diffeomorphisms,
\emph{Publ. Math. I.H.E.S.} \textbf{50} (1979), 101--151.

\bibitem[PT]{PT} J. Palis, F. Takens,
\textit{Hyperbolicity and Sensitive Chaotic Dynamics at Homoclinic Bifurcations.}
Cambridge studies in advanced mathematics \textbf{35}, Cambridge University Press (1993).

\bibitem[PV]{PV} J. Palis, M. Viana,
High dimension diffeomorphisms displaying infinitely many periodic attractors.
{\em Ann. of Math.} {\bf 140}, (1994), no. 1, 207--250.

\bibitem[Pu]{Pu} E. R. Pujals,
On the density of hyperbolicity and homoclinic bifurcations for 3D- diffeomorphisms
in attracting regions. \emph{Discrete Contin. Dyn. Syst.} \textbf{16} (2006), 179--226.

\bibitem[PS$_1$]{PS1} E. R. Pujals, M. Sambarino,
Homoclinic tangencies and hyperbolicity for surface diffeomorphisms.
{\em Ann. of math.} {\bf 151} (2000), 961--1023.

\bibitem[PS$_2$]{PS2} E. R. Pujals, M. Sambarino,
Density of hyperbolicity and tangencies in sectional dissipative regions.
{\em Annales de l'Institut Henri Poincar\'e}
\textbf{26}  (2009), 1971--2000.

\bibitem[R]{R} N. Romero,
Persistence of homoclinic tangencies in higher dimensions.
\textit{Ergod. Th. \& Dynam. Sys.}, \textbf{15,} (1995), 735--757.

\bibitem[We]{We} L. Wen,
Generic Diffeomorphisms away from homoclinic tangencies and heterodimensional cycles.
\textit{Bull. Braz. Math. Soc.} \textbf{35} (2004), 419--452.

\bibitem[Wi]{Wi} S. Wiggins,
\emph{Normally hyperbolic invariant manifolds in dynamical systems.}
Applied Mathematical Sciences \textbf{105}, Springer-Verlag (1994).
\end{thebibliography}
\end{document}